\newcommand{\EE}{\mbox{\bf E}\,}
\newcommand{\R}{\mathbb{R}}
\newcommand{\C}{\mathbb{C}}
\newcommand{\Q}{\mathbb{Q}}
\newcommand{\HH}{\mathbb{H}}
\newcommand{\N}{\mathbb{N}}
\newcommand{\D}{\mathbb{D}}
\newcommand{\TT}{\mathbb{T}}
\newcommand{\Z}{\mathbb{Z}}
\newcommand{\Ree}{\mbox{Re}\,}
\newcommand{\Imm}{\mbox{Im}\,}
\newcommand{\pa}{\partial}
\newcommand{\F}{{\cal F}}
\newcommand{\no}{\noindent}
\newcommand{\BGE}{\begin{equation}}
\newcommand{\BGEN}{\begin{equation*}}
\newcommand{\EDE}{\end{equation}}
\newcommand{\EDEN}{\end{equation*}}
\def\eps{\varepsilon}
\def\til{\widetilde}
\def\ha{\widehat}
\def\sem{\setminus}
\def\lin{\overline}
\def\vphi{\varphi}
\def\conf{\stackrel{\rm Conf}{\twoheadrightarrow}}
\def\luto{\stackrel{\rm l.u.}{\longrightarrow}}
\def\dto{\stackrel{\rm Cara}{\longrightarrow}}
\DeclareMathOperator{\ccap}{cap} \DeclareMathOperator{\rad}{rad}
 \DeclareMathOperator{\diam}{diam}
\DeclareMathOperator{\dist}{dist} \DeclareMathOperator{\dcap}{dcap}
\DeclareMathOperator{\hcap}{hcap} \DeclareMathOperator{\id}{id}
 \DeclareMathOperator{\SLE}{SLE}
 \DeclareMathOperator{\JP}{JP}
\DeclareMathOperator{\mA}{m} 
 \DeclareMathOperator{\cc}{c}
\newtheorem{Theorem}{Theorem}[section]
\newtheorem{Lemma}[Theorem]{Lemma}
\newtheorem{Definition}[Theorem]{Definition}
\newtheorem{Corollary}[Theorem]{Corollary}
\newtheorem{Proposition}[Theorem]{Proposition}
\newtheorem{Question}[Theorem]{Question}
\numberwithin{equation}{section}
\begin{document}
\title{Backward SLE and the symmetry of the welding}
\author[1]{Steffen Rohde\thanks{Research partially supported by NSF grant DMS-1068105.}}
\author[2]{Dapeng Zhan\thanks{Research partially supported by NSF grants DMS-0963733, DMS-1056840, and Sloan fellowship.}}
\affil[1]{University of Washington}
\affil[2]{Michigan State University}
\renewcommand\Authands{ and }

\maketitle
\abstract {The backward chordal Schramm-Loewner Evolution naturally defines a conformal welding homeomorphism of the real line. We show that this homeomorphism is invariant under the automorphism $x\mapsto -1/x$, and conclude that the associated solution to the welding problem (which is  a natural renormalized limit of the finite time Loewner traces) is reversible. The proofs rely on an analysis of the action of analytic circle diffeomorphisms on the space of hulls, and on the coupling techniques of the second author.}
\tableofcontents
\section{Introduction}
\subsection{Introduction and results}
The Schramm-Loewner Evolution $\SLE_{\kappa}$, first introduced in \cite{S-SLE},  is a stochastic process of random conformal maps
that has received a lot of attention over the last decade. We refer to the introductory text \cite{LawSLE}
for basic facts and definitions. In this paper we are largely concerned with chordal $\SLE_{\kappa}$, which can be viewed as
a family of random curves $\gamma$ that join $0$ and $\infty$ in the closure of the upper half plane $\overline\HH.$ A fundamental property of  chordal
SLE is {\it reversibility}: The law of $\gamma$ is invariant under the automorphism $z\mapsto -1/z$ of $\HH,$ modulo time parametrization. This has first been proved by the second author in \cite{reversibility} for $\kappa\leq4$, and recently by Miller and Sheffield for $4<\kappa\leq8$ in \cite{M-SIII}. It is known to be false for $\kappa>8$ (\cite{RS-basic},\cite{duality}).

In the early years of SLE,  Oded Schramm, Wendelin Werner and the first author made an attempt to prove reversibility along the following lines: The ``backward'' flow
$$ \pa_t f_t(z)=\frac{-2}{f_t(z)-\sqrt{\kappa}B_t}, \quad f_0(z)=z,\quad 0\leq t\leq T,$$ 
generates curves $\beta_T=\beta[0,T]$ whose law is that of the chordal SLE trace $\gamma[0,T]$ (up to translation by $\sqrt{\kappa}B_T$) .
When $\kappa\leq4,$ these curves are simple, and
each   point of $\beta$ (with the exception of the endpoints) corresponds to two points on the real line under the conformal map $f_t$. The  {\it conformal welding homeomorphism} $\phi$
of $\beta_T$ is the auto-homeomorphism of the interval $f_T^{-1}(\beta_T)$ that interchanges these two points. In other words, it is the rule that describes which points on the real line are to be identified (laminated) in order to form the curve $\beta_T.$
It is known \cite{RS-basic} that, for $\kappa<4$, the welding almost surely uniquely determines the curve. The welding homeomorphism can be obtained by restricting the backward flow to the real line: Two points $x\neq y$ on the real line are to be welded if and only if their swallowing times coincide, $\phi(x)=y$ if and only if $\tau_x=\tau_y$, see Section \ref{welding}.
An idea to prove reversibility was to prove the invariance of $\phi$ under $x\mapsto-1/x$, and to relate this to reversibility of a suitable limit of the curves $\beta_T$. But the attempts to prove invariance of $\phi$ failed, and this program was  never completed successfully.

In this paper, we use the coupling techniques of the second author, introduced in \cite{reversibility}
for his  proof of reversibility of (forward) SLE traces. We use it to prove the  invariance of the welding:

\begin{Theorem}\label{rev}
  Let $\kappa\in(0,4]$, and $\phi$ be a backward chordal SLE$_\kappa$ welding. Let $h(z)=-1/z$. Then $h\circ \phi\circ h$ has the same distribution as $\phi$.
\end{Theorem}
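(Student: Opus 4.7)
The plan is to adapt the coupling technique that the second author introduced in \cite{reversibility} to prove forward SLE reversibility, now applied to the backward flow. At a high level, I would take two independent backward chordal SLE$_\kappa$ flows $f_t^{(1)}, f_t^{(2)}$ driven by independent Brownian motions, let the first produce a welding $\phi$ of $\R$ and the second produce a welding $\widetilde\phi$, and show that the independent product measure can be tilted by a local martingale so that under the coupled law one has $h \circ \widetilde\phi \circ h = \phi$ almost surely. Since the marginal laws of the weldings are preserved by such a tilt, the distributional identity in the theorem follows.

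The central object is then the Radon-Nikodym derivative $M_{t_1,t_2}$ used in the coupling. I would look for $M$ in the form of a backward SLE partition function built from the conformal derivatives $|(f_{t_j}^{(j)})'|$ evaluated at a pair of marked boundary points, together with a power of a cross-ratio that encodes the relative positions of the two hulls after applying $h$. Verifying that this is a local martingale in $(t_1,t_2)$ amounts to checking a commutation PDE analogous to Dubédat's for forward SLE, via Itô's formula applied to the backward Loewner equation; the signs and drift coefficients are different from the forward case, so the computation must be redone but should be structurally parallel. Girsanov's theorem then produces a coupling in which, conditional on one flow, the other remains a backward SLE$_\kappa$ of the correct form.

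The third ingredient is the geometric translation between the two coordinate systems. Because $h$ is a M\"obius transformation that swaps the two endpoints $0$ and $\infty$ of the flow, one needs to know precisely how a backward Loewner chain and the associated welding transform under an analytic diffeomorphism of the boundary; this is exactly the machinery the abstract promises to develop. Once that is set up, the coupling identifies the partition of $\R$ made by $\phi$ with the $h$-image of the partition made by $\widetilde\phi$, and the theorem follows.

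The main obstacle I expect is in the second and third steps combined: extending the coupling from a local statement (valid up to a stopping time at which the two hulls are still well-separated and the cross-ratio is finite) to a global statement that survives the limits $t_1, t_2 \to \infty$ needed to see the full welding. In the forward reversibility argument the coupling terminates at a concrete geometric event, namely the meeting of the two SLE curves; here, by contrast, the welding is a limit object assembled over all times, and one must ensure that both the martingale property of $M$ and the marginal backward SLE law persist under that limit. Establishing the compactness and uniqueness needed to take this limit, using the action of analytic circle diffeomorphisms on hulls, is where I anticipate the heaviest technical work.
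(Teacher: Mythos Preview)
Your high-level strategy—tilt a product of two independent backward flows by a two-time local martingale, use Girsanov to identify the conditional law, and pass from a local coupling to a global one—is exactly the paper's plan. But there is a real gap in how you set up the two processes.

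You propose to take two independent backward \emph{chordal} SLE$_\kappa$ flows, both started at $0$, and to couple them so that $h\circ\widetilde\phi\circ h=\phi$. The difficulty is that under $h(z)=-1/z$ the second process would have to behave like a backward chordal SLE ``started at $\infty$ with force point $0$''; in chordal coordinates one of the two marked points is always at infinity, and the symmetric coupling you want (between a process anchored at $(0;\infty)$ and one anchored at $(\infty;0)$) cannot be written down directly. Nothing in your martingale $M_{t_1,t_2}$—derivatives at marked boundary points, a cross-ratio term—repairs this, because there is no finite driving point for the second flow to which such quantities can refer.

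The paper's resolution is to pass to the disk. A backward chordal SLE$_\kappa$ is, via a M\"obius map, a backward \emph{radial} SLE$(\kappa;-\kappa-6)$ started from $(z_1;z_2)$ with $z_1,z_2\in\TT$ both finite (Corollary~\ref{chordal-radial}). One then couples the radial SLE$(\kappa;-\kappa-6)$ from $(z_1;z_2)$ with the one from $(z_2;z_1)$; now both starting points and both force points are finite, the two-time martingale $M$ of Section~\ref{comm} is well-defined, and the commutation theorem (Theorem~\ref{coupl}) goes through. A separate short argument (Theorem~6.1) shows that two commuting radial SLE$(\kappa;-\kappa-6)$ processes necessarily induce the \emph{same} welding on $\TT$. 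Finally one pulls back to $\HH$ by two different M\"obius maps $W_1,W_2$ with $W_2=h\circ W_1$, and (\ref{welding-trans}) converts ``same welding on $\TT$'' into $\phi_2=h\circ\phi_1\circ h$.

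So the missing idea is precisely the detour through radial SLE$(\kappa;-\kappa-6)$ (equivalently, through the conformal-change-of-coordinates theory for backward SLE$(\kappa;\rho)$), which is what makes both marked points finite and the symmetric coupling possible. Your anticipated ``main obstacle'' (local-to-global) is handled by the maximum coupling technique of \cite{reversibility}, exactly as you say; but that step only becomes available once the processes are set up correctly in the radial picture.
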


As a  consequence, in the range $\kappa\in(0,4)$ where the SLE trace is conformally removable,
we obtain the reversibility of suitably normalized limits of the $\beta_T$ (see Section \ref{section-reversibility} for details):

\begin{Theorem}
  Let $\kappa\in(0,4)$, and $\beta$ be a normalized global backward chordal SLE$_\kappa$ trace. Let $h(z)=-1/z$. Then $h(\beta\sem\{0\})$ has the same distribution as $\beta\sem\{0\}$ as random sets.
\end{Theorem}

In the important paper \cite{SS-GFF}, Sheffield obtains a representation of the SLE welding in terms of a
quantum gravity boundary length measure, and also relates it to a simple Jordan arc, which differs from our $\beta$ only through normalization. A similar random welding homeomorphism is constructed in \cite{AJKS}, where the main point is the very difficult existence of a curve solving the welding problem. Our approach to the welding is  different:
In order to prove Theorem \ref{rev}, in Section \ref{Section-Conformal}  we develop a framework to study the effect of analytic perturbations of weldings on the corresponding  hulls.
We show in Section \ref{Conformal}  that a M\"obius image of a backward chordal SLE$_\kappa$ process is a
backward radial SLE($\kappa, -\kappa-6$) process, and the welding is preserved under this conformal transformation. In Section \ref{comm} we apply the coupling technique to show that backward radial SLE($\kappa, -\kappa-6$) started from  an ordered pair of points $(a,b)$ commutes with backward radial SLE($\kappa, -\kappa-6$) started from $(b,a)$, and use this in Section \ref{section-reversibility} to prove Theorem \ref{rev}.

In a subsequent paper \cite{tip} of the second author, Theorem \ref{rev} is used to study the ergodic properties of a forward SLE$_\kappa$ trace near the tip at a fixed capacity time.

\subsection{Notation}
Let $\ha\C=\C\cup\{\infty\}$, $\D=\{z\in\C:|z|<1\}$,  $\D^*=\ha\C\sem\lin\D$,
$\TT=\{z\in\C:|z|=1\}$, and $\HH=\{z\in\C:\Imm z>0\}$. Let $I_\R(z)=\lin z$ and $I_\TT(z)=1/\lin z$ be the reflections about $\R$ and $\TT$, respectively. Let $e^i$ denote the map $z\mapsto e^{iz}$. Let $\cot_2(z)=\cot(z/2)$ and $\sin_2(z)=\sin(z/2)$. For a real interval $J$, let $C(J)$ denote the space of real valued continuous functions on $J$. An increasing or decreasing function in this paper is assumed to be strictly monotonic. We use $B(t)$ to denote a standard real Brownian motion. By $f:D\conf E$ we mean that $f$ maps $D$ conformally onto $E$. By $f_n\luto f$ in $U$ we mean that $f_n$ converges to $f$ uniformly on every compact subset of $U$. We will frequently use the notation $D_n\dto D$ as in Definition \ref{def-lim}.

The outline of this paper is the following. In Section \ref{Section-Conformal}, we derive some fundamental results in Complex Analysis, which are interesting on their own. In Section \ref{section-Loewner}, we review the properties of forward Loewner processes, and derive some properties of backward Loewner processes. In Section \ref{Conformal}, we discuss how are backward Loewner processes transformed by conformal maps. In Section \ref{comm}, we present and prove certain commutation relations between backward SLE$(\kappa;\vec\rho)$ processes. In the last section, we prove the reversibility of backward chordal SLE$_\kappa$ processes for $\kappa\in(0,4]$ and propose questions in other cases. In the appendix, we discuss some results on the topology of domains and hulls.

\section{Extension of Conformal Maps} \label{Section-Conformal}
\subsection{Interior hulls in $\C$}
An interior hull (in $\C$) is a nonempty compact connected set $K\subset\C$ such that $\C\sem K$ is also connected. For every interior hull $K$ in $\C$, there are a unique $r\ge 0$ and a unique $\phi_K:\ha\C\sem K\conf \ha\C\sem r\lin{\D}$ such that $\phi_K(\infty)=\infty$ and $\phi_K'(\infty):=\lim_{z\to\infty}z/\phi_K(z)=1$. We call $\rad(K):=r$ the radius of $K$ and $\ccap(K):=\ln(r)$ the capacity of $K$. The radius is $0$ iff $K$ contains only one point. In general, we have $\rad(K)\le \diam(K)\le 4\rad(K)$. We call $K$ nondegenerate if it contains more than one point. For such $K$, there is a unique $\vphi_K:\ha\C\sem K\conf \D^*$ such that $\vphi_K(\infty)=\infty$ and $\vphi_K'(\infty)>0$. In fact, $\vphi_K=\phi_K/\rad(K)$. Let $\psi_K=\vphi_K^{-1}$ for such $K$.

For any Jordan curve $J$ in $\C$, let $D_J$ denote the Jordan domain bounded by $J$, and let $D_J^*=\ha\C\sem(D_J\cup J)$. Suppose $f_J:\D\conf D_J$ and $f_J^*=\psi_{\lin{D_J}}:\D^*\conf D_J^*$. Then both $f_J$ and $f_J^*$ extend continuously to a homeomorphism from $\TT$ onto $J$. Let $h=(f_J^*)^{-1}\circ f_J$. Then $h$ is an orientation-preserving automorphism of $\TT$. We call such $h$ a conformal welding. Not every homeomorphism of $\TT$ is a conformal welding, but it is well-known (and an easy consequence of the uniformization theorem) that every analytic automorphism is a conformal welding, and that the associated Jordan curve is analytic. See \cite{LV} for the quasiconformal theory of conformal welding, and \cite{B} for deep generalizations and further references.

\begin{Lemma}
  Let $\beta$ be an analytic Jordan curve. Let $\Omega\subset\C$ be a neighborhood of $\TT$. Suppose $W$ is a conformal map defined in $\Omega$, maps $\TT$ onto $\TT$, and preserves the orientation of $\TT$. Let $\Omega^\beta=\beta\cup D_{\beta}\cup \psi_{\lin{D_\beta}}(\Omega\cap\D^*)$. Then there is a conformal map $V$ defined in $\Omega^\beta$ such that $V\circ \psi_{\lin{D_\beta}}=\psi_{\lin{D_{V(\beta)}}}\circ W$ in $\Omega\cap\D^*$.\label{quasi}
\end{Lemma}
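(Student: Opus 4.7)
The plan is to construct the target curve $\beta' := V(\beta)$ via the conformal welding problem applied to a modified boundary diffeomorphism, and then to define $V$ by gluing natural inner and outer formulas along $\beta$.

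First, since $\beta$ is an analytic Jordan curve, both $f := f_\beta : \D \to D_\beta$ and $\psi := \psi_{\lin{D_\beta}}: \D^* \to D_\beta^*$ extend conformally across $\TT$, and hence so does the welding $h_\beta := \psi^{-1} \circ f$. By Schwarz reflection (using $W(\TT) = \TT$), $W$ commutes with $I_\TT$ on a neighborhood of $\TT$, so in particular $W$ preserves $\D^*$ near $\TT$. Define the new welding $h' := W \circ h_\beta$, an analytic diffeomorphism of $\TT$ extending conformally across $\TT$. Invoking the classical conformal welding theorem for analytic boundary data---one glues $\D$ and $\D^*$ via $h'$ and applies the uniformization theorem, as indicated in the paragraph preceding the lemma---yields an analytic Jordan curve $\beta'$ together with an interior map $f' : \D \to D_{\beta'}$ and an exterior map $\psi' := \psi_{\lin{D_{\beta'}}} : \D^* \to D_{\beta'}^*$, both extending across $\TT$, with $(\psi')^{-1} \circ f' = h'$ on $\TT$.

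Next, define $V$ piecewise: on a neighborhood of $\lin{D_\beta}$ set $V_{in} := f' \circ f^{-1}$, and on a neighborhood of $\beta \cup \psi(\Omega \cap \D^*)$ (using the analytic extension of $\psi^{-1}$ across $\beta$) set $V_{out} := \psi' \circ W \circ \psi^{-1}$. For $p \in \beta$, write $p = f(z) = \psi(h_\beta(z))$ with $z \in \TT$; then
\[
V_{out}(p) = \psi'(W(h_\beta(z))) = \psi'(h'(z)) = f'(z) = V_{in}(p),
\]
using $f' = \psi' \circ h'$ on $\TT$. Since $V_{in}$ and $V_{out}$ are analytic on overlapping open neighborhoods of $\beta$ and agree on the set $\beta$, the identity theorem forces them to agree on a connected neighborhood of $\beta$, so they combine into a single analytic map $V$ on $\Omega^\beta = D_\beta \cup \beta \cup \psi(\Omega \cap \D^*)$. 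Injectivity follows because $V(D_\beta) = D_{\beta'}$ and $V(\psi(\Omega \cap \D^*)) \subset D_{\beta'}^*$ are disjoint, with each piece already injective. Hence $V$ is conformal on $\Omega^\beta$, $V(\beta) = f'(\TT) = \beta'$, and $V \circ \psi = \psi' \circ W = \psi_{\lin{D_{V(\beta)}}} \circ W$ on $\Omega \cap \D^*$ by the definition of $V_{out}$.

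The main subtlety is identifying the correct modified welding $h' = W \circ h_\beta$ (rather than, for example, $h_\beta \circ W^{-1}$): this choice is forced by the requirement that the inner identity $V \circ f = f'$ on $\D$ be compatible with the outer identity $V \circ \psi = \psi' \circ W$ on $\Omega \cap \D^*$ at the common boundary $\beta$. A minor normalization point is that $\beta'$ is determined by $h'$ only up to translation, since the welding theorem produces $\psi'$ with $(\psi')'(\infty) > 0$; this causes no difficulty because any translate of $V$ satisfies the conclusion equally well.
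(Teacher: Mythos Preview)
Your proof is correct and follows essentially the same approach as the paper: modify the welding to $h'=W\circ h_\beta$, solve the analytic conformal welding problem to obtain $\beta'$, and define $V$ as $f'\circ f_\beta^{-1}$ on the inside, checking that on $\beta$ this agrees with $\psi'\circ W\circ\psi_{\lin{D_\beta}}^{-1}$ so that $V$ extends conformally to all of $\Omega^\beta$. Your write-up is in fact a bit more explicit about the inner/outer gluing and injectivity than the paper's, but the idea is the same.
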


\begin{proof} Fix a conformal map $f_{\beta}:\D\conf D_{\beta}$ and let
$h_{\beta}=\vphi_{\lin{D_\beta}}\circ f_{\beta}$ be the associated conformal welding homeomorphism. Define $h=W\circ h_{\beta}.$ Since
$\beta$ is analytic, $h$ is analytic and there is an analytic Jordan curve $\gamma$ and a conformal map
$f_{\gamma}:\D\conf D_{\gamma}$ such that $h=h_{\gamma}=\vphi_{\lin{D_\gamma}}\circ f_{\gamma}$.
Define $V=f_{\gamma}\circ f_{\beta}^{-1}$ on $D_{\beta}.$ Since ${\beta}$ and $\gamma$ are analytic curves, $V$ extends conformally to a neighborhood of $\beta$ with $V(\beta)=\gamma$. On $\beta$, this extension (still denoted $V$) satisfies
$V=(\psi_{\lin{D_\gamma}}\circ h_\gamma)\circ(h_{\beta}^{-1}\circ\psi_{\lin{D_\beta}}^{-1}) = \psi_{\lin{D_\gamma}}\circ W \circ\psi_{\lin{D_\beta}}^{-1}$.
Therefore $V$ extends conformally to all of $\Omega^\beta$ and satisfies the desired property.
\end{proof}

\begin{figure}[htp]
\centering
\includegraphics[scale=0.70]{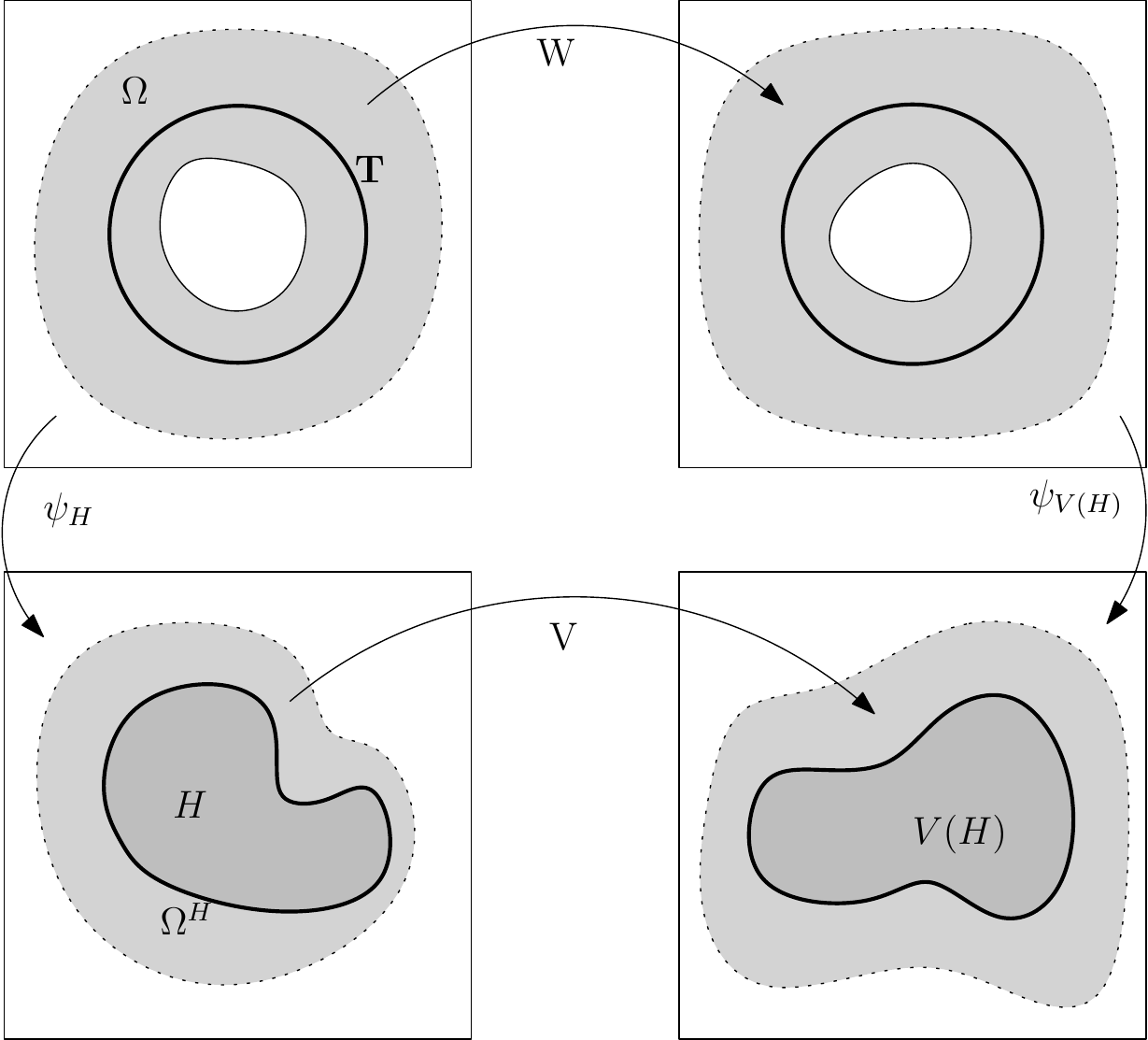}
\caption{The situation of Theorem \ref{extend1} and Lemma \ref{quasi}. Given $H$ and $W$, $V$ can be constructed to be analytic in $H$. In Lemma \ref{quasi}, the boundary of $H$ is assumed to be an analytic Jordan curve, while in Theorem \ref{extend1}, no regularity assumption is made.}\label{fig:annuli}
\end{figure}

\begin{Theorem}
  Let $H$ be a nondegenerate interior hull. Let $\Omega\subset\C$ be a neighborhood of $\TT$. Suppose $W$ is a conformal map defined in $\Omega$, maps $\TT$ onto $\TT$, and preserves the orientation of $\TT$. Let $\Omega^H=H\cup \psi_H(\Omega\cap\D^*)$. Then there is a conformal map $V$ defined in $\Omega^H$ such that $V\circ \psi_H=\psi_{V(H)}\circ W$ in $\Omega\cap\D^*$. 
  If another conformal map $\til V$ satisfies the properties of $V$, then $\til V=aV+b$ for some $a>0$ and $b\in\C$. \label{extend1}
\end{Theorem}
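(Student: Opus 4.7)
My plan is to approximate $H$ by hulls bounded by analytic Jordan curves (so that Lemma \ref{quasi} applies), obtain the corresponding $V_n$, and pass to a limit. Pick $r_n\downarrow 1$ and set $H_n := \C\sem\psi_H(\{|z|>r_n\})$. Each $H_n$ is a nondegenerate interior hull bounded by the analytic Jordan curve $\psi_H(r_n\TT)$; it contains $H$ and satisfies $\psi_{H_n}(z)=\psi_H(r_n z)$. Lemma \ref{quasi} supplies a conformal $V_n:\Omega^{H_n}\to\C$ with $V_n\circ\psi_{H_n}=\psi_{V_n(H_n)}\circ W$ in $\Omega\cap\D^*$. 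The identity $\psi_H(z)=\psi_{H_n}(z/r_n)$ (valid for $|z|>r_n$) together with the openness of $\Omega$ implies that every compact $K\subset\Omega^H$ lies in $\Omega^{H_n}$ for all $n$ large. Fix $z_0\in\Omega\cap\D^*$, set $p_0:=\psi_H(z_0)$, and exploit the affine freedom in Lemma \ref{quasi} to normalize $V_n(p_0)=0$ and $|V_n'(p_0)|=1$. Koebe distortion then makes $\{V_n\}$ a normal family on $\Omega^H$, so I may extract a locally uniform limit $V_{n_k}\luto V$; Hurwitz together with $|V'(p_0)|=1$ ensure $V$ is univalent.

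\textbf{Limit of the welding relation.} For the left side, $\psi_{H_n}(z)=\psi_H(r_n z)\luto\psi_H(z)$ in $\D^*$, hence $V_{n_k}\circ\psi_{H_{n_k}}\luto V\circ\psi_H$ on $\Omega\cap\D^*$. For the right side, uniform convergence $V_{n_k}\luto V$ on a compact neighborhood of $\partial H$, combined with $H_n\downarrow H$, produces Carathéodory convergence $\C\sem V_{n_k}(H_{n_k})\dto\C\sem V(H)$; continuity of the map $K\mapsto\psi_K$ under this topology then gives $\psi_{V_{n_k}(H_{n_k})}\luto\psi_{V(H)}$ in $\D^*$. The welding identity therefore passes to the limit as $V\circ\psi_H=\psi_{V(H)}\circ W$, proving existence.

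\textbf{Uniqueness.} If $\til V$ is another admissible map, combining $V\circ\psi_H=\psi_{V(H)}\circ W$ with $\til V\circ\psi_H=\psi_{\til V(H)}\circ W$ yields $\til V\circ V^{-1}=\psi_{\til V(H)}\circ\psi_{V(H)}^{-1}$ on the nonempty open set $\psi_{V(H)}(W(\Omega\cap\D^*))\subset\C\sem V(H)$. The left side extends conformally to $V(\Omega^H)\supset V(H)$, while the right side extends conformally to all of $\C\sem V(H)$; the two extensions agree on the open overlap and glue to a conformal self-map of $\C$, which must be affine, $z\mapsto az+b$. Comparing the expansions $\psi_K(z)=\rad(K)z+O(1)$ at $\infty$ forces $a=\rad(\til V(H))/\rad(V(H))>0$, so $\til V=aV+b$ as required.

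The principal obstacle is the limit step for the right-hand side: justifying the Carathéodory convergence $\C\sem V_n(H_n)\dto\C\sem V(H)$ and the continuity of $K\mapsto\psi_K$. Both rely on the topological framework for hulls developed in the appendix (cf.\ Definition \ref{def-lim}) and require care in tracking the behavior of $V_n$ all the way up to $\partial H$, where $\psi_{H_n}$ may degenerate and where no a priori modulus of continuity is available for $\psi_H$.
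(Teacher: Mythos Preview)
Your proof follows the same overall strategy as the paper---approximate $H$ by hulls $H_n$ with analytic boundary, apply Lemma~\ref{quasi}, and pass to the limit---and the uniqueness argument via Liouville is identical. The one substantive difference is in how the limit is extracted. You normalize $V_n$ at an exterior point $p_0$ and invoke Koebe distortion to get a normal family directly on $\Omega^H$, after which you must separately justify the hull convergence $V_{n_k}(H_{n_k})\to V(H)$ to pass the right-hand side of the welding relation. The paper instead normalizes the \emph{image} hulls $\gamma_n=V_n(H_n)$ (so that $0\in\gamma_n\subset\lin\D$ with $\diam\gamma_n\ge 1$), uses the hull compactness of Appendix~\ref{B} to extract $\gamma_{n_k}\to K$ \emph{first}, defines $V:=\psi_K\circ W\circ\psi_H^{-1}$ on the outer annulus $\psi_H(\Omega\cap\D^*)$, and only then extends $V$ across $H$ by the maximum principle, using uniform convergence of $V_n$ on a single fixed Jordan curve $\psi_H(r\TT)$ that surrounds $H$. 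This order of operations sidesteps your flagged obstacle entirely: the limit hull is in hand before $V$ is constructed, so no tracking of $V_n$ near $\pa H$ is needed. That said, your concern is resolvable within your own framework: uniform convergence $V_{n_k}\to V$ on a fixed compact neighborhood $H_m\supset H_{n_k}$ together with $H_{n_k}\downarrow H$ gives Hausdorff convergence $V_{n_k}(H_{n_k})\to V(H)$, and for interior hulls Hausdorff convergence already implies Carath\'eodory convergence of the complements, hence $\psi_{V_{n_k}(H_{n_k})}\luto\psi_{V(H)}$ by the equivalence in Appendix~\ref{B}.
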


\begin{proof} First, define a sequence of analytic Jordan curves $(\beta_n)$ by
$$\beta_n=\psi_H(\{e^{\frac 1n +i\theta}:0\le \theta\le 2\pi\}),\quad n\in\N.$$
Then $\beta_n\cup D_{\beta_n}\to H$ in $d_{\cal H}$ (see Appendix \ref{B}).
From Lemma \ref{quasi}, for each $n\in\N$, there is a conformal map $V_n$ defined in $\Omega^{\beta_n}:=\beta_n\cup D_{\beta_n}\cup \psi_{\beta_n}(\Omega\cap\D^*)$ such that $V_n\circ \psi_{\beta_n}=\psi_{V_n(\beta_n)}\circ W$ in $\Omega\cap\D^*$. Note that for any $a_n>0$ and $b_n\in\C$, $a_nV_n+b_n$ satisfies the same property as $V_n$. Thus, we may assume that $0\in V_n(\beta_n)\subset\lin\D$ and $V_n(\beta_n)\cap\TT\ne\emptyset$. Let $\gamma_n=V_n(\beta_n)$, $n\in\N$. Then each $\gamma_n$ is an interior hull contained in the interior hull $\lin\D$, and $\diam(\gamma_n)\ge 1$. So $\rad(\gamma_n)\ge 1/4$. From Corollary \ref{compact-cor}, $(\gamma_n)$ contains a subsequence which converges to some interior hull $K$ contained in $\lin\D$ with radius at least $1/4$. So $K$ is nondegenerate. By passing to a subsequence, we may assume that $\gamma_n\to K$. From $\beta_n\to H$ and $\gamma_n\to K$ we get $\psi_{\beta_n}\luto\psi_H$ in $\Omega\cap \D^*$ and $\psi_{\gamma_n}\luto\psi_K$ in $W(\Omega\cap\D^*)$. Thus, $\psi_{\beta_n}(\Omega\cap\D^*)\dto \psi_H(\Omega\cap\D^*)$ by Lemma \ref{domain convergence*}.

Since $V_n\circ \psi_{\beta_n}=\psi_{\gamma_n}\circ W$ in $\Omega\cap\D^*$, we find that $V_n=\psi_{\gamma_n}\circ W\circ \psi_{\beta_n}^{-1}$ in $\psi_{\beta_n}(\Omega\cap\D^*)$. Let $V=\psi_K\circ W\circ \psi_H^{-1}$ in $\psi_H(\Omega\cap\D^*)$. Then $V_n\luto V$ in $\psi_H(\Omega\cap\D^*)$. We may find $r>1$ such that for any $s\in(1,r]$, $s\TT\subset\Omega\cap\D^*$. Then $\psi_{H}(r\TT)$ is a Jordan curve in $\psi_H(\Omega\cap\D^*)$ surrounding $H$, and the Jordan domain bounded by $\psi_{H}(r\TT)$ is contained in $\Omega^H=H\cup \psi_H(\Omega\cap\D^*)$. Since $\psi_{H}(r\TT)$ is a compact subset of  $\psi_H(\Omega\cap\D^*)$, we have $V_n\to V$ uniformly on $\psi_{H}(r\TT)$. It is easy to see that $\Omega^{\beta_n}\dto\Omega^H$. For $n$ big enough, $\psi_{H}(r\TT)$ together with its interior is contained in $\Omega^{\beta_n}$. From the maximum principle, $V_n$ converges uniformly in the interior of $\psi_{H}(r\TT)$ to a conformal map which extends $V$. We still use $V$ to denote the extended conformal map. Then $V$ is a conformal map defined in $\Omega^H$, and $V_n\luto V$ in $\Omega^H$. Letting $n\to\infty$ in the equality $V_n\circ \psi_{\beta_n}=\psi_{\gamma_n}\circ W$ in $\Omega\cap\D^*$, we conclude that $V\circ \psi_H=\psi_{V(H)}\circ W$ in $\Omega\cap\D^*$. So the existence part is proved.

If $\til V=aV+b$ for some $a>0$ and $b\in\C$, then $\psi_{\til V(H)}=a\psi_{V(H)}+b$, which implies $\til V\circ \psi_H=\psi_{\til V(H)}\circ W$. Finally, suppose $\til V$ satisfies the properties of $V$. Then $\til V\circ V^{-1}$ is a conformal map in $V(\Omega^H)$. Since $V\circ \psi_H=\psi_{V(H)}\circ W$ and $\til V\circ \psi_H=\psi_{\til V(H)}\circ W$ in $\Omega\cap\D^*$, we find that $\til V\circ V^{-1}=\psi_{\til V(H)}\circ \psi_{V(H)}^{-1}$ in $\psi_{V(H)}(W(\Omega\cap\D^*))=V(\Omega^H)\sem V( H)$. Note that $\psi_{\til V(H)}\circ \psi_{V(H)}^{-1}$ is a conformal map defined in $\ha\C\sem V(H)$. Since $V(\Omega^H)\cup (\ha\C\sem V(H))=\ha\C$, we may define an analytic function $h$ in $\C$ such that $h=\til V\circ V^{-1}$ in  $V(\Omega^H)$ and $h=\psi_{\til V(H)}\circ \psi_{V(H)}^{-1}$ in $\C\sem V(H)$. From the properties of $\psi_{\til V(H)}$ and  $\psi_{V(H)}$, we have $h(\infty)=\infty$ and $h'(\infty)>0$. Thus, $h(z)=az+b$ for some $a>0$ and $b\in\C$, which implies that $\til V=aV+b$. \end{proof}

Now we obtain a new proof of the following well-known result about conformal welding.

\begin{Corollary}
 Let $W$ be conformal in a neighborhood of $\TT$, maps $\TT$ onto $\TT$, and preserves the orientation of $\TT$. If $h$ is a conformal welding, then $W\circ h$ and $h\circ W$ are also conformal weldings. \label{weld}
\end{Corollary}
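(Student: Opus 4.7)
Write $h=(f_J^*)^{-1}\circ f_J$ on $\TT$ for some Jordan curve $J$, with $f_J:\D\conf D_J$ and $f_J^*=\psi_{\lin{D_J}}$, so that $\lin{D_J}$ is a nondegenerate interior hull. The plan is to handle $W\circ h$ directly from Theorem \ref{extend1}, and then to reduce $h\circ W$ to the first case via a M\"obius inversion exchanging the two sides of a Jordan curve.

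For $W\circ h$, I will apply Theorem \ref{extend1} with $H:=\lin{D_J}$, producing a conformal $V$ on $\Omega^H$ with $V\circ f_J^*=\psi_{V(H)}\circ W$ on $\Omega\cap\D^*$. Since $V$ is conformal on $\lin{D_J}$, $J':=V(J)$ is a Jordan curve with $V(\lin{D_J})=\lin{D_{J'}}$, giving $\psi_{V(H)}=f_{J'}^*$ and $f_{J'}:=V\circ f_J:\D\conf D_{J'}$ a valid interior parameterization. Using $f_J=f_J^*\circ h$ on $\TT$, a short calculation gives $f_{J'}=V\circ f_J^*\circ h=f_{J'}^*\circ W\circ h$, hence $(f_{J'}^*)^{-1}\circ f_{J'}=W\circ h$ exhibits $W\circ h$ as the welding of $J'$.

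For $h\circ W$, I will first prove a swap lemma: with $g_1(z):=1/z$ (an involution of $\ha\C$ exchanging $\D$ and $\D^*$), if $h_0$ is a welding then so is $g_1\circ h_0^{-1}\circ g_1$. To prove it, pick $z_0\in D_{J_0}$ and set $\mu(z):=1/(z-z_0)$; then $\til J:=\mu(J_0)$ is a Jordan curve with $D_{\til J}=\mu(D_{J_0}^*)$ and $D_{\til J}^*=\mu(D_{J_0})$. Normalizing $f_{J_0}$ so that $f_{J_0}(0)=z_0$ and $f_{J_0}'(0)>0$, the maps $f_{\til J}:=\mu\circ f_{J_0}^*\circ g_1$ and $f_{\til J}^*:=\mu\circ f_{J_0}\circ g_1$ are conformal parameterizations of $D_{\til J}$ and $D_{\til J}^*$, with $f_{\til J}^*$ having the normalization required to coincide with $\psi_{\lin{D_{\til J}}}$; direct computation on $\TT$ then yields welding $g_1\circ h_0^{-1}\circ g_1$. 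Given the swap lemma, $W^*:=g_1\circ W^{-1}\circ g_1$ is conformal near $\TT$, maps $\TT\to\TT$, and preserves its orientation (the two $g_1$'s each reverse orientation and cancel), so $k:=g_1\circ h^{-1}\circ g_1$ is a welding by the swap lemma, $W^*\circ k=g_1\circ(h\circ W)^{-1}\circ g_1$ is a welding by the first case applied to $W^*$ and $k$, and applying the swap lemma once more yields $h\circ W$ as a welding. The main obstacle is the bookkeeping in the swap lemma—verifying the normalization of $f_{\til J}^*$ at $\infty$ and tracking orientations through compositions of conformal maps that individually reverse the $\TT$-orientation—after which both parts of the corollary follow as short consequences of Theorem \ref{extend1}.
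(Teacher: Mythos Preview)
Your treatment of $W\circ h$ is identical to the paper's: apply Theorem \ref{extend1} with $H=\lin{D_J}$, set $J'=V(J)$, $f_{J'}=V\circ f_J$, and read off $(f_{J'}^*)^{-1}\circ f_{J'}=W\circ h$.

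For $h\circ W$ the paper is much terser: it simply notes that $(h\circ W)^{-1}=W^{-1}\circ h^{-1}$ and that $h$ is a conformal welding if and only if $h^{-1}$ is, so the second statement reduces to the first applied to $W^{-1}$ and $h^{-1}$. Your swap lemma is, in effect, an explicit proof of (a conjugated form of) the fact ``inverse of a welding is a welding'', and your chain $k\mapsto W^*\circ k\mapsto h\circ W$ is algebraically equivalent to the paper's reduction. What you gain is that you do not invoke the inverse-closure of weldings as a black box; what you lose is brevity.

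There is one genuine wrinkle in your swap lemma that needs fixing. You write ``pick $z_0\in D_{J_0}$'' and then ``normalizing $f_{J_0}$ so that $f_{J_0}(0)=z_0$ and $f_{J_0}'(0)>0$''. But $h_0$ is \emph{given}, and with the paper's convention $f_{J_0}^*=\psi_{\lin{D_{J_0}}}$ is fixed by $J_0$, so $f_{J_0}$ is determined by $h_0$ and $J_0$; you are not free to renormalize it without changing $h_0$. The remedy is to set $z_0:=f_{J_0}(0)$ and accept that $f_{\til J}^*:=\mu\circ f_{J_0}\circ g_1$ has $f_{\til J}^*(\infty)=\infty$ but leading coefficient $1/f_{J_0}'(0)$, which need not be positive. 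One then gets that the welding of $\til J$ is $R_\alpha\circ g_1\circ h_0^{-1}\circ g_1$ for a rotation $R_\alpha$ (with $\alpha=\arg f_{J_0}'(0)$). Since rotations satisfy the hypotheses on $W$ in the already-proved first part, composing on the left by $R_\alpha^{-1}$ shows $g_1\circ h_0^{-1}\circ g_1$ is itself a welding, and the rest of your argument goes through unchanged.
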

 \begin{proof} Apply Theorem \ref{extend1} to $H=\lin{D_J}$, where $J$ is the Jordan curve for the conformal welding $h$. We find a conformal map $V$ defined in $\Omega^H=D_J\cup f_J^*(\Omega\cap\D^*)$ such that $V\circ f_J^*=\psi_{V(H)}\circ W$ in $\Omega\cap\D^*$. Let $J'=V(J)$. Then $J'$ is also a Jordan curve, $V(H)=\lin{D_{J'}}$, and $\psi_{V(H)}=f_{J'}^*$. Let $f_{J'}=V\circ f_J$. Then $f_{J'}:\D\conf D_J$. Thus,
$$W\circ h=W\circ (f_J^*)^{-1}\circ f_J= \psi_{H(H)}^{-1}\circ V\circ f_J=(f_{J'}^*)^{-1}\circ f_{J'},$$
which implies that $W\circ h$ is a conformal welding. As for $h\circ W$, note that $(h\circ W)^{-1}=W^{-1}\circ h^{-1}$ and that $h$ is a conformal welding if and only if $h^{-1}$ is a conformal welding.
\end{proof}

\subsection{Hulls in the upper half plane}
Let $\HH=\{z\in\C:\Imm z>0\}$. A subset $K$ of $\HH$ is called an $\HH$-hull if it is bounded, relatively closed in $\HH$, and $\HH\sem K$ is simply connected. For every $\HH$-hull $K$, there is are a unique $c\ge 0$ and a unique $g_K:\HH\sem K\conf \HH$ such that $g_K(z)=z+\frac{c}z+O(\frac 1{z^2})$ as $z\to\infty$. The number $c$ is called the $\HH$-capacity of $K$, and is denoted by $\hcap(K)$. Let $f_K=g_K^{-1}$. The empty set is an $\HH$-hull with $\hcap(\emptyset)=0$ and $g_\emptyset=f_\emptyset=\id_{\HH}$.

\begin{Definition} Let $K_1$ and $K_2$ be $\HH$-hulls.
\begin{enumerate}
  \item  If $K_1\subset K_2$, define $K_2/K_1=g_{K_1}(K_2\sem K_1)$. We call $K_2/K_1$ a quotient hull of $K_2$, and write $K_2/K_1\prec K_2$.
  \item The product $K_1\cdot K_2$ is defined to be $K_1\cup f_{K_1}(K_2)$.
\end{enumerate}\label{/}
\end{Definition}

The following facts are easy to check.
\begin{enumerate}
  \item $K_2/K_1$ and $K_1\cdot K_2$ in the definition are also $\HH$-hulls.
  \item For any two $\HH$-hulls $K_1$ and $K_2$, $K_1\subset K_1\cdot K_2$ and $K_2=(K_1\cdot K_2)/K_1\prec K_1\cdot K_2$. If $K_1\subset K_2$, then $K_1\cdot(K_2/K_1)=K_2$.
  \item The space of $\HH$-hulls with the product ``$\cdot$'' is a semigroup with identity element $\emptyset$, and ``$\prec$'' is a transitive relation of this space.
  \item  $f_{K_1\cdot K_2}=f_{K_1}\circ f_{K_2}$ in $\HH$; $g_{K_1\cdot K_2}=g_{K_2}\circ g_{K_1}$ in $\HH\sem(K_1\cdot K_2)$.
  \item $\hcap(K_1\cdot K_2)=\hcap(K_1)+\hcap(K_2)$.  If $K_1\subset K_2$ or $K_1\prec K_2$, then $\hcap(K_1)\le\hcap(K_2)$.
\end{enumerate}

From $f_{K_1\cdot K_2}=f_{K_1}\circ f_{K_2}$ in $\HH$ we can conclude that $f_{K_1}=f_{K_1\cdot K_2}\circ g_{K_2}$ in $\HH\sem K_2$. So $f_{K_1}$ is an analytic extension of $f_{K_1\cdot K_2}\circ g_{K_2}$, which means that $K_1$ is uniquely determined by $K_1\cdot K_2$ and $K_2$. So the following definition makes sense.

\begin{Definition} Let $K_1$ and $K_2$ be $\HH$-hulls such that $K_1\prec K_2$. We use $K_2:K_1$ to denote the unique $\HH$-hull $K\subset K_2$ such that $K_2/K=K_1$.\label{//}
\end{Definition}

For an $\HH$-hull $K$, the base of $K$ is the set $B_K=\lin{K}\cap \R$. Let the double of $K$ be defined by $\ha K=K\cup I_{\R}(K)\cup B_K$, where $I_\R(z):=\lin{z}$. Then $g_K$ extends to a conformal map (still denoted by $g_K$) in $\ha\C\sem\ha K$, which satisfies $g_K(\infty)=\infty$, $g_K'(\infty)=1$, and $g_K\circ I_\R=I_\R\circ g_K$. Moreover, $g_K(\ha\C\sem \ha K)=\ha\C\sem S_K$ for some compact $S_K\subset \R$, which is called the support of $K$. So $f_K$ extends to a conformal map from $\ha\C\sem S_K$ onto $\ha\C\sem \ha K$. 

\begin{Lemma}
  $f_K$ can not be extended analytically beyond $\ha\C\sem S_K$. \label{extend}
\end{Lemma}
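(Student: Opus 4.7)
The plan is to argue by contradiction: suppose $f_K$ extends analytically to an open neighborhood $U$ of some $x_0 \in S_K$. Because the original $f_K$ satisfies $f_K \circ I_\R = I_\R \circ f_K$ on $\ha\C \sem S_K$ and $x_0 \in \R$, conjugating the extension by $I_\R$ produces a second extension on $I_\R(U)$ which agrees with the first on the overlap (by the identity theorem applied on $U \cap I_\R(U)$, a set that meets the original domain). Gluing them, we may assume $U$ is $I_\R$-symmetric and the extension commutes with $I_\R$. In particular $y_0 := f_K(x_0) \in \R$ and the Taylor coefficients of $f_K$ at $x_0$ are real. Moreover $y_0 \in B_K$: if instead $y_0 \in \ha\C \sem \ha K$, then $g_K$ would be analytic at $y_0$, and passing to the limit in $g_K \circ f_K = \id$ near $x_0$ would force $g_K(y_0) = x_0 \in S_K$, contradicting $g_K(\ha\C \sem \ha K) = \ha\C \sem S_K$.

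I would then split according to whether $f_K'(x_0)$ vanishes. In the case $f_K'(x_0) \ne 0$, $f_K$ is locally biholomorphic at $x_0$; since the original $f_K$ sends $\HH$ onto $\HH \sem K$, the extension maps a small half-disk around $x_0$ in $\HH$ onto a half-neighborhood $V$ of $y_0$ contained in $\HH \sem K$. But $y_0 \in B_K = \lin K \cap \R$ produces a sequence $z_n \in K \cap \HH$ with $z_n \to y_0$; these $z_n$ eventually lie in $V$, contradicting $V \cap K = \emptyset$. In the case $f_K'(x_0) = 0$, write the Taylor expansion $f_K(z) = y_0 + a(z-x_0)^k + O((z-x_0)^{k+1})$ with $k \ge 2$, $a \ne 0$, and (by the symmetry above) real coefficients. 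For $z = x_0 + \epsilon e^{i\theta}$ with $\theta \in (0,\pi)$ and $\epsilon > 0$ small,
\begin{equation*}
\Imm f_K(z) = a \epsilon^k \sin(k\theta) + O(\epsilon^{k+1}),
\end{equation*}
which must be strictly positive since $f_K(U \cap \HH) \subset f_K(\HH) \subset \HH$. But $\sin(k\theta)$ has interior zeros $j\pi/k$ for $1 \le j \le k-1$ in $(0,\pi)$ and changes sign across each, so $a\sin(k\theta)$ cannot remain positive throughout $(0,\pi)$, a contradiction.

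The main delicate step is the preparatory one: the symmetrization of the extension and the identification $y_0 \in B_K$. Once these are in place, the two cases conclude immediately --- the nondegenerate case via the geometric observation that a boundary point of $\lin K$ cannot admit a $K$-free half-neighborhood in $\HH$, and the degenerate case via a sign analysis of the leading Taylor term that leverages only the real-coefficient structure together with $f_K(\HH) \subset \HH$. I do not expect any technical obstacle beyond care with the $I_\R$-symmetrization.
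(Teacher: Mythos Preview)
Your proof is correct, but it takes a longer route than the paper's. The paper argues in one stroke: by the open mapping theorem (together with the $I_\R$-symmetry you correctly make explicit), the image $f_K(U)$ is an open neighborhood of $f_K(x_0)\in\R$, and symmetry gives $f_K(U)\cap\HH=f_K(U\cap\HH)\subset f_K(\HH)=\HH\setminus K$; hence $\HH\setminus K$ contains an $\HH$-neighborhood of $f_K(x_0)$, forcing $f_K(x_0)\in\R\setminus B_K$. Since $f_K:\ha\C\setminus S_K\to\ha\C\setminus\ha K$ is a bijection, there is then some $y_0\in\R\setminus S_K$ with $f_K(y_0)=f_K(x_0)$, and injectivity of $f_K$ on $\HH$ (applied to nearby $\HH$-points) yields $x_0=y_0\notin S_K$, the desired contradiction.

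Your argument reaches the same contradiction from the opposite side: you first show $f_K(x_0)\in B_K$ via $g_K$, and then your Case~1 is exactly the half-neighborhood observation the paper uses to show $f_K(x_0)\notin B_K$. What your approach buys is explicitness: the symmetrization step you spell out is only implicit in the paper. What the paper's approach buys is brevity: the case split on $f_K'(x_0)$ is unnecessary, since the open mapping theorem combined with symmetry already shows $f_K(U\cap\HH)$ is a full $\HH$-neighborhood of $f_K(x_0)$ regardless of the order of vanishing---your Case~2 in effect reproves a special case of this. You could therefore drop Case~2 entirely and run Case~1 verbatim (with ``locally biholomorphic'' replaced by ``open map'').
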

\begin{proof}
  Suppose $f_K$ can be extended analytically near $x_0\in\R$, then the image of $f_K$ contains a neighborhood of $f_K(x_0)\in\R$. So $f_K(\HH)=\HH\sem K$ contains a neighborhood of $f_K(x_0)$ in $\HH$. This then implies that $f_K(x_0)\in\R\sem B_K$. Thus, there is $y_0\in\R\sem S_K$ such that $f_K(y_0)=f_K(x_0)$. Since $f_K$ is conformal in $\HH$, we must have $x_0=y_0\in\R\sem S_K$.
\end{proof}

\begin{Lemma}
  If $K_1=K_2/K_0\prec K_2$, then $S_{K_1}\subset S_{K_2}$, $f_{K_2}=f_{K_0}\circ f_{K_1}$ in $\ha\C\sem S_{K_2}$, and $g_{K_2}=g_{K_1}\circ g_{K_0}$ in $\ha\C\sem \ha K_2$. \label{SKprec}
\end{Lemma}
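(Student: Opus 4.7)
The plan is first to observe that $K_0\subset K_2$ together with $K_1=K_2/K_0$ gives, via the listed identity $K_0\cdot (K_2/K_0)=K_2$, the decomposition $K_2=K_0\cdot K_1$. The semigroup property (fact 4) then immediately yields $f_{K_2}=f_{K_0}\circ f_{K_1}$ on $\HH$ and $g_{K_2}=g_{K_1}\circ g_{K_0}$ on $\HH\sem K_2$. The real work is to (i) establish $S_{K_1}\subset S_{K_2}$ and (ii) extend both identities to the full domains stated.

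For (i), the pivot is Lemma \ref{extend}. Since $K_0\subset K_2$ implies $\ha K_0\subset \ha K_2$, the map $g_{K_0}$ is conformal on $\ha\C\sem \ha K_2$, and $f_{K_2}$ sends $\ha\C\sem S_{K_2}$ conformally onto $\ha\C\sem \ha K_2$. Therefore $F:=g_{K_0}\circ f_{K_2}$ is analytic on $\ha\C\sem S_{K_2}$, and on $\HH$ the half-plane identity gives $F=g_{K_0}\circ f_{K_0}\circ f_{K_1}=f_{K_1}$. Because $S_{K_1}\cup S_{K_2}\subset\R$ is compact, its complement in $\ha\C$ is connected, so the identity theorem forces $F=f_{K_1}$ on $\ha\C\sem(S_{K_1}\cup S_{K_2})$; hence $F$ and $f_{K_1}$ glue to an analytic function extending $f_{K_1}$ to $\ha\C\sem(S_{K_1}\cap S_{K_2})$. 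Lemma \ref{extend} then forces this domain to be contained in $\ha\C\sem S_{K_1}$, which gives $S_{K_1}\subset S_{K_2}$.

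The two conformal identities follow by the same pattern. The extension just constructed establishes $f_{K_1}=g_{K_0}\circ f_{K_2}$ on $\ha\C\sem S_{K_2}$, whence $f_{K_1}(\ha\C\sem S_{K_2})=g_{K_0}(\ha\C\sem \ha K_2)\subset g_{K_0}(\ha\C\sem \ha K_0)=\ha\C\sem S_{K_0}$, so $f_{K_0}\circ f_{K_1}$ is analytic on the connected domain $\ha\C\sem S_{K_2}$ and matches $f_{K_2}$ on $\HH$; the identity theorem gives equality throughout. For $g$, composing $f_{K_1}=g_{K_0}\circ f_{K_2}$ with $g_{K_2}$ from the right yields $g_{K_0}=f_{K_1}\circ g_{K_2}$ on $\ha\C\sem \ha K_2$, hence $g_{K_0}(\ha\C\sem \ha K_2)=f_{K_1}(\ha\C\sem S_{K_2})\subset \ha\C\sem \ha K_1$, so $g_{K_1}\circ g_{K_0}$ is analytic on $\ha\C\sem \ha K_2$; its agreement with $g_{K_2}$ on $\HH\sem K_2$ plus the identity theorem close the proof. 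The main obstacle is step (i); once it is in place, the conformal identities follow by routine analytic continuation, with Lemma \ref{extend} playing the role of a maximality principle throughout.
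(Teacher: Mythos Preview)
Your proof is correct and follows essentially the same approach as the paper: both define $F=g_{K_0}\circ f_{K_2}$ on $\ha\C\sem S_{K_2}$, observe it agrees with $f_{K_1}$ on $\HH$, and invoke Lemma \ref{extend} to conclude $S_{K_1}\subset S_{K_2}$; the remaining identities then follow by composition and analytic continuation. Your treatment is slightly more explicit than the paper's in two places --- you spell out the gluing that underlies the appeal to Lemma \ref{extend}, and you derive the $g$ identity directly rather than just saying ``take inverses'' --- but these are refinements of presentation, not of strategy.
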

\begin{proof} Since $K_2=K_0\cdot K_1$, we have $f_{K_2}=f_{K_0}\circ f_{K_1}$ in $\HH$, which implies that $g_{K_0}\circ f_{K_2}=f_{K_1}$ in $\HH$. Since $f_{K_2}$ maps $\ha\C\sem S_{K_2}$ conformally onto $\ha\C\sem \ha K_2\subset \ha\C\sem \ha K_0$, and $g_{K_0}$ is analytic in $\ha\C\sem \ha K_2$, we see that $g_{K_0}\circ f_{K_2}$ is analytic in $\ha\C\sem S_{K_2}$. Since  $g_{K_0}\circ f_{K_2}=f_{K_1}$ in $\HH$, from Lemma \ref{extend} we have $S_{K_1}\subset S_{K_2}$, and $g_{K_0}\circ f_{K_2}=f_{K_1}$ in $\ha\C\sem S_{K_2}$. Composing $f_{K_0}$ to the left of both sides, we get $f_{K_2}=f_{K_0}\circ f_{K_1}$ in $\ha\C\sem S_{K_2}$. Taking inverse, we obtain the equality for $g_K$'s.
\end{proof}

\begin{Definition}
  $S\subset \ha\C$ is called $\R$-symmetric if $I_\R(S)=S$. An $\R$-symmetric map $W$ is a function defined in an $\R$-symmetric domain $\Omega$, which commutes with $I_\R$, and maps $\Omega\cap\HH$ into $\HH$. \label{R-symmetric}
\end{Definition}

\no{\bf Remarks.}
\begin{enumerate}
  \item For any $\HH$-hull $K$,  $g_K$ and $f_K$ are $\R$-symmetric conformal maps.
  \item Let $W$ be an $R$-symmetric conformal map defined in $\Omega$. If an $\HH$-hull $K$ satisfies $\ha K\subset \Omega$ and $\infty\not\in W(\ha K)$, then $W(K)$ is also an $\HH$-hull and $\ha{W(K)}=W(\ha K)$.
\end{enumerate}

\begin{Definition}
  Let $\Omega$ be an $\R$-symmetric domain and $K$ be an $\HH$-hull. If $\ha K\subset \Omega$, we write $\Omega_K$ or $(\Omega)_K$ for $S_K\cup g_K(\Omega\sem \ha K)$, and call it the collapse of $\Omega$ via $K$.  If $S_K\subset\Omega$, we write $\Omega^K$ or $(\Omega^K)$ for $\ha K\cup f_K(\Omega\sem S_K)$, and call it the lift of $\Omega$ via $K$. \label{OmegaK}
\end{Definition}

\no{\bf Remarks.}
\begin{enumerate}
  \item In the definition, $\Omega_K$ is an $\R$-symmetric domain containing $S_K$; $\Omega^K$ is an $\R$-symmetric domain containing $\ha K$.
  \item $(\Omega_K)^K=\Omega$ and $(\Omega^K)_K=\Omega$ if the lefthand sides are well defined.
  \item  $\Omega_{K_1\cdot K_2}=(\Omega_{K_1})_{K_2 }$ and  $\Omega^{K_1\cdot K_2}=(\Omega^{K_2 })^{K_1}$ if either sides are well defined.
\end{enumerate}

\begin{Definition}
  Let $W$ be an $\R$-symmetric conformal map with domain $\Omega$. Let $K$ be an $\HH$-hull such that $\ha K\subset\Omega$ and $\infty\not\in W(\ha K)$. We write $W_K$ or $(W)_K$ for the conformal extension of $g_{W(K)}\circ W\circ f_K$ to $\Omega_K$, and call it the collapse of $W$ via $K$. \label{WK}
\end{Definition}

\no{\bf Remarks.}
\begin{enumerate}
  \item Since $g_{W(K)}\circ W\circ f_K:\Omega_K\sem S_K\conf W(\Omega)\sem S_{W(K)}$,
  the existence of $W_K$ follows from the Schwarz reflection principle. $W_K$ is an $\R$-symmetric conformal map, and $W_K(S_K)=S_{W(K)}$.
  \item The $g_K$ and $f_K$ defined at the beginning of this section should not be understood as the collapse of $g$ and $f$ via $K$.
  \item $W_{K_1\cdot K_2}=(W_{K_1})_{K_2 }$ if either side is well defined.
  \item $V_{W(K)}\circ W_K=(V\circ W)_K$ if either side is well defined. In particular, $(W^{-1})_{W(K)}=(W_K)^{-1}$.
\end{enumerate}

Let $B_K^*$ and $S_K^*$ be the convex hulls of $B_K$ and $S_K$, respectively. Let $\ha K^*=\ha K\cup B_K^*$. Then $g_K:\ha\C\sem \ha K^*\conf \ha\C\sem S_K^*$. If $K\ne\emptyset$, then $S_K^*$ is a bounded closed interval, $\ha K^*$ is a nondegenerate interior hull, and $\psi_{\ha K^*}=f_K\circ \psi_{S_K^*}$.
If $S_K^*\subset \Omega$, then $\Omega^K=\ha K^*\cup f_K(\Omega\sem S_K^*)$.
The lemma below is a part of  Lemma 5.3 in \cite{LERW}, where $S_K^*$ was denoted by $[c_K,d_K]$.

\begin{Lemma}
   If $K_1\subset K_2$, then $S_{K_1}^*\subset S_{K_2}^*$. \label{lemma-LERW}
\end{Lemma}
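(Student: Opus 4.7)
The plan is to prove the stronger statement $S_{K_1} \subset [c_{K_2}, d_{K_2}]$; taking convex hulls then gives $S_{K_1}^* \subset S_{K_2}^*$. By Lemma \ref{extend}, it is enough to extend $f_{K_1}$ analytically to a connected domain $\Omega \supset \HH$ whose complement meets $\R$ only inside $[c_{K_2}, d_{K_2}]$. I will build this extension by factoring $f_{K_1}$ through $f_{K_2}$.

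Set $K := K_2/K_1$, so that $f_{K_2} = f_{K_1} \circ f_K$ on $\HH$ (extended to $\ha\C \sem S_{K_2}$ by Lemma \ref{SKprec}), and consequently $f_{K_1} = f_{K_2} \circ g_K$ on $\HH \sem K$. Lemma \ref{SKprec} also gives $S_K \subset S_{K_2}$, hence $[c_K, d_K] \subset [c_{K_2}, d_{K_2}]$. The composition $f_{K_2} \circ g_K$ is analytic wherever $g_K$ is defined (i.e.\ on $\ha\C \sem \ha K^*$) and $g_K(z) \notin [c_{K_2}, d_{K_2}]$. Since $g_K: \ha\C \sem \ha K^* \to \ha\C \sem [c_K, d_K]$ is a bijection and $f_K$ is real-analytic and monotone on $(-\infty, c_K)$ and $(d_K, \infty)$ with boundary values $f_K(c_K) = a_K$, $f_K(d_K) = b_K$, the excluded set pulls back to $[f_K(c_{K_2}), a_K) \cup (b_K, f_K(d_{K_2})] \subset \R$. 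Analytic continuation therefore extends $f_{K_1}$ to
\[
\Omega := \ha\C \sem \bigl( \ha K^* \cup [f_K(c_{K_2}), a_K) \cup (b_K, f_K(d_{K_2})] \bigr),
\]
which is connected and contains $\HH \sem K$. Lemma \ref{extend} then forces $S_{K_1} \subset \R \sem \Omega = [f_K(c_{K_2}), f_K(d_{K_2})]$, using $\ha K^* \cap \R = [a_K, b_K]$.

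The main remaining step, which I expect to be the principal obstacle, is to show $f_K(c_{K_2}) \ge c_{K_2}$ and $f_K(d_{K_2}) \le d_{K_2}$; since $c_{K_2} \le c_K$ and $d_K \le d_{K_2}$, these reduce to $f_K(y) \ge y$ for $y < c_K$ and $f_K(y) \le y$ for $y > d_K$, equivalently $g_K(x) \le x$ for $x < a_K$ and $g_K(x) \ge x$ for $x > b_K$. I will prove the latter by a maximum principle argument on $\phi(z) := g_K(z) - z$: this function is analytic on $\HH \sem K$, vanishes at $\infty$, is real on $\R \sem B_K$ (so $\Imm \phi = 0$ there), and on $\pa K \cap \HH$ (where $g_K$ takes real boundary values) one has $\Imm \phi = -\Imm z \le 0$. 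The maximum principle gives $\Imm \phi \le 0$ throughout $\HH \sem K$. Letting $z = x + i\eps$ with $x > b_K$ and $\eps \downarrow 0$ yields $\phi'(x) \le 0$, so $\phi$ is nonincreasing on $(b_K, \infty)$; combined with $\phi(b_K^+) = d_K - b_K \ge 0$ and $\phi(\infty) = 0$, this forces $\phi \ge 0$ on $(b_K, \infty)$. The inequality on the left is symmetric, completing the proof.
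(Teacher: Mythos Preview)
Your argument is correct, and since the paper does not prove this lemma (it is quoted from \cite{LERW}), your proof is a genuine contribution rather than a paraphrase.

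Two small remarks on presentation. First, the clause ``combined with $\phi(b_K^+)=d_K-b_K\ge 0$'' is both unjustified at that point and unnecessary: monotone nonincreasing together with $\phi(\infty)=0$ already forces $\phi\ge 0$ on $(b_K,\infty)$, and in fact $d_K\ge b_K$ then follows as a corollary. Second, your maximum-principle step for $\Imm\phi\le 0$ invokes real boundary values of $g_K$ on $\partial K$, which requires continuous extension and is not available for a general $\HH$-hull. The conclusion $\Imm g_K(z)\le\Imm z$ is nonetheless true without any regularity: from the representation (\ref{f-z}) one reads off $\Imm f_K(z)=\Imm z\cdot\bigl(1+\int |z-x|^{-2}\,d\mu_K(x)\bigr)\ge\Imm z$, and substituting $z=g_K(w)$ gives exactly what you need. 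With that replacement the proof is fully rigorous.
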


\begin{Theorem}
Let $W$ be an $\R$-symmetric conformal map with domain $\Omega$. Let $K$ be an $\HH$-hull such that $S_K\subset\Omega$ and $\infty\not\in W(S_K)$. Then there is a unique $\R$-symmetric conformal map $V$ defined in $\Omega^K$ such that $V_K=W$. \label{W^K}
\end{Theorem}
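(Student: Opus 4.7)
\noindent The plan is to mirror the proof of Theorem \ref{extend1}, using the nondegenerate interior hull $H := \ha K^*$ as the bridge and exploiting the identity $\psi_{\ha K^*}=f_K\circ\psi_{S_K^*}$ stated just before Lemma \ref{lemma-LERW}.

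\noindent For uniqueness, suppose $V$ and $\til V$ both satisfy the conclusion. Then $V_K=\til V_K=W$ on $\Omega\sem S_K$ gives
\[V\circ f_K=f_{V(K)}\circ W,\qquad \til V\circ f_K=f_{\til V(K)}\circ W,\]
so $\til V\circ V^{-1}$ and $f_{\til V(K)}\circ g_{V(K)}$ agree on $V(f_K(\Omega\sem S_K))$. The latter extends conformally to $\ha\C\sem\ha{V(K)}$ with the hydrodynamic normalization $z+O(1/z)$ at $\infty$, while the former is conformal on $V(\Omega^K)\supset\ha{V(K)}$; gluing them produces a conformal self-map of $\ha\C$ of the form $z\mapsto z+b$. $\R$-symmetry forces $b\in\R$, and comparing the two defining equations for $V_K=\til V_K$ forces $b=0$, so $V=\til V$.

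\noindent For existence, I would first handle the model case $S_K^*\subset\Omega$, where $\psi_{S_K^*}^{-1}(\Omega\sem S_K^*)\cup\TT$ is a genuine $\R$-symmetric neighborhood of $\TT$. Since $W$ is $\R$-symmetric and restricts to an orientation-preserving homeomorphism of the interval $S_K^*$ onto another interval $I:=W(S_K^*)\subset\R$, the conjugate $W':=\vphi_I\circ W\circ\psi_{S_K^*}$, initially defined on $\psi_{S_K^*}^{-1}(\Omega\sem S_K^*)\subset\D^*$, has continuous boundary values on $\TT$ landing in $\TT$ and therefore extends, via Schwarz reflection across $\TT$, to a conformal map on a neighborhood of $\TT$ sending $\TT$ onto $\TT$ with orientation preserved. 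Theorem \ref{extend1} applied to $H=\ha K^*$ and $W'$ delivers a conformal $V'$ on $(\Omega')^H$ satisfying $V'\circ\psi_H=\psi_{V'(H)}\circ W'$. The interior hull $V'(H)$ inherits $\R$-symmetry (using the $aV'+b$ freedom in Theorem \ref{extend1} to enforce it), so there is a unique $\HH$-hull $L$ with $\ha L^*=V'(H)$; using $\psi_H=f_K\circ\psi_{S_K^*}$ and $\psi_{V'(H)}=f_L\circ\psi_I$, the Theorem \ref{extend1} identity collapses to $V'\circ f_K=f_L\circ W$ on $\Omega\sem S_K$, which is precisely $V_K=W$ with $V:=V'$ and $V(K):=L$.

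\noindent The main obstacle lies in the general case $S_K\subset\Omega$ with $S_K^*\not\subset\Omega$: then $\psi_{S_K^*}^{-1}(\Omega)$ is not a full neighborhood of $\TT$ and Theorem \ref{extend1} does not apply directly. My plan is to decompose $K$ in the $\HH$-hull semigroup as a finite product $K=K_1\cdot K_2\cdots K_N$ whose factors each have support confined to a small interval lying inside $\Omega$ (or inside the relevant lifted domain at each stage, so that the model hypothesis $S_{K_j}^*\subset\Omega_j$ holds), apply the model case iteratively, and splice via the composition identity $V_{K'\cdot K''}=(V_{K'})_{K''}$. Verifying the existence of such a decomposition for arbitrary compact $S_K$ and open $\Omega\supset S_K$, and checking that the successive lifts glue to a single conformal $V$ on all of $\Omega^K$, is the most delicate part of the argument.
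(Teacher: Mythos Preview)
Your approach is essentially the paper's: reduce to the model case $S_K^*\subset\Omega$ via Theorem~\ref{extend1} applied to $H=\ha K^*$, handle the general case by writing $K$ as a finite dot-product and iterating, and prove uniqueness by the gluing-plus-Liouville argument. For the decomposition you flag as delicate, the paper covers $S_K$ by finitely many disjoint $\R$-symmetric Jordan disks $D_{J_1},\dots,D_{J_n}\subset\Omega$, sets $H_j=K\cap\bigcup_{k\ge j}D_{f_K(J_k)}$ and $K_j=H_j/H_{j+1}$, and checks $S_{K_j}^*\subset D_{g_{H_j}(f_K(J_j))}\subset\Omega_{j-1}$.

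One slip in your model case: you write $\psi_{V'(H)}=f_L\circ\psi_I$, but the correct identity is $\psi_{V'(H)}=f_L\circ\psi_{S_L^*}$, and there is no a priori reason that $S_L^*=I=W(S_K^*)$. The paper repairs this by noting that $h:=\psi_{S_L^*}\circ\psi_I^{-1}$ is affine (both are bounded closed intervals) and replacing $V'$ by $h^{-1}\circ V'$; this is a second affine adjustment, separate from the $\R$-symmetrization you already used. Also, in your uniqueness argument the hydrodynamic normalization $z+O(1/z)$ you cite already forces the glued entire map to be the identity; the detour through $z\mapsto z+b$ is superfluous.
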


\begin{figure}[htp]
\centering
\includegraphics[scale=0.60]{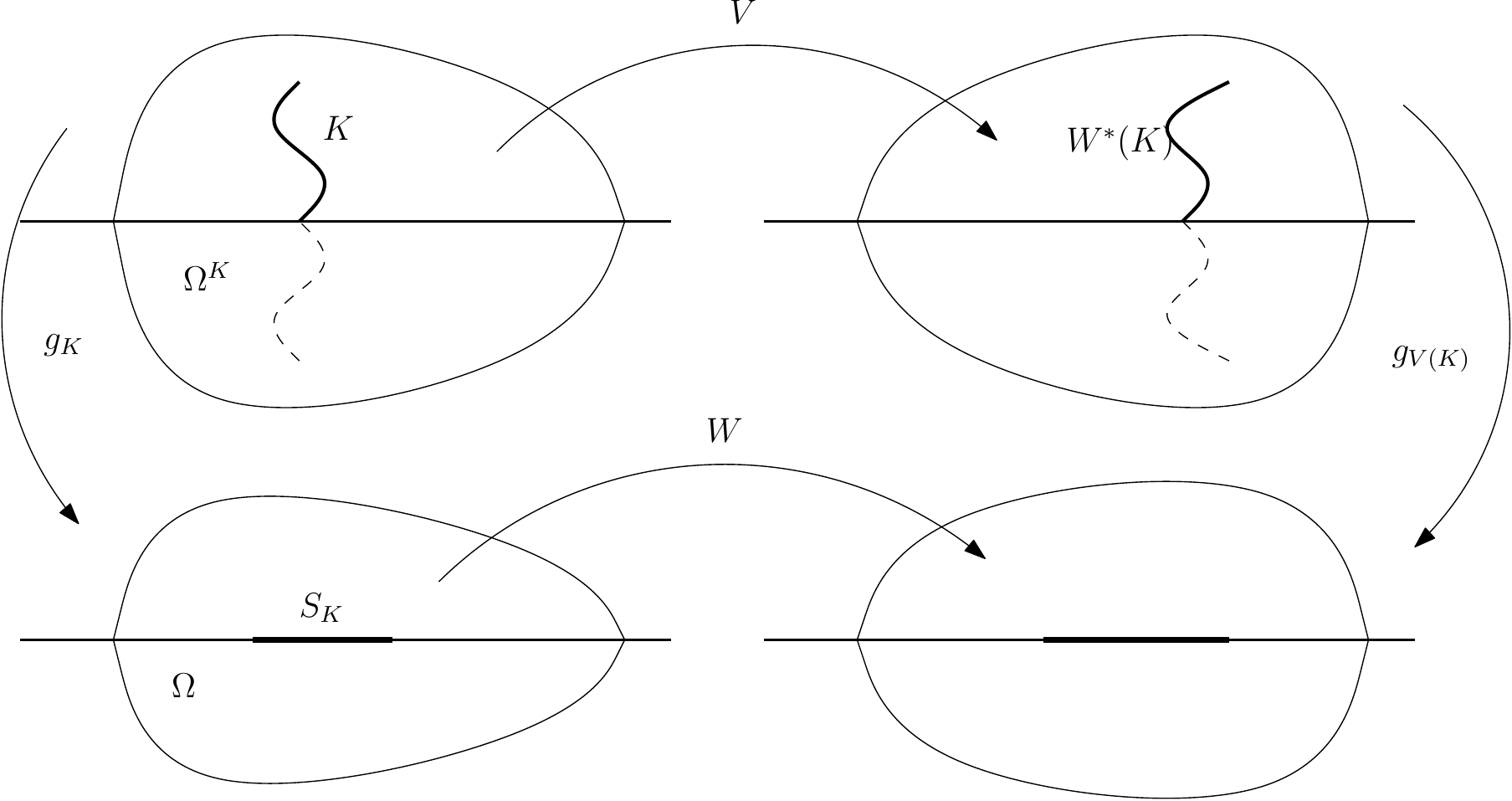}
\caption{The situation of Theorem \ref{W^K}. Given $K$ and $W$, there is a unique $V$, also denoted $W^K$,
which is analytic across $K$ and its reflection $I_{\R}(K)$,
see Definition
\ref{W^K-def}.}\label{fig:W*^K-fig}
\end{figure}

 \begin{proof} We first consider the existence. If $K=\emptyset$, since $f_\emptyset=\id$ and $\Omega^\emptyset=\Omega$, $V=W$ is what we need. Now suppose $K\ne \emptyset$ and $S_K^*\subset\Omega$. Note that $S_K^*$ is a bounded closed interval, and so is $W(S_K^*)$. Let $\Omega_{\TT}=\psi_{S_K^*}^{-1}(\Omega\sem S_K^*)$. Define a conformal map $W_{\TT}$ in  $\Omega_{\TT}$ by $W_{\TT}=\psi_{W(S_K^*)}^{-1}\circ W\circ \psi_{S_K^*}$. Then $W_\TT(z)\to \TT$ as $\Omega_\TT\ni z\to \TT$. Thus, $W_\TT$ extends conformally across $\TT$, maps $\TT$ onto $\TT$, and preserves the orientation of $\TT$. Apply Theorem \ref{extend1} to $W_\TT$ and $\ha K^*$. We find a conformal map $\ha V$ defined in
$$\ha K^*\cup \psi_{\ha K^*}(\Omega_\TT)=\ha K^*\cup f_K(\Omega\sem S_K^*) =\Omega^K$$
such that $\ha V\circ \psi_{\ha K^*}=\psi_{\ha V(\ha K^*)}\circ W_\TT$ in $\Omega_\TT$. Let $\til V=I_\R\circ \ha V\circ I_\R$. Then $\til V(\ha K^*)=I_\R\circ \ha V(\ha K^*)$. So $\psi_{\til V(\ha K^*)}=I_\R\circ \psi_{\ha V(\ha K^*)}\circ I_\R$. Since $I_\R$ commutes with $\psi_{\ha K^*}$ and $W_\TT$, we see that $\til V$ also satisfies the properties of $\ha V$. So $\til V=a\ha V+b$ for some $a>0$ and $b\in\C$. Thus, $I_\R\circ \ha V\circ I_\R=a\ha V+b$. Considering the values of $\ha V$ on $\Omega^K\cap\R$, we find that $a=1$ and $\Ree b=0$. Note that $\ha V-\frac b2$ satisfies the property of $\ha V$, and commutes with $I_\R$.
By replacing $\ha V$ with $\ha V-\frac b2$, we may assume that $\ha V$ is an $\R$-symmetric conformal map.

Since $\ha V\circ \psi_{\ha K^*}=\psi_{\ha V(\ha K^*)}\circ W_\TT$ in $\Omega_\TT$, from $\psi_{\ha K^*}=f_K\circ \psi_{S_K^*}$, $\psi_{\ha V(\ha K^*)}=f_{\ha V(K)}\circ \psi_{S_{\ha V(K)}^*}$, and the definitions of $W_\TT$ and $\Omega_\TT$, we have
\BGE \ha V\circ f_K=f_{\ha V(K)}\circ \psi_{S_{\ha V(K)}^*}\circ \psi_{W(S_K^*)}^{-1}\circ W\label{VK}\EDE
on $\Omega\sem S_K^*$. Let $h=\psi_{S_{\ha V(K)}^*}\circ \psi_{W(S_K^*)}^{-1}$. Since $S_{\ha V(K)}^*$ and $W(S_K^*)$ are both bounded closed intervals, we have $h(z)=az+b$ for some $a>0$ and $b\in\R$. Let $V=h^{-1}\circ \ha V$. Then $V$ is also  an $\R$-symmetric conformal map defined on $\Omega^K$, and $f_{V(K)}=h^{-1}\circ f_{\ha V(K)}\circ h$. From (\ref{VK}) we have
$$f_{V(K)}\circ W=h^{-1}\circ f_{\ha V(K)}\circ h\circ W=h^{-1}\circ\ha V\circ f_K=V\circ f_K.$$
This finishes the existence part in the case that $K\ne \emptyset$ and $S_K^*\subset\Omega$.

Now we still assume that $K\ne\emptyset$ but do not assume that $S_K^*\subset\Omega$. Let $\Omega_0=\Omega$ and $W_0=W$. We will construct $\HH$-hulls $K_1,\dots,K_n$ and $\R$-symmetric domains $\Omega_1,\dots,\Omega_n$ such that $K_n\cdot K_{n-1}\cdots K_1=K$, $\Omega_j=\Omega_{j-1}^{K_j}$, and $S_{K_j}^*\subset \Omega_{j-1}$, $1\le j\le n$. When they are constructed, using the above result, we can obtain $\R$-symmetric conformal maps $W_j$ defined on $\Omega_j$, $1\le j\le n$, such that $(W_j)_{K_j}=W_{j-1}$, $1\le j\le n$. Let $V=W_n$. Then $V$ is defined in $\Omega_n=\Omega_0^{K_n\cdots K_1}=\Omega^K$, and $V_K=(W_n)_{K_n\cdots K_1}=W_0=W$. So $V$ is what we need.

It remains to construct $K_j$ and $\Omega_j$ with the desired properties. Since $\Omega\cap\R$ is a disjoint union of  open intervals, and $S_K$ is a compact subset of $\Omega\cap \R$, we may find finitely many components of $\Omega\cap\R$ which cover $S_K$. There exist mutually disjoint $\R$-symmetric Jordan curves $J_1,\dots,J_n$ in $\Omega$ such that their interiors $D_{J_1},\dots,D_{J_n}$ are mutually disjoint and contained in $\Omega$, and $S_K\subset \bigcup_{k=1}^n D_{J_k}$. Then $J^K_j:=f_K(J_j)$, $1\le j\le n$ are $\R$-symmetric Jordan curves, which together with their interiors are mutually disjoint, and $\ha K\subset\bigcup_{k=1}^n D_{f_K(J_k)}$. Let $H_j=K\cap \bigcup_{k=j}^n D_{J^K_j}$, $1\le j\le n$. Then each $H_j$ is an $\HH$-hull, and $K=H_1\supset H_2\supset\cdots\supset H_n$. Let $K_j=H_j/H_{j+1}$, $1\le j\le n-1$, and $K_n=H_n$. Then we have $K_n\cdots K_1=H_1=K$.

Construct $\Omega_j$, $1\le j\le n$, such that $\Omega_j=\Omega_{j-1}^{K_j}$, $1\le j\le n$. Then
$$\Omega_{j-1}=(\Omega_0)^{K_{j-1}\cdots K_1}=(\Omega^{K_n\cdots K_1})_{K_n\cdots K_j}=(\Omega^K)_{H_j},\quad 1\le j\le n.$$
It suffices to show that $S_{K_j}^*\subset \Omega_{j-1}$. We have
$$K_j=H_j/H_{j+1}=g_{H_{j+1}}(H_j\sem H_{j+1})=g_{H_{j+1}}(K\cap D_{J^K_j}).$$
Thus, $\ha K_j\subset D_{g_{H_{j+1}}(J^K_j)}$, which implies that $S_{K_j}\subset D_{g_{H_{j}}(J^K_j)}$. Since $\R\cap  D_{g_{H_{j}}(J^K_j)}$ is an interval, we have $S_{K_j}^*\subset D_{g_{H_{j}}(J^K_j)}$. Since $\overline{D_{J^K_j}}\subset \Omega^K$, and $J^K_j$ has positive distance from $H_j$, we have $D_{g_{H_{j}}(J^K_j)}\subset (\Omega^K)_{H_j}=\Omega_{j-1}$. So $K_j$ and $\Omega_j$ satisfy the properties we need. This finishes the proof of the existence part.

Now we prove the uniqueness. Suppose $\til V$ is another $\R$-symmetric conformal map defined on $\Omega^K$ such that $\til V_K=W$. Then
$$g_{V(K)}\circ V=W\circ g_K=g_{\til V(K)}\circ \til V$$
on $\Omega\sem \ha K$. Thus, $\til V\circ V^{-1}=f_{\til V(K)}\circ g_{V(K)}$ on $V(\Omega\sem \ha K)=V(\Omega)\sem V(\ha K)$. We know that $\til V\circ V^{-1}$ is a conformal map defined on $V(\Omega)$, while $f_{\til V(K)}\circ g_{V(K)}$ is a conformal map defined on $\ha\C\sem \ha{V(K)}=\ha C\sem V(\ha K)$. Since $V(\Omega)$ and $\ha\C\sem V(\ha K)$ cover $\ha C$, we may define an analytic function $h$ on $\C$ such that $h=\til V\circ V^{-1}$ on $V(\Omega)$ and $h=f_{\til V(K)}\circ g_{V(K)}$ on $\ha\C\sem V(\ha K)$.  From the properties of $f_{\til V(K)}$ and $g_{V(K)}$, we see that $h(z)-z\to 0$ as $z\to\infty$. So $h=\id$, which implies that $\til V=V$. So the uniqueness is proved. \end{proof}

\begin{Definition}
  We use $W^K$ to denote the unique $V$ in Theorem \ref{W^K}, and call it the lift of $W$ via $K$. Let $W^*$ be the map defined by
   $W^*(K)=W^K(K)$. \label{W^K-def}
\end{Definition}

\no{\bf Remarks.}
\begin{enumerate}
    \item $(W_K)^K=W$ and $(W^K)_K=W$.
    \item The range of $W^K$ is $W^K(\Omega^K)=(W(\Omega))^{W^*(K)}$.
  \item   $W^{K_1\cdot K_2}=(W^{K_2 })^{K_1}$, $V^{W^K(K)}\circ W^K=(V\circ W)^K$, and $(W^K)^{-1}=(W^{-1})^{W(K)}$.
   \item The domain (resp.\ range) of $W^*$ is the set of $\HH$-hulls whose supports are contained in the domain (resp.\ range) of $W$;  and $S_{W^*(K)}=W(S_K)$.
   \item $V^*\circ W^*=(V\circ W)^*$; $(W^*)^{-1}=(W^{-1})^*$.
\end{enumerate}

\begin{Lemma}
  Suppose $K_1\prec K_2$, $S_{K_2}$ lies in the domain of an $\R$-symmetric conformal map $W$, and $\infty\not\in W(S_{K_2})$. Then $W^*(K_1)\prec W^*(K_2)$, and
  \BGE W^*(K_2):W^*(K_1)=W^{K_2}(K_2:K_1).\label{K2:K1}\EDE \label{W*K/K}
\end{Lemma}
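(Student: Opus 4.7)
The plan is to reduce everything to the factorization $K_2 = K_0 \cdot K_1$, where $K_0 := K_2 : K_1$ is the unique sub-hull of $K_2$ with $K_2/K_0 = K_1$ (Definition~\ref{//}), and then use the functorial properties of the lift $W^K$ established in the remarks after Definition~\ref{W^K-def}. The main identity I will exploit is the third such remark, $W^{K_2} = W^{K_0\cdot K_1} = (W^{K_1})^{K_0}$, or equivalently (applying $(\cdot)_{K_0}$ and using the first remark there) $(W^{K_2})_{K_0} = W^{K_1}$. This packages all the needed compatibilities into one clean equation of $\R$-symmetric conformal maps.

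First I would check that $W^{K_2}(K_0)$ is a genuine $\HH$-hull. Since $K_0\subset K_2$ we have $\ha{K_0}\subset\ha{K_2}\subset\Omega^{K_2}$, which is the domain of $W^{K_2}$, and $W^{K_2}$ is conformal on that domain with $\ha{K_0}$ compact inside it; in particular $\infty\notin W^{K_2}(\ha{K_0})$. The remark after Definition~\ref{R-symmetric} then guarantees that $W^{K_2}(K_0)$ is an $\HH$-hull, and manifestly it is contained in $W^{K_2}(K_2)=W^*(K_2)$.

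Second I would compute $W^*(K_1)=W^{K_1}(K_1)$ using the identity $(W^{K_2})_{K_0}=W^{K_1}$ together with the explicit formula $g_{W^{K_2}(K_0)}\circ W^{K_2}\circ f_{K_0}$ for the collapse (Definition~\ref{WK}). From the product definition $K_2=K_0\cdot K_1=K_0\cup f_{K_0}(K_1)$ we read off $f_{K_0}(K_1)=K_2\sem K_0$, and since $W^{K_2}$ is injective on $\Omega^{K_2}$,
\[ W^{K_1}(K_1)=g_{W^{K_2}(K_0)}\big(W^{K_2}(K_2\sem K_0)\big)=g_{W^{K_2}(K_0)}\big(W^*(K_2)\sem W^{K_2}(K_0)\big). \]
By Definition~\ref{/} the right-hand side is exactly $W^*(K_2)/W^{K_2}(K_0)$. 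Thus $W^*(K_1)=W^*(K_2)/W^{K_2}(K_0)$, which gives $W^*(K_1)\prec W^*(K_2)$, and the uniqueness in Definition~\ref{//} then forces $W^*(K_2):W^*(K_1)=W^{K_2}(K_0)=W^{K_2}(K_2:K_1)$, i.e.\ (\ref{K2:K1}).

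I expect the only real obstacle is careful bookkeeping: confirming that the identity $(W^{K_2})_{K_0}=W^{K_1}$ applies at the points of $K_1$ (which sit in $\HH$, away from the support $S_{K_0}$ where the bare composition $g_{W^{K_2}(K_0)}\circ W^{K_2}\circ f_{K_0}$ is literally defined) rather than only on some smaller set, and that the set identity $f_{K_0}(K_1)=K_2\sem K_0$ is safe to combine with the injectivity of $W^{K_2}$. Once those points are verified the proof is the short chain of equalities above, with no further analytic input beyond the semigroup laws after Definition~\ref{/}.
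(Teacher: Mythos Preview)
Your proof is correct and is essentially the same as the paper's: both set $K_0=K_2:K_1$, use the identity $(W^{K_2})_{K_0}=W^{K_1}$ (equivalently $W^{K_0\cdot K_1}=(W^{K_1})^{K_0}$), and run the chain $W^{K_1}(K_1)=(W^{K_2})_{K_0}(K_2/K_0)=g_{W^{K_2}(K_0)}\circ W^{K_2}(K_2\sem K_0)=W^*(K_2)/W^{K_2}(K_0)$. The only cosmetic difference is that the paper reads this chain in the reverse direction and explicitly cites Lemma~\ref{SKprec} to justify that $S_{K_1}\subset S_{K_2}$ (so that $W^{K_1}$ exists), which you use implicitly.
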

\begin{proof} From Lemma \ref{SKprec}, $S_{K_1}\subset S_{K_2}$. So $W^{K_1}$ and $W^{K_2}$ exist. Let $K_0=K_2:K_1\subset K_2$. Then $W^{K_2}(K_0)\subset W^{K_2}(K_2)$ and
$$W^{K_2}(K_2)/W^{K_2}(K_0)=g_{W^{K_2}(K_0)}\circ W^{K_2}(K_2\sem K_0)$$
$$=g_{W^{K_2}(K_0)}\circ W^{K_2}\circ f_{K_0}(K_2/K_0)=(W^{K_2})_{K_0}(K_2/K_0)$$
$$=(W^{K_0\cdot K_1})_{K_0}(K_1)=W^{K_1}(K_1).$$ Thus, $W^{K_1}(K_1)\prec W^{K_2}(K_2)$ and $W^{K_2}(K_2):W^{K_1}(K_1)=W^{K_2}(K_0)$.
\end{proof}

\begin{figure}[htp]
\centering
\includegraphics[scale=0.80]{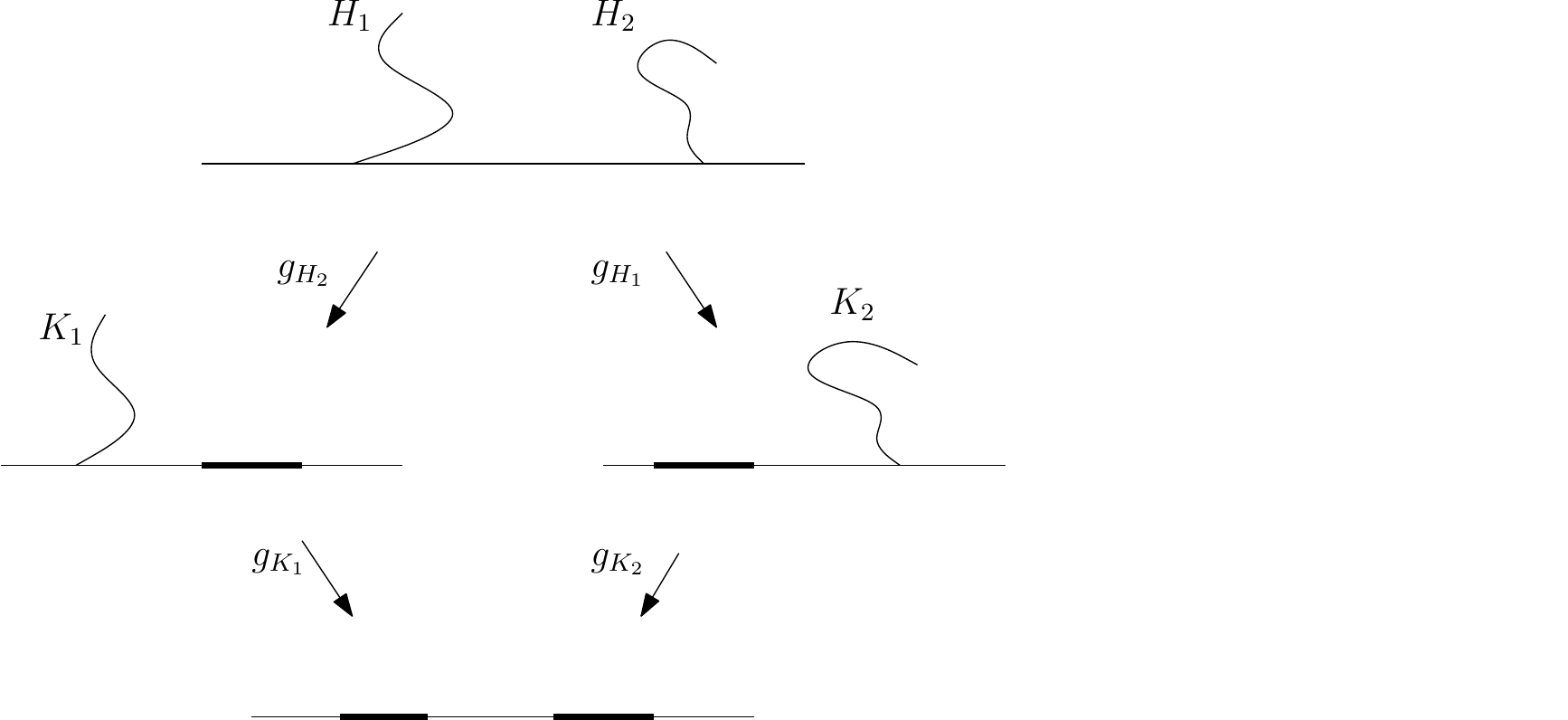}
\caption{The pair $(H_1,H_2)$ uniquely determines the pair $(K_1,K_2),$ and vice versa, see Theorem \ref{bijection-gf*}
and Definition \ref{g*} .}\label{fig2-15}
\end{figure}

\begin{Definition}
Let ${\cal P}^*$ denote the set of pair of $\HH$-hulls $(H_1,H_2)$ such that $\ha H_1\cap\ha H_2=\emptyset$. Let ${\cal P}_*$ denote the set of pair of $\HH$-hulls $(K_1,K_2)$ such that $S_{K_1}\cap S_{K_2}=\emptyset$. Define $g_*$ on  ${\cal P}^*$ by $g_*(H_1,H_2)=(g_{H_2}(H_1),g_{H_1}(H_2))$. Define $f^*$ on ${\cal P}_*$ by $f^*(K_1,K_2)=(f_{K_2}^*(K_1),f_{K_1}^*(K_2))$. \label{g*}
\end{Definition}

\no{\bf Remarks.}
\begin{enumerate}
  \item $g_*$ is well defined on ${\cal P}^*$ because for $j=1,2$, $\ha K_{3-j}$ is contained in the domain of $g_{K_j}$: $\ha\C\sem \ha K_j$. The value of $g_*$ is a pair of $\HH$-hulls.
 \item   $f^*$ is well defined on ${\cal P}_*$ because for $j=1,2$, $S_{K_{3-j}}$ is contained in the domain of $f_{K_j}$: $\ha\C\sem S_{K_j}$. The value of $f^*$ is a pair of $\HH$-hulls.
\end{enumerate}

\begin{Theorem}
 $g_*$ and $f^*$ are bijections between ${\cal P}^*$ and ${\cal P}_*$, and are inverse of each other.
 Moreover, if $(H_1,H_2)=f^*(K_1,K_2)$, then
 \begin{enumerate}
   \item [(i)] $H_1\cdot K_2=H_2\cdot K_1=H_1\cup H_2$;
   \item [(ii)] $f_{K_2}(S_{K_1})=S_{H_1}$ and $f_{K_1}(S_{K_2})=S_{H_2}$;
   \item [(iii)] $S_{H_1\cup H_2}=S_{K_1}\cup S_{K_2}$.
 \end{enumerate}
 \label{bijection-gf*}
\end{Theorem}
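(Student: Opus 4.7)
The strategy is to treat both directions of the bijection separately, establishing (i)--(iii) along with $g_*:{\cal P}^*\to{\cal P}_*$ first, and then verifying that $f^*$ is the inverse by an explicit construction closed up via the uniqueness clause of Theorem~\ref{W^K}. Starting with $(H_1,H_2)\in{\cal P}^*$, set $K_j:=g_{H_{3-j}}(H_j)$; these are $\HH$-hulls by the remark after Definition~\ref{R-symmetric}. Property (i) is immediate: since $H_2\subset\HH\setminus H_1$, we have $f_{H_1}\circ g_{H_1}|_{H_2}=\id$, so $H_1\cdot K_2=H_1\cup f_{H_1}(g_{H_1}(H_2))=H_1\cup H_2$, and symmetrically for $H_2\cdot K_1$. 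For (ii), (iii), and $S_{K_1}\cap S_{K_2}=\emptyset$, the key is to identify the collapse $(g_{H_2})_{H_1}$: by Definition~\ref{WK} it equals $g_{K_1}\circ g_{H_2}\circ f_{H_1}$ extended to $(\ha\C\setminus\ha H_2)_{H_1}=\ha\C\setminus\ha K_2$, where it is a normalized $\R$-symmetric conformal map, so by uniqueness it must agree with $g_{K_2}$. The remark $W_K(S_K)=S_{W(K)}$ then gives $g_{K_2}(S_{H_1})=S_{K_1}$, yielding one half of (ii) and, since $g_{K_2}$ takes values in $\ha\C\setminus S_{K_2}$, the disjointness. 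Tracing $g_{H_1\cup H_2}=g_{K_2}\circ g_{H_1}$ along the real line gives (iii) via $S_{H_1\cup H_2}=S_{K_2}\sqcup g_{K_2}(S_{H_1})=S_{K_1}\sqcup S_{K_2}$; the other half of (ii) is symmetric.

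\textbf{Backward direction.} Given $(K_1,K_2)\in{\cal P}_*$, set $V:=f_{K_2}^{K_1}$, $V':=f_{K_1}^{K_2}$ (well-defined by Theorem~\ref{W^K}), and let $H_1:=V(K_1)$, $H_2:=V'(K_2)$. Unwinding $(V)_{K_1}=f_{K_2}$ yields $V\circ f_{K_1}=f_{H_1}\circ f_{K_2}$ on $\ha\C\setminus(S_{K_1}\cup S_{K_2})$, and the right-hand side forces $V(z)=z+O(1/z)$ at infinity. So $V$ is a normalized $\R$-symmetric conformal map on $\Omega^{K_1}=\ha\C\setminus f_{K_1}(S_{K_2})$; hence $V=f_H$ for a unique $\HH$-hull $H$ with $S_H=f_{K_1}(S_{K_2})$, and $\ha H_1=V(\ha K_1)\subset\ha\C\setminus\ha H$ gives $\ha H\cap\ha H_1=\emptyset$. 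Symmetrically $V'=f_{H^\dagger}$ with $\ha H^\dagger\cap\ha H_2=\emptyset$. Applying the forward direction to $(H_1,H)\in{\cal P}^*$ gives $g_H(H_1)=V^{-1}(H_1)=K_1$; inverting $V\circ f_{K_1}=f_{H_1}\circ f_{K_2}$ yields $g_{K_1}\circ g_H=g_{K_2}\circ g_{H_1}$, which combined with the forward-direction identity $g_{K_1}\circ g_H=g_{g_{H_1}(H)}\circ g_{H_1}$ forces $g_{H_1}(H)=K_2$. Symmetrically $g_{H_2}(H^\dagger)=K_1$, so $H^\dagger=f_{H_2}(K_1)$, and applying the forward direction to $(H_2,H^\dagger)$ also gives $S_{H_2}=f_{K_1}(S_{K_2})=S_H$.

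\textbf{Closing the loop.} The key remaining step is to show $H=H_2$, for which I would verify that $f_{H_2}$ is itself a lift of $f_{K_2}$ via $K_1$. Using Definition~\ref{WK} together with $f_{H_2}(K_1)=H^\dagger$,
\BGEN
(f_{H_2})_{K_1}=g_{H^\dagger}\circ f_{H_2}\circ f_{K_1}=g_{H^\dagger}\circ f_{H^\dagger}\circ f_{K_2}=f_{K_2},
\EDEN
where the middle equality comes from the symmetric identity $f_{H_2}\circ f_{K_1}=V'\circ f_{K_2}=f_{H^\dagger}\circ f_{K_2}$. Since $f_{H_2}$ is defined on $\ha\C\setminus S_{H_2}=\ha\C\setminus S_H=\Omega^{K_1}$, the uniqueness clause in Theorem~\ref{W^K} now forces $f_{H_2}=V=f_H$, so $H=H_2$ and $\ha H_1\cap\ha H_2=\emptyset$. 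Reading off $g_{H_2}(H_1)=g_H(H_1)=K_1$ and $g_{H_1}(H_2)=g_{H_1}(H)=K_2$ proves $g_*\circ f^*=\id$, and the reverse $f^*\circ g_*=\id$ is automatic from the uniqueness of the hulls produced in the forward direction. The main obstacle is this final identification $H=H_2$: all other steps are routine manipulations of the $W_K$ and $W^K$ machinery from Section~\ref{Section-Conformal}, but closing the loop requires invoking uniqueness of the lift in a non-obvious way.
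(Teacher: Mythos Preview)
Your proof is correct, and the forward direction is essentially the paper's first paragraph (with the roles of subscripts $1,2$ swapped): both identify the collapse $(g_{H_2})_{H_1}$ with $g_{K_2}$ via normalization and read off (ii), (iii), and the disjointness of supports.

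The backward direction is where you and the paper diverge. The paper defines $H_2$ \emph{asymmetrically} as $f_{H_1}(K_2)$, then computes the range of $(f_{K_2})^{K_1}$ directly (using the remark that the range of $W^K$ is $(W(\Omega))^{W^*(K)}$) to be $\ha\C\setminus\ha H_2$, and concludes $(f_{K_2})^{K_1}=f_{H_2}$ by normalization---no auxiliary hulls, no closing-the-loop step. This immediately gives $g_*(H_1,H_2)=(K_1,K_2)$, proving surjectivity of $g_*$; combined with $f^*\circ g_*=\id$ (which the paper proves \emph{first}, in its forward direction, by inverting the collapse identity), the bijection follows. Your route instead defines both $H_1,H_2$ symmetrically via $f^*$, introduces the auxiliary hulls $H,H^\dagger$, and then has to work to identify $H=H_2$. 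What your approach buys is symmetry; what it costs is the extra closing step and the somewhat compressed final sentence.

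On that final sentence: ``automatic from the uniqueness of the hulls produced in the forward direction'' is correct in spirit but deserves one line of expansion. Concretely, given $(H_1',H_2')\in{\cal P}^*$ with $g_*(H_1',H_2')=(K_1,K_2)$, your forward-direction identity $(g_{H_2'})_{H_1'}=g_{K_2}$ inverts (via the remark $(W^{-1})_{W(K)}=(W_K)^{-1}$) to $(f_{H_2'})_{K_1}=f_{K_2}$; since $S_{H_2'}=f_{K_1}(S_{K_2})$ by (ii), $f_{H_2'}$ has domain $\Omega^{K_1}$, and uniqueness in Theorem~\ref{W^K} forces $f_{H_2'}=(f_{K_2})^{K_1}$, whence $f_{K_2}^*(K_1)=f_{H_2'}(K_1)=H_1'$. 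This is exactly the paper's first-paragraph argument, which you deferred to the end.
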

\begin{proof}
  Let $(H_1,H_2)\in{\cal P}^*$ and $(K_1,K_2)=g_*(H_1,H_2)$. Then $(\ha\C\sem \ha H_1)_{H_2}=\ha\C\sem \ha K_1$, $S_{H_1}\subset\ha\C\sem \ha K_2$, and $(\ha\C\sem S_{H_1})_{K_2}=\ha\C\sem g_{K_2}(S_{H_1})$. Since $g_{H_1}:\ha\C\sem \ha H_1\conf \ha\C\sem S_{H_1}$ and $g_{H_1}(H_2)=K_2$, we get $(g_{H_1})_{H_2}:\ha\C\sem \ha K_1\conf \ha\C\sem g_{K_2}(S_{H_1})$. From the normalization of $g_{H_1},g_{H_2},g_{K_2}$ at $\infty$, we conclude that
  \BGE (g_{H_1})_{H_2}=g_{K_1},\quad g_{K_2}(S_{H_1})=S_{K_1}.\label{gHK}\EDE
  From $S_{H_1}\subset\ha\C\sem \ha K_2$ and $g_{K_2}(S_{H_1})=S_{K_1}$, we see that $S_{K_1}\cap S_{K_2}=\emptyset$, i.e., $(K_1,K_2)\in{\cal P}_*$. Since $f_{H_1}=g_{H_1}^{-1}$, $f_{K_1}=g_{K_1}^{-1}$, and $g_{H_1}(H_2)=K_2$, from (\ref{gHK}) we get $(f_{H_1})_{K_2}=f_{K_1}$, which implies that $(f_{K_1})^{K_2}=f_{H_1}$. Thus, $f_{K_1}^*(K_2)=f_{H_1}(K_2)=H_2$. Similarly, $f_{K_2}^*(K_1)=H_1$. Thus, $f^*(K_1,K_2)=(H_1,H_2)$. So $f^*\circ g_*=\id_{{\cal P}^*}$.

   Let $(K_1,K_2)\in{\cal P}_*$ and $H_1=f_{K_2}^*(K_1)$. Then $S_{H_1}=f_{K_2}(S_{K_1})$ is disjoint from $\ha K_2$. Thus, we may define another $\HH$-hull $H_2:=f_{H_1}(K_2)$. Then $\ha H_2\subset\ha\C\sem \ha H_1$. So $(H_1,H_2)\in{\cal P}^*$.
   We have $(\ha\C\sem S_{K_2})^{K_1}=\ha\C\sem f_{K_1}(S_{K_2})$ and $(\ha\C\sem \ha K_2)^{H_1}=\ha\C\sem \ha H_2$. Since $f_{K_2}:\ha\C\sem S_{K_2}\conf \ha\C\sem \ha K_2$ and $(f_{K_2})^*(K_1)=H_1$, we see that $(f_{K_2})^{K_1}:\ha\C\sem f_{K_1}(S_{K_2})\conf \ha\C\sem \ha H_2$. From the normalization of $f_{K_1},f_{H_1},f_{K_2}$ at $\infty$, we conclude that
   \BGE (f_{K_2})^{K_1}=f_{H_2},\quad f_{K_1}(S_{K_2})=S_{H_2}.\label{f-S}\EDE
   Since $H_1=f_{K_2}^*(K_1)$, we get $f_{K_2}=g_{H_1}\circ f_{H_2}\circ f_{K_1}$ on $(\ha\C\sem S_{K_2})\sem S_{K_1}$, which implies that $f_{H_1}\circ f_{K_2}= f_{H_2}\circ f_{K_1}$ on $\ha\C\sem (S_{K_1}\cup S_{K_2})$. So
   \BGE  H_2\cdot K_1=H_1\cdot K_2=H_1\cup f_{H_1}(K_2)=H_1\cup H_2.\label{circ-f-H}\EDE
   Thus, $K_1=g_{H_2}(H_1)$ and $K_2=g_{H_1}(H_2)$, i.e., $(K_1,K_2)=g_*(H_1,H_2)$. This shows that the range of $g_*$ is ${\cal P}_*$, which combining with $f^*\circ g_*=\id_{{\cal P}^*}$ shows that $f^*=(g_*)^{-1}$ and $g_*=(f^*)^{-1}$.

   In the previous paragraph, since $(K_1,K_2)=g_*(H_1,H_2)$, $f^*(K_1,K_2)=(H_1,H_2)$. Thus, (i) follows from (\ref{circ-f-H}); the second parts of (ii) follow from (\ref{f-S}), and the first part follows from symmetry. Finally, since $g_{K_2}\circ g_{H_1}=g_{H_1\cdot K_2}=g_{H_1\cup H_2}$, from $g_{H_1}:\ha\C\sem (\ha H_1\cup \ha H_2)\conf \ha\C\sem(S_{H_1}\cup\ha K_2)$, $g_{K_2}:\ha\C\sem(S_{H_1}\cup \ha K_2)\conf \ha\C\sem (g_{K_2}(S_{H_1})\cup S_{K_2})$, and (\ref{gHK}), we get (iii).
\end{proof}

\begin{Definition}
  For $(K_1,K_2)\in {\cal P}_*$, we define the quotient union of $K_1$ and $K_2$ to be
  $K_1\vee K_2=H_1\cup H_2$, where $(H_1,H_2)=f^*(K_1,K_2)$.
\end{Definition}

\no{\bf Remark.} From Theorem \ref{bijection-gf*},  $K_1,K_2\prec K_1\vee K_2$ and $S_{K_1\vee K_2}=S_{K_1}\cup S_{K_2}$.
\vskip 4mm

The space of $\HH$-hulls has a natural metric $d_{\cal H}$ described in Appendix \ref{C}.  Let ${\cal H}_S$ denote the set of $\HH$-hulls whose supports are contained in $S$. From Lemma \ref{compact-new},  if $F$ is compact, $({\cal H}_F,d_{\cal H})$ is compact, and $H_n\to H$ in ${\cal H}_F$ implies that $f_{H_n}\luto f_H$ in $\C\sem F$.

\begin{Theorem}
  \begin{enumerate}
    \item [(i)] Let $F\subset \R$ be compact. Let $W$ be an $\R$-symmetric conformal map whose domain contains $F$. Then $W^*:{\cal H}_F\to {\cal H}_{W(F)}$ is continuous.
    \item [(ii)] Let $E$ and $F$ be two nonempty compact subsets of $\R$ with $E\cap F=\emptyset$. Then $f^*$ and $(K_1,K_2)\mapsto K_1\vee K_2$ are continuous on ${\cal H}_{E}\times {\cal H}_{F}$.
  \end{enumerate} \label{continuous}
\end{Theorem}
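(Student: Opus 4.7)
The plan is to prove (i) by a subsequence argument using the uniqueness clause of Theorem \ref{W^K}, and then derive (ii) by running the same proof with a varying ambient map. For (i), given $K_n \to K$ in ${\cal H}_F$, since ${\cal H}_{W(F)}$ is compact (Lemma \ref{compact-new}), it suffices to take an arbitrary subsequential limit $W^*(K_n) \to L$ in ${\cal H}_{W(F)}$ and show $L = W^*(K)$. From Lemma \ref{compact-new} and standard Carath\'eodory facts, I would first record $f_{K_n} \luto f_K$ on $\C \sem F$, $g_{K_n} \luto g_K$ on $\C \sem \ha K$, and $f_{W^*(K_n)} \luto f_L$ on $\C \sem W(F)$.

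The identity $(W^{K_n})_{K_n} = W$ rearranges to
\[
W^{K_n} = f_{W^*(K_n)} \circ W \circ g_{K_n} \quad \text{on } \Omega^{K_n} \sem \ha{K_n},
\]
so $W^{K_n} \to V_0 := f_L \circ W \circ g_K$ uniformly on compact subsets of $\Omega^K \sem \ha K$. The crucial step is to extend this convergence across $\ha K$: I would pick an $\R$-symmetric open neighborhood $U$ of $\ha K$ with $\lin U \subset \Omega^K$ and, using Carath\'eodory convergence $\Omega^{K_n} \dto \Omega^K$, conclude $\lin U \subset \Omega^{K_n}$ for large $n$. Then $W^{K_n}$ is analytic on $\lin U$, and uniform convergence on $\pa U$ combined with the maximum principle forces uniform convergence on $\lin U$. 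Hurwitz's theorem makes the limit $V$ an injective, $\R$-symmetric, conformal extension of $V_0$ to all of $\Omega^K$.

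To identify $L$, I use $W^{K_n}(\ha{K_n}) = \ha{W^*(K_n)}$ and pass to limits: the Hausdorff convergences $\ha{K_n} \to \ha K$ and $\ha{W^*(K_n)} \to \ha L$ together with uniform convergence $W^{K_n} \to V$ on $\lin U$ give $V(\ha K) = \ha L$, whence $V(K) = L$. On $\Omega \sem S_K$ one then has $g_{V(K)} \circ V \circ f_K = g_L \circ f_L \circ W = W$, so by conformal extension $V_K = W$ on $\Omega$; the uniqueness in Theorem \ref{W^K} forces $V = W^K$, and hence $L = V(K) = W^*(K)$. For (ii), continuity of $f^*$ reduces to continuity of $(K_1, K_2) \mapsto f_{K_1}^*(K_2)$, which I would prove by running the same argument but with the fixed ambient $W$ replaced by the varying map $W_n := f_{K_1^n}$; the convergence $W_n \luto f_{K_1}$ on $\C \sem E \supset F$ (from Lemma \ref{compact-new} applied to $K_1^n \to K_1$) makes every composition behave as before. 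Continuity of $\vee$ then follows from the identity $K_1 \vee K_2 = K_1 \cdot f_{K_1}^*(K_2)$ (Theorem \ref{bijection-gf*}(i)) together with continuity of the hull product $\cdot$, which is a routine consequence of $f_{K \cdot K'} = f_K \circ f_{K'}$ and Lemma \ref{compact-new}.

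The main obstacle is the Carath\'eodory step $\Omega^{K_n} \dto \Omega^K$ and the accompanying containment $\lin U \subset \Omega^{K_n}$: concretely, in the collar between $\ha{K_n}$ and $\ha K$ one must verify that $g_{K_n}$ lands inside $\Omega$ so that the formula $W^{K_n} = f_{W^*(K_n)} \circ W \circ g_{K_n}$ is valid and $W^{K_n}$ is analytic on all of $\lin U$. Once this bookkeeping is in hand, the Hurwitz/maximum-principle mechanics and the uniqueness invocation are routine, and (ii) is obtained from (i) by inspection.
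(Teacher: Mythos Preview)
Your approach to part (i) is essentially identical to the paper's: both take a subsequential limit $L$ of $W^*(K_n)$ in the compact space ${\cal H}_{W(F)}$, pass the identity $W^{K_n}=f_{W^*(K_n)}\circ W\circ g_{K_n}$ to the limit on $\Omega^K\sem\ha K$, extend across $\ha K$ by the maximum principle, and then invoke the uniqueness clause of Theorem~\ref{W^K} to force $L=W^*(K)$. The paper phrases the Carath\'eodory step a bit differently (it notes that $\Omega^{H_0}\sem f_{H_0}(\Omega\sem F)$ is compact and applies the maximum principle on that set rather than on a chosen $\lin U$), but the content is the same, and the ``obstacle'' you flag is exactly the point the paper treats.

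For part (ii) you take a genuinely different route. The paper argues directly: it extracts subsequential limits of $H_1^n$, $H_2^n$, and $H_1^n\cup H_2^n$ (using Lemma~\ref{lemma-LERW} and Lemma~\ref{compact-new} for compactness, since $S_{H_1^n\cup H_2^n}\subset E\cup F$), then identifies the limit via Theorem~\ref{bijection-gf*}(i) and a weak-convergence-of-measures argument to pin down the support $S_{H_1^0}\subset f_{K_2^0}(E)$. Your approach instead reruns the machinery of (i) with a \emph{varying} ambient map $W_n=f_{K_1^n}$, using $f_{K_1^n}\luto f_{K_1}$ on $\C\sem E$ from Lemma~\ref{compact-new}. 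This is more economical and conceptually unified; the only extra bookkeeping is checking that $S_{W_n^*(K_2^n)}=f_{K_1^n}(S_{K_2^n})\subset f_{K_1^n}(F)$ sits in a fixed compact $G\subset\R$ for large $n$ (which follows from uniform convergence of $f_{K_1^n}$ on $F$), so that ${\cal H}_G$ provides the needed compactness. The paper's approach, by contrast, avoids the varying-map analysis but pays with the measure-theoretic identification step.

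One small correction: the identity you quote is $K_1\vee K_2 = H_2\cdot K_1 = f_{K_1}^*(K_2)\cdot K_1$ (or symmetrically $H_1\cdot K_2$), not $K_1\cdot f_{K_1}^*(K_2)$; the product ``$\cdot$'' is not commutative. This does not affect your continuity argument, since the dot product is continuous in both variables.
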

\begin{proof}
  (i). First, $W^*$ is well defined on ${\cal H}_F$, and the range of $W^*$ is ${\cal H}_{W(F)}$. Suppose $(H_n)$ is a sequence in ${\cal H}_F$ and $H_n\to H_0\in{\cal H}_F$. To prove the continuity of $W^*$, we need to show that $W^*(H_n)\to W^*(H_0)$. Suppose this is not true. Since ${\cal H}_{W(F)}$ is compact, by passing to a subsequence, we may assume that  $W^*(H_n)\to K_0 \ne W^*(H_0)$. For each $n_k$, $W^{H_{n_k}}=f_{W^*(H_{n_k})}\circ W\circ g_{H_{n_k}}$ on $f_{H_{n_k}}(\Omega\sem F)$. We have $g_{H_{n_k}}\luto g_{H_0}$ in $f_{H_{0}}(\Omega\sem F)$ and $f_{W^*(H_{n_k})}\luto f_{K_0}$ in $W(\Omega)\sem W(F)$. Thus, $W^{H_{n_k}}\luto f_{K_0}\circ W\circ g_{H_0}=:V$ in $f_{H_{0}}(\Omega\sem F)$. The domain of $W^{H_{n_k}}$ is $\Omega^{H_{n_k}}=\ha H_{n_k}\cup f_{H_{n_k}}(\Omega\sem S_{H_{n_k}})$, which converges to $\Omega^{H_{0}}=\ha H_{0}\cup f_{H_{0}}(\Omega\sem S_{H_{0}})\supset f_{H_{0}}(\Omega\sem F)$. It is clear that $\Omega^{H_0}\sem f_{H_0}(\Omega\sem F)$ is compact. Since $W^{H_{n_k}}\luto V$ in $f_{H_{0}}(\Omega\sem F)$, from the maximum principle,  $W^{H_{n_k}}$ converges locally uniformly in $\Omega^{H_0}$. We still let $V$ denote the limit function. Since $H_{n_k}\to H_0$ and $W^{H_{n_k}}(H_{n_k})\to K_0$, we have $V(H_0)=K_0$. Since $f_{K_0}\circ W\circ g_{H_0}=V$ in $f_{H_{0}}(\Omega\sem F)$, we see that $f_{V(H_0)}\circ W\circ g_{H_0}=V$ in $f_{H_0}(\Omega\sem S_{H_0})$. Thus, $V=W^{H_0}$. So $K_0=W^{H_0}(H_0)=W^*(H_0)$. This is the contradiction we need.

  (ii). To show $f^*$ is continuous, it suffices to show that, if $(K_1^{n},K_2^{n})$ is a sequence in ${\cal H}_{E}\times {\cal H}_{F}$ which converges to $(K_1^{0},K_2^{0})\in {\cal H}_{E}\times {\cal H}_{F}$,  then it has a subsequence $(K_1^{(n_k)},K_2^{(n_k)})$ such that $f^*(K_1^{(n_k)},K_2^{(n_k)})\to f^*(K_1^{0},K_2^{0})$. Let $(H_1^{n},H_2^{n})=f^*(K_1^{n},K_2^{n})$, $n\in\N$. From Theorem \ref{bijection-gf*} (iii), $S_{H_1^{n}\cup H_2^{n}}=S_{K_1^{n}}\cup S_{K_2^{n}}\subset E\cup F$. From Lemma \ref{compact-new}, $(H_1^{n}\cup H_2^{n})$ has a convergent subsequence with limit in ${\cal H}_{E\cup F}$. From Lemma \ref{lemma-LERW}, $S_{H_1^{n}}\subset S_{H_1^{n}\cup H_2^{n}}^*\subset A$, where $A$ is the convex hull of $E\cup F$. From Lemma \ref{compact-new}, $(H_1^{n})$ has a convergent subsequence. For the same reason, $(H_2^{n})$ also has a convergent subsequence.
  By passing to subsequences, we may assume that $H_1^{n}\cup H_2^{n}\to M^{0}\in {\cal H}_{E\cup F}$ and $H_j^{n}\to H_j^{0}$, $j=1,2$.

From Theorem \ref{bijection-gf*} (i) and the continuity of the dot product, we get $H_1^{0}\cdot K_2^{0}=H_2^{0}\cdot K_1^{0}=M^{0}$. This implies that $M^{0}=H_1^{0}\cup f_{H_1^{0}}(K_2^{0})$. The measures $(\mu_{H_1^{n}})$ (see Appendix \ref{C}) converges to $\mu_{H_1^{0}}$ weakly. Each $\mu_{H_1^{n}}$ is supported by $S_{H_1^{n}}$. From Theorem \ref{bijection-gf*} (ii), $S_{H_1^{n}}=f_{{K_2^{n}}}(S_{K_1^{n}})\subset f_{K_2^{n}}(E)$. Since $E$ is a compact subset of $\C\sem F$, we have $f_{K_2^{n}}\to f_{K_2^{0}}$ uniformly on $E$. Thus, $f_{K_2^{n}}(E)\to f_{K_2^{0}}(E)$ in the Hausdorff metric. So $\mu_{H_1^{0}}$ is supported by $ f_{K_2^{0}}(E)$, which implies that $S_{H_1^{0}}\subset f_{K_2^{0}}(E)$. Hence $f_{H_1^{0}}(K_2^{0})$ is another $\HH$-hull, which is bounded away from $H_1^{0}$. From $K_2^{n}\to K_2^{0}$ we have $\HH\sem K_2^{n}\dto \HH\sem K_2^{0}$. From (\ref{f-z}) we get $f_{H_1^{n}}\luto f_{H_1^{0}}$ in $\C\sem S_{H_1^{0}}$. Thus, $\HH\sem f_{H_1^{n}}(K_2^{n})\dto\HH\sem f_{H_1^{0}}(K_2^{0})$. Since $H_2^{n}=f_{H_1^{n}}(K_2^{n})$, we have $\HH\sem H_2^{n}\dto \HH\sem f_{H_1^{0}}(K_2^{0})$. On the other hand, $\HH\sem H_2^{n}\dto \HH\sem H_2^{0}$. Since $\HH\sem H_2^{0}$ and $\HH\sem f_{H_1^{0}}(K_2^{0})$ both contain a neighborhood of $\infty$ in $\HH$, they must be the same domain. Thus, $H_2^{0}=f_{H_1^{0}}(K_2^{0})$ is bounded away from $H_1^{0}$, i.e., $(H_1^{n},H_2^{n})\in{\cal P}^*$. For the same reason, $H_1^{0}=f_{H_2^{0}}(K_1^{0})$. Thus, $(H_1^{n},H_2^{n})\to (H_1^{0},H_2^{0})=f^*(K_1^{0},K_2^{0})$. This shows that $f^*$ is continuous. Finally, since $K_1\vee K_2=H_1\cdot K_2$ if $(H_1,H_2)=f^*(K_1,K_2)$, we see that $(K_1,K_2)\mapsto K_1\vee K_2$ is also continuous.
\end{proof}

\begin{Corollary}
  \begin{enumerate}
    \item [(i)] Let $W$ be an $\R$-symmetric conformal map with domain $\Omega$. Then $W^*$ is measurable on ${\cal H}_{\Omega\cap \R}$.
    \item [(ii)] $f^*$ and $(K_1,K_2)\mapsto K_1\vee K_2$ are measurable on ${\cal P}_*$.
  \end{enumerate}\label{continuous-Cor}
\end{Corollary}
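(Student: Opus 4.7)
The plan is to deduce measurability of $W^*$, $f^*$, and the quotient union from the continuity statements in Theorem \ref{continuous} by exhausting each of the (non-compact) domains ${\cal H}_{\Omega\cap\R}$ and ${\cal P}_*$ with countably many compact pieces on which continuity has already been established. No new analysis is needed beyond Theorem \ref{continuous}; the argument is a standard $\sigma$-compact exhaustion.

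For part (i), I would write the open set $\Omega\cap\R$ as an increasing union $\Omega\cap\R=\bigcup_{n\in\N} F_n$ of compact sets, for instance $F_n=\{x\in\Omega\cap\R:|x|\le n,\ \dist(x,\R\sem\Omega)\ge 1/n\}$. For any $H\in{\cal H}_{\Omega\cap\R}$, the support $S_H$ is a compact subset of $\Omega\cap\R$, hence $S_H\subset F_n$ for all sufficiently large $n$, so ${\cal H}_{\Omega\cap\R}=\bigcup_n {\cal H}_{F_n}$. By Lemma \ref{compact-new} each ${\cal H}_{F_n}$ is compact in $d_{\cal H}$, hence a Borel subset of ${\cal H}_{\Omega\cap\R}$, and Theorem \ref{continuous}(i) gives continuity of $W^*$ on each ${\cal H}_{F_n}$. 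Measurability of $W^*$ on the countable union follows.

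For part (ii), let $\cal I$ denote the countable family of ordered pairs $(E,F)$ in which $E$ and $F$ are each finite unions of closed intervals with rational endpoints and $E\cap F=\emptyset$. Given $(K_1,K_2)\in{\cal P}_*$, the supports $S_{K_1},S_{K_2}$ are disjoint compact subsets of $\R$, so they admit disjoint bounded open neighborhoods; approximating these neighborhoods from inside by unions of rational intervals one easily extracts $(E,F)\in\cal I$ with $S_{K_1}\subset E$ and $S_{K_2}\subset F$. Thus ${\cal P}_*=\bigcup_{(E,F)\in{\cal I}}{\cal H}_E\times{\cal H}_F$, a countable union of compact (hence Borel) sets, and on each piece both $f^*$ and $(K_1,K_2)\mapsto K_1\vee K_2$ are continuous by Theorem \ref{continuous}(ii). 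Measurability on all of ${\cal P}_*$ follows.

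The only point requiring a moment's care is the Borel structure on the space of $\HH$-hulls (and on products of such), but this is automatic: each piece ${\cal H}_{F_n}$ and ${\cal H}_E\times{\cal H}_F$ is compact in $d_{\cal H}$ by Lemma \ref{compact-new}, so the inclusion into the ambient space is continuous, the pieces are Borel, and expressing the global map as a countable union of continuous restrictions yields a Borel measurable function. Thus there is no real obstacle; the corollary is essentially a routine consequence of Theorem \ref{continuous} once the exhaustions above are written down.
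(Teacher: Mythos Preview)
Your proposal is correct and follows essentially the same approach as the paper: exhaust ${\cal H}_{\Omega\cap\R}$ and ${\cal P}_*$ by countably many compact pieces on which Theorem \ref{continuous} provides continuity, and deduce global measurability. Your use of finite unions of rational intervals in part (ii) is slightly more general than the paper's choice of single closed intervals, which is harmless (and in fact a little more careful, since supports of disconnected hulls need not lie in a single interval disjoint from the other support).
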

\begin{proof}
  (i) We may find an increasing sequence of compact subsets $(F_n)$ of $\Omega\cap\R$ such that ${\cal H}_{\Omega\cap \R}=\bigcup_{n=1}^\infty{\cal H}_{F_n} $. From Theorem \ref{continuous} (i), $W^*$ is continuous on each ${\cal H}_{F_n} $. Thus, $W^*$ is measurable on ${\cal H}_{\Omega\cap \R}$.

  (ii) We may find a sequence of pairs of disjoint bounded closed intervals of $\R$: $(E_n,F_n)$, $n\in\N$, such that ${\cal P}_*=\bigcup_{n=1}^\infty {\cal H}_{E_n}\times{\cal H}_{F_n}$. 
  From Theorem \ref{continuous} (ii), $f^*$ and $(K_1,K_2)\mapsto K_1\vee K_2$ are continuous on each ${\cal H}_{E_n}\times {\cal H}_{F_n}$, and so they are measurable on ${\cal P}_*$.
\end{proof}

\subsection{Hulls in the unit disc}
A subset $K$ of $\D=\{|z|<1\}$ is called a $\D$-hull if $\D\sem K$ is a simply connected domain containing $0$. For every $\D$-hull $K$, there is a unique $g_K:\D\sem K\conf \D$ such that $g_K(0)=0$ and $g_K'(0)>0$. Then $\ln g_K'(0)\ge 0$ is called the $\D$-capacity of $K$, and is denoted by $\dcap(K)$. Let $f_K=g_K^{-1}$. 

We may define $K_1\cdot K_2$,  $K_2/K_1$ (when $K_1\subset K_2$), and $K_1\prec K_2$ on the space of $\D$-hulls as in Definition \ref{/}. Then the remarks after Definition \ref{/} still hold if $\HH$ is replaced by $\D$ and $\hcap$ is replaced by $\dcap$. Then we may define $K_2:K_1$ (when $K_1\prec K_2$) as in Definition \ref{//}. For a $\D$-hull $K$, the base $B_K$ of $K$ is $\lin{K}\cap \TT$, and the double of $K$ is $\ha K=K\cup I_\TT(K)\cup B_K$, where $I_\TT(z):=1/\lin z$. Then $g_K$ extends to a conformal map (still denoted by $g_K$) on $\ha\C\sem\ha K$, which commutes with $I_\TT$. Moreover, $g_K(\ha\C\sem \ha K)=\ha\C\sem S_K$ for some compact $S_K\subset \TT$, which is called the support of $K$. So $f_K$ extends to a conformal map from $\ha\C\sem S_K$ onto $\ha\C\sem \ha K$, which commutes with $I_\TT$. Then Lemma \ref{extend} and Lemma \ref{SKprec} still hold here.

We may define $\TT$-symmetric sets and $\TT$-symmetric conformal maps using Definition \ref{R-symmetric} with $\R$ and $\HH$ replaced by $\TT$ and $\D$, respectively. For a $\TT$-symmetric domain $\Omega$ and a $\D$-hull $K$, we may define domains $\Omega_K$ (when $\ha  K\subset \Omega$) and $\Omega^K$ (when $S_K\subset \Omega$) using Definition \ref{OmegaK}. If $W$ is a $\TT$-symmetric conformal map with domain $\Omega$, and if $\Omega_K$ is defined, we may then define $W_K$ using Definition \ref{WK}, which is a $\TT$-symmetric conformal map on $\Omega_K$. The remarks after Definition \ref{R-symmetric}, Definition \ref{OmegaK}, and Definition \ref{WK} hold here with minor modifications. We claim that Theorem \ref{W^K} holds here with modifications. We need several lemmas.

The lemma below relates the $\HH$-hulls with $\D$-hulls. To distinguish the two set of symbols, we use $f^{\HH}_K$,  $g^\HH_K$, $B_K^\R$, $S_K^\R$, and $\ha K^\R$ for $\HH$-hulls, and $f^{\D}_K$,  $g^\D_K$, $B_K^\TT$, $S_K^\TT$, and $\ha K^\TT$ for $\D$-hulls.

\begin{Theorem}
  \begin{enumerate}
    \item [(i)] Let $W$ be a M\"obius transformation that maps $\D$ onto $\HH$, and $K$ be a $\D$-hull such that $W^{-1}(\infty)\not\in S_K^\TT$. Then there is a unique M\"obius transformation $W^K$ that maps $\D$ onto $\HH$ such that $W^K(K)$ is an $\HH$-hull,  $g^\HH_{W^K(K)}\circ W^K\circ f^\D_K=W$ in $\ha\C\sem S_K^\TT$, and $S^\R_{W^K(K)}=W(S^\TT_K)$.
        \item [(ii)] Let $W$ be a M\"obius transformation that maps $\HH$ onto $\D$, and $K$ be an $\HH$-hull. Then there is a unique M\"obius transformation $W^K$ that maps $\HH$ onto $\D$ such that $W^K(K)$ is a $\D$-hull,  $g^\D_{W^K(K)}\circ W^K\circ f^\HH_K=W$ in $\ha\C\sem S_K^\R$, and $S^\TT_{W^K(K)}=W(S^\R_K)$.
  \end{enumerate}\label{Mob}
\end{Theorem}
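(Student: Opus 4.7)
The plan is to build $W^K$ from $W$ by composing with a suitable Möbius self-map of $\HH$ (resp.\ $\D$) that enforces the hydrodynamic (resp.\ $\D$-) normalization. I focus on part (i); part (ii) is structurally identical with normalization at $0$ in place of $\infty$.

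First I fix the pole of the desired $V=W^K$. Since the identity $g_{V(K)}^\HH\circ V\circ f_K^\D=W$ is equivalent to $V\circ f_K^\D=f_{V(K)}^\HH\circ W$, evaluating at $z_0:=W^{-1}(\infty)\in\TT\setminus S_K^\TT$ forces $V(w_0)=\infty$, where $w_0:=f_K^\D(z_0)\in\TT\setminus B_K^\TT$ is well defined by the hypothesis. Pick any Möbius $V_0:\D\conf\HH$ with $V_0(w_0)=\infty$. Then $H_0:=V_0(K)$ is bounded (because $w_0\notin\lin K$) and its complement in $\HH$ is simply connected, so $H_0$ is an $\HH$-hull, and $\Psi_0:=V_0\circ f_K^\D\circ W^{-1}$ is a conformal map $\HH\conf\HH\setminus H_0$ fixing $\infty$. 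Consequently $g_{H_0}^\HH\circ\Psi_0$ is a Möbius automorphism of $\HH$ fixing $\infty$, namely $M(z)=\alpha z+\beta$ with $\alpha>0$, $\beta\in\R$.

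Next I absorb $M$. Set $L(z):=(z-\beta)/\alpha$ and $V:=L\circ V_0$, a Möbius map $\D\conf\HH$ with $V^{-1}(\infty)=w_0$ and $V(K)=L(H_0)$ again an $\HH$-hull. A one-line asymptotic expansion at $\infty$ shows $L\circ f_{H_0}^\HH\circ M(z)=z+O(1/z)$, so by uniqueness of the hydrodynamic normalization $L\circ f_{H_0}^\HH\circ M=f_{L(H_0)}^\HH=f_{V(K)}^\HH$. Therefore $V\circ f_K^\D\circ W^{-1}=f_{V(K)}^\HH$ on $\HH$, and by Schwarz reflection this identity extends to $\ha\C\setminus S_K^\TT$, giving $g_{V(K)}^\HH\circ V\circ f_K^\D=W$ there. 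The support identity $S_{V(K)}^\R=W(S_K^\TT)$ then drops out of Lemma \ref{extend}: the displayed identity exhibits a conformal extension of $f_{V(K)}^\HH$ across every point of $\R\setminus W(S_K^\TT)$, giving $S_{V(K)}^\R\subset W(S_K^\TT)$, while non-extendability of $f_K^\D$ beyond $\ha\C\setminus S_K^\TT$ (via Lemma \ref{extend}, conjugated by the Möbius maps $V$ and $W^{-1}$) yields the reverse inclusion.

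For uniqueness, if $\til V$ also satisfies the conclusions, evaluating $\til V\circ f_K^\D=f_{\til V(K)}^\HH\circ W$ at $z_0$ forces $\til V(w_0)=\infty$, so $V\circ\til V^{-1}:\HH\conf\HH$ is Möbius fixing $\infty$. On $\HH\setminus \til V(K)$ one computes $V\circ\til V^{-1}=f_{V(K)}^\HH\circ g_{\til V(K)}^\HH$, whose right-hand side has expansion $z+O(1/z)$ at $\infty$; the only Möbius self-map of $\HH$ with such asymptotics is the identity, giving $V=\til V$. For (ii), the analogous setup uses $p_0:=W^{-1}(0)\in\HH$, $q_0:=f_K^\HH(p_0)\in\HH\setminus K$, any Möbius $V_0:\HH\conf\D$ with $V_0(q_0)=0$, a decomposition $V_0\circ f_K^\HH\circ W^{-1}=f_{V_0(K)}^\D\circ M$ with $M(z)=e^{i\theta}z$ the rotation of $\D$ arising from the Schwarz lemma, and absorption $V:=L\circ V_0$, $L(z)=e^{-i\theta}z$. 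The only real bookkeeping nuisance is tracking the supports and keeping the two normalizations straight; no analytic ingredient beyond Lemma \ref{extend} and asymptotics at the distinguished point is needed.
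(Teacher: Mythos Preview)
Your proof is correct and follows essentially the same strategy as the paper's: fix the pole via $w_0=f_K^\D(W^{-1}(\infty))$, choose any M\"obius $V_0:\D\to\HH$ sending $w_0$ to $\infty$, observe that $g^\HH_{V_0(K)}\circ V_0\circ f_K^\D\circ W^{-1}$ is an affine automorphism of $\HH$, and absorb it; the uniqueness argument is likewise the same. The only cosmetic differences are that the paper works on all of $\ha\C$ from the outset (so the support identity $S^\R_{W^K(K)}=W(S_K^\TT)$ falls out directly from the bijection $G:\ha\C\setminus W(S_K^\TT)\conf\ha\C\setminus S^\R_{L_0}$), whereas you first establish the identity on $\HH$ and then recover the support equality via Lemma~\ref{extend}; and your ``Schwarz reflection'' step is really just the identity theorem applied to two analytic functions already defined on the symmetric domain.
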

\begin{proof}
  (i) Let $z_0=W^{-1}(\infty)\in\TT\sem S_K^\TT$. Then $w_0:=f_K^\D(z_0)\in\TT\sem B_K^\TT$ is well defined.
  Let $W^K_0(z)=i\frac{w_0+z}{w_0-z}$. Then $W^K_0$ is a M\"obius transformation that maps $\D$ onto $\HH$ and takes $w_0$ to $\infty$. Let $L_0=W^K_0(K)$. Since $w_0 $ is bounded away from $K$, we see that $L_0$ is an $\HH$-hull. We have $W^K_0:\ha\C\sem \ha K\conf \ha\C\sem \ha L_0$. Define $G=g^\HH_{L_0}\circ W^K_0\circ f^\D_K\circ W^{-1}$ on $\ha\C\sem W(S_K^\TT)$. Then $G:\ha\C\sem W(S_K^\TT)\conf \ha\C\sem S_{L_0}^\R$, fixes $\infty$, and maps $\HH$ onto $\HH$. So $G(z)=az+b$ for some $a>0$ and $b\in\R$. Let $W^K=G^{-1}\circ W^K_0$. Then $W^K$ is also a M\"obius transformation that maps $\D$ onto $\HH$, and $W^K(K)$ is also an $\HH$-hull with $S_{W^K(K)}^\R=G^{-1}(S_{L_0}^\R)=W(S_K^\TT)$ and $g^\HH_{W^K(K)}=G^{-1}\circ g^\HH_{L_0}\circ G$. Thus,
  $$g^\HH_{W^K(K)}\circ W^K\circ f^\D_K\circ W^{-1}=G^{-1}\circ g^\HH_{L_0}\circ G\circ G^{-1}\circ W^K_0\circ  f^\D_K\circ W^{-1}$$
  $$=G^{-1}\circ g^\HH_{L_0}\circ   W^K_0\circ  f^\D_K\circ W^{-1}=G^{-1}\circ G=\id_{\ha\C\sem W(S_K)}.$$
  This implies that $g^\HH_{L}\circ W^K\circ f^\D_K=W$ in $\ha\C \sem S_K$.  So we proved the existence. On the other hand, if $W^K$ satisfies the desired property, then from $W^K=f^\HH_{L}\circ W\circ g^\D_K$ we get $W^K(w_0)=\infty$. So $W^K=G_0\circ W^K_0$, where $G_0(z)=az+b$ for some $a>0$ and $b\in\R$. The above argument shows that $G_0=G^{-1}$. So we get the uniqueness.

  (ii) We may use the proof of (i) with slight modifications: replace $\infty$  by $0$, swap $\HH$ and $\D$, swap $\R$ and $\TT$, and define $W^K_0(z)=\frac{z-w_0}{z-\lin{w_0}}$.
\end{proof}

We also use $W^*(K)$ to denote the hull $W^K(K)$ in the above lemma. The following lemma is similar to Lemma \ref{W*K/K}.
\begin{Lemma}
Let $K_1$ and $K_2$ be two $\HH$-(resp.\ $\D$-)hulls such that $K_1\prec K_2$. Let $W$ be a M\"obius transformation that maps $\HH$ onto $\D$ (resp.\ maps $\D$ onto $\HH$) such that $\infty\not\in W(S_{K_2})$. Then $W^*(K_1)\prec W^*(K_2)$ and (\ref{K2:K1}) still holds.
\label{W*K/K-Mob}
\end{Lemma}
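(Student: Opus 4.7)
The plan is to mirror the proof of Lemma \ref{W*K/K}. I treat the case where $K_1,K_2$ are $\D$-hulls and $W:\D\to\HH$ is Möbius; the case $W:\HH\to\D$ is symmetric, using Theorem \ref{Mob}(ii) in place of (i). The paper notes that Lemma \ref{SKprec} carries over to $\D$-hulls, so $K_1\prec K_2$ gives $S^\TT_{K_1}\subset S^\TT_{K_2}$; writing $K_0:=K_2{:}K_1$ (so $K_2=K_0\cdot K_1$), the same lemma gives $S^\TT_{K_0}\subset S^\TT_{K_2}$. Hence the hypothesis $\infty\notin W(S^\TT_{K_2})$ passes to $K_0$ and $K_1$, and Theorem \ref{Mob}(i) yields Möbius maps $W^{K_0},W^{K_1},W^{K_2}:\D\to\HH$.

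The key step is to identify
\[V\;:=\;g^\HH_{W^{K_2}(K_0)}\circ W^{K_2}\circ f^\D_{K_0}\]
with $W^{K_1}$. First I check that $W^{K_2}(K_0)$ is an $\HH$-hull. The point $(W^{K_2})^{-1}(\infty)$ lies in $\TT\sem S^\TT_{K_2}$ (by Theorem \ref{Mob}(i) and $\infty\notin W(S^\TT_{K_2})$), while $\lin{K_0}\cap\TT=B^\TT_{K_0}\subset S^\TT_{K_0}\subset S^\TT_{K_2}$, so $(W^{K_2})^{-1}(\infty)\notin\lin{K_0}$; thus $W^{K_2}(K_0)$ is bounded. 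Its complement in $\HH$ is $W^{K_2}(\D\sem K_0)$, which is simply connected because $\D\sem K_0$ is. So $V:\D\to\HH$ is a well-defined conformal bijection, hence Möbius.

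With $V$ Möbius, I apply the factorization $f^\D_{K_2}=f^\D_{K_0}\circ f^\D_{K_1}$ coming from $K_2=K_0\cdot K_1$, together with $g^\HH_{M/L}\circ g^\HH_L=g^\HH_M$ for $L=W^{K_2}(K_0)\subset M=W^{K_2}(K_2)$. Since $V(K_1)=g^\HH_{W^{K_2}(K_0)}(W^{K_2}(K_2)\sem W^{K_2}(K_0))=W^{K_2}(K_2)/W^{K_2}(K_0)$, a direct composition gives
\begin{equation*}
g^\HH_{V(K_1)}\circ V\circ f^\D_{K_1}=g^\HH_{W^{K_2}(K_2)}\circ W^{K_2}\circ f^\D_{K_2}=W,
\end{equation*}
where the last equality is the defining property of $W^{K_2}$. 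The uniqueness clause of Theorem \ref{Mob}(i) then forces $V=W^{K_1}$, whence $W^*(K_1)=V(K_1)=W^{K_2}(K_2)/W^{K_2}(K_0)\prec W^*(K_2)$ and $W^*(K_2){:}W^*(K_1)=W^{K_2}(K_0)=W^{K_2}(K_2{:}K_1)$, which is exactly (\ref{K2:K1}).

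The only mildly non-routine point is verifying that $W^{K_2}(K_0)$ really is an $\HH$-hull, since Theorem \ref{Mob} only guarantees this for $K_2$ itself and not automatically for its sub-hulls; the containment $B^\TT_{K_0}\subset S^\TT_{K_0}\subset S^\TT_{K_2}$ together with the hypothesis on $W$ makes this check immediate. Everything else is a direct transcription of the proof of Lemma \ref{W*K/K}, with the $\R$-symmetric collapse $(W^{K_2})_{K_0}$ appearing there replaced here by the explicit Möbius conjugation defining $V$.
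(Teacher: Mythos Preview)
Your proof is correct and follows exactly the route the paper intends (the paper gives no proof for this lemma, merely noting it is ``similar to Lemma~\ref{W*K/K}''). The core step---defining $V=g^\HH_{W^{K_2}(K_0)}\circ W^{K_2}\circ f^\D_{K_0}$, computing $g^\HH_{V(K_1)}\circ V\circ f^\D_{K_1}=W$, and then invoking the uniqueness clause of Theorem~\ref{Mob}(i) to conclude $V=W^{K_1}$---is right and is just an unpacking of the identity $(W^{K_0\cdot K_1})_{K_0}=W^{K_1}$ used in the proof of Lemma~\ref{W*K/K}.

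One small wobble: your justification that $W^{K_2}(K_0)$ is an $\HH$-hull is more elaborate than necessary and contains two slips. First, $(W^{K_2})^{-1}(\infty)=f^\D_{K_2}(W^{-1}(\infty))$ lies in $\TT\setminus B^\TT_{K_2}$, not in $\TT\setminus S^\TT_{K_2}$ as you claim. Second, Lemma~\ref{SKprec} concerns the relation $\prec$, not $\subset$, so it does not give $S^\TT_{K_0}\subset S^\TT_{K_2}$; and the containment $B^\TT_{K_0}\subset S^\TT_{K_0}$ is not something the paper establishes. None of this matters, though: since $K_0\subset K_2$ and $W^{K_2}(K_2)$ is already an $\HH$-hull by Theorem~\ref{Mob}(i), the subset $W^{K_2}(K_0)\subset W^{K_2}(K_2)$ is automatically bounded, and the remaining hull properties (relatively closed, simply connected complement) follow because $W^{K_2}:\D\to\HH$ is a homeomorphism. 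Also, the map $W^{K_0}$ you introduce is never used and can be dropped.
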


The following lemma is used to treat the case $S_K=\TT$ in Theorem \ref{W^K-d}.

\begin{Lemma}
  Let $W$ be a $\TT$-symmetric conformal map with domain $\Omega\supset\TT$. Let $(K_n)$ be a sequence of $\D$-hulls which converges to $K$. Suppose that for each $n$, there is a $\TT$-symmetric conformal map $V^{\langle n\rangle}$ defined on $\Omega^{K_n}$ such that $V^{\langle n\rangle}_{K_n}=W$. Then there is a $\TT$-symmetric conformal map $V $ defined on $\Omega^{K}$ such that $V_K=W$. Moreover, $V(K)$ is a subsequential limit of $(V^{\langle n\rangle }(K_n))$. \label{T-support}
\end{Lemma}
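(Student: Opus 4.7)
Write $L_n := V^{\langle n\rangle}(K_n)$; each $L_n$ is a $\D$-hull because $V^{\langle n\rangle}$ is $\TT$-symmetric, and all $L_n$ lie in $\lin\D$. By the $\D$-hull analog of Lemma \ref{compact-new}, the family of $\D$-hulls in $\lin\D$ is compact, so some subsequence satisfies $L_{n_k}\to L$ for a $\D$-hull $L$. I will build the desired $V$ as the limit of the $V^{\langle n_k\rangle}$'s, and the final claim $V(K)=L$ will follow from the construction.

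On $\Omega^{K_n}\sem\ha K_n$ the identity $V^{\langle n\rangle}_{K_n}=W$ rearranges as
$$V^{\langle n\rangle}=f_{L_n}\circ W\circ g_{K_n}.$$
Since $K_{n_k}\to K$ and $L_{n_k}\to L$, we have $g_{K_{n_k}}\luto g_K$ on $\ha\C\sem \ha K$ and $f_{L_{n_k}}\luto f_L$ on $\ha\C\sem \ha L$ (the $\D$-version of convergence of conformal maps under Carath\'eodory convergence of domains, cf.\ Lemma \ref{domain convergence*}). Composing, $V^{\langle n_k\rangle}\luto V_0:=f_L\circ W\circ g_K$ on $\Omega^K\sem \ha K$, in the sense that every compact subset of $\Omega^K\sem \ha K$ is eventually contained in $\Omega^{K_{n_k}}\sem \ha K_{n_k}$ and convergence is uniform there.

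To extend the convergence across $\ha K$ I mimic the Jordan-curve argument from the proof of Theorem \ref{extend1}. Choose a $\TT$-symmetric analytic Jordan curve $J\subset\Omega$ surrounding $\TT$ whose closed interior lies in $\Omega$; such $J$ exists because $\TT\subset\Omega$. Then $f_K(J)$ is a $\TT$-symmetric Jordan curve in $\Omega^K$ surrounding $\ha K$, and for large $k$ the curves $f_{K_{n_k}}(J)$ surround $\ha K_{n_k}$ inside $\Omega^{K_{n_k}}$ and converge uniformly to $f_K(J)$. The family $\{V^{\langle n_k\rangle}\}$ sends $\D$ to $\D$ (by $\TT$-symmetry), hence is uniformly bounded on the closed topological disc bounded by $f_{K_{n_k}}(J)$, so it is normal there. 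Combined with the already-established uniform convergence on a neighborhood of $f_K(J)$, the maximum principle (applied as in the final steps of Theorem \ref{extend1}) upgrades the convergence to locally uniform convergence on all of $\Omega^K$. The limit $V$ is then $\TT$-symmetric and conformal on $\Omega^K$ and extends $V_0$. Passing to the limit in $V^{\langle n_k\rangle}_{K_{n_k}}=W$ gives $V_K=W$, while $V^{\langle n_k\rangle}(K_{n_k})=L_{n_k}\to L$ together with $V^{\langle n_k\rangle}\to V$ and $K_{n_k}\to K$ forces $V(K)=L$.

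The principal obstacle is the case $S_K=\TT$, which is exactly the degenerate regime motivating this lemma (the $\D$-version of Theorem \ref{W^K} breaks down when the support fills the whole circle, as its Jordan-curve decomposition argument needs a gap in $S_K$). One must verify that $\Omega^K$ remains a well-defined $\TT$-symmetric domain containing $\ha K$ when $S_K=\TT$, that the limit $V$ is univalent at interior points of $\ha K$ (which follows from univalence on $\Omega^K\sem \ha K$ via Hurwitz, combined with $\TT$-symmetry and non-constancy), and that the $\D$-hull analogs of Lemma \ref{compact-new}, Lemma \ref{domain convergence*}, and the relevant parts of Theorem \ref{continuous} carry over from the $\HH$-hull setting; this last check is routine given the M\"obius correspondence provided by Theorem \ref{Mob}.
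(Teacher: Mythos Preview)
Your overall strategy---extract a limit hull $L$, build $V_0=f_L\circ W\circ g_K$ on the complement of $\ha K$, then extend by a maximum-principle argument---can be made to work, but it is more roundabout than the paper's proof and has two concrete gaps.

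First, the compactness claim ``the family of $\D$-hulls in $\lin\D$ is compact'' is false: every $\D$-hull sits in $\lin\D$, and the full space of $\D$-hulls is not compact (see the Remark after Lemma~\ref{compact-D-M}). Lemma~\ref{compact-D-F}, the genuine $\D$-analog of Lemma~\ref{compact-new}, requires the supports to lie in a fixed compact $F\subsetneq\TT$, which you do not have here. One would instead need a uniform bound on $\dcap(L_n)$ to invoke Lemma~\ref{compact-D-M}, and you have not supplied one. Second, a ``$\TT$-symmetric Jordan curve surrounding $\TT$'' does not exist: $I_\TT$ swaps the two sides of $\TT$, so an $I_\TT$-invariant Jordan curve other than $\TT$ must cross $\TT$. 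What you presumably want is the annulus bounded by $\{|z|=r\}$ and $\{|z|=1/r\}$ for suitable $r>1$; that fix is easy but should be stated.

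The paper avoids both problems by reversing the order of extraction. Since each $V^{\langle n\rangle}$ sends $\Omega^{K_n}\cap\D$ into $\D$, the family $(V^{\langle n\rangle})$ is uniformly bounded there and hence normal; a subsequence converges locally uniformly in $\Omega^K\cap\D$, then in $\Omega^K\cap\D^*$ by $\TT$-symmetry, and then across $\TT$ by the maximum principle. The limit $V$ is nonconstant because each $V^{\langle n\rangle}$ maps $\TT$ onto $\TT$. Only after obtaining $V$ does the paper read off $V^{\langle n\rangle}(K_n)\to V(K)$ and pass to the limit in the collapse relation. This makes the relative compactness of $(L_n)$ a consequence rather than a hypothesis, and no Jordan-curve scaffolding is needed.
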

\begin{proof} Since $K_n\to K$, $\Omega^{K_n}\dto\Omega^K$. Since $V^{\langle n\rangle}$ maps $\Omega^{K_n}\cap\D$ into $\D$, the family $(V^{\langle n\rangle}|_{\Omega^{K_n}\cap\D})$ is uniformly bounded. Thus, $(V^{\langle n\rangle})$ contains a subsequence, which convergence locally uniformly in $\Omega^K\cap\D$. To save the symbols, we assume that $(V^{\langle n\rangle})$ itself converges locally uniformly in $\Omega^K\cap\D$. Since each $V^{\langle n\rangle}$ is $\TT$-symmetric, the sequence also converges locally uniformly in $\Omega^K\cap\D^*$. From the maximum principle, $(V^{\langle n\rangle})$ converges locally uniformly in $\Omega^K$. Let $V$ be the limit function. Since each $V^{\langle n\rangle}$ maps $\TT$ onto $\TT$, and $V^{\langle n\rangle}\to V$ uniformly on $\TT$,  $V$ can not be constant. From Lemma \ref{domain convergence*}, $V$ is a conformal map. It is $\TT$-symmetric because each $V^{\langle n\rangle}$ is $\TT$-symetric. Since $K_n\to K$, we have $V^{\langle n\rangle}(K_n)\to V(K)$. From $V^{\langle n\rangle}_{K_n}=W$ we have $g_{V^{\langle n\rangle}(K_n)}\circ V^{\langle n\rangle}\circ f_{K_n}=W$ in $\Omega\sem \TT$. Letting $n\to\infty$ we get $g_{V(K)}\circ V\circ f_K=W$ in $\Omega\sem\TT$. By continuation, this equality also holds on $\Omega\sem S_K$. Thus, $V_K=W$.
\end{proof}

\begin{Theorem}
Let $W$ be a $\TT$-symmetric conformal map with domain $\Omega$. Let $K$ be a $\D$-hull such that $S_K\subset\Omega$. Then there is a unique $\TT$-symmetric conformal map $V$ defined on $\Omega^K$ such that $V_K=W$. \label{W^K-d}
\end{Theorem}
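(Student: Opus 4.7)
The plan is to reduce Theorem \ref{W^K-d} to its $\HH$-counterpart, Theorem \ref{W^K}, by conjugating with Möbius transformations between $\D$ and $\HH$, using Theorem \ref{Mob} as the bridge. The degenerate case $S_K=\TT$ is then handled by approximation together with Lemma \ref{T-support}.

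Assume first that $S_K\ne\TT$. Since $W$ restricts to a homeomorphism of $\TT\cap\Omega$ onto $\TT\cap W(\Omega)$, the image $W(S_K)$ is also a proper compact subset of $\TT$. Pick $z_0\in\TT\setminus S_K$ and $w_0\in\TT\setminus W(S_K)$, and Möbius transformations $M_1,M_2\colon\D\conf\HH$ with $M_1(z_0)=\infty$ and $M_2(w_0)=\infty$. Set $\til\Omega:=M_1(\Omega\setminus\{z_0\})$ and $\til W:=M_2\circ W\circ M_1^{-1}$, which is an $\R$-symmetric conformal map on $\til\Omega$. Theorem \ref{Mob}(i) applied to $(M_1,K)$ produces a Möbius transformation $M_1^K\colon\D\conf\HH$ such that $\til K:=M_1^K(K)$ is an $\HH$-hull with $S_{\til K}^\R=M_1(S_K)\subset\til\Omega$, and the choice of $w_0$ ensures $\infty\notin\til W(S_{\til K})$. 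Theorem \ref{W^K} then furnishes a unique $\R$-symmetric conformal map $\til V$ on $\til\Omega^{\til K}$ with $\til V_{\til K}=\til W$. Next, Theorem \ref{Mob}(ii) applied to $(M_2^{-1},\til V(\til K))$ yields a Möbius $N\colon\HH\conf\D$ for which $L:=N(\til V(\til K))$ is a $\D$-hull. Setting
$$V:=N\circ\til V\circ M_1^K,$$
the intertwining identities $M_1^K\circ f_K^\D=f_{\til K}^\HH\circ M_1$ and $g_L^\D\circ N=M_2^{-1}\circ g_{\til V(\til K)}^\HH$ from Theorem \ref{Mob}, together with $\til V_{\til K}=\til W$, give $g_L^\D\circ V\circ f_K^\D=W$, i.e.\ $V_K=W$. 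The $\TT$-symmetry of $V$ is inherited from that of $M_1^K$, $\til V$, and $N$.

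When $S_K=\TT$, approximate $K$ by a sequence of $\D$-hulls $K_n\subset K$ with $S_{K_n}\subsetneq\TT$ and $K_n\to K$ (for instance by cutting off a small neighborhood of an arc from $K$); the previous step provides $\TT$-symmetric conformal maps $V^{\langle n\rangle}$ on $\Omega^{K_n}$ with $V^{\langle n\rangle}_{K_n}=W$, and Lemma \ref{T-support} delivers the required $V$ on $\Omega^K$. For uniqueness, if $V$ and $\til V$ both satisfy $V_K=\til V_K=W$, then $g_{V(K)}^\D\circ V=g_{\til V(K)}^\D\circ\til V$ on $\Omega\setminus\ha K$, so $\til V\circ V^{-1}$ coincides with $f_{\til V(K)}^\D\circ g_{V(K)}^\D$ on $V(\Omega)\setminus V(\ha K)$. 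The two maps glue across $\ha\C$ into a Möbius transformation $h$ which fixes $0$, satisfies $h'(0)>0$, and commutes with $I_\TT$; these constraints force $h=\id$ and hence $\til V=V$.

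The main technical hurdle is the construction of the approximating sequence $(K_n)$ in the boundary case $S_K=\TT$: one needs $\D$-hulls contained in $K$ with strictly smaller supports but converging to $K$ strongly enough for Lemma \ref{T-support} to apply. Once that and the Möbius reduction are in place, the rest of the argument is straightforward bookkeeping with the intertwining identities furnished by Theorem \ref{Mob}.
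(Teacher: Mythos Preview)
Your proposal is correct and follows essentially the same route as the paper: conjugate by M\"obius transformations via Theorem \ref{Mob} to reduce the case $S_K\ne\TT$ to Theorem \ref{W^K}, treat the case $S_K=\TT$ by approximation through Lemma \ref{T-support}, and prove uniqueness by gluing $\til V\circ V^{-1}$ with $f^\D_{\til V(K)}\circ g^\D_{V(K)}$ into a global automorphism of $\ha\C$ forced to be the identity.

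A few small points are worth tightening. First, in the uniqueness argument the equality $g^\D_{V(K)}\circ V=g^\D_{\til V(K)}\circ\til V$ holds on $\Omega^K\setminus\ha K$ (not $\Omega\setminus\ha K$), and consequently the overlap region is $V(\Omega^K)\setminus V(\ha K)$. Second, if $z_0\in\Omega$ then $\til\Omega=M_1(\Omega\setminus\{z_0\})$ omits $\infty$, so $\til V$ is a priori only defined on a punctured neighborhood of $\infty$ and $V$ misses the single point $f^\D_K(z_0)\in\Omega^K$; a removable-singularity argument is needed to fill this in (the paper is equally informal here). Third, and most substantively, your suggested approximation in the case $S_K=\TT$ does not work as stated: removing a neighborhood of an arc from $K$ will generally not produce a $\D$-hull, and there is no reason to expect approximants with $K_n\subset K$. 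The paper instead first approximates by the annular hulls $K_n=\D\setminus f_K(\{|z|<1-1/(2n)\})\supset K$, which still satisfy $S_{K_n}=\TT$, and then approximates each such annulus by a simple curve that winds almost all the way around, thereby obtaining hulls with $S_{K_n}\subsetneq\TT$. You rightly identify this step as the main technical hurdle; the actual construction is a two-stage limit rather than a single truncation.
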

 \begin{proof} We first consider the existence. Case 1. $S_K^\TT\ne\TT$. We will apply Theorems \ref{W^K} and \ref{Mob} for this case. Pick $z_0\in\TT\sem S_K^\TT$ and let $h(z)=i\frac{z_0+z}{z_0-z}$. From Theorem \ref{Mob} (i), there is a M\"obius transformation $h^K$ that maps $\D$ onto $\HH$ such that $L:=h^K(K)$ is an $\HH$-hull, and $g^\HH_{L}\circ h^K\circ f^\D_K=h$ in $\ha\C\sem S_K^\TT$.   Since $W$ is a homeomorphism on $S_K$, $W(S_K)\ne \TT$. So there is $z_W\in\TT\sem W(S_K)$. Let $h_W(z)=z_W\cdot\frac{z-i}{z+i}$. Then $h_W$ is a M\"obius transformation that maps $\HH$ onto $\D$ and takes $\infty$ to $z_W$. Let $\til W=h_W^{-1}\circ W\circ h^{-1}$. Then $\til W$ is an $\R$-symmetric conformal map with domain $h(\Omega)$, and $\til W(S^\R_L)=h_W^{-1}\circ W(S_K^\TT) \not\ni\infty$. From Theorem \ref{W^K}, there is an $\R$-symmetric conformal map $\til V$ with domain $\ha L^\R\cup f_L^\R(h(\Omega)\sem S_L^\R)$ such that $L^*:=\til V(L)$ is an $\HH$-hull, and $\til V=f^\HH_{L^*}\circ \til W\circ g^\HH_L$ in $\ha\C\sem \ha L^\R$. From Theorem \ref{Mob} (ii), there is a M\"obius transformation $h_W^{L^*}$ that maps $\HH$ onto $\D$ such that $K^*:=h_W^{L^*}(L^*)$ is a $\D$-hull, and $g^\D_{K^*}\circ h_W^{L^*}\circ f^\HH_{L^*}=h_W$ in $\ha\C\sem S_{L^*}^\R$. Finally, let $V=h_W^{L^*}\circ \til V\circ h^K$. Then
 $$V(K)=h_W^{L^*}\circ \til V(L)=h_W^{L^*}(L^*)=K^*,$$
 and
 $$g_{K^*}\circ V\circ f_K=g_{K^*}\circ h_W^{L^*}\circ \til V\circ h^K\circ f_K$$
 $$=g_{K^*}\circ h_W^{L^*}\circ f^\HH_{L^*}\circ \til W\circ g^\HH_L\circ h^K\circ f_K$$
 $$=h_W\circ \til W\circ h=W$$
 in $\ha\C\sem \ha K$. This finishes the existence part for Case 1.

 Case 2. $S_K=\TT$. First, we may approximate $K$ using $\D$-hulls bounded by $\TT$ and a Jordan curve in $\D$. For example, let $J_n=f_{K}(\{|z|=1-1/(2n)\})$, and let $K_n=\D\sem D_{J_n}$. Then each $K_n$ is a $\D$-hull, and $K_n\to K$. Second, if $K'$ has the form of $\D\sem D_J$ for some Jordan curve $J$, then we may define a curve $\beta$, which starts from $\beta(0)=z_0\in\TT$, then follows a simple curve in $\D\cap D_J^*$ to a point on $J$, and then follows $J$ in the clockwise direction, and ends when it finishes one round. Suppose the domain of $\beta$ is $[0,1]$. Then $\beta$ is simple on $[0,1-\eps]$ for any $\eps>0$. Let $K_n=\beta((0,1-1/n])$, $n\in\N$. Then each $K_n$ is a $\D$-hull with $S_{K_n}\ne \TT$, and $K_n\to K'$. Thus, $K$ can be approximated by a sequence of $\D$-hulls $(K_n)$ such that $S_{K_n}\ne\TT$ for each $K_n$. Then the existence of $V$ follows from Case 1 and Lemma \ref{T-support}.

 Now we prove the uniqueness. Suppose $\til V$ is another $\TT$-symmetric conformal map defined on $\Omega^K$ such that $\til V_K=W$. We may use the argument in the proof of Theorem \ref{W^K} to construct an analytic function $h$ on $\C$ such that $h=\til V\circ V^{-1}$ on $V(\Omega)$ and $h=f_{\til V(K)}\circ g_{V(K)}$ on $\C\sem V(\ha K)$. Then $h$ is $\TT$-symmetric.
  From the properties of $f_{\til V(K)}$ and $g_{V(K)}$, we see that $h(0)=0$ and $h'(0)>0$. So $h=\id$, which implies that $\til V=V$.
 \end{proof}

 We may then define $W^K$ and $W^*$ using Definition \ref{W^K-def} with Theorem \ref{W^K-d} in place of Theorem \ref{W^K} and $\D$ in place of $\HH$. The remarks after Definition \ref{W^K-def} hold here with minor modifications, and so does Lemma \ref{W*K/K}. Then we define ${\cal P}^*$, ${\cal P}_*$, $g_*$, and $f^*$ using Definition \ref{g*} with $\HH$  replaced by $\D$. Then Theorem \ref{bijection-gf*} still holds here, and we may define the quotient union $K_1\vee K_2$ for $(K_1,K_2)\in{\cal P}_*$. 

 The space of $\D$-hulls has a natural metric $d_{\cal H}$ described in Appendix \ref{D}. Let ${\cal H}_S$ denote the set of $\D$-hulls whose supports are contained in $S$. We claim that Theorem \ref{continuous} still holds here if every $\R$ is replaced by $\TT$. For part (i), if $F\ne\TT$, then the proof of Theorem \ref{continuous} (i) still goes through with Lemma \ref{compact-D-F} in place of Lemma \ref{compact-new}; if $F=\TT$, then the continuity of $W^*$ follows from Lemma \ref{T-support}. For part (ii), the proof of Theorem \ref{continuous} still goes through with some modifications. The relatively compactness of $(H_n\cup J_n)$ follows from Lemma \ref{compact-D-F} instead of Lemma \ref{compact-new} because $S_{H_n\cup J_n}\subset E\cup F\subsetneqq \TT$. To show the relatively compactness of $(H_n)$ and $(J_n)$, instead of applying Lemma \ref{lemma-LERW}, we now apply Lemma \ref{compact-D-M}, and use the relatively compactness of $(H_n\cup J_n)$ and the inequalities $\dcap(H_n),\dcap(J_n)\le \dcap(H_n\cup J_n)$. In addition, (\ref{f/z}) will be used in place of (\ref{f-z}). This finishes the proof of Theorem \ref{continuous} in the radial case.  Then Corollary \ref{continuous-Cor} in the radial case immediately follows.

 The proof of Theorem \ref{continuous} (i) may also be used to show that the map $K\mapsto W^K(K)$ in Theorem \ref{Mob} (i) (resp.\ (ii)) is continuous if restricted to ${\cal H}_F^\D$ (resp.\ ${\cal H}_F^\HH$), where $F$ is a compact subset of $\TT\sem W^{-1}(\infty)$ (resp.\ $\R$). We then can conclude that the maps $K\mapsto W^K(K)$ in Theorem \ref{Mob} (i) and (ii) are both measurable.

\section{Loewner Equations and Loewner Chains} \label{section-Loewner}
\subsection{Forward Loewner equations}
We review the definitions and basic facts about (forward) Loewner equations. The reader is referred to \cite{LawSLE} for details.
Let $\lambda\in C([0,T))$, where $T\in(0,\infty]$. The chordal Loewner equation driven by $\lambda$ is
$$\pa_t g_t(z)=\frac{2}{g_t(z)-\lambda(t)}, \quad g_0(z)=z.$$
We assume that $g_t(\infty)=\infty$ for $0\le t<\infty$. For $z\in\C$, suppose that the maximal interval for $t\mapsto g_t(z)$ is $[0,\tau_z)$. Let $K_t=\{z\in\HH:\tau_z\le t\}$, i.e., the set of $z\in\HH$ such that $g_t(z)$ is not defined. Then $g_t$ and $K_t$, $0\le t<T$, are called the chordal Loewner maps and hulls driven by $\lambda$. It is known that each $K_t$ is an $\HH$-hull with $\hcap(K_t)=2t$, and for $0<t<T$, $g_t=g_{K_t}$ with exactly the same domain: $\ha\C\sem \ha K_t$. At $t=0$, $K_0=\emptyset$ and $g_0=\id_{\ha\C\sem\{\lambda(0)\}}$.

We say that $\lambda$ generates a chordal trace $\beta$ if
$$\beta(t):=\lim_{\HH\ni z\to \lambda(t)} g_t^{-1}(z)\in\lin{\HH}$$
exists for $0\le t<T$, and $\beta$ is a continuous curve. We call such $\beta$ the chordal trace driven by $\lambda$. If the chordal trace $\beta$ exists, then for each $t$, $\HH\sem K_t$ is the unbounded component of $\HH\sem \beta((0,t])$, and $f_t$ extends continuously from $\HH$ to $\HH\cup\R$. The trace $\beta$ is called simple if it is a simple curve and $\beta(t)\in\HH$ for $0<t<T$, in which case $K_t=\beta((0,t])$ for $0\le t<T$.

The radial Loewner equation driven by $\lambda$ is
$$\pa_t g_t(z)=g_t(z)\frac{e^{i\lambda(t)}+g_t(z)}{e^{i\lambda(t)}-g_t(z)},\quad 0\le t<T;\quad g_0(z)=z.$$
We assume that $g_t(\infty)=\infty$ for $0\le t<\infty$.
For each $t\in[0,T)$, let $K_t$ be the set of $z\in\D:=\{|z|<1\}$ at which $g_t$ is not defined. Then $g_t$ and $K_t$, $0\le t<T$, are called the radial Loewner maps and hulls driven by $\lambda$. It is known that, each $K_t$ is a $\D$-hull with $\dcap(K_t)=t$, and for $0<t<T$, $g_t=g_{K_t}$ with exactly the same domain: $\ha\C\sem \ha K_t$. At $t=0$, $K_0=\emptyset$ and $g_0=\id_{\ha\C\sem \{e^{i\lambda(0)}\}}$.

We say that $\lambda$ generates a radial trace $\beta$ if
$$\beta(t):=\lim_{\D\ni z\to e^{i\lambda(t)}} g_t^{-1}(z)\in\lin\D$$
exists for $0\le t<T$, and $\beta$ is a continuous curve. We call such $\beta$ the radial trace driven by $\lambda$. If the radial trace $\beta$ exists, then for each $t$, $\D\sem K_t$ is the component of $\D\sem \beta((0,t])$ that contains $0$. The trace $\beta$ is called simple if it is a simple curve and $\beta(t)\in\D$ for $0<t<T$, in which case $K_t=\beta((0,t])$ for $0\le t<T$.

Let $\cot_2(z)=\cot(z/2)$. The covering radial Loewner equation driven by $\lambda$ is
$$\pa \til g_t(z)=\cot_2(\til g_t(z)-\lambda(t)),\quad 0\le t<T,\quad \til g_0(z)=z.$$
For each $t\in[0,T)$, let $\til K_t$ be the set of all $z\in\HH$ at which $\til g_t$ is not defined. Then $\til g_t$ and $\til K_t$, $0\le t<T$, are called the covering radial Loewner maps and hulls driven by $\lambda$. We have $\til g_t:\HH\sem\til K_t\conf \HH$. If $g_t$ and $K_t$, $0\le t<T$, are the radial Loewner maps and hulls driven by $\lambda$, then $\til K_t=(e^i)^{-1}(K_t)$ and $e^i\circ \til g_t=g_t\circ e^i$, where $e^i$ denotes the map $z\mapsto e^{iz}$.

For $\kappa>0$, chordal (resp.\ radial) SLE$_\kappa$ is defined by solving the chordal (resp.\ radial) Loewner equation with $\lambda(t)=\sqrt\kappa B(t)$. Such driving function a.s.\ generates a chordal (resp.\ radial) trace, which is simple if $\kappa\in(0,4]$.

\subsection{Backward  Loewner equations}
Let $\lambda\in C([0,T))$. The backward chordal Loewner equation driven by $\lambda$ is
\BGE \pa_t f_t(z)=\frac{-2}{f_t(z)-\lambda(t)}, \quad f_0(z)=z.\label{chordal-backward}\EDE
We assume that $f_t(\infty)=\infty$ for $0\le t<T$. Let $L_t=\HH\sem f_t(\HH)$. We call $f_t$ and $L_t$, $0\le t<T$, the backward chordal Loewner maps and hulls driven by $\lambda$.

Define a family of maps $f_{t_2,t_1}$, $t_1,t_2\in[0,T)$, such that, for any fixed $t_1\in[0,T)$ and $z\in\ha\C\sem\{\lambda(t_1)\}$, the function $t_2\mapsto f_{t_2,t_1}(z)$ is the maximal solution of the ODE
$$ \pa_{t_2} f_{t_2,t_1}(z)=\frac{-2}{f_{t_2,t_1}(z)-\lambda(t_2)},\quad f_{t_1,t_1}(z)=z.$$ 
Note that $f_{t,0}=f_{t}$ and $f_{t,t}=\id_{\ha\C\sem\{\lambda(t)\}}$, $0\le t<T$. If $t_1\in(0,T)$, then $t_2$ could be bigger or smaller than $t_1$. Some simple observations give the following lemma.

\begin{Lemma}
\begin{enumerate}
  \item[(i)] For any $t_1,t_2,t_3\in[0,T)$, $f_{t_3,t_2}\circ f_{t_2,t_1}$ is a restriction of $f_{t_3,t_1}$. In particular, this implies that $f_{t_1,t_2}=f_{t_2,t_1}^{-1}$.
  \item[(ii)] For any fixed $t_0\in[0,T)$, $f_{t_0+t,t_0}$, $0\le t<T-t_0$, are the backward chordal Loewner maps driven by $\lambda(t_0+t)$, $0\le t<T-t_0$. 
  \item[(iii)] For any fixed $t_0\in[0,T)$, $f_{t_0-t,t_0}$, $0\le t\le t_0$, are the (forward) chordal Loewner maps driven by $\lambda(t_0-t)$, $0\le t\le t_0$.
\end{enumerate} \label{ft2t1}
\end{Lemma}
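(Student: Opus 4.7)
My plan is to exploit the fact that, by the very definition of $f_{t_2,t_1}(z)$, for any fixed pair $(t_1,z)$ the function $s\mapsto f_{s,t_1}(z)$ is the \emph{maximal} solution of the time-dependent ODE
\[ \partial_s w(s) = \frac{-2}{w(s)-\lambda(s)}, \qquad w(t_1)=z. \]
Classical uniqueness of ODE solutions then drives all three parts.

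For part (i), suppose $z$ is such that $w:=f_{t_2,t_1}(z)$ and $f_{t_3,t_2}(w)$ are both defined. The maximal solution $s\mapsto f_{s,t_1}(z)$ passes through $(t_2,w)$, and by ODE uniqueness it coincides on the common maximal interval with $s\mapsto f_{s,t_2}(w)$. A short mutual-maximality argument (each solution is a solution through the other's initial point, so neither maximal interval can strictly contain the other) shows the two intervals are equal. In particular $t_3$ lies in both, and $f_{t_3,t_1}(z)=f_{t_3,t_2}(w)=f_{t_3,t_2}(f_{t_2,t_1}(z))$. Taking $t_3=t_1$ and using $f_{t_1,t_1}=\id$ yields the relation $f_{t_1,t_2}=f_{t_2,t_1}^{-1}$.

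For part (ii), set $\til f_t(z)=f_{t_0+t,t_0}(z)$. Then $\til f_0=\id$, and the chain rule applied to the defining ODE of $f_{s,t_0}$ gives
\[ \partial_t \til f_t(z) = \partial_s f_{s,t_0}(z)\big|_{s=t_0+t} = \frac{-2}{\til f_t(z)-\lambda(t_0+t)}, \]
which is (\ref{chordal-backward}) with driving function $t\mapsto\lambda(t_0+t)$. Part (iii) is analogous: with $g_t(z):=f_{t_0-t,t_0}(z)$ we have $g_0=\id$ and
\[ \partial_t g_t(z) = -\partial_s f_{s,t_0}(z)\big|_{s=t_0-t} = \frac{2}{g_t(z)-\lambda(t_0-t)}, \]
which is the \emph{forward} chordal Loewner equation driven by $t\mapsto\lambda(t_0-t)$. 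The only subtlety is that the defining ODE for $f_{s,t_0}(z)$ must be used at values $s<t_0$ as well, but this is exactly what is meant by saying $s\mapsto f_{s,t_0}(z)$ is the maximal solution (the solution extends on both sides of $s=t_0$).

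The only place requiring any real care is part (i), specifically the identification of the maximal intervals of the two flows; parts (ii) and (iii) follow immediately because the right-hand side of the defining ODE depends on time only through $\lambda$, so time translations and time reversals of the driving function translate directly into the corresponding operations on the flow.
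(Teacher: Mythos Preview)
Your proof is correct and is precisely the fleshing-out of the ``simple observations'' the paper alludes to without writing them down; the paper gives no proof beyond that phrase, and your use of uniqueness and maximality of ODE solutions for (i) together with direct reparametrization for (ii) and (iii) is exactly the intended argument.
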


Let $L_{t_2,t_1}=\HH\sem f_{t_2,t_1}(\HH)$ for $0\le t_1\le t_2<T$. From (i), (iii), and the properties of forward chordal Loewner maps, we see that, if $0\le t_1< t_2<T$, then $L_{t_2,t_1}$ is an $\HH$-hull with $\hcap(L_{t_2,t_1})=2(t_2-t_1)$, and $f_{t_2,t_1}=f_{L_{t_2,t_1}}$. If $t_1=t_2$, this is almost still true except that $f_{t_1,t_1}=\id_{\ha\C\sem\{\lambda(t_1)\}}$ and $f_{L_{t_1,t_1}}=f_\emptyset=\id_{\ha\C}$. Since $L_{t,0}=L_t$, and $\lambda(t)\in\R$ does not lie in the range of $f_t$, which is $\ha\C\sem \ha L_t$ for $t>0$, we get the following lemma.

\begin{Lemma}
  For $0\le t<T$, $L_t$ is an $\HH$-hull with $\hcap(L_t)=2t$. If $t\in(0,T)$, then $f_t=f_{L_t}$ with the same domain: $\ha\C\sem S_{L_t}$, and $\lambda(t)\in B_{L_t}$. \label{chordal-S}
\end{Lemma}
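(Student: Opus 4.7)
The strategy is to convert the statement into one about the forward chordal Loewner equation, whose corresponding properties have just been recalled in the previous subsection, via the time-reversal identity of Lemma \ref{ft2t1}.

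First, I would fix $t\in(0,T)$ and apply Lemma \ref{ft2t1}(iii) with $t_0=t$: this identifies the family $\hat g_s:=f_{t-s,t}$, $0\le s\le t$, with the forward chordal Loewner maps driven by $\hat\lambda(s):=\lambda(t-s)$. Denote the associated forward chordal Loewner hulls by $\hat K_s$. Lemma \ref{ft2t1}(i) then gives $f_t^{-1}=f_{t,0}^{-1}=f_{0,t}=\hat g_t$.

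Next, from the review of forward chordal Loewner maps in the previous subsection, $\hat K_t$ is an $\HH$-hull with $\hcap(\hat K_t)=2t$, and $\hat g_t=g_{\hat K_t}$ with domain exactly $\ha\C\setminus\ha{\hat K_t}$. Inverting, $f_t=f_{\hat K_t}$ with domain $\ha\C\setminus S_{\hat K_t}$; in particular $f_t:\HH\conf\HH\setminus \hat K_t$. By the very definition $L_t=\HH\setminus f_t(\HH)$, so $L_t=\hat K_t$. This immediately yields that $L_t$ is an $\HH$-hull with $\hcap(L_t)=2t$, and that $f_t=f_{L_t}$ with common domain $\ha\C\setminus S_{L_t}$. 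For $t=0$ the claims are trivial since $L_0=\emptyset$.

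It remains to verify $\lambda(t)\in B_{L_t}$ for $t>0$. The point $\hat\lambda(0)=\lambda(t)$ is the initial driving value of the forward process generating $L_t=\hat K_t$. By continuity of $\hat\lambda$ and of the Loewner flow, for every $\eps>0$ the hull $\hat K_\eps$ is contained in an arbitrarily small neighborhood of $\hat\lambda(0)=\lambda(t)$ as $\eps\downarrow 0$; together with $\hcap(\hat K_\eps)=2\eps>0$ (so $\hat K_\eps\ne\emptyset$) this forces $\lambda(t)\in\overline{\hat K_\eps}\subset\overline{\hat K_t}=\overline{L_t}$, and hence $\lambda(t)\in\overline{L_t}\cap\R=B_{L_t}$.

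The argument is routine; the only point requiring minor care is the domain assertion for $f_t$, where one must invoke the uniqueness of the conformal extension (given the hydrodynamic normalization $f_t(z)=z+O(1/z)$ at $\infty$) to identify the analytic continuation of $f_t$ from $\HH$ with $f_{L_t}$ across $\R\setminus S_{L_t}$.
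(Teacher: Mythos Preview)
Your proposal is correct and follows essentially the same route as the paper: reduce to the forward chordal Loewner equation via the time-reversal in Lemma \ref{ft2t1}(iii), identify $f_t^{-1}$ with the forward map $g_{\hat K_t}$ via Lemma \ref{ft2t1}(i), and read off $L_t=\hat K_t$, $\hcap(L_t)=2t$, and $f_t=f_{L_t}$ on $\ha\C\sem S_{L_t}$.

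The only noticeable difference is in the justification of $\lambda(t)\in B_{L_t}$. The paper argues more directly: since the backward ODE is singular at $\lambda(t)$, the value $\lambda(t)$ cannot lie in the range of $f_t$, which for $t>0$ is $\ha\C\sem\ha L_t$; hence $\lambda(t)\in\ha L_t\cap\R=B_{L_t}$. Your argument via the shrinking forward hulls $\hat K_\eps$ is also valid, though the sentence ``this forces $\lambda(t)\in\overline{\hat K_\eps}$'' is slightly imprecise---what you mean (and what suffices) is that picking any $p_\eps\in\hat K_\eps\ne\emptyset$ gives $p_\eps\to\lambda(t)$ with $p_\eps\in L_t$, so $\lambda(t)\in\overline{L_t}\cap\R=B_{L_t}$.
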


If $t_2\ge  t_1\ge t_0$, from $f_{t_2,t_1}\circ f_{t_1,t_0}=f_{t_2,t_0}$  we get $L_{t_2,t_0}=L_{t_2,t_1}\cdot L_{t_1,t_0}$.
From Lemmas \ref{SKprec} and \ref{ft2t1}, we obtain the following lemma.

\begin{Lemma}
  For any $0\le t_1<t_2<T$, $L_{t_1}\prec L_{t_2}$ and $S_{L_{t_1}}\subset S_{L_2(t_2)}$. For any fixed $t_0\in[0,T)$, the family $L_{t_0}:L_{t_0-t}=L_{t_0,t_0-t}$, $0\le t\le t_0$, are the chordal Loewner hulls driven by $\lambda(t_0-t)$, $0\le t\le t_0$. 
  \label{backward-forward-chordal}
\end{Lemma}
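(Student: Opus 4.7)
The plan is to deduce everything directly from the composition identity $L_{t_2,t_0} = L_{t_2,t_1}\cdot L_{t_1,t_0}$ noted in the paragraph just before the statement (which in turn comes from $f_{t_2,t_1}\circ f_{t_1,t_0}=f_{t_2,t_0}$ in Lemma \ref{ft2t1}(i)), combined with the algebraic properties of $\cdot$, $/$, $\prec$ from Definitions \ref{/}--\ref{//} and with Lemma \ref{SKprec}. No new analytic input should be needed; the work is bookkeeping.

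First I would specialize the composition identity to $t_0=0$, obtaining $L_{t_2} = L_{t_2,t_1}\cdot L_{t_1}$. The second item in the list of facts after Definition \ref{/} then immediately gives $L_{t_2,t_1}\subset L_{t_2}$ and $L_{t_1} = L_{t_2}/L_{t_2,t_1}\prec L_{t_2}$. Applying Lemma \ref{SKprec} (with $K_2=L_{t_2}$, $K_0=L_{t_2,t_1}$, $K_1=L_{t_1}$) yields the support inclusion $S_{L_{t_1}}\subset S_{L_{t_2}}$. This disposes of the first sentence.

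For the second sentence, fix $t_0\in[0,T)$ and $t\in[0,t_0]$. The composition identity with indices $(t_0,t_0-t,0)$ gives $L_{t_0} = L_{t_0,t_0-t}\cdot L_{t_0-t}$. By Definition \ref{//}, $L_{t_0}:L_{t_0-t}$ is the unique $\HH$-hull $K\subset L_{t_0}$ with $L_{t_0}/K=L_{t_0-t}$; the displayed equation exhibits $K=L_{t_0,t_0-t}$ as such a hull, so $L_{t_0}:L_{t_0-t} = L_{t_0,t_0-t}$.

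It remains to identify $L_{t_0,t_0-t}$, $0\le t\le t_0$, with the forward chordal Loewner hulls driven by $s\mapsto\lambda(t_0-s)$. By Lemma \ref{ft2t1}(iii), the maps $f_{t_0-t,t_0}$, $0\le t\le t_0$, are precisely the forward chordal Loewner maps driven by $\lambda(t_0-t)$. On the other hand, by Lemma \ref{ft2t1}(i) and Lemma \ref{chordal-S}, $f_{t_0-t,t_0} = f_{t_0,t_0-t}^{-1} = f_{L_{t_0,t_0-t}}^{-1} = g_{L_{t_0,t_0-t}}$, so the forward Loewner hulls generated by $f_{t_0-t,t_0}$ are exactly the $L_{t_0,t_0-t}$. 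Combined with the previous paragraph, this gives the claim. The only mild care point is the endpoint $t=0$, where $L_{t_0,t_0}=\emptyset$ (since $f_{t_0,t_0}=\id_{\ha\C\sem\{\lambda(t_0)\}}$) and $L_{t_0}:L_{t_0}=\emptyset$ by inspection of Definition \ref{//}, consistent with the identification. I do not anticipate any real obstacle; the argument is essentially an exercise in unwinding the hull algebra introduced in Section \ref{Section-Conformal}.
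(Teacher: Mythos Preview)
Your proposal is correct and follows essentially the same approach as the paper, which simply remarks that the lemma follows from the composition identity $L_{t_2,t_0}=L_{t_2,t_1}\cdot L_{t_1,t_0}$ together with Lemmas \ref{SKprec} and \ref{ft2t1}. One tiny citation quibble: the identity $f_{t_0,t_0-t}=f_{L_{t_0,t_0-t}}$ that you invoke is stated in the text preceding Lemma \ref{chordal-S} (for general $t_1<t_2$) rather than in Lemma \ref{chordal-S} itself, which is the special case $t_1=0$.
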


Note that $S_{L_0}=S_\emptyset=\emptyset$, and its is easy to see that, for $0<t_0<T$, $S_{L_{t_0}}$ is the set of $x\in\R$ such that the solution $f_t(x)$ to (\ref{chordal-backward}) blows up before or at $t_0$, i.e., $S_{L_{t_0}}=\{x\in\R:\tau_x\le t_0\}$. So every $S_{L_t}$, $0<t<T$, is a real interval, and $\bigcap_{0<t<T} S_{L_t}=\{\lambda(0)\}$.

If for every $t_0\in[0,T)$, $\lambda(t_0-t)$, $0\le t\le t_0$, generates a (forward) chordal trace, which we denote by $\beta_{t_0}(t_0-t)$, $0\le t\le t_0$, then we say that $\lambda$ generates backward chordal traces $\beta_{t_0}$, $0\le t_0<T$. If this happens, then for any $0\le t_1\le t_2<T$, $\HH\sem L_{t_2,t_1}$ is the unbounded component of  $\HH\sem \beta_{t_2}([t_1,t_2))$, and  $f_{t_2,t_1}$ extends continuously from $\HH$ to $\lin\HH$ such that
\BGE f_{t_2,t_1}(\lambda(t_1))=\beta_{t_2}(t_1),\quad 0\le t_1\le t_2<T.\label{ft2t1lambda}\EDE
Here we still use $f_{t_2,t_1}$ to denote the continuation if there is no confusion. 
For $0\le t_0\le t_1\le t_2<T$, the equality $f_{t_2,t_0}=f_{t_2,t_1}\circ f_{t_1,t_0}$ still holds after continuation, which together with (\ref{ft2t1lambda}) implies that
\BGE f_{t_2,t_1}(\beta_{t_1}(t))=\beta_{t_2}(t),\quad 0\le t\le t_1\le t_2<T.\label{ft2t1beta}\EDE

\vskip 4mm
\no{\bf Remark.}  One should keep in mind that each $\beta_t$ is a continuous function defined on $[0,t]$, $\beta_t(0)$ is the tip of $\beta_t$, and $\beta_t(t)$ is the root of $\beta_t$, which lies on $\R$. The parametrization is different from a forward chordal trace $\beta$, of which $\beta(0)$ is the root.
\vskip 4mm

The backward radial Loewner equations and the backward covering radial Loewner equation driven by $\lambda\in C([0,T))$ are the following two equations respectively:
$$ \pa_t f_t(z)=-f_t(z)\frac{e^{i\lambda(t)}+f_t(z)}{e^{i\lambda(t)}-f_t(z)},\quad f_0(z)=z; $$ 
$$ \pa_{t} \til f_{t}(z)=-\cot_2({\til f_{t}(z)-\lambda(t)}),\quad \til f_{0}(z)=z. $$ 
We have $f_t\circ e^i=e^i\circ \til f_t$.
Let $L_t=\D\sem f_t(\D)$. We call $f_t$ and $L_t$, $0\le t<T$,  the backward radial Loewner maps and hulls driven by $\lambda$, and call $\til f_t$, $0\le t<T$, the backward covering radial Loewner maps driven by $\lambda$.

By introducing $f_{t_2,t_1}$ in the radial setting, we find that Lemma \ref{ft2t1} holds if the word ``chordal'' is replaced by ``radial''. The following lemma is similar to Lemma \ref{chordal-S}.

\begin{Lemma}
  For $0\le t<T$, $L_t$ is a $\D$-hull with $\dcap(L_t)=t$. If $t\in(0,T)$, then $f_t=f_{L_t}$ with the same domain: $\ha\C\sem S_{L_t}$, and $e^{i\lambda(t)}\in B_{L_t}$. \label{radial-S}
\end{Lemma}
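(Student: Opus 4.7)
The proof will mirror that of Lemma \ref{chordal-S} with the obvious substitutions, using the radial analog of Lemma \ref{ft2t1} noted in the excerpt (just after the backward radial equation is introduced), together with the standard domain/capacity facts for forward radial Loewner maps.

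First I would introduce the two-parameter family $f_{t_2,t_1}$ for the backward radial equation, exactly as in the chordal setting: for each $t_1\in[0,T)$ and each $z\in\ha\C\sem\{e^{i\lambda(t_1)}\}$, let $t_2\mapsto f_{t_2,t_1}(z)$ be the maximal solution of
\[ \pa_{t_2}f_{t_2,t_1}(z)=-f_{t_2,t_1}(z)\frac{e^{i\lambda(t_2)}+f_{t_2,t_1}(z)}{e^{i\lambda(t_2)}-f_{t_2,t_1}(z)},\quad f_{t_1,t_1}(z)=z. \]
Then set $L_{t_2,t_1}=\D\sem f_{t_2,t_1}(\D)$ for $0\le t_1\le t_2<T$. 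The three parts of Lemma \ref{ft2t1} hold here with the same algebraic proofs; in particular, $f_{t_1,t_2}=f_{t_2,t_1}^{-1}$, and for any fixed $t_0\in(0,T)$ the family $f_{t_0-t,t_0}$, $0\le t\le t_0$, solves the forward radial Loewner equation driven by $s\mapsto\lambda(t_0-s)$.

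Fix $t_0\in(0,T)$. Applying the forward radial theory to the family of maps $f_{t_0-t,t_0}$, we learn that at $t=t_0$ the map $f_{0,t_0}$ is the forward radial Loewner map $g_K$ at time $t_0$, where $K$ is the corresponding forward radial Loewner hull: a $\D$-hull with $\dcap(K)=t_0$ whose associated $g_K$ is defined conformally on $\ha\C\sem\ha K$ with image $\ha\C\sem S_K$. By part (i) of the (radial) Lemma \ref{ft2t1}, $f_{0,t_0}=f_{t_0}^{-1}$, so $f_{t_0}=g_K^{-1}=f_K$ on $\D$. Consequently
\[ L_{t_0}=\D\sem f_{t_0}(\D)=\D\sem f_K(\D)=K, \]
so $L_{t_0}$ is a $\D$-hull with $\dcap(L_{t_0})=t_0$. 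Moreover, because the inverse of $f_{0,t_0}=g_K$ is defined on all of $\ha\C\sem S_K=\ha\C\sem S_{L_{t_0}}$, the map $f_{t_0}$ extends conformally to this same domain and coincides there with $f_{L_{t_0}}$.

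It remains to verify $e^{i\lambda(t_0)}\in B_{L_{t_0}}$. The backward radial ODE has a singularity whenever $f_t(z)=e^{i\lambda(t)}$, so this value cannot be attained by any trajectory starting away from $e^{i\lambda(0)}$ at time $0$; equivalently, $e^{i\lambda(t_0)}$ is not in the range of $f_{t_0}$, which is $\ha\C\sem\ha L_{t_0}$. Since $e^{i\lambda(t_0)}\in\TT$ and $\ha L_{t_0}\cap\TT=B_{L_{t_0}}$, we conclude $e^{i\lambda(t_0)}\in B_{L_{t_0}}$. The case $t_0=0$ is trivial since $L_0=\emptyset$ has $\dcap(L_0)=0$. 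I do not anticipate any real obstacle in this argument; the only substantive point is the identification in step two of $f_{0,t_0}$ with the forward radial Loewner map driven by the time-reversed $\lambda$, which is the radial counterpart of what was used in Lemma \ref{chordal-S}.
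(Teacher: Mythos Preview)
Your proof is correct and follows essentially the same approach as the paper, which does not give a separate proof of Lemma~\ref{radial-S} but simply notes it is similar to Lemma~\ref{chordal-S}. You have faithfully carried out the radial translation: introduce the two-parameter family $f_{t_2,t_1}$, invoke the radial version of Lemma~\ref{ft2t1} (part (iii)) to identify $f_{0,t_0}$ with the forward radial Loewner map driven by $\lambda(t_0-\cdot)$, deduce $L_{t_0}$ is a $\D$-hull with $\dcap(L_{t_0})=t_0$ and $f_{t_0}=f_{L_{t_0}}$ on $\ha\C\sem S_{L_{t_0}}$, and then use the ODE singularity at $e^{i\lambda(t)}$ to place $e^{i\lambda(t_0)}$ in $B_{L_{t_0}}$. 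One minor phrasing remark: since the forward theory already asserts that the ODE-defined $g_{K}$ has domain exactly $\ha\C\sem\ha K$, the inverse $f_{t_0}$ as an ODE solution already has domain exactly $\ha\C\sem S_{L_{t_0}}$, so no genuine ``extension'' is needed.
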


We find that Lemma \ref{backward-forward-chordal} holds here if the word ``chordal'' is replaced by ``radial''. So we may define backward radial traces $\beta_t$, $0\le t<T$, in a similar manner.

The following lemma holds only in the radial case.

\begin{Lemma}
If $T=\infty$, then $\TT\sem \bigcup_{0<t<\infty} S_{L_t}$ contains at most one point.\label{simple-trace-lem-r}
\end{Lemma}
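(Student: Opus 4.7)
The plan is to show that $|A_t| := |\TT \setminus S_{L_t}|$ tends to zero as $t \to \infty$ via a Koebe/harmonic-measure estimate, and then to use that each $A_t$ is a single open arc to conclude.

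First I would establish that $A_t$ is an open arc. By the radial analog of Lemma \ref{backward-forward-chordal}, each $L_t$ equals the forward radial Loewner hull of capacity $t$ driven by the reversed function $s \mapsto \lambda(t-s)$ on $[0,t]$, and it is a standard fact that supports of forward radial Loewner hulls are closed connected arcs of $\TT$ growing continuously from the initial driving point. Consequently $S_{L_t}$ is a closed arc, $A_t$ is an open arc, and $A_s \subset A_t$ whenever $s > t$.

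The main estimate is a single harmonic-measure identity combined with Koebe's $1/4$-theorem. I would consider $u_t(z) = \log|g_{L_t}(z)| - \log|z|$ on $\D\setminus L_t$. Since $g_{L_t}(z) = e^t z + O(z^2)$ near $0$, $u_t$ extends to a bounded harmonic function on $\D \setminus L_t$ with $u_t(0) = t$. Its boundary values are $0$ on $\TT$ (where $|g_{L_t}|=|z|=1$) and $-\log|z|$ on $\pa L_t \cap \D$ (where $|g_{L_t}|=1$). Letting $\omega_t$ denote harmonic measure from $0$ in $\D \setminus L_t$, Poisson's formula gives
$$t = \int_{\pa L_t \cap \D} (-\log|z|)\, d\omega_t(z).$$
On $\pa L_t \cap \D$ we have $|z| \ge \dist(0, L_t) \ge e^{-t}/4$, the last inequality being Koebe's $1/4$-theorem applied to $f_t : \D \to \D\setminus L_t$ with $f_t'(0)=e^{-t}$; hence $-\log|z| \le t + \log 4$ throughout the integral. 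Conformal invariance via $g_{L_t}$ identifies the harmonic-measure mass of $\pa L_t \cap \D$ with $|S_{L_t}|/(2\pi) = 1 - |A_t|/(2\pi)$. Combining these inputs yields
$$t \le (t + \log 4)\bigl(1 - |A_t|/(2\pi)\bigr),\qquad\text{i.e.,}\qquad |A_t| \le \frac{2\pi \log 4}{t + \log 4} \to 0.$$
Since $\bigcap_{t>0} A_t$ is a decreasing intersection of open arcs, it is a closed arc; having zero length it contains at most one point, which is the claim.

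The hardest step will be the first one, justifying that $S_{L_t}$ is a \emph{connected} arc. Without this structural input, the estimate above only forces the leftover set to have Lebesgue measure zero, which is much weaker than the stated conclusion (a zero-measure closed subset of $\TT$ can have uncountably many points). The rest is routine complex analysis: the identity $u_t(0) = t$ is forced by the capacity normalization $g_{L_t}'(0) = e^t$, and the contradiction with a lower bound on $|A_t|$ arises because Koebe ties $-\log\dist(0, L_t)$ to the same $t$, so the two quantities can only balance if the $\TT$-portion of the harmonic measure from $0$ in $\D \setminus L_t$ shrinks to zero.
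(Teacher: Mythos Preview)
Your proof is correct. Both your argument and the paper's rest on the same two ingredients: a Koebe-type estimate to control $\dist(0,L_t)$, and conformal invariance to identify the harmonic measure of the $\TT$-side of $\pa(\D\sem L_t)$ from $0$ with $|A_t|/(2\pi)$. The paper's version is terser: it notes that $\dist(0,L_t)\to 0$ (the Schwarz/Koebe upper bound) and asserts that this forces the harmonic measure of $\TT\sem B_{L_t}$ from $0$ to vanish --- an implication which, strictly, relies on connectedness of $L_t$ together with a Beurling-type estimate, though the paper does not spell this out. Your route is more self-contained and quantitative: by introducing the harmonic function $u_t=\log|g_{L_t}|-\log|\cdot|$ with $u_t(0)=t$, you combine Poisson's formula with the Koebe \emph{lower} bound on $\dist(0,L_t)$ to cap the boundary values, obtaining the explicit rate $|A_t|\le 2\pi\log 4/(t+\log 4)$ with no appeal to Beurling. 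You also make explicit the step from ``measure zero'' to ``at most one point'' via the arc structure of $S_{L_t}$, which the paper leaves implicit. One minor quibble: a decreasing intersection of open arcs need not be a \emph{closed} arc (it can be half-open); but it is connected, and that is all you need.
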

\begin{proof}
Let $S_\infty=\bigcup_{0<t<\infty} S_{L_t}$. From Koebe's $1/4$ theorem, as $t\to\infty$, $\dist(0,L_t)\to 0$, which implies that the harmonic measure of $\TT\sem B_{L_t}$ in $\D\sem L_t$ seen from $0$ tends to $0$. Since $f_t:\D\conf \D\sem L_t$, $f_t(0)=0$, and $f_t(\TT\sem S_{L_t})=\TT\sem B_{L_t}$, the above harmonic measure at time $t$ equals to $|\TT\sem S_{L_t}|/(2\pi)$. Thus, $|\TT\sem S_\infty|=\lim_{t\to\infty}|\TT\sem S_{L_t}|= 0$.
\end{proof}

For $\kappa>0$, the backward chordal (resp.\ radial) SLE$_\kappa$ is defined by solving backward chordal (resp.\ radial) Loewner equation with $\lambda(t)=\sqrt\kappa B(t)$, $0\le t<\infty$. Since for any fixed $t_0>0$, $(\lambda(t_0-t)-\lambda(t_0),0\le t\le t_0)$ has the distribution of $(\sqrt\kappa B(t),0\le t\le t_0)$, using the existence of forward chordal (resp.\ radial) SLE$_\kappa$ traces, we conclude that $\lambda$ a.s.\ generates a family of backward chordal (resp.\ radial) traces.

\subsection{Normalized global backward trace} \label{normalized}

First we consider a backward chordal Loewner process generated by $\lambda(t)$, $0\le t<T$. Let $S_t=S_{L_t}$, $0\le t<T$, and $S_T=\bigcup_{0\le t<T} S_t$. Then $(S_t)$ is an increasing family, and $S_T$ is an interval.
The following Lemma is similar in spirit to Proposition 5.1 in \cite{SS-GFF}.

\begin{Lemma}
  There exists a family of conformal maps $F_{T,t}$, $0\le t<T$, on $\HH$ such that $F_{T,t_1}=F_{T, t_2}\circ f_{t_2,t_1}$ in $\HH$ if $0\le t_1\le t_2<T$. Let $D_t=F_{T,t}(\HH)$, $0\le t<T$, and $D_T=\bigcup_{t<T} D_t$. If $(\ha F_{T,t})$ satisfies the same property as $(F_{T,t})$, then there is a conformal map $h_T$ defined on $D_T$ such that $\ha F_{T,t}=h_T\circ F_{T,t}$, $0\le t<T$.
  If there is $z_0\in\HH$ such that
  \BGE \lim_{t\to T} \frac{\Imm f_t(z_0)}{|f_t'(z_0)|}=\infty, \label{z0}\EDE
  then we may construct $(F_{T,t})$ such that $D_T=\C$, and we have $S_T=\R$.
  \label{limit-T*}
\end{Lemma}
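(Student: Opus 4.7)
The plan has three parts: existence of $(F_{T,t})$, uniqueness up to post-composition, and the two conclusions under hypothesis (\ref{z0}).

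For existence, fix any $z_0\in\HH$ and any sequence $s_n\uparrow T$, and set
$$
F_{T,t}^{(n)}(z)\;:=\;\frac{f_{s_n,t}(z)-f_{s_n}(z_0)}{f_{s_n}'(z_0)}+z_0,\qquad 0\le t\le s_n.
$$
The semigroup identity $f_{s_n,t_2}\circ f_{t_2,t_1}=f_{s_n,t_1}$ from Lemma \ref{ft2t1} gives the desired compatibility $F_{T,t_1}^{(n)}=F_{T,t_2}^{(n)}\circ f_{t_2,t_1}$ whenever $t_1\le t_2\le s_n$. The affine prefactor is chosen so that, using $f_{s_n,t}\circ f_t=f_{s_n}$, the map $F_{T,t}^{(n)}$ sends $f_t(z_0)\mapsto z_0$ with derivative $1/f_t'(z_0)$, a normalization depending on $t$ and $z_0$ but not on $n$. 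In particular $\{F_{T,0}^{(n)}\}$ is a normalized family of conformal maps on $\HH$, and Koebe distortion makes it normal; the same holds for $\{F_{T,t}^{(n)}\}$ at each fixed $t$. A diagonal extraction over a countable dense set $Q\subset[0,T)$ yields a subsequence along which $F_{T,t}^{(n)}\luto F_{T,t}$ in $\HH$ for every $t\in Q$, each limit is conformal by Hurwitz's theorem, and the compatibility passes to the limit. Setting $F_{T,t}:=F_{T,t'}\circ f_{t',t}$ for any $t'\in Q$ with $t'\ge t$ extends the family to all $t\in[0,T)$, independently of $t'$ by compatibility on $Q$.

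For uniqueness, given a second family $\ha F_{T,t}$ with the same property, put $h_T^{(t)}:=\ha F_{T,t}\circ F_{T,t}^{-1}$ on $D_t$. The two compatibility relations force $h_T^{(t_1)}=h_T^{(t_2)}$ on $D_{t_1}\subset D_{t_2}$ whenever $t_1\le t_2$, so these glue to a single conformal map $h_T$ on $D_T=\bigcup_{t<T}D_t$ with $\ha F_{T,t}=h_T\circ F_{T,t}$.

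For the final statement, use the constructed $(F_{T,t})$. Since $F_{T,t}$ sends $f_t(z_0)\in\HH$ to $z_0$ with derivative $1/f_t'(z_0)$, Koebe's $\tfrac14$ theorem (transferred from $\D$ via a Möbius map $0\mapsto f_t(z_0)$) gives
$$
D_t\;\supset\;B\!\left(z_0,\;\tfrac12\,\frac{\Imm f_t(z_0)}{|f_t'(z_0)|}\right),
$$
and this radius tends to $\infty$ by (\ref{z0}), so $D_T=\C$. For $S_T=\R$, suppose for contradiction that $x_0\in\R\sem S_T$, and pick an open interval $I\subset\R\sem S_T$ containing $x_0$. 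By Lemma \ref{chordal-S} and the $\R$-symmetry of $f_t$, each $f_t$ extends univalently to the fixed simply connected slit plane $D:=\C\sem(\R\sem I)$, with $f_t(x_0)\in\R$ and $f_t'(x_0)>0$. The renormalized family $\phi_t(z):=(f_t(z)-f_t(x_0))/f_t'(x_0)$ consists of univalent analytic maps on $D$ with the common normalization $\phi_t(x_0)=0$, $\phi_t'(x_0)=1$, and satisfies the invariance $\Imm\phi_t(z_0)/|\phi_t'(z_0)|=\Imm f_t(z_0)/|f_t'(z_0)|$. Pushing back to $\D$ through a Riemann map sending $0\mapsto x_0$, the Koebe growth and distortion estimates bound $|\phi_t(z_0)|$ above and $|\phi_t'(z_0)|$ away from $0$ uniformly in $t$; consequently $\Imm f_t(z_0)/|f_t'(z_0)|$ is bounded, contradicting (\ref{z0}). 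Hence $S_T=\R$.

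The main obstacle is the last paragraph: one has to identify the renormalization $\phi_t$ and the fixed slit-plane domain $D$ that turn the structural condition ``$x_0\in\R\sem S_T$'' into a uniform Koebe-type bound on the ratio $\Imm f_t(z_0)/|f_t'(z_0)|$, which is precisely what hypothesis (\ref{z0}) rules out.
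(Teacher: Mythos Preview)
Your argument is correct. The existence, uniqueness, and $D_T=\C$ parts are essentially the paper's proof with cosmetic changes (you center the normalization at $z_0$ rather than at $0$, and you get the sharper constant $\tfrac12$ from Koebe's $\tfrac14$ theorem, where the paper writes $\tfrac14$). One small point: when you pick $x_0\in\R\sem S_T$ and then an open interval $I\subset\R\sem S_T$ around it, you implicitly need $x_0$ to be in the interior of $\R\sem S_T$; since $S_T$ is an interval, $S_T\ne\R$ forces $\lin{S_T}\ne\R$, so such $x_0$ exists.

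Where your proof genuinely diverges from the paper is the conclusion $S_T=\R$. The paper reuses the machinery it has just built: it runs the existence construction a second time with a base point $\ha z_0\in\R\sem\lin{S_T}$, observes that the resulting maps $\ha F_{T,t}$ are $\R$-symmetric and hence $\ha D_T\subset\HH$, and then invokes the uniqueness clause to get a conformal map $h_T:\C=D_T\to\ha D_T\subset\HH$, a contradiction. Your route is more direct: you bypass the second construction entirely and instead exploit that all the $f_t$ are simultaneously univalent on the fixed simply connected domain $D=\C\sem(\R\sem I)$, so Koebe growth/distortion (transplanted via a Riemann map $\D\to D$) bounds $|\phi_t(z_0)|$ above and $|\phi_t'(z_0)|$ below uniformly in $t$, contradicting (\ref{z0}) directly. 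The paper's version is conceptually tidy in that it recycles the uniqueness statement, but it requires checking that the existence argument still goes through when the base point is real (normality and nondegeneracy of limits then come from the $\R$-symmetric extension rather than from the interior normalization). Your version avoids that subtlety at the cost of one extra Riemann map.
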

 \begin{proof} Fix $z_0\in \HH$. Let $z_{t}=f_{t}(z_0)$ and $u_t=f_t'(z_0)$, $0\le t<T$. For $t\in[0,T)$, let $M_t(z)=\frac{z-z_t}{u_t}$ and $F_t=M_t\circ f_t$. Then $F_t$ maps $z_0$ to $0$ and has derivative $1$ at $z_0$. For $0\le t_1\le t_2<T$, define $F_{t_2,t_1}=M_{t_2}\circ f_{t_2,t_1}$. Then $F_{t_2,t_1}\circ f_{t_1,t_0}=F_{t_2,t_0}$ if $t_0\le t_1\le t_2$. Setting $t_0=0$ we get $F_{t_2,t_1}\circ f_{t_1}=F_{t_2}$. Thus, $F_{t_2,t_1}$ is a conformal map on $\HH$ with $F_{t_2,t_1}(z_{t_1})=0$ and $F_{t_2,t_1}'(z_{t_1})=1/{u_{t_1}}$. By Koebe's distortion theorem, for any $t_1\in[0,T)$, $\{F_{t_2,t_1}:t_2\in[t_1,T)\}$ is uniformly bounded on each compact subset of $\HH$. This implies that every sequence in this family contains a subsequence which converges locally uniformly, and the limit function is also conformal on $\HH$, maps $z_{t_1}$ to $0$, and has derivative $1/{u_{t_1}}$ at $z_{t_1}$.

From a diagonal argument, we can find a sequence $(t_n)$ in $[0,T)$ such that $t_n\to T$ and for any $q\in\Q\cap [0,T)$, $(F_{t_n,q})$ converges locally uniformly on $\HH$. Let $F_{T,q}$, $q\in\Q\cap[0,T)$, denote the limit functions, which are conformal on $\HH$. Since $F_{t_n,q_2}\circ f_{q_2,q_1}=F_{t_n,q_1}$ for each $n$, we have $F_{T,q_2}\circ f_{q_2,q_1}=F_{T,q_1}$.
For $t\in[0,T)$, choose $q\in\Q\cap[t,T)$ and define the conformal map $F_{T,t}=F_{T,q}\circ f_{q,t}$ on $\HH$. If $q_1\le q_2\in \Q\cap[t,T)$, then $F_{T,q_1}\circ f_{q_1,t}=F_{T,q_2}\circ f_{q_2,q_1}\circ f_{q_1,t}=F_{T,q_2}\circ f_{q_2,t}$.
Thus, the definition of $F_{T,t}$ does not depend on the choice of $q$.
If $0\le t_1\le t_2<T$, by choosing  $q\in \Q\cap[0,T)$ with $q\ge t_1\vee t_2$, we get
$F_{T,t_2}\circ f_{t_2,t_1}=F_{T,q}\circ f_{q,t_2}\circ f_{t_2,t_1}=F_{T,q}\circ f_{q,t_1}=F_{T,t_1}$.

If (\ref{z0}) holds, then we start the construction of $(F_{T,t})$ with such $z_0$. Since $F_{T,t}:(\HH;z_t)\conf (D_t;0)$ and $F_{T,t}'(z_t)=1/u_t$,  Koebe's $1/4$ theorem implies that $\dist(0,\pa D_t)\ge \frac 14{\Imm z_t}/{|u_t|}=\frac 14\frac{\Imm f_t(z_0)}{|f_t'(z_0)|}$, which tends to $\infty$ as $t\to T$. So $D_T$ has to be $\C$.

Suppose $\ha F_{T,t}$, $0\le t<T$, satisfies the same property as $F_{T,t}$, $0\le t<T$. Let $h_t=\ha F_{T,t}\circ F_{T,t}^{-1}$, $0\le t<T$. Then each $h_t$ is a conformal map defined on $D_t$. If $0\le t_1<t_2<T$, then 
$$h_{t_1}\circ F_{T,t_1}=\ha F_{T,t_1}(z)=\ha F_{T,t_2}\circ f_{t_2,t_1}=h_{t_2}\circ F_{T,t_2}\circ f_{t_2,t_1}=h_{t_2}\circ F_{T,t_1}$$
in $\HH$, which implies that $h_{t_1}=h_{t_2}|_{D_{t_1}}$. So we may define a conformal map $h_T$ on $D_T$ such that $h_t=h_T|_{D_t}$ for $0\le t<T$. Such $h_T$ is what we need.

Suppose that (\ref{z0}) holds but $S_T\ne \R$. Since $S_T$ is an interval, $\lin{S_T}\ne\R$. Choose $\ha z_0\in\R\sem \lin{S_T}$, and start the construction with $\ha z_0$ in place of $z_0$ at the beginning of this proof. Let $\ha F_{T,t}$, $0\le t<T$, denote the family of maps constructed in this way. Then each $\ha F_{T,t}$ is an $\R$-symmetric conformal map, which implies that $\ha D_T\subset\HH$. However, now $D_T=\C$ and $h_T:D_T\conf \ha D_T$, which is impossible. Thus, $S_T= \R$ when (\ref{z0}) holds.
\end{proof}

Let $(F_{T,t})$, $D_t$, and $D_T$ be as in Lemma \ref{limit-T*}. Let $F_T=F_{T,0}$. Suppose $\lambda$ generates backward chordal traces $\beta_{t_0}$, $0\le t_0<T$, which satisfy
\BGE \forall t_0\in[0,T),\quad \exists t_1\in(t_0,T),\quad \beta_{t_1}([0,t_0])\subset\HH.\label{beta>0}\EDE
We may define $\beta(t)$, $0\le t<T$, as follows. For every $t\in[0,T)$, pick $t_0\in(t,T)$ such that $\beta_{t_0}(t)\in\HH$, which is possible by (\ref{beta>0}), and define
\BGE \beta(t)=F_{T,t_0}\beta_{t_0}(t)\in  D_{t_0}\subset D_T. \label{beta}\EDE
Since $F_{T,t_1}=F_{T, t_2}\circ f_{t_2,t_1}$ in $\HH$, from (\ref{ft2t1beta}) we see that the definition of $\beta$ does not depend on the choice of $t_0$. Let $t_0\in[0,T)$. From (\ref{beta>0}), there is $t_1>t_0$ such that  $\beta_{t_1}([0,t])\in\HH$. Since $\beta(t)=F_{T,t_0}(\beta_{t_0}(t))$, $0\le t\le t_0$, we see that $\beta$ is continuous on $[0,t_0]$. Thus, $\beta(t)$, $0\le t<T$, is a continuous curve in $D_T$.

Fix any $x\in S_T$. Then $x\in S_{t_0}$ for some $t_0\in(0,T)$. So $f_{t_0}(x)$ lies on the outer boundary of $L_{t_0}$, which implies that $f_{t_0}(x)\in\beta_{t_0}(t)$ for some $t\in[0,t_0]$. From (\ref{beta>0}), there is $t_1\in(t_0,T)$ such that $\beta_{t_1}([0,t_0])\subset \HH$. Then $f_{t_1}(x)=f_{t_1,t_0}(\beta_{t_0}(t))= \beta_{t_1}(t)\in\HH$. From the continuity of $f_{t_1}$ on $\HH\cup\R$, there is a neighborhood $U$ of $x$ in $\HH\cup\R$ such that $f_{t_1}(U)\subset\HH$. This shows that $U\cap\R\subset S_{t_1}\subset S_T$. Since $F_T=F_{T,t_1}\circ f_{t_1}$ in $\HH$, we find that $F_T$ has continuation on $U$. Since $x\in S_T$ is arbitrary, we conclude that $S_T$ is an open interval, and $F_T$ has continuation to $\HH\cup S_T$.

Now we assume that $\lambda$ generates backward chordal traces, and both (\ref{z0}) and (\ref{beta>0}) hold. Then $D_T=\C$, $S_T=\R$, a continuous curve $\beta(t)$, $0\le t<T$, is well defined, and $F_T$ extends continuously to $\HH\cup\R$. Moreover, $F_T$ is unique up to a M\"obius transformation that fixes $\infty$. With some suitable normalization condition, the family $F_{T,t}$ and the curve $\beta$ will be determined by $\lambda$. We will use the following normalization:
\BGE F_T(\lambda(0))=\lambda(0),\quad F_T(\lambda(0)+i)=\lambda(0)+i.\label{norm}\EDE
If (\ref{norm}) holds, we call $\beta$ the normalized global backward chordal trace generated by $\lambda$. From (\ref{norm}) we see that $\beta(0)=\lambda(0)$, and $\beta$ does not pass through $\lambda(0)+i$.

For the radial case, Lemma \ref{limit-T*} still holds with $\HH$ replaced by $\D$, and (\ref{z0}) replaced by $T=\infty$. For the construction, we choose $z_0=0$ and let $F_{t_2,t_1}(z)=e^{t_2} f_{t_2,t_1}(z)$. If $\lambda$ generates backward radial traces $\beta_t$, $0\le t<T$, which satisfy \BGE \forall t_1\in[0,T),\quad \exists t_2\in(t_1,T),\quad \beta_{t_2}(t_1)\in\D,\label{beta>0-r}\EDE
then we may define a continuous curve $\beta(t)$, $0\le t<T$, in $D_T$ using (\ref{beta}). If $T=\infty$, then $D_T=\C$, and such $\beta$ is determined by $\lambda$ up to a M\"obius transformation that fixes $\infty$, which means that  we may define a normalized global backward radial trace once a normalization condition is fixed.


\subsection{Forward and backward Loewner chains}

%
%
In this section, we review a condition on a family of hulls that corresponds to continuously driven (forward) Loewner hulls, and discuss the corresponding condition for backward Loewner chains.

Let $D\subset\ha\C$ be a simply connected domain such that $\ha\C\sem D$ contains more than one point. A relatively closed subset $H$ of $D$ is called a (boundary) hull in $D$ if $D\sem H$ is simply connected. For example, a hull in $\HH$ is an $\HH$-hull iff it is bounded; a hull in $\D$ is a $\D$-hull iff it does not contain $0$. Let $T\in(0,\infty]$. A family of hulls in $D$: $K_t$, $0\le t<T$, is called a Loewner chain in $D$ if
\begin{enumerate}
  \item $K_0=\emptyset$ and $K_{t_1}\subsetneqq K_{t_2}$ whenever $0\le t_1<t_2<T$;
  \item for any fixed $a\in[0,T)$ and a compact set $F\subset D\sem K_a$, the conjugate extremal distance (c.f.\ \cite{Ahl}) between $K_{t+\eps}\sem K_t$ and $F$ in $D\sem K_t$ tends to $0$ as $\eps\to 0$, uniformly in $t\in[0,a]$.
\end{enumerate}
If $K_t$, $0\le t<T$, is a Loewner chain in $D$, and $a\in[0,T)$, then we also call the restriction $K_t$, $0\le t\le a$, a Loewner chain in $D$.

There are two important properties for Loewner chains. If $K_t$, $0\le t<T$, is a Loewner chain in $D$, and $u$ is a continuous  increasing function defined on $[0,T)$ with $u(0)=0$, then $K_{u^{-1}(t)}$, $0\le t<u(T)$, is also a Loewner chain in $D$, which is called a time-change of $(K_t)$ via $u$. If $W$ maps $D$ conformally onto $E$, then $W(K_t)$, $0\le t<T$, is a Loewner chain in $E$.

An $\HH$-(resp.\ $\D$-)Loewner chain is a Loewner chain in $\HH$ (resp.\ $\D$) such that each hull is an $\HH$-(resp.\ $\D$-)hull.
An $\HH$-(resp.\ $\D$-)Loewner chain $(K_t)$ is said to be normalized if $\hcap(K_t)=2t$ (resp.\ $\dcap(K_t)=t$) for each $t$.

The conditions for the conformal invariance property of  $\HH$-(resp.\ $\D$-)Loewner chains can be slightly weakened as below.
\begin{Proposition}
  If $K_t$, $0\le t<T$, is an $\HH$-(resp.\ $\D$-)Loewner chain, and $W$ is an $\R$-(resp.\ $\TT$-)symmetric conformal map, whose domain contains $\ha K_t$ for each $t$ and whose image does not contain $\infty$ (resp.\ $0$), then $W(K_t)$, $0\le t<T$, is also an $\HH$-(resp.\ $\D$-)Loewner chain. \label{conformal-Loewner}
\end{Proposition}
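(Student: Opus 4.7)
Condition (1) for a Loewner chain---$W(K_0)=\emptyset$ and $W(K_{t_1})\subsetneq W(K_{t_2})$ for $t_1<t_2$---follows immediately from the injectivity of $W$. Each $W(K_t)$ is an $\HH$-hull by the remark after Definition~\ref{R-symmetric}: by hypothesis $\ha K_t\subset\Omega$, and $\infty\notin W(\ha K_t)$ since $W(\ha K_t)$ is compact; hence $W(K_t)$ is an $\HH$-hull with $\ha{W(K_t)}=W(\ha K_t)$.

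For the conjugate extremal distance condition, fix $a\in[0,T)$ and a compact set $F\subset\HH\sem W(K_a)$; we must show that the conjugate extremal distance between $W(K_{t+\eps})\sem W(K_t)$ and $F$ in $\HH\sem W(K_t)$ tends to $0$ as $\eps\to 0$, uniformly in $t\in[0,a]$. Since $g_{W(K_t)}:\HH\sem W(K_t)\conf\HH$, conformal invariance of extremal distance reduces this to the conjugate extremal distance between $W(K_{t+\eps})/W(K_t)$ and $g_{W(K_t)}(F)$ in $\HH$. The collapse identity $g_{W(K_t)}\circ W=W_{K_t}\circ g_{K_t}$ on $\Omega\sem\ha K_t$ (from Definition~\ref{WK}), applied to $K_{t+\eps}\sem K_t\subset\Omega\sem\ha K_t$, yields the key identity
$$W(K_{t+\eps})/W(K_t)=W_{K_t}(K_{t+\eps}/K_t).$$

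The map $W_{K_t}$ is $\R$-symmetric and conformal on $\Omega_{K_t}\supset S_{K_t}$. The Loewner chain property of $(K_t)$ in $\HH$, translated through the conformal map $g_{K_t}$, says precisely that the $\HH$-hull $K_{t+\eps}/K_t$ shrinks to a boundary point in $S_{K_t}\subset\R$ as $\eps\to 0$, uniformly in $t\in[0,a]$. Its image under $W_{K_t}$ is therefore a shrinking $\HH$-hull concentrating at a point of $S_{W(K_t)}\subset\R$, with linear shrinking rate governed by $|W_{K_t}'|$ near $S_{K_t}$. Standard modulus estimates for annuli then give that the conjugate extremal distance between this shrinking hull and the fixed compact $\HH$-set $g_{W(K_t)}(F)$ tends to $0$. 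The principal technical obstacle is the uniformity in $t\in[0,a]$: one needs uniform upper bounds for $|W_{K_t}'|$ near $S_{K_t}$ and a uniform positive lower bound for the distance from $W_{K_t}(S_{K_t})$ to $g_{W(K_t)}(F)$. Both follow from the continuous dependence of $K_t$, $S_{K_t}$, and $W_{K_t}$ on $t$ (a consequence of the Loewner chain hypothesis for $(K_t)$ combined with the continuity of the collapse construction) together with compactness of $[0,a]$. The $\D$-case is handled identically using the analogous collapse for $\D$-hulls and $\TT$-symmetric maps developed in Section~\ref{Section-Conformal}.
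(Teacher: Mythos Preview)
The paper does not prove this proposition; it presents it as a ``slight weakening'' of the conformal invariance property stated two paragraphs earlier, namely that if $W:D\conf E$ is a global conformal map then $(W(K_t))$ is a Loewner chain in $E$. The intended proof is therefore a short localization argument using only monotonicity of extremal length: $W|_{\Omega\cap\HH}$ is a global conformal map onto $W(\Omega)\cap\HH$, so $(W(K_t))$ is a Loewner chain in $W(\Omega)\cap\HH$; each $W(K_t)$ is an $\HH$-hull by the remark after Definition~\ref{R-symmetric}; and for an arbitrary compact $F\subset\HH\sem W(K_a)$ one chooses a crosscut $\gamma\subset W(\Omega)\cap\HH$ separating $W(K_a)$ from $F$, so that by the series/monotonicity rule for extremal length the conjugate extremal distance from $W(K_{t+\eps})\sem W(K_t)$ to $F$ in $\HH\sem W(K_t)$ is bounded above by the same quantity with $F$ replaced by $\gamma$ and $\HH$ replaced by $W(\Omega)\cap\HH$, which tends to $0$ uniformly by the Loewner chain property in the subdomain.

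Your route is different: you pass through the collapse machinery $W_{K_t}$ and the identity $W(K_{t+\eps})/W(K_t)=W_{K_t}(K_{t+\eps}/K_t)$, and then invoke the fact that $K_{t+\eps}/K_t$ shrinks to a point. That last fact is not the \emph{definition} of a Loewner chain---it is a consequence of the equivalence in Proposition~\ref{Loewner-eqn-chain} (after time-change to a normalized chain). Your argument is correct, and it is much closer in spirit to how the paper later handles the backward case (Proposition~\ref{conformal-Loewner-backward}), where the $W^*$ machinery is genuinely needed. But for the forward case the direct extremal-length comparison is shorter and avoids the uniformity bookkeeping you flag as the ``principal technical obstacle''; in particular it does not require the continuity of $t\mapsto W_{K_t}$ or any appeal to Proposition~\ref{Loewner-eqn-chain}.
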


The following proposition combines some results in \cite{LSW1}  and \cite{Pom}.
\begin{Proposition}
Let $T\in(0,\infty]$.  The following are equivalent.
\begin{enumerate}
    \item [(i)] $K_t$, $0\le t<T$, are chordal (resp.\ radial) Loewner hulls driven by some $\lambda\in C([0,T))$.
    \item [(ii)] $K_t$, $0\le t<T$, is a normalized $\HH$-(resp.\ $\D$-)Loewner chain.
\end{enumerate}
If either of the above holds, with $\mathring\lambda(t)=\lambda(t)$ (resp.\ $\mathring\lambda(t)=e^{i\lambda(t)}$ in the radial case) we have
$$ \{\mathring\lambda(t)\}=\bigcap_{\eps>0}\overline{K_{t+\eps}/K_t}, \quad 0\le t<T. $$ 
In addition, if $K_t$, $0\le t<T$, is any $\HH$-(resp.\ $\D$-)Loewner chain, then the function $u(t):=\hcap(K_t)/2$ (resp.\ $u(t):=\dcap(K_t)$), $0\le t<T$, is continuous increasing with $u(0)=0$, which implies that $K_{u^{-1}(t)}$, $0\le t<u(T)$, is a normalized $\HH$-(resp.\ $\D$-)Loewner chain. \label{Loewner-eqn-chain}
\end{Proposition}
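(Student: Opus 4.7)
The plan is to prove the three assertions in the order: (a) for an arbitrary $\HH$- or $\D$-Loewner chain, $u(t)=\hcap(K_t)/2$ (resp.\ $\dcap(K_t)$) is continuous and strictly increasing with $u(0)=0$; (b) the implication (i)$\Rightarrow$(ii); (c) the implication (ii)$\Rightarrow$(i) together with the explicit formula for the driving function. I focus on the chordal case; the radial case is analogous.

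For (a), strict monotonicity of $u$ follows from $K_{t_1}\subsetneqq K_{t_2}$ together with the fact that $\hcap$ is strictly monotone on strict inclusions of $\HH$-hulls (use $\hcap(K_2)-\hcap(K_1)=\hcap(K_2/K_1)>0$ whenever $K_2/K_1\ne\emptyset$). For continuity at $t\in[0,a]$ with $a<T$, fix a compact $F\subset\HH\sem K_a$ that surrounds $K_a$ in $\HH$ (e.g.\ a large semicircle). The conjugate extremal distance hypothesis implies that $g_{K_t}(K_{t+\eps}\sem K_t)$, which is the quotient hull $K_{t+\eps}/K_t$, has diameter going to $0$ uniformly in $t\in[0,a]$ as $\eps\to 0$; combining this with the elementary bound $\hcap(K)\le\diam(K)^2$ for $\HH$-hulls (more precisely, $\hcap(K)\le 4\rad(K)^2\le \diam(K)^2$) yields $u(t+\eps)-u(t)=\hcap(K_{t+\eps}/K_t)/2\to 0$ uniformly. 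Hence $u$ is continuous. Time-changing by $u$ gives a normalized Loewner chain, reducing the rest of the proof to the normalized case.

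For (b), assume $\lambda\in C([0,T))$ drives $(K_t)$. The standard hydrodynamic identity gives $\hcap(K_t)=2t$, and the inclusion $K_{t_1}\subsetneqq K_{t_2}$ comes from the ODE (a boundary point enters the hull at the swallowing time). For the conjugate extremal distance condition, observe that $K_{t+\eps}/K_t$ is precisely the chordal Loewner hull at time $\eps$ driven by the continuous function $s\mapsto \lambda(t+s)$. By uniform continuity of $\lambda$ on $[0,a+1]$ and the fact that $\hcap(K_{t+\eps}/K_t)=2\eps$, we get $\diam(K_{t+\eps}/K_t)\to 0$ uniformly in $t\in[0,a]$. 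Pulling back to $\HH\sem K_t$ via $f_{K_t}$, which maps a neighborhood of $\mathring\lambda(t)$ onto a neighborhood of the tip region, and using standard conformal distortion estimates (Koebe) on the compact set $F\subset\HH\sem K_a$, gives the required uniform decay of the conjugate extremal distance.

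For (c), given a normalized $\HH$-Loewner chain, for each $t$ set $Q_t^\eps:=\overline{K_{t+\eps}/K_t}$. The extremal distance condition, applied to a fixed compact $F$ surrounding $K_a$, forces $\diam(Q_t^\eps)\to 0$ uniformly in $t\in[0,a]$ as $\eps\to 0$, so $\bigcap_{\eps>0}Q_t^\eps$ is a single point which, being on $\R$ (as $Q_t^\eps\subset\lin\HH$ contains a boundary point of $\HH\sem K_t$ on $\R$), we call $\lambda(t)$. Uniform decay of $\diam(Q_t^\eps)$ together with the semigroup identity $g_{K_{t+\eps}}=g_{K_{t+\eps}/K_t}\circ g_{K_t}$ gives uniform continuity of $\lambda$. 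To verify the Loewner ODE, fix $z\in\HH\sem K_a$ and write $g_{K_{t+\eps}}(z)-g_{K_t}(z)=[g_{K_{t+\eps}/K_t}-\id](g_{K_t}(z))$; the expansion at $\infty$ $g_H(w)=w+\hcap(H)/w+O(|H|^3/|w|^2)$ with $\hcap(K_{t+\eps}/K_t)=2\eps$ and $\diam(K_{t+\eps}/K_t)\to 0$ clustered at $\lambda(t)$ yields $\partial_t g_t(z)=2/(g_t(z)-\lambda(t))$, proving $(K_t)$ is driven by $\lambda$. The formula $\{\mathring\lambda(t)\}=\bigcap_\eps\overline{K_{t+\eps}/K_t}$ is then built into the construction, and the equivalence is complete.

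The main obstacle is step (c): extracting a continuous driving function from the purely geometric extremal-distance hypothesis and verifying the ODE. The delicate point is quantifying the rate at which $K_{t+\eps}/K_t$ collapses to $\lambda(t)$ well enough to obtain both the continuity of $\lambda$ and the pointwise expansion of $g_{K_{t+\eps}/K_t}$ needed to identify the Loewner vector field; this is exactly where the conjugate extremal distance hypothesis (rather than, say, Hausdorff convergence of hulls) is essential.
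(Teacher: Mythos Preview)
The paper does not actually supply a proof of this proposition: it is stated with the preamble ``The following proposition combines some results in \cite{LSW1} and \cite{Pom}'' and then used as a black box. So there is no ``paper's own proof'' to compare against; what you have written is essentially the standard argument that one finds (in various forms) in Lawler--Schramm--Werner and in Pommerenke's work on the Loewner equation.

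Your sketch follows that standard route and the overall logic is sound. Two places deserve tightening. First, in step (c) your justification of the ODE invokes the expansion ``at $\infty$'', $g_H(w)=w+\hcap(H)/w+O(|H|^3/|w|^2)$, but $g_{K_t}(z)$ is not near $\infty$; what you actually need is the local estimate that for a small $\HH$-hull $H$ clustered near $x_0\in\R$ and $w$ bounded away from $H$, $g_H(w)-w=\hcap(H)/(w-x_0)+o(\hcap(H))$. This is the lemma that does the real work, and it should be stated as such. Second, the continuity of $\lambda$ is asserted a bit quickly: to pass from uniform smallness of $\diam(K_{t+\eps}/K_t)$ to $|\lambda(t+\eps)-\lambda(t)|$ small, you need the observation that $K_{s+\delta}/K_s = g_{K_s/K_t}\bigl((K_{s+\delta}/K_t)\setminus(K_s/K_t)\bigr)$ for $t\le s\le s+\delta\le t+\eps$, together with the fact that $g_{K_s/K_t}$ is uniformly close to the identity when $K_s/K_t$ is small. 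Both points are routine but should be made explicit if you are writing this out as a self-contained proof.
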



%
%
\begin{Definition}
A family of $\HH$-(resp.\ $\D$-)hulls: $L_{t}$, $0\le t <T$, is called a backward $\HH$-(resp.\ $\D$-)Loewner chain if they satisfy
\begin{enumerate}
  \item $L_0=\emptyset$ and $L_{t_1}\prec L_{t_2}$ if $0\le t_1\le t_2 <T$;
  \item $L_{t_0}:L_{t_0-t}$, $0\le t\le t_0$, is an $\HH$-(resp.\ $\D$-)Loewner chain for any $t_0\in(0,T)$.
\end{enumerate} \label{Loewner-chain-backward}
\end{Definition}

If $u$ is a continuous increasing function defined on $[0,T)$ with $u(0)=0$, then $L_{u^{-1}(t)}$, $0\le t <u(T)$, is also a backward $\HH$-(resp.\ $\D$-)Loewner chain, and is called a time-change of $(L_{t})$ via $u$.
A backward $\HH$-(resp.\ $\D$-)Loewner chain $(L_{t})$ is said to be normalized if $\hcap(L_{t})=2t$ (resp.\ $\dcap(L_{t})=t$) for any $t\in[0,T)$.

Using  Lemma \ref{backward-forward-chordal} and Proposition \ref{Loewner-eqn-chain}, we obtain the following.
\begin{Proposition}
  Let $T\in(0,\infty]$. The following are equivalent.
  \begin{enumerate}
    \item[(i)] $L_{t}$, $0\le t<T$, are backward chordal (resp.\ radial) Loewner hulls driven by some $\lambda\in C([0,T))$.
    \item[(ii)] $L_{t}$, $0\le t <T$, is a normalized backward $\HH$-(resp.\ $\D$-)Loewner chain.
  \end{enumerate}
If either of the above holds, with $\mathring\lambda(t)=\lambda(t)$ (resp.\ $\mathring\lambda(t)=e^{i\lambda(t)}$ in the radial case) we have
\BGE \{\mathring\lambda(t)\}=\bigcap_{\eps>0}\overline{L_{t}:L_{t-\eps}}, \quad 0< t<T,\label{backward-hull-driving}\EDE
In addition, if $L_t$, $0\le t<T$, is any backward $\HH$-(resp.\ $\D$-)Loewner chain, then the function $u(t):=\hcap(K_t)/2$ (resp.\ $u(t):=\dcap(K_t)$), $0\le t<T$, is continuous increasing with $u(0)=0$, which implies that $L_{u^{-1}(t)}$, $0\le t<u(T)$, is a normalized backward $\HH$-(resp.\ $\D$-)Loewner chain. \label{backward-eqn-chain}
\end{Proposition}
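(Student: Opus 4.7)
The strategy is to reduce everything to the already-available results about \emph{forward} Loewner chains (Proposition~\ref{Loewner-eqn-chain}) via the key translation Lemma~\ref{backward-forward-chordal}. I will treat the chordal case; the radial case is line-by-line analogous, substituting $\D$, $\dcap$, and $e^{i\lambda}$ appropriately.

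The implication (i)$\Rightarrow$(ii) is almost immediate. Given backward chordal Loewner hulls $L_t$ driven by $\lambda\in C([0,T))$, Lemma~\ref{chordal-S} gives $\hcap(L_t)=2t$ and $L_0=\emptyset$, while Lemma~\ref{backward-forward-chordal} gives $L_{t_1}\prec L_{t_2}$ for $t_1\le t_2$ and identifies $L_{t_0}:L_{t_0-t}$, $0\le t\le t_0$, as the forward chordal Loewner hulls driven by the continuous function $\lambda(t_0-\cdot)$. By Proposition~\ref{Loewner-eqn-chain} this forward family is an $\HH$-Loewner chain, so condition~2 of Definition~\ref{Loewner-chain-backward} holds, and $(L_t)$ is a normalized backward $\HH$-Loewner chain.

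For (ii)$\Rightarrow$(i), fix $t_0\in(0,T)$. From $L_{t_0-t}\prec L_{t_0}$ and the identity $L_{t_0}=(L_{t_0}:L_{t_0-t})\cdot L_{t_0-t}$, additivity of $\hcap$ yields $\hcap(L_{t_0}:L_{t_0-t})=2t$, so the forward chain $K^{(t_0)}_t:=L_{t_0}:L_{t_0-t}$, $0\le t\le t_0$, is a normalized $\HH$-Loewner chain. By Proposition~\ref{Loewner-eqn-chain} it is driven by a unique continuous function $\lambda^{(t_0)}\colon[0,t_0]\to\R$, and I define $\lambda(s):=\lambda^{(t_0)}(t_0-s)$ for $s\in[0,t_0)$. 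The key compatibility step is to show this definition is independent of $t_0$. Applying the characterization $\{\lambda^{(t_0)}(u)\}=\bigcap_\eps\overline{K^{(t_0)}_{u+\eps}/K^{(t_0)}_u}$ with $u=t_0-s$, I need to identify
\[
 \bigl(L_{t_0}:L_{t_0-u-\eps}\bigr)\big/\bigl(L_{t_0}:L_{t_0-u}\bigr)= L_{t_0-u}:L_{t_0-u-\eps}.
\]
This follows from the uniqueness of factorization in the semigroup of $\HH$-hulls: writing $L_{t_0}$ both as $(L_{t_0}:L_{t_0-u-\eps})\cdot L_{t_0-u-\eps}$ and as $(L_{t_0}:L_{t_0-u})\cdot(L_{t_0-u}:L_{t_0-u-\eps})\cdot L_{t_0-u-\eps}$, the fact that the leftmost factor is determined by the remaining ones gives $L_{t_0}:L_{t_0-u-\eps}=(L_{t_0}:L_{t_0-u})\cdot(L_{t_0-u}:L_{t_0-u-\eps})$, and the quotient then simplifies as claimed. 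Consequently
\[
 \{\lambda^{(t_0)}(t_0-s)\}=\bigcap_{\eps>0}\overline{L_s:L_{s-\eps}},
\]
whose right-hand side is independent of $t_0$. Thus $\lambda\in C([0,T))$ is well defined, and applying Lemma~\ref{backward-forward-chordal} to the backward chordal Loewner hulls $L'_t$ driven by $\lambda$ shows that $L'_{t_0}:L'_{t_0-t}$ and $L_{t_0}:L_{t_0-t}$ are the same forward chain for each $t_0$; specializing at $t=t_0$ gives $L_{t_0}=L'_{t_0}$. This proves (ii)$\Rightarrow$(i), and the displayed formula for $\lambda$ is exactly (\ref{backward-hull-driving}).

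Finally, for the unnormalized statement, given any backward $\HH$-Loewner chain $(L_t)$, the forward chain $K^{(t_0)}_t=L_{t_0}:L_{t_0-t}$ supplied by condition~2 has a continuous strictly increasing capacity function by the forward version of the proposition; together with $\hcap(L_{t_0-t})=\hcap(L_{t_0})-\hcap(L_{t_0}:L_{t_0-t})$ this makes $s\mapsto\hcap(L_s)$ continuous and strictly increasing on $[0,t_0]$, hence on $[0,T)$. Setting $u(t)=\hcap(L_t)/2$ and reparametrizing, $(L_{u^{-1}(t)})$ is a normalized backward $\HH$-Loewner chain. The main obstacle in the whole argument is the consistency of the $\lambda^{(t_0)}$; once the product-identity for quotient hulls above is in place, everything else reduces cleanly to the forward theory.
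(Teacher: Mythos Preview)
Your proof is correct and follows exactly the route the paper indicates: the paper's entire proof is the sentence ``Using Lemma~\ref{backward-forward-chordal} and Proposition~\ref{Loewner-eqn-chain}, we obtain the following,'' and you have supplied the details, including the semigroup identity $(L_{t_0}:L_{t_0-u-\eps})/(L_{t_0}:L_{t_0-u})=L_{t_0-u}:L_{t_0-u-\eps}$ that makes the driving functions $\lambda^{(t_0)}$ consistent. One cosmetic point: your definition $\lambda(s):=\lambda^{(t_0)}(t_0-s)$ actually makes sense for all $s\in[0,t_0]$, and the intersection formula pins it down for $s\in(0,t_0]$ while $s=0$ is handled by continuity; otherwise nothing needs adjustment.
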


We say that $f_t$ and $L_t$, $0\le t<T$, are backward chordal (resp.\ radial) Loewner maps and hulls, via a time-change $u$, driven by $\lambda$, if $u$ is a continuous increasing function defined on $[0,T)$ with $u(0)=0$, such that $f_{u^{-1}(t)}$ and $L_{u^{-1}(t)}$, $0\le t<u(T)$, are backward chordal (resp.\ radial) Loewner maps and hulls driven by $\lambda\circ u^{-1}$. From the above proposition, if $(L_t)$ is any $\HH$-(resp.\ $\D$-)Loewner chain, then $L_t$, $0\le t<T$, are backward chordal (resp.\ radial) Loewner hulls, via a time-change $u(t):=\hcap(L_t)/2$ (resp.\ $\dcap(L_t)$), driven by $\lambda$, which satisfies (\ref{backward-hull-driving}).

\subsection{Simple curves and weldings} \label{welding}
An $\HH$-simple (resp.\ $\D$-simple) curve is a half-open simple curve in $\HH$ (resp.\ $\D\sem\{0\}$), whose open side approaches a single point on $\R$ (resp.\ $\TT$). Every $\HH$(resp.\ $\D$)-simple curve $\beta$ is an $\HH$(resp.\ $\D$)-hull, whose base $B_\beta$ is a single point, and whose support $S_\beta$ is an $\R$(resp.\ $\TT$-)interval. Here an $\TT$-interval is an arc on $\TT$. The function $f_\beta$ extends continuously from $\HH$ (resp.\ $\D$) to $\lin\HH$ (resp.\ $\lin\D$), which maps $S_\beta$ onto $\lin\beta$, sends the two ends of $S_\beta$ to $B_\beta$, and sends a unique point, say $z_\beta\in S_\beta$ to the tip of $\beta$. The point $z_\beta$ divides $S_\beta$ into two $\R$(resp.\ $\TT$-)intervals such that the restriction of $f_\beta$ to either interval is a homeomorphism onto $\lin\beta$. Thus, there is a unique involution (an auto homeomorphism whose inverse is itself) $\phi_\beta$ of $S_\beta$, which fixes only one point: $z_\beta$, swaps the two end points of $S_\beta$, and satisfies that $y=\phi_\beta(x)$ implies that $f_\beta(x)=f_\beta(y)$. We call $\phi_\beta$ the welding induced by $\beta$.

Suppose $K$ is an $\HH$- or $\D$-simple curve. Let $W$ be as in Theorems \ref{W^K}, \ref{Mob}, or \ref{W^K-d}. Then
$W^*(K)$ is also an $\HH$- or $\D$-simple curve. The equality $W^K\circ f_K= f_{W^*(K)}\circ W$ holds after continuous extension from $\HH$ or $\D$ to its closure. So the weldings induced by $K$ and $W^*(K)$ satisfy $\phi_{W^*(K)}=W\circ \phi_K\circ W^{-1}$.

Suppose the hulls $(L_t)$ generated by a backward chordal (resp.\ radial) Loewner process driven by $\lambda$ are all $\HH$(resp.\ $\D$)-simple curves. Then the process generates backward chordal (resp.\ radial) traces $(\beta_t)$ such that every $\beta_t$ is a simple curve, and $L_{t}=\beta_{t}([0,t))$, $0\le t<T$.
Let $\phi_{t}$ be the welding induced by $L_t$, which is an involution of $S_t:=S_{L_t}$. Recall that $(S_t)$ is an increasing family because $L_{t_1}\prec L_{t_2}$ for $t_1<t_2$. If $0\le t_1<t_2<T$, then from $f_{t_2,t_1}\circ f_{t_1}=f_{t_2}$ we see that $\phi_{t_2}|_{S_{t_1}}=\phi_{t_1}$. Thus, there is a unique involution $\phi$ of $S_T:=\bigcup_t S_{t}$ such that $\phi|_{S_t}=\phi_t$ for each $t\in[0,T)$. In other words, $y=\phi(x)$ implies that $f_t(x)=f_t(y)$ for some $t\ge 0$, where $f_t$ is the continuous extension of the Loewner map at time $t$ from $\HH$(resp.\ $\D$) to $\lin\HH$ (resp.\ $\lin\D$). We say that $\phi$ is the welding induced by this process. In the case that $S_T=\R$ (resp. $\TT\sem\{z_0\}$ for some $z_0\in\TT$), we will extend $\phi$ to an involution of $\ha\R:=\R\cup\{\infty\}$ (resp. $\TT$) such that $\infty$ (resp.\ $z_0$) is the other fixed point of $\phi$.

Here is another way to view the welding $\phi$. For every $t\in(0,T)$, $\phi$ swaps the two end points of $S_t$. Let $\mathring \lambda(0)=\lambda(0)$ (resp.\ $e^{i\lambda(0)}$).
Since $f_t(\mathring\lambda(0))=\beta_t(0)$ is the tip of $L_t$ for each $t$, we see that $\mathring\lambda(0)$ is the only fixed point of $\phi$. On the other hand, it is easy to see that, $x$ and $y$ are end points of $S_t$ if and only if $\tau_x=\tau_y=t$, $0<t<T$; and every point on $S_T\sem \{\mathring\lambda(0)\}$ is an end point of some $S_t$, $0<t<T$. Thus, for $x\ne y\in S_T\sem \{\mathring\lambda(0)\}$, $y=\phi(x)$ if and only if $\tau_x=\tau_y$, i.e., $x$ and $y$ are swallowed at the same time.


 Let $\kappa\in(0,4]$. Since the backward chordal (resp.\ radial) SLE$_\kappa$ traces are $\HH$(resp.\ $\D$-)simple curves, so the process induces a random welding, which we call a backward chordal (resp.\ radial) SLE$_\kappa$ welding. In the chordal case, For any $x\in\R\sem\{\lambda(0)\}=\R\sem\{0\}$, the process $X^x_t:=\lambda(t)-f_t(x)$ is a rescaled Bessel process of dimension $1-\frac 4\kappa<1$, which implies that a.s.\ $X^x_t\to 0$ at some finite time. Thus, $S_\infty=\R$. which implies that a chordal SLE$_\kappa$ welding is an involution of $\ha\R$ with two fixed points: $\lambda(0)=0$ and $\infty$. In the radial case, since $T=\infty$, Lemma \ref{simple-trace-lem-r} says that $S_\infty=\TT$ or $\TT\sem\{z_0\}$ for some $z_0\in\TT$. The first case can not happen since $\phi$ has only one fixed point on $S_\infty$. Thus, a radial SLE$_\kappa$ welding is an involution of $\TT$ with two fixed points, one of which is $e^{i\lambda(0)}=1$.

Suppose a backward chordal (resp.\ radial) Loewner process generates $\HH$(resp.\ $\D$)-simple backward traces $\beta_{t}$, $0\le t<T$. Then (\ref{beta>0}) (resp.\ (\ref{beta>0-r})) is satisfied because $\beta_{t_2}(t_1)$ lies in $\HH$ (resp.\ $\D$) if $t_2>t_1$. It is clear that the curve $\beta$ defined by (\ref{beta}) is simple, and $D_t=D_T\sem \beta([t,T))$ for $0\le t<T$. Let $\phi$ be the welding induced by the process. If $y=\phi(x)$, there is $t\in [0,T)$ such that $y,x\in S_t$ and $f_t(y)=f_t(x)$. From $F_{T,t}\circ f_t=F_T$, we get $F_T(y)=F_T(x)$. This means that $\phi$ can be realized by the conformal map $F_T$.

If a backward chordal (resp.\ radial) Loewner chain $(L_t)$ is composed of $\HH$(resp.\ $\D$)-simple curves, then $(L_t)$ induces a welding $\phi$, which is an involution of $\bigcup S_{L_t}$, and agrees with $\phi_{L_t}$ on $S_{L_t}$ for each $t$. To see this, one may first normalized the backward Loewner chain so that it is generated by a backward Loewner process.

\section{Conformal Transformations}\label{Conformal}

\begin{Proposition}
   Suppose $L_{t}$, $0\le t<T$, is a backward $\HH$-(resp.\ $\D$-)Loewner chain, $W$ is an $\R$-(resp.\ $\TT$-)symmetric conformal map whose domain contains every $S_{L_{t}}$, and $\infty\not\in W(S_{L_t})$ for $0\le t<T$. Then  $W^*(L_t)$, $0\le t <T$, is also a backward $\HH$-(resp.\ $\D$-)Loewner chain.\label{conformal-Loewner-backward}
\end{Proposition}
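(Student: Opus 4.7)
The plan is to verify the two conditions of Definition \ref{Loewner-chain-backward} for the family $W^*(L_t)$, $0\le t<T$, using Lemma \ref{W*K/K} (resp.\ Lemma \ref{W*K/K-Mob} in the $\D$-case) to reduce condition (2) to the already-established conformal invariance of \emph{forward} Loewner chains, Proposition \ref{conformal-Loewner}.

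For condition (1) of Definition \ref{Loewner-chain-backward}, note first that $W^*(L_0)=W^*(\emptyset)=\emptyset$. Now fix $0\le t_1\le t_2<T$. By hypothesis $L_{t_1}\prec L_{t_2}$, and the assumptions $S_{L_{t_2}}\subset \mathrm{dom}(W)$ and $\infty\notin W(S_{L_{t_2}})$ are exactly what Lemma \ref{W*K/K} (resp.\ \ref{W*K/K-Mob}) requires; applying it yields $W^*(L_{t_1})\prec W^*(L_{t_2})$, and simultaneously the identity
\BGE W^*(L_{t_2}):W^*(L_{t_1})=W^{L_{t_2}}(L_{t_2}:L_{t_1}), \label{plan-eq1}\EDE
which I will use momentarily.

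For condition (2), fix $t_0\in(0,T)$ and consider the auxiliary family $K_t:=L_{t_0}:L_{t_0-t}$, $0\le t\le t_0$. By the definition of a backward Loewner chain applied to $(L_t)$, this family is a forward $\HH$-(resp.\ $\D$-)Loewner chain. Setting $t_2=t_0$ and $t_1=t_0-t$ in \eqref{plan-eq1} gives
\BGEN W^*(L_{t_0}):W^*(L_{t_0-t})=W^{L_{t_0}}(K_t),\quad 0\le t\le t_0.\EDEN
So to verify condition (2) it suffices to show that the image chain $(W^{L_{t_0}}(K_t))_{0\le t\le t_0}$ is itself a forward $\HH$- (resp.\ $\D$-)Loewner chain. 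This is exactly what Proposition \ref{conformal-Loewner} provides, once I check its hypotheses: by Theorem \ref{W^K} (resp.\ Theorem \ref{W^K-d}), $W^{L_{t_0}}$ is an $\R$-(resp.\ $\TT$-)symmetric conformal map on $\Omega^{L_{t_0}}$; since $K_t\subset L_{t_0}$ (fact 2 after Definition \ref{/}), we have $\ha K_t\subset \ha L_{t_0}\subset \Omega^{L_{t_0}}$ for every $t$; and $W^{L_{t_0}}(\ha K_t)\subset \ha{W^*(L_{t_0})}$ avoids $\infty$ (resp.\ $0$) because $W^*(L_{t_0})$ is itself an $\HH$-(resp.\ $\D$-)hull. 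Invoking Proposition \ref{conformal-Loewner} closes the argument.

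The substantive input is entirely in the earlier machinery, so no real obstacle should arise; the main point to be careful about is the bookkeeping of domains, namely that the support/domain condition ``$S_{L_t}\subset \mathrm{dom}(W)$ and $\infty\notin W(S_{L_t})$'' given in the hypothesis is exactly what lets $W^*$ act on each $L_t$ and what Lemma \ref{W*K/K} needs to produce the crucial identity \eqref{plan-eq1}. In the radial case one must additionally remember that ``$\R$-symmetric'' is replaced by ``$\TT$-symmetric'' and ``$\infty$'' by ``$0$'' wherever appropriate when invoking the $\D$-versions of Theorem \ref{W^K-d}, Lemma \ref{W*K/K-Mob}, and Proposition \ref{conformal-Loewner}, but the structure of the argument is identical.
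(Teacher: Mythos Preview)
Your proof is correct and follows essentially the same approach as the paper's: verify $W^*(L_0)=\emptyset$, use Lemma \ref{W*K/K} to get both $W^*(L_{t_1})\prec W^*(L_{t_2})$ and the identity $W^*(L_{t_0}):W^*(L_{t_0-t})=W^{L_{t_0}}(L_{t_0}:L_{t_0-t})$, then invoke Proposition \ref{conformal-Loewner}. One small correction: in the $\D$-case the relevant lemma is not Lemma \ref{W*K/K-Mob} (which concerns M\"obius maps between $\HH$ and $\D$) but the radial analogue of Lemma \ref{W*K/K} for $\TT$-symmetric maps, which the paper states holds verbatim after Theorem \ref{W^K-d}.
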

\begin{proof}
From Theorem \ref{W^K}, $W^{L_t}$ and $W^*(L_t)$ are well defined.
Since $L_0=\emptyset$, $W^*(L_0)=\emptyset$. Let $0\le t_1\le t_2<T$. Since $L_{t_1}\prec L_{t_2}$,
from Lemma \ref{W*K/K}, $W^*(L_{t_1})\prec W^*(L_{t_2})$.
Fix $t_0\in(0,T)$. Since $L_{t_0}:L_{t_0-t}$, $0\le t\le t_0$, is an $\HH$-(resp.\ $\D$-)Loewner chain, from Lemma \ref{W*K/K} and Proposition \ref{conformal-Loewner} we see that
$$W^*(L_{t_0}):W^*(L_{t_0-t})=W^{L_{t_0}}(L_{t_0}:L_{t_0-t}),\quad 0\le t\le t_0,$$
is also an $\HH$-(resp.\ $\D$-)Loewner chain. This finishes the proof.
\end{proof}
Using Lemma \ref{W*K/K-Mob} instead of Lemma \ref{W*K/K}, we can show that a similar proposition holds.
\begin{Proposition} Suppose $L_{t}$, $0\le t<T$, is a backward $\HH$-(resp.\ $\D$-)Loewner chain, $W$ is a Mobius transform that maps $\HH$ onto $\D$ (resp.\ maps $\D$ onto $\HH$) such that $\infty\not\in W(S_{L_t})$ for $0\le t<T$. Then  $W^*(L_t)$, $0\le t <T$, is a backward $\D$-(resp.\ $\HH$-)Loewner chain. \label{conformal-Loewner-backward-Mob}
\end{Proposition}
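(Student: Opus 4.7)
The plan is to mimic the proof of Proposition \ref{conformal-Loewner-backward} essentially verbatim, with Lemma \ref{W*K/K-Mob} playing the role of Lemma \ref{W*K/K} and Theorem \ref{Mob} playing the role of Theorem \ref{W^K}. The hypothesis $\infty\notin W(S_{L_t})$ is precisely the non-degeneracy condition in Theorem \ref{Mob}, so $W^{L_t}$ and $W^*(L_t)=W^{L_t}(L_t)$ are well defined $\D$-hulls (resp.\ $\HH$-hulls) for every $t\in[0,T)$, and trivially $W^*(L_0)=W^*(\emptyset)=\emptyset$.

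Next I verify condition (1) of Definition \ref{Loewner-chain-backward}. Fix $0\le t_1\le t_2<T$; by assumption $L_{t_1}\prec L_{t_2}$, and by Lemma \ref{SKprec} the map $W$ is defined on $S_{L_{t_2}}\supset S_{L_{t_1}}$ with $\infty\notin W(S_{L_{t_2}})$. Lemma \ref{W*K/K-Mob} then yields $W^*(L_{t_1})\prec W^*(L_{t_2})$ together with the identity
\begin{equation*}
W^*(L_{t_2}):W^*(L_{t_1}) \;=\; W^{L_{t_2}}\bigl(L_{t_2}:L_{t_1}\bigr).
\end{equation*}

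For condition (2), fix $t_0\in(0,T)$ and use the identity to rewrite
\begin{equation*}
W^*(L_{t_0}):W^*(L_{t_0-t}) \;=\; W^{L_{t_0}}\bigl(L_{t_0}:L_{t_0-t}\bigr),\qquad 0\le t\le t_0.
\end{equation*}
By Definition \ref{Loewner-chain-backward} applied to $(L_t)$, the family $L_{t_0}:L_{t_0-t}$, $0\le t\le t_0$, is an $\HH$-(resp.\ $\D$-)Loewner chain. By Theorem \ref{Mob}, $W^{L_{t_0}}$ is a Möbius transformation mapping $\HH$ conformally onto $\D$ (resp.\ $\D$ onto $\HH$). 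The general conformal invariance of Loewner chains, stated just after the definition of a Loewner chain in a simply connected domain $D$, therefore turns this chain into a Loewner chain in $\D$ (resp.\ $\HH$). The displayed identity guarantees that the images $W^{L_{t_0}}(L_{t_0}:L_{t_0-t})$ are genuine $\D$-hulls (resp.\ $\HH$-hulls), since the left-hand side is such a hull by Definition \ref{//}. This is exactly condition (2), and the proof concludes.

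No serious obstacle arises; the only point one has to keep straight is that $W^{L_{t_0}}$ is a Möbius map between the two canonical domains rather than an $\R$- or $\TT$-symmetric conformal self-map, so one invokes the general-domain conformal invariance of Loewner chains rather than Proposition \ref{conformal-Loewner}. Everything else is a direct transcription of the preceding proof.
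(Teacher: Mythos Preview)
Your proof is correct and follows exactly the approach the paper indicates, namely transcribing the proof of Proposition \ref{conformal-Loewner-backward} with Lemma \ref{W*K/K-Mob} in place of Lemma \ref{W*K/K} and Theorem \ref{Mob} in place of Theorem \ref{W^K}. You have also correctly identified and filled in the one subtlety the paper does not spell out: since $W^{L_{t_0}}$ maps $\HH$ onto $\D$ (or vice versa) rather than being $\R$- or $\TT$-symmetric, one invokes the general conformal invariance of Loewner chains and then checks separately that the image hulls are $\D$-hulls (resp.\ $\HH$-hulls), rather than citing Proposition \ref{conformal-Loewner} directly.
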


Suppose $(L_t)$ is composed of $\HH$- or $\D$-simple curves. Then $(W^*(L_t))$ is also composed of $\HH$ or $\D$-simple curves.
Let $\phi$ and $\phi_W$ be the weldings induced by these two chains, which are involutions of $\bigcup S_{L_t}$ and $\bigcup S_{W^*(L_t)}$, respectively. Since for each $t\in(0,T)$, $\phi|_{S_{L_t}}=\phi_{L_t}$, $\phi_W|_{S_{W^*(L_t)}}=\phi_{W^*(L_t)}$, $S_{W^*(L_t)}=W(S_{L_t})$, and $\phi_{W^*(L_t)}=W\circ \phi_{L_t}\circ W^{-1}$, we see that $\bigcup S_{W^*(L_t)}=W(\bigcup S_{L_t})$ and
\BGE \phi_W=W\circ \phi\circ W^{-1}.\label{welding-trans}\EDE
This means that the conformal transformation preserves the welding.

The following proposition is essentially Lemma 2.8 in \cite{LSW1}.
\begin{Proposition}
Let $W$ be an $\R$-symmetric conformal map, whose domain contains $z_0\in\R$, such that $W(z_0)\ne \infty$. Then
$$\lim_{H\to z_0} \frac{\hcap(W(H))}{\hcap(H)}=|W'(z_0)|^2,$$
where $H\to z_0$ means that $\diam (H\cup\{z_0\})\to 0$ with  $H$ being a nonempty $\HH$-hull. \label{chordal-hull}
\end{Proposition}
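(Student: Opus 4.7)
\emph{Plan.} Reduce to a normalized case, then combine an integral formula for $\hcap$ with a normal-families argument showing that the lift $W^H$ converges to $W$ as $H\to z_0$.

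\emph{Reduction.} Since $W$ is $\R$-symmetric and preserves orientation locally, $a:=W'(z_0)>0$. Set $b=W(z_0)\in\R$, $\til W(z)=(W(z+z_0)-b)/a$, $\til H=H-z_0$; then $\til W$ is $\R$-symmetric with $\til W(0)=0$, $\til W'(0)=1$, and $W(H)=a\,\til W(\til H)+b$. The expansion $g_K(z)=z+\hcap(K)/z+O(1/z^2)$ at $\infty$ yields $\hcap(\alpha K+\beta)=\alpha^2\hcap(K)$ for $\alpha>0$, $\beta\in\R$, hence
\BGEN \frac{\hcap(W(H))}{\hcap(H)}=a^2\cdot\frac{\hcap(\til W(\til H))}{\hcap(\til H)}. \EDEN
It suffices to show the right-hand ratio tends to $1$ as $\til H\to 0$.

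\emph{Integral formula and change of variables.} Use the standard identity
\BGEN \hcap(K)=\frac{1}{\pi}\int_{S_K}\Imm f_K(x)\,dx, \EDEN
obtained by writing the bounded harmonic function $\Imm(z-g_K(z))$ on $\HH\sem K$ as the Poisson integral of its boundary values (equal to $\Imm\zeta$ on $\pa K\cap\HH$, to $0$ on $\R\sem S_K$) and matching its $\hcap(K)/y$ asymptotic at $iy\to i\infty$, then pulling the integral through $g_K$. The defining relation $(W^H)_H=W$ gives $W^H\circ f_H=f_{W(H)}\circ W$ on $\Omega\sem S_H$, extended by continuity to $S_H$. The substitution $y=W(x)$ then yields
\BGEN \hcap(W(H))=\frac{1}{\pi}\int_{S_H}\Imm\bigl(W^H(f_H(x))\bigr)\,W'(x)\,dx. \EDEN

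\emph{Limit and main obstacle.} As $\til H\to 0$, $S_{\til H}\to\{0\}$, so $f_{\til H}(x)\to 0$ and $\til W'(x)\to 1$ uniformly on $S_{\til H}$. The crux is the locally uniform convergence $\til W^{\til H}\to\til W$ on a fixed neighborhood $U$ of $0$. On the exterior $U\sem\ha{\til H}$ the explicit formula $\til W^{\til H}=f_{\til W(\til H)}\circ\til W\circ g_{\til H}$ applies, and since $g_{\til H},f_{\til W(\til H)}\to\id$ locally uniformly off $0$, one has $\til W^{\til H}\to\til W$ on compact subsets of $U\sem\{0\}$. The family $(\til W^{\til H})_{\til H\text{ small}}$ is uniformly bounded on $U$, hence normal by Montel; any subsequential limit agrees with $\til W$ on $U\sem\{0\}$ and thus on $U$ by analytic continuation. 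Each $\til W^{\til H}$ is $\R$-symmetric (Theorem~\ref{W^K}), so $\til W^{\til H}(0)\in\R$ and $(\til W^{\til H})'(0)\in\R_{>0}$, and Taylor expansion gives
\BGEN \Imm \til W^{\til H}(\zeta)=(\til W^{\til H})'(0)\,\Imm\zeta+O(|\zeta|^2), \EDEN
uniformly in $\zeta,\til H\to 0$. Substituting $\zeta=f_{\til H}(x)$ and dividing by $\hcap(\til H)=\pi^{-1}\int_{S_{\til H}}\Imm f_{\til H}(x)\,dx$, the $O$-term contributes a factor of order $\sup_{x\in S_{\til H}}|f_{\til H}(x)|\to 0$, while $(\til W^{\til H})'(0)\,\til W'(x)\to 1\cdot 1=1$; thus the ratio tends to $1$. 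The main obstacle is the $C^1$-convergence of the lift $\til W^{\til H}$ to $\til W$ near $0$, which is resolved by the Montel-plus-analytic-continuation argument above.
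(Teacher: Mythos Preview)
The paper does not give its own proof of this proposition; it simply records it as ``essentially Lemma~2.8 in \cite{LSW1}''. Your overall strategy---normalize, express $\hcap$ by the integral formula, and pass to the limit via convergence of an auxiliary conformal map---is reasonable, but your identification of that auxiliary map is wrong, and the error propagates through the change of variables.

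You invoke the \emph{lift} $W^H$ of Theorem~\ref{W^K} and claim that $(W^H)_H=W$ yields $W^H\circ f_H=f_{W(H)}\circ W$. But by Definition~\ref{WK}, $(W^H)_H$ is the extension of $g_{W^H(H)}\circ W^H\circ f_H$, so what you actually obtain is $W^H\circ f_H=f_{W^*(H)}\circ W$ with $W^*(H)=W^H(H)$. In general $W^*(H)\neq W(H)$: the lift $W^H$ does not coincide with $W$ on $H$ even when both are defined there. Consequently your integral is a formula for $\hcap(W^*(H))$, not for $\hcap(W(H))$. A related symptom is your substitution $y=W(x)$, which presumes $S_{W(H)}=W(S_H)$; the correct identity (Remark after Definition~\ref{WK}) is $S_{W(H)}=W_H(S_H)$, again pointing to the collapse rather than the lift.

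The repair is to use the \emph{collapse} $W_H$ of Definition~\ref{WK} in place of $W^H$. For $H$ small near $z_0$ one has $\ha H\subset\Omega$, so $W_H$ is defined, and the correct intertwining is $W\circ f_H=f_{W(H)}\circ W_H$ on $\Omega_H\sem S_H$. Substituting $y=W_H(x)$ now gives
\[
\hcap(W(H))=\frac{1}{\pi}\int_{S_H}\Imm\bigl(W(f_H(x))\bigr)\,W_H'(x)\,dx.
\]
Your normal-families argument transfers verbatim to $W_H=g_{W(H)}\circ W\circ f_H$ (with $g_{W(H)}\to\id$ and $f_H\to\id$ locally uniformly off $W(z_0)$ and $z_0$), yielding $W_H\to W$ in $C^1$ near $z_0$ and hence $W_H'(x)\to W'(z_0)$ uniformly on $S_H$. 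For the inner factor, since $\til W$ is $\R$-symmetric with real Taylor coefficients at $0$, the elementary bound $|\sin k\theta|\le k|\sin\theta|$ gives $\Imm\til W(\zeta)=(\Imm\zeta)(1+O(|\zeta|))$, and the conclusion follows as you wrote. Note that after the correction the roles are swapped: the \emph{fixed} map $W$ sits inside $\Imm(\cdot)$ while the varying collapse $W_H'$ supplies the Jacobian---so the Taylor expansion is for $W$ rather than for the auxiliary map, which actually simplifies the endgame.
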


Using the integral formulas for capacities of $\HH$-hulls and $\D$-hulls, it is not hard to derive the following similar proposition.

\begin{Proposition}
\begin{enumerate}
  \item[(i)] Let $W$ be a conformal map on a $\TT$-symmetric domain $\Omega$, which satisfies $I_\R\circ W=W\circ I_{\TT}$ and $W(\Omega\cap\D)\subset\HH$. Let $z_0\in \Omega\cap\TT$ be such that $W(z_0)\ne\infty$. Then
  $$\lim_{H\to z_0} \frac{\hcap(W(H))}{\dcap(H)}=2|W'(z_0)|^2,$$
  where $H\to z_0$ means that $\diam (H\cup\{z_0\})\to 0$ with $H$ being a nonempty $\D$-hull.
  \item[(ii)] Proposition \ref{chordal-hull} holds with $\R$ replaced by $\TT$, $\hcap$ replaced by $\dcap$, and $H\to z_0$ understood as in (i).
\end{enumerate}\label{radial-hull}
\end{Proposition}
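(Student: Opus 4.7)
The plan is to reduce both parts to Proposition~\ref{chordal-hull} via M\"obius transformations between $\HH$ and $\D$, with a single base-case computation for M\"obius maps pinning down the factor of $2$ in (i). Specifically, I would first prove the base case: for any M\"obius $\sigma:\D\to\HH$ with $\sigma^{-1}(\infty)\neq z_0$,
$$\lim_{H\to z_0}\frac{\hcap(\sigma(H))}{\dcap(H)}=2|\sigma'(z_0)|^2.$$
This would be obtained from the standard integral representations
$$g^\HH_K(z)=z+\int_\R\frac{d\mu^\HH_K(x)}{z-x},\qquad \log\!\bigl(g^\D_K(z)/z\bigr)=\int_\TT\frac{e^{i\theta}+z}{e^{i\theta}-z}\,d\mu^\D_K(\theta),$$
whose total masses are $\hcap(K)$ and $\dcap(K)$ respectively, with supports in $S_K$. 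As $H\to z_0$ both measures concentrate at single boundary points, so the above integrals linearize; comparing the resulting leading-order expansions of $g^\D_H$ and $g^\HH_{\sigma(H)}$ through the conformal identity of Theorem~\ref{Mob} (which links them via an auxiliary M\"obius map converging to $\sigma$) yields the base case. The factor of $2$ comes from the asymptotic $\frac{z_0+z}{z_0-z}\sim\frac{2z_0}{z_0-z}$ as $z\to z_0$, which has no chordal counterpart.

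For part (i), fix any M\"obius $\sigma:\D\to\HH$ with $\sigma^{-1}(\infty)\neq z_0$. The composition $W\circ\sigma^{-1}$ is $\R$-symmetric (combining $I_\R\circ W=W\circ I_\TT$ with $\sigma\circ I_\TT=I_\R\circ\sigma$), defined on a neighborhood of $\sigma(z_0)\in\R$, maps $\HH$ into $\HH$, and satisfies $(W\circ\sigma^{-1})(\sigma(z_0))=W(z_0)\neq\infty$. Since $\sigma(H)\to\sigma(z_0)$, Proposition~\ref{chordal-hull} gives $\hcap(W(H))/\hcap(\sigma(H))\to|W'(z_0)|^2/|\sigma'(z_0)|^2$; multiplying by the base case yields (i). For part (ii), pick a M\"obius $\sigma:\D\to\HH$ with $\sigma^{-1}(\infty)\neq W(z_0)$. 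The composition $\sigma\circ W$ is $\TT$-to-$\R$ symmetric and maps $\D\cap\Omega$ into $\HH$ near $z_0$, so part (i) applied to $\sigma\circ W$ gives $\hcap(\sigma(W(H)))/\dcap(H)\to 2|\sigma'(W(z_0))|^2|W'(z_0)|^2$, while the base case at $W(z_0)$ gives $\hcap(\sigma(W(H)))/\dcap(W(H))\to 2|\sigma'(W(z_0))|^2$; dividing the two limits yields (ii).

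The main obstacle is the base case, where the numerical factor of $2$ has to be established by direct analysis; the remaining reductions are formal consequences of Proposition~\ref{chordal-hull} and the chain rule.
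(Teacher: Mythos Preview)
Your approach is correct. The paper does not give a detailed proof; it only remarks that the proposition ``is not hard to derive'' from the integral formulas for $\HH$- and $\D$-hull capacities (formulas (\ref{f-z}) and (\ref{f/z})). Your M\"obius-factorization strategy is a clean way to organize this: once the base case $\hcap(\sigma(H))/\dcap(H)\to 2|\sigma'(z_0)|^2$ is established for a single M\"obius $\sigma:\D\to\HH$, both (i) and (ii) reduce to Proposition~\ref{chordal-hull} by the chain-rule arguments you describe, and the symmetry checks you wrote out are correct.

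Two minor remarks on the base case. First, Theorem~\ref{Mob} relates $g^\D_H$ to $g^\HH_{\sigma^H(H)}$ for an auxiliary M\"obius $\sigma^H$, not directly to $g^\HH_{\sigma(H)}$; however, the map $\til\sigma:=g^\HH_{\sigma(H)}\circ\sigma\circ f^\D_H$ is itself M\"obius $\D\to\HH$, and by the uniqueness part of Theorem~\ref{Mob} one has $(\til\sigma)^H=\sigma$, so the identity you need is available with $\sigma(H)$ on the nose. Second, it is in fact enough to verify the base case for \emph{one} explicit $\sigma$ (for instance via the covering map $e^i$, where $\cot_2(w)=2/w+O(w)$ makes the comparison with the chordal equation immediate and yields the factor $2$), since your reduction in (i) already upgrades one M\"obius instance to all of them through Proposition~\ref{chordal-hull} applied to $\sigma'\circ\sigma^{-1}$. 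This slightly streamlines the argument and avoids the measure-concentration analysis.
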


\subsection{Transformations between backward $\HH$-Loewner chains} \label{Conformal-chordal}
Suppose $L_t$ and $f_t$, $0\le t<T$, are backward chordal Loewner hulls and maps driven by $\lambda\in C([0,T))$. From Proposition \ref{backward-eqn-chain}, $(L_t)$ is a backward $\HH$-Loewner chain.
 Let $W$ be an $\R$-symmetric conformal map, whose domain $\Omega$ contains the support of every $L_t$. Write $W_t$ for $W^{L_t}$. The domain of $W_t$ is $\Omega^{L_t}$, which contains $\ha L_t$. If $t>0$, $\lambda(t)\in \ha L_t$, so $\lambda(t)$ is contained in the domain of $W_t$. This is also true for $t=0$ because $W_0=W$ and $\{\lambda(0)\}=S_0\subset S_t=S_{L_t}\subset\Omega$ for any $t\in(0,T)$.
 Let $L^*_t=W^*(L_t)=W_t(L_t)$, $0\le t<T$. From Proposition \ref{conformal-Loewner-backward}, $(L^*_t)$ is a backward $\HH$-Loewner chain, and
 \BGE W_t(L_t:L_{t-\eps})=L^*_t:L^*_{t-\eps},\quad 0\le t-\eps<t<T.\label{W(:)}\EDE
 From Proposition \ref{backward-eqn-chain}, $L^*_{t}$, $0\le t<T$, are backward chordal Loewner hulls via a time-change $u(t):=\hcap(L^*_t)/2$, driven by some $\lambda^*$, which satisfies
$$\{\lambda^*(t)\}=\bigcap_{\eps>0}\overline{L^*_{t}:L^*_{t-\eps}}, \quad 0< t<T. $$
From (\ref{backward-hull-driving}), (\ref{W(:)}), 
and continuity, we find that \BGE \lambda^*(t)=W_t(\lambda(t)),\quad 0\le t<T.\label{W=}\EDE
Since $(L_t)$ and $(L^*_{u^{-1}(t)})$ are normalized, we know that $\hcap(L_t:L_{t-\eps})=2\eps$ and $\hcap(L^*_t:L^*_{t-\eps})=2u(t)-2u(t-\eps)$. From (\ref{W(:)}) and Proposition \ref{chordal-hull}, we find that
\BGE u'(t)=W_t'(\lambda(t))^2,\quad 0\le t<T.\label{u'}\EDE

Let $f^*_t=f_{L^*_t}$.  From the definition of $W_t=W^{L_t}$, we have the equality
\BGE W_t\circ f_t=f^*_t\circ W,\label{circ}\EDE
which holds in $\Omega\sem S_{L_t}$. Differentiating (\ref{circ}) w.r.t.\ $t$, and using (\ref{W=}) and (\ref{u'}), we get
$$\pa_t W_t(f_t(z))+W_t'(f_t(z))\frac{-2}{f_t(z)-\lambda(t)}=\frac{-2u'(t)}{f^*_t(W(z))-\lambda^*(t)}$$
$$=\frac{-2W_t'(\lambda(t))^2}{W_t(f_t(z))-W_t(\lambda(t))}.$$
Thus, for any $w=f_t(z)\in f_t(\Omega\sem S_{L_t})=\Omega^{L_t}\sem \ha{L_t}$,
\BGE\pa_t W_t(w)=\frac{-2W_t'(\lambda(t))^2}{W_t(w)-W_t(\lambda(t))}-W_t'(w)\frac{-2}{w-\lambda(t)}.\label{patWtw}\EDE
By analytic extension, the above equality holds for any $w\in \Omega^{L_t}\sem \{\lambda(t)\}$. Letting $w\to\lambda(t)$, we find that
\BGE \pa_t W_t(\lambda(t))=3W_t''(\lambda(t)),\quad 0\le t<T.\label{-3}\EDE
Differentiating (\ref{patWtw}) w.r.t.\ $w$ and letting $w\to\lambda(t)$, we get
\BGE \frac{\pa_t W_t'(\lambda(t))}{W_t'(\lambda(t))}=-\frac 12\Big(\frac{W_t''(\lambda(t))}{W_t'(\lambda(t))}\Big)^2+\frac 43 \frac{ W_t'''(\lambda(t))}{W_t'(\lambda(t))}.\label{W''/W'-chordal}\EDE

\subsection{Transformations involving backward $\D$-Loewner chains}\label{Conformal-radial}
Now suppose $L_t$, $0\le t<T$, are backward radial Loewner hulls driven by $\lambda$. Let $f_t$ and $\til f_t$ be the corresponding radial Loewner maps and covering maps. Suppose $W$ is a $\TT$-symmetric conformal map, whose domain $\Omega$ contains the support of every $L_t$. Let $W_t=W^{L_t}$, $L^*_t=W_t(L_t)=W^*(L_t)$, and $u(t)=\dcap(L^*_t)$, $0\le t<T$. Then $L^*_{t}$, $0\le t<T$, are backward radial Loewner hulls via a time-change $u(t):=\dcap(L^*_t)$, driven by some $\lambda^*$, which satisfies
$$\{e^{i\lambda^*(t)}\}=\bigcap_{\eps>0} \overline{ L^*_t:L^*_{t-\eps}},\quad 0<t<T.$$
Let $f^*_t$ (resp.\ $\til f^*_t$), $0\le t<T$, denote the backward radial (resp.\ covering radial) Loewner hulls via the time-change $u$ driven by $\lambda^*$. The argument in the last subsection still works with Proposition \ref{radial-hull} in place of Proposition \ref{chordal-hull}. We can conclude that $e^{i\lambda(t)}$ lies in the domain of $W_t$ for $0\le t<T$; $W_t(e^{i\lambda(t)})=e^{i\lambda^*(t)}$; $u'(t)=|W_t'(e^{i\lambda(t)})|^2$; and (\ref{circ}) still holds.
Suppose $\til W$ is an $\R$-symmetric conformal map defined on $\til\Omega=(e^i)^{-1}(\Omega)$, which satisfies $e^i\circ \til W=W\circ e^i$. Define $\til W_t$ to be the analytic extension of $\til f^*_t\circ \til W\circ \til f_t^{-1}$ to $\til \Omega_t:=(e^i)^{-1}(\Omega^{L_t})$. Then we get
\BGE \til W_t\circ \til f_t=\til f^*_t\circ \til W;\label{circ-til-til}\EDE
Comparing this with (\ref{circ}) we find $e^i\circ \til W_t=W_t\circ e^i$. So $\lambda(t)$ lies in the domain of $\til W_t$, and
\BGE u'(t)=\til W_t'(\lambda(t))^2,\quad 0\le t<T.\label{u'-til}\EDE
Since $W_t(e^{i\lambda(t)})=e^{i\lambda^*(t)}$, from the continuity, there is $n\in\N$ such that $\til W_t(\lambda(t))=\lambda^*(t)+2n\pi$ for $0\le t<T$. Since $\lambda^*$ and $\lambda^*+2n\pi$ generate the same backward radial Loewner objects via the time-change $u$, by replacing $\lambda^*$ with $\lambda^*+2n\pi$, we may assume that
\BGE \til W_t({\lambda(t)})={\lambda^*(t)},\quad 0\le t<T.\label{W=-til}\EDE
Differentiating (\ref{circ-til-til}) w.r.t.\ $t$ and letting $w=\til f_t(z)$, we get
\BGE\pa_t \til W_t(w)=-\til W_t'(\lambda(t))^2\cot_2({\til W_t(w)-\til W_t(\lambda(t))})+\til W_t'(w)\cot_2({w-\lambda(t)}),\label{patWtw-radial}\EDE
which holds for $w\in (e^i)^{-1}(\Omega^{L_t}\sem\{e^{i\lambda(t)}\})$. Letting $w\to\lambda(t)$, we get
\BGE \pa_t \til W_t(\lambda(t))=3\til W_t''(\lambda(t)),\quad 0\le t<T.\label{-3-til}\EDE
Differentiating (\ref{patWtw-radial}) w.r.t.\ $w$ and letting $w\to\lambda(t)$, we get
\BGE \frac{\pa_t \til W_t'(\lambda(t))}{\til W_t'(\lambda(t))}=-\frac 12\Big(\frac{W_t''(\lambda(t))}{W_t'(\lambda(t))}\Big)^2+\frac 43 \frac{ W_t'''(\lambda(t))}{W_t'(\lambda(t))}+\frac 16\til W_t'(\lambda(t))^2-\frac 16.\label{W''/W'-radial}\EDE
The number $\frac 16$ comes from the Laurent series of $\cot_2(z)$: $\frac 2z-\frac z6+O(z^3)$.

Let $(L_t)$, $(f_t)$, and $(\til f_t)$ be as above. Now suppose $W$ is a M\"obius transformation that maps $\D$ onto $\HH$ such that $W^{-1}(\infty)\not\in S_{L_t}$ for every $t$. Let $W^{L_t}$ be as in Theorem \ref{Mob}. Let $W_t=W^{L_t}$ and $L^*_t=W_t(L_t)=W^*(L_t)$, $0\le t<T$. Then $L^*_{t}$, $0\le t<T$, are backward chordal Loewner hulls via a time-change $u(t):=\hcap(L^*_t)/2$, driven by some $\lambda^*$. Let $f^*_t=f_{L^*_t}$. Then (\ref{circ}) still holds, and we have $u'(t)=|W_t'(e^{i\lambda(t)})|^2$ and $W_t(e^{i\lambda(t)})=e^{i\lambda^*(t)}$. Let $\til W=W\circ e^i$ and $\til W_t=W_t\circ e^i$. We get (\ref{u'-til}), (\ref{W=-til}), and
$\til W_t\circ \til f_t=f^*_t\circ \til W$. 
Differentiating this equality w.r.t.\ $t$ and letting $w=\til f_t(z)$ tend to $\lambda(t)$,  we find that (\ref{-3-til}) still holds.

\subsection{Conformal invariance of backward SLE$(\kappa;\rho)$ processes} \label{section-kappa-rho}
 We now define backward chordal and radial SLE$(\kappa;\vec\rho)$ processes, where $\vec\rho=(\rho_1,\dots,\rho_n)\in\R^n$. Let $x_0,q_1,\dots,q_n\in\R$ such that $q_k\ne x_0$ for all $k$. Let $\lambda(t)$, $0\le t<T$, be the maximal solution of the equation
 \BGE d\lambda(t)=\sqrt\kappa dB(t)+\sum_{k=1}^n \frac{-\rho_k}{\lambda(t)-f^\lambda_t(q_k)}\,dt;\quad \lambda(0)=x_0.\label{kappa-rho-chordal}\EDE
 Here $f^\lambda_t$, $0\le t<T$, are the backward chordal Loewner maps driven by $\lambda$. Then we call the backward chordal Loewner process driven by $\lambda$ the chordal SLE$(\kappa;\vec\rho)$ process started from $x_0$ with force points $(q_1,\dots,q_n)$, or simply started from $(x_0;q_1,\dots,q_n)$. We allow some $q_k$ to be $\infty$. In that case, $f^\lambda_t(q_k)$ is always $\infty$, and the term $\frac{-\rho_k}{\lambda(t)-f^\lambda_t(q_k)}$ vanishes.

Let  $x_0,q_1,\dots,q_n\in\R$ be such that $q_k\not\in x_0+2\pi\Z$ for all $k$.  Let $\lambda(t)$, $0\le t<T$, be the maximal solution of the equation
 \BGE d\lambda(t)=\sqrt\kappa dB(t)+\sum_{k=1}^n \frac{-\rho_k}2 \cot_2({\lambda(t)-\til f^\lambda_t(q_k)})\,dt;\quad \lambda(0)=x_0.\label{kappa-rho-radial}\EDE
 Here $\til f^\lambda_t$, $0\le t<T$, are the covering backward radial Loewner maps driven by $\lambda$. Then the backward radial Loewner process driven by $\lambda$ is called the radial SLE$(\kappa;\vec\rho)$ process started from $e^{ix_0}$ with marked points $(e^{iq_1},\dots,e^{iq_n})$, or simply started from $(e^{ix_0};e^{iq_1},\dots,e^{iq_n})$.

The existence of backward chordal (resp.\ radial) SLE$_\kappa$ traces and Girsanov's Theorem imply the existence of a backward chordal (resp.\ radial) SLE$(\kappa;\vec\rho)$ traces. The traces are $\HH$(resp.\ $\D$)-simple curves if $\kappa\in(0,4]$.


 The following lemma is easy to check.
\begin{Lemma}
  Let $W$ be a M\"obius transformation. Then the following hold.
  \begin{enumerate}
    \item[(i)] For any $z\in\C\cap W^{-1}(\C)$ and $w\in\ha\C$,
    $$\frac{2W'(z)}{W(z)-W(w)}-\frac{2}{z-w}=\frac{W''(z)}{W'(z)}.$$
    \item[(ii)] Let $\til W=W\circ e^i$. For any $z\in\C\cap \til W^{-1}(\C)$ and $w\in\C$,
    $$\frac{2\til W'(z)}{\til W(z)-\til W(w)}- \cot_2(z-w)= \frac{\til W''(z)}{\til W'(z)}.$$
    \item[(iii)] Suppose an analytic function $\til W:\Omega\to\C$ satisfies $e^i\circ \til W=W\circ e^i$ in $\Omega$. Then for any $z,w\in\Omega$,
        $$\til W'(z)\cot_2(\til W(z)-\til W(w))-\cot_2(z-w)= \frac{\til W''(z)}{\til W'(z)}.$$
  \end{enumerate}\label{Mob-W}
\end{Lemma}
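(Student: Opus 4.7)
The plan is to treat all three identities by direct computation, using the explicit form of a M\"obius transformation in (i) and then reducing (ii) and (iii) to (i) via the substitution $Z=e^{iz}$, $U=e^{iw}$ together with the elementary identity
$$\cot_2(z-w)=\frac{2iZ}{Z-U}-i,$$
which follows from $e^{iz}-e^{iw}=2ie^{i(z+w)/2}\sin_2(z-w)$.

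For (i), I would write $W(z)=(az+b)/(cz+d)$ with $ad-bc\ne 0$. From the standard identities $W(z)-W(w)=\frac{(ad-bc)(z-w)}{(cz+d)(cw+d)}$ and $W'(z)=\frac{ad-bc}{(cz+d)^2}$ one computes $\frac{2W'(z)}{W(z)-W(w)}-\frac{2}{z-w}=\frac{2(cw+d)}{(z-w)(cz+d)}-\frac{2}{z-w}=\frac{-2c}{cz+d}$, which equals $W''(z)/W'(z)$ by direct differentiation of $W'$. Equivalently, the LHS is automatically independent of $w$ precisely when $W$ is M\"obius, and its value is recovered by letting $w\to z$ via Taylor expansion of $W(w)$ around $z$.

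For (ii), with $Z=e^{iz}$ and $U=e^{iw}$, the chain rule gives $\til W'(z)=iZW'(Z)$, and a logarithmic derivative yields $\til W''(z)/\til W'(z)=i+iZW''(Z)/W'(Z)$. Multiplying (i) by $iZ$ and then adding $2iZ/(Z-U)$ to both sides produces
$$\frac{2\til W'(z)}{\til W(z)-\til W(w)}=\frac{iZW''(Z)}{W'(Z)}+\frac{2iZ}{Z-U}=\frac{\til W''(z)}{\til W'(z)}+\cot_2(z-w),$$
which is (ii) after transposing $\cot_2(z-w)$.

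For (iii), differentiating $e^{i\til W(z)}=W(Z)$ once gives $\til W'(z)=ZW'(Z)/W(Z)$, and a further logarithmic derivative yields $\til W''(z)/\til W'(z)=i+iZW''(Z)/W'(Z)-iZW'(Z)/W(Z)$. Applying the trigonometric identity both at $(z,w)$ and at $(\til W(z),\til W(w))$---using $e^{i\til W(z)}=W(Z)$ and $e^{i\til W(w)}=W(U)$---one computes
$$\til W'(z)\cot_2(\til W(z)-\til W(w))-\cot_2(z-w)=iZ\left[\frac{2W'(Z)}{W(Z)-W(U)}-\frac{2}{Z-U}\right]-\frac{iZW'(Z)}{W(Z)}+i,$$
and applying (i) to collapse the bracketed expression to $W''(Z)/W'(Z)$ matches this to $\til W''(z)/\til W'(z)$. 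The only real obstacle is the bookkeeping of half-angle factors; no deeper idea than (i) is needed, since (ii) and (iii) are essentially the logarithmic lift of (i) to the universal cover via $e^i$.
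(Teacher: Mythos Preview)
Your computations are correct, and the paper itself gives no proof beyond the comment ``easy to check'', so what you have written is exactly the intended verification. The reduction of (ii) and (iii) to (i) via $Z=e^{iz}$, $U=e^{iw}$ and the identity $\cot_2(z-w)=\dfrac{2iZ}{Z-U}-i$ is the natural way to carry out this check.
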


  \begin{Theorem}
   Let $L_t$, $0\le t<T$, be the backward chordal SLE$(\kappa;\vec\rho)$ hulls started from $(x_0;q_1,\dots,q_n)$. Suppose $\sum\rho_k=-\kappa-6$. Let $W$ be a M\"obius transformation from $\HH$ onto $\HH$ such that $\{\infty,W^{-1}(\infty)\}\subset\{q_1,\dots,q_n\}$. Then, after a time-change, $W^*(L_t)$, $0\le t<T$, are the backward chordal SLE$(\kappa;\vec\rho)$ hulls started from $(W(x_0);W(q_1),\dots,W(q_n))$. \label{cord-change-1}
 \end{Theorem}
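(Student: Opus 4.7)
The plan is to derive the SDE satisfied by $\lambda^*(t):=W_t(\lambda(t))$, where $W_t:=W^{L_t}$, by applying Itô's formula to this composite, and then identify it as a backward SLE$(\kappa;\vec\rho)$ equation after the time change $u$ of Section~\ref{Conformal-chordal}. Since $W$ is M\"obius, its domain is $\ha\C$, so $\Omega^{L_t}=\ha\C$ and the lift $W_t$ furnished by Theorem~\ref{W^K} is a conformal automorphism of $\ha\C$, i.e.\ itself a M\"obius transformation; the intertwining $W_t\circ f_{L_t}=f_{W^*(L_t)}\circ W$ together with $\R$-symmetry forces $W_t(\HH)=\HH$. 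The hypothesis $\{\infty,W^{-1}(\infty)\}\subset\{q_1,\dots,q_n\}$ ensures that $W_t$ remains finite and well defined at $\lambda(t)$ throughout, and is the precise condition under which the M\"obius identity of Lemma~\ref{Mob-W}(i) applies cleanly to each finite or infinite force point.

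From Section~\ref{Conformal-chordal}, after reparametrization by $\til t=u(t)$ the hulls $L^*_t:=W^*(L_t)$ are backward chordal Loewner hulls driven by $\lambda^*$; the identity $\partial_t W_t(\lambda(t))=3W_t''(\lambda(t))$ of (\ref{-3}), together with $d\langle\lambda\rangle_t=\kappa\,dt$ and Itô's formula, yields
\begin{equation*}
d\lambda^*(t)=\tfrac{\kappa+6}{2}W_t''(\lambda(t))\,dt+W_t'(\lambda(t))\sqrt\kappa\,dB(t)+W_t'(\lambda(t))\sum_{k=1}^n\frac{-\rho_k}{\lambda(t)-f^\lambda_t(q_k)}\,dt.
\end{equation*}
Multiplying Lemma~\ref{Mob-W}(i) by $W_t'(\lambda(t))$ and by $-\rho_k/2$, summing over $k$, and using $\sum\rho_k=-\kappa-6$ together with (\ref{circ}) in the form $W_t(f^\lambda_t(q_k))=f^*_t(W(q_k))$ rewrites the final drift as
\begin{equation*}
\sum_k\frac{-\rho_k W_t'(\lambda(t))^2}{\lambda^*(t)-f^*_t(W(q_k))}\,dt-\tfrac{\kappa+6}{2}W_t''(\lambda(t))\,dt,
\end{equation*}
so the Schwarzian correction cancels exactly, and only the first and last terms of the display above survive.

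Finally I would carry out the time change $\til t=u(t)$: since $M(t):=\int_0^t W_s'(\lambda(s))\,dB(s)$ is a continuous local martingale with $\langle M\rangle_t=u(t)$, L\'evy's criterion produces a standard Brownian motion $\til B$ with $W_t'(\lambda(t))\,dB(t)=d\til B(\til t)$, and the factor $W_t'(\lambda(t))^2\,dt=d\til t$ turns the remaining drift into $\sum_k\frac{-\rho_k}{\lambda^*(\til t)-f^{\lambda^*}_{\til t}(W(q_k))}\,d\til t$. The resulting SDE for $\lambda^*(\til t)$ is exactly (\ref{kappa-rho-chordal}) with driving Brownian motion $\til B$, initial point $W(x_0)$, and force points $(W(q_1),\dots,W(q_n))$, which proves the theorem. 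I expect the only genuine obstacle to be the first step---identifying $W_t$ as a M\"obius map that never sends $\lambda(t)$ to $\infty$, and bookkeeping the cases $q_k\in\{\infty,W^{-1}(\infty)\}$ in which one side of Lemma~\ref{Mob-W}(i) vanishes by the usual convention---after which the cancellation forced by $\sum\rho_k=-\kappa-6$ makes the computation essentially mechanical.
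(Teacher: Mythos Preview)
Your approach matches the paper's, and the SDE computation is correct. There is, however, one missing step: identifying the SDE for $\lambda^*$ only shows that $(W^*(L_t))_{0\le t<T}$ is, after the time change, a backward chordal SLE$(\kappa;\vec\rho)$ process \emph{run up to time $u(T^-)$}. The theorem asserts it is the \emph{complete} (maximal) such process, and this needs an extra argument.

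The paper closes this gap as follows. If the target process were not complete, it could be extended past $u(T^-)$ without swallowing any of the force points $W(q_1),\dots,W(q_n)$. Here the hypothesis $\infty\in\{q_1,\dots,q_n\}$ enters: it makes $W(\infty)$ one of the target force points, so $W(\infty)=(W^{-1})^{-1}(\infty)$ lies outside the supports of the extended hulls. That is exactly the condition needed for $(W^{-1})^*$ to be defined on the extension, and applying it (via Propositions~\ref{conformal-Loewner-backward} and~\ref{backward-eqn-chain}) would produce a nontrivial extension of the original chain $(L_t)_{0\le t<T}$ without swallowing any $q_k$, contradicting the maximality of $T$.

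Note that your stated justification for the hypothesis $\{\infty,W^{-1}(\infty)\}\subset\{q_1,\dots,q_n\}$ really only uses the half $W^{-1}(\infty)\in\{q_k\}$ (this is what keeps $\infty\notin W(S_{L_t})$ so that $W^*(L_t)$ is well defined). The other half, $\infty\in\{q_k\}$, is precisely what is consumed by the completeness argument above.
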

 \begin{proof}
 Since $W^{-1}(\infty)$ is a force point, it is not contained in the support of any $L_t$. So $\infty\not\in W(S_{L_t})$, $0\le t<T$. Let $\lambda$ be the driving function, and $f_t=f^\lambda_t$, $0\le t<T$, be the corresponding maps.
   We may and now adopt the notation in Section \ref{Conformal-chordal}. Let $(\F_t)$ be the complete filtration generated by $B(t)$ in (\ref{kappa-rho-chordal}). Then $(\lambda_t)$ and $(L_t)$ are $(\F_t)$-adapted.
   From Corollary \ref{continuous-Cor} (i), $(W^*(L_t))$ is also $(\F_t)$-adapted. Since $W_t=W^{L_t}=f_{W^*(L_t)}\circ W\circ  g_{L_t}$ on $\Omega^{L_t}\sem \ha {L_t}$, $(W_t)$ is $(\F_t)$-adapted. So we may apply It\^o's formula (c.f.\ \cite{RY}).
   From (\ref{W=}) and (\ref{-3}), we get
   $$d\lambda^*(t)=W_t'(\lambda(t))d\lambda(t)+\Big(\frac\kappa 2+3\Big) W_t''(\lambda(t))dt,\quad 0\le t<T.$$
   Applying (\ref{kappa-rho-chordal}) and Lemma \ref{Mob-W} (i), and using the condition that $\sum\rho_k=-\kappa-6$, we find that
   $$d\lambda^*(t)=W_t'(\lambda(t))\sqrt\kappa dB(t)+\sum_{k=1}^n \frac{-\rho_k W_t'(\lambda(t))^2}{W_t(\lambda(t))-W_t\circ f^\lambda_t(q_k)}\,dt$$
   $$=W_t'(\lambda(t))\sqrt\kappa dB(t)+\sum_{k=1}^n \frac{-\rho_k W_t'(\lambda(t))^2}{\lambda^*(t)- f^{*}_t\circ W(q_k)}\,dt, \quad 0\le t<T.$$
From (\ref{W=}) we get $\lambda^*(0)=W_0(\lambda(0))=W(x_0)$.
Since $L^*_t=W^*(L_t)$ and $f^*_t$ are backward chordal Loewner hulls and maps via the time-change $u$ driven by $\lambda^*$, from (\ref{u'}) and the above equation, we conclude that, after a time-change, $W^*(L_t)$, $0\le t<T$, are the backward chordal SLE$(\kappa;\vec\rho)$ hulls started from $(W(x_0);W(q_1),\dots,W(q_n))$ and stopped at some time.

   It remains to show that the above process is completed. If not, the process can be extended without swallowing the force points $W(q_1),\dots,W(q_n)$. From the condition, $W(\infty)$ is among these force points. So $(W^{-1})^*$ is well defined at the hulls of the extended process. From Propositions \ref{conformal-Loewner-backward} and \ref{backward-eqn-chain}, this implies that the backward chordal Loewner hulls $L_t$, $0\le t<T$, can be extended without swallowing any of $q_1,\dots,q_n$, which is a contradiction.
 \end{proof}

The following theorem can be proved using the above proof with minor modifications: we now use the argument in Section \ref{Conformal-radial} instead of that in Section \ref{Conformal-chordal}, apply Lemma \ref{Mob-W} (ii) and (iii) instead of (i), and use Proposition \ref{conformal-Loewner-backward-Mob} in addition to Proposition \ref{conformal-Loewner-backward}.

\begin{Theorem}
Suppose $\sum\rho_k=-\kappa-6$.
   Let $(L_t)$ be the backward radial SLE$(\kappa;\vec\rho)$ hulls started from $(e^{ix_0};e^{iq_1},\dots,e^{iq_n})$. Let $W$ map $\D$ conformally onto $\HH$ (resp.\ $\D$) such that $\{W^{-1}(\infty)\}\cap\TT\subset \{e^{iq_1},\dots,e^{iq_n}\}$. Then, after a time-change, $(W^*(L_t))$ are the backward chordal (resp.\ radial) SLE$(\kappa;\vec\rho)$ hulls started from $(W(e^{ix_0});W(e^{iq_1}),\dots,W(e^{iq_n}))$.\label{cord-change-2}
 \end{Theorem}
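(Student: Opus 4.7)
\textbf{Proof proposal for Theorem \ref{cord-change-2}.} The plan is to mimic the proof of Theorem \ref{cord-change-1} verbatim in structure, but with the chordal machinery replaced by the covering radial machinery developed in Section \ref{Conformal-radial}. Let $\lambda$ be the driving function of the backward radial SLE$(\kappa;\vec\rho)$, let $f^\lambda_t$ and $\til f^\lambda_t$ be the corresponding backward radial Loewner maps and their covering maps, and let $(\F_t)$ be the filtration generated by the Brownian motion in (\ref{kappa-rho-radial}). Since $W^{-1}(\infty)\cap\TT$, if nonempty, lies among the force points, it is never swallowed, so Proposition \ref{conformal-Loewner-backward} (in the $\D\to\D$ case) or Proposition \ref{conformal-Loewner-backward-Mob} (in the $\D\to\HH$ case) applies and $(W^*(L_t))$ is a backward $\D$- or $\HH$-Loewner chain. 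Adopt the notation of Section \ref{Conformal-radial}: set $L^*_t=W^*(L_t)$, $W_t=W^{L_t}$, $\til W_t$ the lift with $e^i\circ\til W_t=W_t\circ e^i$ (or $\til W_t=W_t\circ e^i$ in the $\D\to\HH$ case), $u(t)$ the $\dcap$ or $\frac12\hcap$ of $L^*_t$. By Corollary \ref{continuous-Cor} both $(L^*_t)$ and $(\til W_t)$ are $(\F_t)$-adapted, so It\^o's formula applies.

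Combining (\ref{W=-til}) with (\ref{-3-til}) and the SDE (\ref{kappa-rho-radial}), It\^o's formula yields
\BGE d\lambda^*(t)=\til W_t'(\lambda(t))\sqrt\kappa\,dB(t)+\Big(\frac\kappa2+3\Big)\til W_t''(\lambda(t))\,dt+\sum_{k=1}^n\frac{-\rho_k}{2}\til W_t'(\lambda(t))\cot_2(\lambda(t)-\til f^\lambda_t(q_k))\,dt.\label{proposal-Ito}\EDE
Now apply Lemma \ref{Mob-W} (iii) (in the $\D\to\D$ case) or Lemma \ref{Mob-W} (ii) (in the $\D\to\HH$ case) with $z=\lambda(t)$ and $w=\til f^\lambda_t(q_k)$ to rewrite
$$\til W_t'(\lambda(t))\cot_2(\lambda(t)-\til f^\lambda_t(q_k))=\til W_t'(\lambda(t))^2\,\mathrm{cot}_{2,*}\big(\lambda^*(t)-\til W_t(\til f^\lambda_t(q_k))\big)+\til W_t''(\lambda(t)),$$
where $\mathrm{cot}_{2,*}$ denotes $\cot_2$ or $\frac{2}{\cdot}$ according to the case, and where $\til W_t\circ\til f^\lambda_t=\til f^*_t\circ\til W$ by the analogue of (\ref{circ-til-til}). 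Substituting back into (\ref{proposal-Ito}) the sum of the $\til W_t''(\lambda(t))$ coefficients is $\frac\kappa2+3+\frac12\sum_k\rho_k$, which vanishes exactly because $\sum\rho_k=-\kappa-6$. After time-change by $u$ (whose derivative is $\til W_t'(\lambda(t))^2$ by (\ref{u'-til})), the resulting SDE for $\lambda^*$ is precisely (\ref{kappa-rho-chordal}) or (\ref{kappa-rho-radial}) started from $(\til W(x_0);\til W(q_1),\dots,\til W(q_n))$, which projects to the required initial data under $e^i$.

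Finally, to see that the transformed process is complete rather than being stopped prematurely, I would argue exactly as in Theorem \ref{cord-change-1}: if $(W^*(L_t))$ were the restriction of a longer backward SLE$(\kappa;\vec\rho)$ process, the assumption $\{W^{-1}(\infty)\}\cap\TT\subset\{e^{iq_k}\}$ ensures that $(W^{-1})^*$ is defined on the extended hulls, so by Propositions \ref{conformal-Loewner-backward} or \ref{conformal-Loewner-backward-Mob} the original process $(L_t)$ would extend without swallowing any $e^{iq_k}$, contradicting maximality of $\lambda$. The only genuinely novel input beyond the chordal proof is the bookkeeping for the $2\pi\Z$-ambiguity of $\lambda^*$ (handled as in the paragraph containing (\ref{W=-til})) and the careful distinction between the two cases $W(\D)=\D$ and $W(\D)=\HH$, which is precisely what parts (ii) and (iii) of Lemma \ref{Mob-W} are designed to unify; neither step is a serious obstacle, so I would expect the proof to be essentially mechanical once the correct identities are in place.
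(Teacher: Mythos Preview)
Your proposal is correct and follows exactly the route the paper itself indicates: replace the chordal machinery of Section \ref{Conformal-chordal} by the covering-radial machinery of Section \ref{Conformal-radial}, invoke Lemma \ref{Mob-W} (ii) or (iii) in place of (i), and use Proposition \ref{conformal-Loewner-backward-Mob} alongside Proposition \ref{conformal-Loewner-backward} for the two target cases. One minor slip: the sign on the $\til W_t''(\lambda(t))$ term in your displayed identity should be negative (rearranging Lemma \ref{Mob-W} (ii),(iii) gives $\til W_t'(z)\cot_2(z-w)=\til W_t'(z)^2\,\mathrm{cot}_{2,*}(\til W_t(z)-\til W_t(w))-\til W_t''(z)$), but your stated total coefficient $\frac\kappa2+3+\frac12\sum_k\rho_k$ is the one consistent with the correct sign and does vanish, so the argument goes through.
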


 \begin{Corollary}
   Let $(L_t)$  be the backward radial SLE$(\kappa;-\kappa-6)$ hulls started from $(e^{ix_0};e^{iq_0})$. Let $W$ map $\D$ conformally onto $\HH$ such that $W(e^{ix_0})=0$ and $W(e^{iq_0})=\infty$. Then, after a time-change, $(W^*(L_t))$ are the backward chordal SLE$_\kappa$ hulls started from $0$.\label{chordal-radial}
 \end{Corollary}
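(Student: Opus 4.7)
The plan is to invoke Theorem \ref{cord-change-2} directly, in its ``chordal'' alternative. In our setup there is only one force with $\rho_1 = -\kappa-6$, so trivially $\sum_k \rho_k = -\kappa-6$, and the hypothesis on $\sum\rho_k$ is satisfied. The map $W:\D\conf\HH$ sends the sole force point $e^{iq_0}$ to $\infty$, i.e.\ $W^{-1}(\infty)=e^{iq_0}$, so the hypothesis $\{W^{-1}(\infty)\}\cap\TT \subset \{e^{iq_1},\dots,e^{iq_n}\}$ holds.

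Applying Theorem \ref{cord-change-2} therefore yields that, after a time change, $(W^*(L_t))$ is a backward chordal SLE$(\kappa;-\kappa-6)$ process started from $(W(e^{ix_0});W(e^{iq_0})) = (0;\infty)$.

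The last step is to recognize that a backward chordal SLE$(\kappa;\rho)$ with its force point at $\infty$ is simply backward chordal SLE$_\kappa$. Indeed, as remarked immediately after (\ref{kappa-rho-chordal}), when $q_k = \infty$ one has $f^\lambda_t(q_k) \equiv \infty$ and the drift term $\frac{-\rho_k}{\lambda(t)-f^\lambda_t(q_k)}$ vanishes identically. Hence the SDE (\ref{kappa-rho-chordal}) with $x_0=0$, $q_1=\infty$, $\rho_1=-\kappa-6$ reduces to $d\lambda(t) = \sqrt\kappa\, dB(t)$ with $\lambda(0)=0$, which is precisely the driving function of backward chordal SLE$_\kappa$ started at $0$.

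There is no real obstacle here: the corollary is simply the specialization of Theorem \ref{cord-change-2} to the case of a single force point which is mapped to $\infty$ by $W$, together with the observation that a chordal force point at $\infty$ is invisible. The content of the statement is that the very specific drift coefficient $-\kappa-6$ in the radial setting is the one needed to make the M\"obius image of the (non-trivial) radial process into a (drift-free) chordal SLE$_\kappa$; all the analytic work needed to see this has already been carried out in Section \ref{Conformal-radial} and in the proof of Theorem \ref{cord-change-1}.
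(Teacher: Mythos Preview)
Your proof is correct and is exactly the argument the paper intends: the corollary is stated without proof precisely because it follows immediately from Theorem \ref{cord-change-2} together with the remark after (\ref{kappa-rho-chordal}) that a chordal force point at $\infty$ contributes no drift.
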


\no{\bf Remarks.}
\begin{enumerate}
  \item The above theorems resemble the work in \cite{SW} for forward SLE$(\kappa;\vec\rho)$ processes. The condition in their paper is $\sum \rho_k=\kappa-6$. This is one reason why we may view backward SLE$_\kappa$  as SLE$_{-\kappa}$.
      \item The definition of backward SLE$(\kappa;\vec\rho)$ process differ from Sheffield's definition in \cite{SS-GFF} by a minus sign in (\ref{kappa-rho-chordal}) and (\ref{kappa-rho-radial}) before the $\rho_k$'s. If Sheffield's definition were used, the condition for conformal invariance would be $\sum\rho_k=\kappa+6$ instead of $\sum\rho_k=-\kappa-6$.
  \item We may allow interior force points as in \cite{SW}. For the chordal (resp.\ radial) SLE$(\kappa;\vec\rho)$ process, if $q_k\in\HH$ (resp.\ $e^{iq_k}\in\D$) is a force point, we use $\Ree f^\lambda_t(q_k)$ (resp.\ $\Ree \til f^\lambda_t(q_k)$) instead of $f^{\lambda}_t(q_k)$ (resp.\ $\til f^{\lambda}_t(q_k)$) in (\ref{kappa-rho-chordal}) (resp.\ (\ref{kappa-rho-radial})). In the radial case, adding $0$ to be a force point or change the force for $0$ does not affect the process. Theorems \ref{cord-change-1} and Theorem \ref{cord-change-2} still hold if some or all force points lie inside $\HH$ or $\D$. For the proofs, we apply Lemma \ref{Mob-W} with real parts taken on the displayed formulas. One particular example is the following corollary.
      \begin{Corollary}
        Let $L_t$, $0\le t<\infty$, be a backward radial SLE$_\kappa$ process. Let $W$ be a M\"obius transformation that maps $\D$ onto $\HH$ such that $W(1)\ne \infty$. Let $T$ be the maximum number such that $W^{-1}(\infty)\not\in S_{L_t}$, $0\le t<T$. Then, after a time-change, $W^*(L_t)$, $0\le t<T$, are the backward chordal SLE$(\kappa;-\kappa-6)$ hulls started from $(W(1);W(0))$.
      \end{Corollary}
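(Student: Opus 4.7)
The plan is to deduce this corollary from Theorem \ref{cord-change-2} combined with the extension in Remark~3 that allows interior force points. The starting observation is that a backward radial SLE$_\kappa$ process is, by definition, a backward radial SLE$(\kappa;\vec\rho)$ process with no force points. By Remark~3, adding $0$ as an interior force point with an arbitrary weight does not affect the process: the term involving $\Ree\til f^\lambda_t(0) = 0$ in the analogue of (\ref{kappa-rho-radial}) is constant in $t$ and merely shifts the drift. Hence we may equivalently regard the given backward radial SLE$_\kappa$ process as the backward radial SLE$(\kappa;-\kappa-6)$ process started from $(1;0)$, whose single interior force point at $0$ carries weight $\rho = -\kappa-6$. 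The sum of the weights then equals $-\kappa-6$, as required by Theorem \ref{cord-change-2}.

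Next I would prepare the hypothesis on the boundary force points. Since $W$ is a M\"obius map from $\D$ onto $\HH$, the point $W^{-1}(\infty)$ lies on $\TT$, and we need $\{W^{-1}(\infty)\}\cap\TT$ to be a subset of the set of marked points. To arrange this without changing the process, I would introduce $W^{-1}(\infty)$ as an auxiliary boundary force point with weight $0$. Because the corresponding summand in (\ref{kappa-rho-radial}) vanishes identically, the driving function, hulls, and traces of the enlarged backward radial SLE$(\kappa;-\kappa-6,0)$ process, started from $(1;0,W^{-1}(\infty))$, coincide with those of the original process. The weight sum is still $-\kappa-6$.

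Now Theorem \ref{cord-change-2} (in the radial-to-chordal direction, with the extension from Remark~3 permitting interior force points and with Lemma \ref{Mob-W} applied to real parts) is directly applicable. Its conclusion is that, after a time change, $W^*(L_t)$, $0 \le t < T$, are the backward chordal SLE$(\kappa;-\kappa-6,0)$ hulls started from $(W(1);W(0),W(W^{-1}(\infty))) = (W(1);W(0),\infty)$. Since $f^\lambda_t$ fixes $\infty$ and the weight there is $0$, the term with force point $\infty$ drops out of (\ref{kappa-rho-chordal}), reducing the process to backward chordal SLE$(\kappa;-\kappa-6)$ started from $(W(1);W(0))$, as claimed. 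The time $T$ in the statement is exactly the maximal time up to which the force point $W^{-1}(\infty)$ is not swallowed, which is also the maximal time of definition of the conformally transformed process.

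The main obstacle is really bookkeeping rather than analysis: one must check carefully that Remark~3 genuinely extends Theorem \ref{cord-change-2} to interior force points, which in turn reduces to verifying the real-part analogue of Lemma \ref{Mob-W}(i)--(iii) at interior points; once this is granted, everything else is the trivial substitution of $0$-weight force points.
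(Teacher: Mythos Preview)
Your approach is correct and is exactly the one the paper intends: the corollary is stated immediately after Remark~3 as a direct application of the extension of Theorem~\ref{cord-change-2} to interior force points, using that a backward radial SLE$_\kappa$ is the same as a backward radial SLE$(\kappa;-\kappa-6)$ with the single interior force point $0$, and then padding with a weight-$0$ boundary force point at $W^{-1}(\infty)$ so that the hypothesis $\{W^{-1}(\infty)\}\cap\TT\subset\{\text{force points}\}$ is met.

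One small remark on exposition: your justification ``the term involving $\Ree\til f^\lambda_t(0)=0$ is constant in $t$ and merely shifts the drift'' is not quite the right computation. The covering map $\til f^\lambda_t$ lives on (a subset of) $\HH$, not on $\D$, so ``$\til f^\lambda_t(0)$'' is not the relevant quantity; the interior force point $0\in\D$ corresponds to $q$ with $\Imm q\to+\infty$, and the vanishing of the drift contribution comes from $\cot_2(w)\to \pm i$ as $\Imm w\to\mp\infty$, hence the real part of the force term tends to $0$. Since the paper simply asserts in Remark~3 that ``adding $0$ to be a force point or changing the force for $0$ does not affect the process,'' and you cite this correctly, this imprecision does not affect the validity of your argument.
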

\item Using the properties of Bessel process and applying Girsanov's theorem, one may define backward chordal or radial SLE$(\kappa;\vec \rho)$ processes with exactly one degenerate force point, if the corresponding force $\rho_1$ satisfies $\rho_1<-2$ (which corresponds to a Bessel or Bessel-like process of dimension $d=1-\frac{4+2\rho_1}\kappa>1$). Theorems \ref{cord-change-1} and \ref{cord-change-2} still hold when a degenerate force point exists. Unlike forward SLE$(\kappa;\vec \rho)$ process, it is impossible to define a backward SLE$(\kappa;\vec \rho)$ process with two different degenerate force points.
\item Consider the radial case with one force point. Suppose the force $\rho_1\le -\frac\kappa 2-2$. Let $X_t=\lambda(t)-\til f^\lambda_t(q_1)$. Then $X_t$ is a Bessel-like process with dimension $d=1-\frac{4+2\rho_1}\kappa\ge 2$, which implies that $X_t$ never hits $2\pi\Z$. So $T=\infty$ and ${e^{iq_1}}\not\in S_t$ for any $t$. From Lemma \ref{simple-trace-lem-r}, $S_\infty=\TT\sem \{e^{iq_1}\}$. If, in addition, $\kappa\in(0,4]$, then a backward radial SLE$(\kappa;\rho_1)$ process induces a random welding $\phi$, which is a involution of $\TT$ with exactly two fixed point, $e^{i\lambda(0)}$ and $e^{iq_1}$, which are the initial point and the force point of the process.
\end{enumerate}

\section{Commutation Relations} \label{comm}
\begin{Definition}
  Let $\kappa_1,\kappa_2>0$,  $n\in\N$, and $\vec\rho_1,\vec\rho_2\in\R^n$. Let $z_1,z_2,w_k$, $2\le k\le n$, be distinct points  on $\R$ (resp.\ $\TT$). We say that a backward chordal (resp.\ radial) SLE$(\kappa_1;\vec\rho_1)$ started from $(z_1;z_2,w_2,\dots,w_n)$ commutes with a backward chordal (resp.\ radial) SLE$(\kappa_2;\vec\rho_2)$ started from $(z_2;z_1,w_2,\dots,w_n)$ if there exists a coupling of two processes $(L_1(t);0\le t<T_1)$ and $(L_2(t);0\le t<T_2)$ such that
  \begin{enumerate}
    \item[(i)] For $j=1,2$,  $(L_j(t),0\le t<T_j)$ is a complete backward chordal (resp.\ radial) SLE$(\kappa_j;\vec\rho_j)$ process started from $(z_j;z_{3-j},w_2,\dots,w_n)$.
    \item[(ii)] For $j\ne k\in\{1,2\}$, if $\bar t_k<T_k$ is a stopping time w.r.t.\ the complete filtration $(\F^k_t)$ generated by $(L_k(t))$, then conditioned on $\F^k_{\bar t_k}$, after a time-change, $f_k({\bar t_k},\cdot)^*(L_j({t_j}))$, $0\le t_j<T_j(\bar t_k)$, has the distribution of a partial backward chordal (resp.\ radial)  SLE$(\kappa_j;\vec\rho_j)$ process started from
        $$(f_k({\bar t_k},(z_j));\mathring \lambda_k(\bar t_k),f_k({\bar t_k},w_2),\dots,f_k({\bar t_k},w_n)),$$ where $f_k({\bar t_k},\cdot):=f_{L_k({\bar t_k})}$, $T_j(\bar t_k):=\sup \{t_j<T_j:S_{L_j({t_j})}\cap S_{L_{k}({\bar t_k})}=\emptyset\}$, $\mathring \lambda_k(\bar t_k)=\lambda_k(\bar t_k)$ in the chordal case (resp.\ $e^{i\lambda_k(\bar t_k)}$ in the radial case), and $\lambda_k$ is the driving function for $(L_k(t))$.
  \end{enumerate}
  Here a partial backward SLE$(\kappa;\vec\rho_j)$ process is a complete SLE$(\kappa;\vec\rho_j)$ process stopped at a positive stopping time. 
  If the commutation holds for any distinct points $z_1,z_2,w_k$, $2\le k\le n$ on $\R$ (resp.\ $\TT$), then we simply say that backward chordal (resp.\ radial) SLE$(\kappa_1;\vec\rho_1)$ commutes with backward chordal (resp.\ radial) SLE$(\kappa_2;\vec\rho_2)$. \label{def-coupl}
\end{Definition}

\begin{Theorem}
For any $\kappa>0$, backward chordal (resp.\ radial) SLE$(\kappa;-\kappa-6)$ commutes with backward chordal (resp.\ radial) SLE$(\kappa;-\kappa-6)$. 
\label{coupl}
\end{Theorem}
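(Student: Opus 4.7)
The plan is to apply the coupling-via-local-martingale technique of \cite{reversibility}. I would start from the \emph{independent} product coupling of two backward SLE$(\kappa;-\kappa-6)$ processes and construct a two-parameter positive local martingale $M(t_1,t_2)$, then reweight by $M$ and verify the Definition \ref{def-coupl} conditions via Girsanov. Concretely: on a product space with independent Brownian motions $B_1,B_2$, let $\lambda_j$ satisfy the defining SDE
$$d\lambda_j(t_j)=\sqrt\kappa\,dB_j(t_j)+\frac{\kappa+6}{\lambda_j(t_j)-f_j(t_j,z_{3-j})}\,dt_j+\cdots$$
(with $\cot_2$ analogue and the extra $w_k$-terms in the radial case), where $f_j(t_j,\cdot):=f_{L_j(t_j)}$. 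Work inside the open two-parameter region $\mathcal D:=\{(t_1,t_2):S_{L_1(t_1)}\cap S_{L_2(t_2)}=\emptyset\}$, whose closure up to $(T_1,T_2)$ is what has to be filled by the coupling.

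On $\mathcal D$, Theorem \ref{W^K} (or Theorem \ref{W^K-d}) produces the conformal lift $W_j(t_1,t_2):=(f_{L_{3-j}(t_{3-j})})^{L_j(t_j)}$, which is analytic across $L_j(t_j)$, hence at $\lambda_j(t_j)$. The joint object behind the scene is the quotient union $L_1(t_1)\vee L_2(t_2)$ of Theorem \ref{bijection-gf*}; the two lifts $W_1,W_2$ are tied by the identities there and collapse respectively to $f_{L_{3-j}(t_{3-j})}$. The proposed martingale has the product form
$$M(t_1,t_2)=\bigl|W_1'(t_1,t_2)(\lambda_1(t_1))\bigr|^{\alpha}\,\bigl|W_2'(t_1,t_2)(\lambda_2(t_2))\bigr|^{\alpha}\,\Phi\bigl(W_1(t_1,t_2)(\lambda_1(t_1))-\lambda_2(t_2)\bigr),$$
with an exponent $\alpha$ and a scalar function $\Phi$ to be determined, and an extra factor in the radial case compensating the $\tfrac16(\tilde W')^2-\tfrac16$ term appearing in (\ref{W''/W'-radial}).

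To verify that $M$ is a local martingale in $t_1$ for each fixed $t_2$, I would apply Itô's formula and substitute the SDE for $\lambda_1$ together with the conformal-covariance identities from Section \ref{Conformal-chordal}, namely $\partial_{t_1}W_1(\lambda_1)=3W_1''(\lambda_1)$ from (\ref{-3}) and $\partial_{t_1}\log W_1'(\lambda_1)=-\tfrac12(W_1''/W_1')^2+\tfrac43(W_1'''/W_1')$ from (\ref{W''/W'-chordal}) (and their radial analogues (\ref{-3-til}), (\ref{W''/W'-radial})). The $dt_1$-drift is then a rational expression in $W_1,W_1',W_1'',W_1'''$ evaluated at $\lambda_1(t_1)$, together with a pole coming from the $\rho$-drift; the choice of $\alpha$ and $\Phi$ is forced by requiring this drift to vanish, and the key cancellation is the same sum condition $\sum\rho_k=-\kappa-6$ that underwrites Möbius invariance in Theorems \ref{cord-change-1}--\ref{cord-change-2}. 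The formula is symmetric in $1\leftrightarrow 2$ (swap $W_1\leftrightarrow W_2$, use Theorem \ref{bijection-gf*}), so $M$ is simultaneously a local martingale in $t_2$ for each fixed $t_1$.

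A standard two-parameter Girsanov argument (run in $t_1$ with $t_2=\bar t_2$ frozen as a stopping time of $\mathcal F^2$) then reweights the product measure by $M(\bar t_1,\bar t_2)/M(0,0)$ and produces a coupling under which, conditional on $\mathcal F^2_{\bar t_2}$, the drift of $\lambda_1$ acquires exactly the extra term that, after conformal push-forward by $f_2(\bar t_2,\cdot)$, turns $(f_2(\bar t_2,\cdot))^*(L_1(t_1))$ into a backward SLE$(\kappa;-\kappa-6)$ started from the image of the marked points --- this is where Theorems \ref{cord-change-1}--\ref{cord-change-2} are invoked again, and where the symmetric role of the two force points $(z_1,z_2)$ makes the pair $(\kappa;-\kappa-6)\leftrightarrow(\kappa;-\kappa-6)$ the only self-consistent choice. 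Finally, completeness of each marginal (the $\rho$-drift keeps the processes from swallowing the other marked points) lets one extend the coupling to the full lifetimes $[0,T_1)\times[0,T_2)$.

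The main obstacle is identifying $\alpha$ and $\Phi$ explicitly and performing the Itô bookkeeping in Step 3: the appearance of $W_1'''(\lambda_1)$ in (\ref{W''/W'-chordal}) has to cancel against the $\Phi$-term contribution, and in the radial case one additionally has to absorb the $(\tilde W')^2$ term. The guiding principle is that $\Phi$ is essentially a partition function for the two-point backward SLE$(\kappa;-\kappa-6)$, and its correct form is dictated by Möbius covariance: reducing via $W$ sending one marked point to $\infty$ (Corollary \ref{chordal-radial} and Theorem \ref{cord-change-1}) turns the problem into an a priori computation for ordinary backward SLE$_\kappa$, which pins down the exponents. Once the drift calculation is carried out, the remainder of the argument is structurally parallel to the forward-SLE reversibility coupling of \cite{reversibility}.
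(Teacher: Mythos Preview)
Your overall architecture matches the paper's: start from the independent product, build a two-parameter positive local martingale on the region $\mathcal D$ using the lifted maps $W_j=(f_{L_{3-j}})^{L_j}$, reweight by Girsanov, and patch local couplings into a global one. But the martingale you write down is structurally incomplete, and this is a genuine gap rather than a bookkeeping detail.

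First, the argument of your $\Phi$ is wrong: the relevant quantity is $X_1=A_{1,0}-A_{2,0}$, the difference of the \emph{two} lifted driving points $W_1(\lambda_1)$ and $W_2(\lambda_2)$, not $W_1(\lambda_1)-\lambda_2$. Both live in the same ``quotient union'' chart, and only their difference has the right covariance.

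Second, and more seriously, your ansatz $|W_1'|^\alpha|W_2'|^\alpha\,\Phi(X_1)$ cannot be a local martingale for any choice of $\alpha$ and scalar $\Phi$. When you compute $\partial_{t_j}\log|W_j'|^\alpha$ via (\ref{W''/W'-chordal}) (or (\ref{W''/W'-radial})), the drift contains a Schwarzian term $A_{j,S}=W_j'''/W_j'-\tfrac32(W_j''/W_j')^2$ with the ``central charge'' coefficient $-\tfrac{\cc}{6}$. You assert this cancels against $\Phi$, but it cannot: $A_{j,S}$ is not a function of $X_1$ alone, and its $t_j$-antiderivative is a genuine double integral over $[0,t_1]\times[0,t_2]$. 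The paper introduces a separate factor
\[
F(t_1,t_2)=\exp\Big(\int_0^{t_2}\!\!\int_0^{t_1} A_{1,1}^2A_{2,1}^2\,Q\,ds_1\,ds_2\Big),\qquad Q=\cot_2'''(X_1)\ \text{(resp.\ }-12/X_1^4\text{)},
\]
and the correct pre-martingale is $\hat M=A_{1,1}^\alpha A_{2,1}^\alpha\,Y\,F^{-\cc/6}$ (times $e^{\cc\mA/12}$ in the radial case), with $Y=|\sin_2(X_1)|^{-2\alpha}$ (resp.\ $|X_1|^{-2\alpha}$) playing the role of your $\Phi$. The actual martingale is then the ratio $M(t_1,t_2)=\hat M(t_1,t_2)\hat M(0,0)/\hat M(t_1,0)\hat M(0,t_2)$, which absorbs the one-parameter pieces. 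The ``extra factor in the radial case'' you mention is yet another ingredient ($e^{\cc\mA/12}$), independent of the $F$-issue, which is already present in the chordal case.

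Two further steps you pass over too quickly: (i) one must prove $M$ is \emph{bounded} on each rectangle $[0,T_1(J_1)]\times[0,T_2(J_2)]$ for disjoint arcs $J_1,J_2$ (this is Proposition \ref{bounded} in the paper and uses the compactness results of Appendix \ref{D}); without it the local martingale need not be a martingale and the Girsanov reweighting is not justified. (ii) Passing from these local couplings to a single global coupling satisfying Definition \ref{def-coupl} is not ``completeness of the marginals'' but the maximum-coupling construction of \cite{reversibility,duality}, using a countable dense family of arc pairs and the identity (\ref{Tjtk}).
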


We will prove this theorem in the next two subsections.

\subsection{Ensemble}
We first consider the radial case. Fix $\kappa>0$ and $z_1\ne z_2\in\TT$. Write $z_j=e^{i\til z_j}$, $j=1,2$. For $j=1,2$, let $L_j(t)$, $0\le t<T_j$, be a backward radial SLE$(\kappa;-\kappa-6)$ process started from $(z_j;z_{3-j})$; let $\lambda_j$ be the driving function, and let $f_j(t,\cdot)$ and $\til f_j(t,\cdot)$, $0\le t<T_j$, be the corresponding maps and covering maps. At first, we suppose that the two processes are independent. Then for $j=1,2$, $\lambda_j$ satisfies $\lambda_j(0)=\til z_j$ and the SDE:
\BGE d\lambda_j(t)=\sqrt\kappa dB_j(t)-\frac{-\kappa-6}2 \cot_2(\lambda_j(t)-\til f_j(t,\til z_{3-j}))dt,\quad 0\le t<T_j,\label{dlambda}\EDE
where $B_1(t)$ and $B_2(t)$ are independent standard Brownian motions. For $j=1,2$, let $(\F^j_t)$ denote the complete filtration generated by $B_j(t)$.

Define ${\cal D}=\{(t_1,t_2)\in [0,T_1)\times[0,T_2): S_{L_1(t_1)}\cap S_{L_2(t_2)}=\emptyset\}$.
Then for $(t_1,t_2)\in\cal D$, we have $(L_1(t_1),L_2(t_2))\in{\cal P}_*$. So we may define
$$(L_{1,t_2}(t_1),L_{2,t_1}(t_2))=f^*(L_1(t_1),L_2(t_2)).$$
Let $f_{1,t_2}(t_1,\cdot)=f_{L_{1,t_2}(t_1)}$ and $f_{2,t_1}(t_2,\cdot)=f_{L_{2,t_1}(t_2)}$.
From a radial version of Theorem \ref{bijection-gf*}, we see that 
 \BGE f_{1,t_2}(t_1,\cdot)\circ f_2(t_2,\cdot)=f_{L_1(t_1)\vee L_2(t_2)}=f_{2,t_1}(t_2,\cdot)\circ f_1(t_1,\cdot).\label{circ-f}\EDE
Fix $j\ne k\in\{1,2\}$. From a radial version of Corollary \ref{continuous-Cor} (ii), the random map $f_{j,t_k}(t_j,\cdot)$ is $\F^j_{t_j}\times \F^k_{t_k}$-measurable. Let $u_{j,t_k}(t_j)=\dcap(L_{j,t_k}(t_j))$. From Propositions \ref{backward-eqn-chain} and \ref{conformal-Loewner-backward}, for any fixed $t_k\in[0,T_k)$, $f_{j,t_k}(t_j,\cdot)$ are backward radial Loewner maps via the time-change $u_{j,t_k}$. Let $\til f_{j,t_k}(t_j,\cdot)$ be the corresponding covering maps. So $e^i\circ \til f_{j,t_k}(t_j,\cdot)= f_{j,t_k}(t_j,\cdot)\circ e^i$. From continuity, we see that $\til f_{j,t_k}(t_j,\cdot)$ is also $\F^j_{t_j}\times \F^k_{t_k}$-measurable, and from (\ref{circ-f}) we have
\BGE \til f_{1,t_2}(t_1,\cdot)\circ \til f_2(t_2,\cdot)=\til f_{2,t_1}(t_2,\cdot)\circ \til f_1(t_1,\cdot).\label{circ-f-til}\EDE
Define $\mA$ on $\cal D$ by $\mA(t_1,t_2)=\dcap(L_{1}(t_1)\vee L_{2}(t_2))$. From (\ref{circ-f}) we get
\BGE \mA(t_1,t_2)=u_{1,t_2}(t_1)+t_2=u_{2,t_1}(t_2)+t_1.\label{mA}\EDE

Apply the argument in the first paragraph of Section \ref{Conformal-radial} with $\lambda=\lambda_j$, $L_{t_j}=L_j(t_j)$, $W=f_k(t_k,\cdot)$, and $\til W=\til f_k(t_k,\cdot)$, where $t_k\in[0,T_k)$ is fixed. Then we have correspondence: $L^*_{t_j}=L_{j,t_k}(t_j)$, $u=u_{j,t_k}$, and $\til f^*_{t_j}=\til f_{j,t_k}(t_j,\cdot)$.
Since $\til W_{t_j}$ is an analytic extension of $\til f^*_{t_j}\circ \til W\circ \til f_{t_j}^{-1}$, from (\ref{circ-f-til}), we find that
$\til W_{t_j}=\til f_{k,t_j}(t_k,\cdot)$. Thus, $e^{i\lambda_j(t_j)}$ (resp.\ $\lambda_j(t_j)$) lies in the domain of $f_{k,t_j}(t_k,\cdot)$ (resp.\ $\til f_{k,t_j}(t_k,\cdot)$) as long as $(t_1,t_2)\in\cal D$.

Write $\til F_{k,t_k}(t_j,\cdot)=\til f_{k,t_j}(t_k,\cdot)$. We will use $\pa_t$ to denote the partial derivative w.r.t.\ the first variable inside the parentheses, and use $'$ and the superscript $(h)$ to denote the partial derivatives w.r.t.\ the second variable inside the parentheses. For $h=0,1,2,3$, define $A_{j,h}$ on $\cal D$ by
\BGE A_{j,h}(t_1,t_2)=\til f_{k,t_j}^{(h)}(t_k,\lambda_j(t_j))=\til F_{k,t_k}^{(h)}(t_j,\lambda_j(t_j)).\label{Ajh}\EDE
Use the superscript $(S)$ to denote the (partial) Schwarzian derivative. Define $A_{j,S}$ on $\cal D$ by
\BGE A_{j,S}(t_1,t_2)=\til f_{k,t_j}^{(S)}(t_k,\lambda_j(t_j))=\til F_{k,t_k}^{(S)}(t_j,\lambda_j(t_j)) \label{AS}\EDE
From Section \ref{Conformal-radial}, we know that $L_{j,t_k}(t_j)$ are backward radial Loewner hulls via the time-change $u_{j,t_k}$ driven by $\lambda_{j,t_k}$, which can be chosen such that
\BGE \lambda_{j,t_k}(t_j)=A_{j,0}(t_1,t_2).\label{lambda-j-tk}\EDE
Moreover, from (\ref{u'-til}), (\ref{-3-til}), and (\ref{W''/W'-radial}), we have
\BGE u_{j,t_k}'(t_j)
=A_{j,1} ^2,\label{ujtk}\EDE
\BGE \pa_{t} \til F_{k,t_k}(t_j,\lambda_j(t_j))=3A_{j,2},\label{3A}\EDE
\BGE \frac{ \pa_{t} \til F_{k,t_k}'(t_j,\lambda_j(t_j))}{\til F_{k,t_k}'(t_j,\lambda_j(t_j))}= -\frac12\Big(\frac{A_{j,2}}{A_{j,1}}\Big)^2 +\frac43 \frac{A_{j,3}}{A_{j,1}} +\frac 16A_{j,1}^2-\frac 16,\label{W''/W'-ensemble}\EDE
where all $A_{j,h}$ are valued at $(t_1,t_2)$.


From now on, we fix an $(\F^k_t)$-stopping time $t_k$ with $t_k<T_k$. Then the process of conformal maps $(\til F_{k,t_k}(t_j,\cdot))$ is $(\F^j_{t_j}\times \F^k_{t_k})_{t_j\ge 0}$-adapted. Let $T_j(t_k)$ be the maximal number such that for any $t_j<T_j(t_k)$, we have $(t_1,t_2)\in\cal D$. Then $T_j(t_k)$ is an $(\F^j_{t_j}\times \F^k_{t_k})_{t_j\ge 0}$-stopping time. Recall that $(\lambda_j(t))$ is an $(\F^j_{t_j})$-adapted local martingale with $\langle \lambda_j\rangle_t =\kappa t$. From now on, we will apply It\^o's formula repeatedly. All SDEs below are $(\F^j_{t_j}\times \F^k_{t_k})_{t_j\ge 0}$-adapted, and $t_j$ runs in the interval $[0,T_j(t_k))$.

From (\ref{lambda-j-tk}), (\ref{Ajh}), and (\ref{3A}), we get
\BGE d\lambda_{j,t_k}(t_j)=A_{j,1} d\lambda_j(t_j)+\Big(\frac\kappa 2+3\Big) A_{j,2}dt,\quad 0\le t<T_j(t_k).\label{dlambdajtk}\EDE
From (\ref{Ajh}) and (\ref{W''/W'-ensemble}) we get
\BGE \frac{\pa_{t_j} A_{j,h}}{A_{j,h}}=\frac{A_{j,2}}{A_{j,1}}d\lambda_j+\Big[-\frac12\Big(\frac{A_{j,2}}{A_{j,1}}\Big)^2 +\Big(\frac\kappa 2+\frac43\Big)\frac{A_{j,3}}{A_{j,1}}+\frac 16A_{j,1}^2-\frac 16\Big] dt_j.\label{dAjh/Ajh}\EDE

Let
$$\alpha=\frac{6-(-\kappa)}{2(-\kappa)},\quad \cc=\frac{(8-3(-\kappa))(-\kappa-6)}{2(-\kappa)}.$$
Note that if $-\kappa$ is replaced by $\kappa$, then $\cc$ becomes the central charge for forward SLE$_\kappa$. So we view the $\cc$ here the central charge for backward SLE$_\kappa$, which runs in the interval $[25,\infty)$. Since $A_{j,S}=\frac{A_{j,3}}{A_{j,1}}-\frac 32(\frac{A_{j,2}}{A_{j,1}})^2$,
from (\ref{dAjh/Ajh}) we get
\BGE\frac{\pa_{t_j} A_{j,1}^{\alpha}}{ A_{j,1}^{\alpha}}=\alpha \frac{A_{j,2}}{A_{j,1}}d \lambda_j+
\Big[-\frac {\cc} 6 A_{j,S} +\frac \alpha6   A_{j,1}^2 - \frac \alpha6\Big]  dt_j.\label{pa-A-j-alpha}\EDE

Now we study $\pa_{t_j} A_{k,h}$ and $\pa_{t_j} A_{k,S}$. From (\ref{Ajh}) we have $A_{k,h}(t_1,t_2)=\til F_{j,t_j}^{(h)}(t_k,\lambda_k(t_k))$. Recall that $\til F_{j,t_j}(t_k,\cdot)=\til f_{j,t_k}(t_j,\cdot)$, and $\til f_{j,t_k}(t_j,\cdot)$ are backward covering radial Loewner maps via the time-change $u_{j,t_k}$ driven by $\lambda_{j,t_k}$.
From (\ref{lambda-j-tk}) and (\ref{ujtk}), we get
\BGE\pa_t \til f_{j,t_k}(t_j,z)=-A_{j,1}^2\cot_2(\til f_{j,t_k}(t_j,z)-A_{j,0}).\label{patfu}\EDE
Differentiate the above formula w.r.t.\ $z$, we get
\BGE\frac{\pa_t \til f_{j,t_k}'(t_j,z)}{\til f_{j,t_k}'(t_j,z)}=-A_{j,1}^2\cot_2'(\til f_{j,t_k}(t_j,z)-A_{j,0}) .\label{patfu'}\EDE
Differentiating the above formula w.r.t.\ $z$, we get
$$ \pa_t \frac{ \til f_{j,t_k}''(t_j,z)}{\til f_{j,t_k}'(t_j,z)}=-A_{j,1}^2\cot_2''(\til f_{j,t_k}(t_j,z)-A_{j,0})\til f_{j,t_k}'(t_j,z).$$ 
Since $f^{(S)}=(\frac{f''}{f'})'-\frac 12(\frac{f''}{f'})^2$, from the above formula, we get
\BGE \pa_t  { \til f_{j,t_k}^{(S)}(t_j,z)} =-A_{j,1}^2\cot_2'''(\til f_{j,t_k}(t_j,z)-A_{j,0})\til f_{j,t_k}'(t_j,z)^2.\label{patFS}\EDE
Letting $z=\lambda_k(t_k)$ in (\ref{patfu}), (\ref{patfu'}), and (\ref{patFS}), we get
\BGE \pa_{t_j} A_{k,0}=-A_{j,1}^2\cot_2(A_{k,0}-A_{j,0})dt_j;\label{paAk0}\EDE
\BGE \frac{\pa_{t_j} A_{k,1}}{A_{k,1}}=-A_{j,1}^2\cot_2'(A_{k,0}-A_{j,0})dt_j;\label{paAk1}\EDE
\BGE \pa_{t_j}A_{k,S}=-A_{j,1}^2A_{k,1}^2\cot_2'''(A_{k,0}-A_{j,0})dt_j.\label{paAkS}\EDE

Define $X_j$ on $\cal D$ such that $X_j=A_{j,0}-A_{k,0}$. Then $X_1+X_2=0$. Since $e^{i\lambda_j(t_j)}$ lies in the domain of $f_{k,t_j}(t_k,\cdot)$, $e^{iA_{j,0}}=f_{k,t_j}(t_k,e^{i\lambda_j(t_j)})$ lies in the range of $f_{k,t_j}(t_k,\cdot)$, i.e., $\ha\C\sem L_{k,t_j}(t_k)$. On the other hand, since via a time-change, $L_{k,t_j}(t_k)$ are backward radial Loewner hulls driven by $\lambda_{k,t_j}(t_k)=A_{k,0}$, from Lemma \ref{radial-S} we have $e^{iA_{k,0}}\in L_{k,t_j}(t_k)$ when $t_k>0$. Thus, $e^{iA_{j,0}}\ne e^{iA_{k,0}}$ if $t_k>0$. Switching $j$ and $k$, the inequality also holds if $t_j>0$. If $t_j=t_k=0$, then $e^{iA_{j,0}}=e^{i\til z_j}\ne e^{i\til z_k}=e^{iA_{k,0}}$. Thus, $X_j,X_k\not\in 2\pi\Z$. So we may define
 $$Y=|\sin_2(X_1)|^{-2\alpha}=|\sin_2(X_2)|^{-2\alpha}.$$
From (\ref{lambda-j-tk}), (\ref{dlambdajtk}), and (\ref{paAk0}), we get
$$ \pa_{t_j} X_j=A_{j,1} d\lambda_j+\Big(\frac\kappa 2+3\Big) A_{j,2}dt-A_{j,1}^2\cot_2(X_j)dt.$$ 
From It\^o's formula, we get
 $$\frac{\pa_{t_j} Y}Y=-\alpha \cot_2(X_j)A_{j,1}d\lambda_j-\alpha(\frac\kappa 2+3)A_{j,2}\cot_2(X_j)dt_j$$
 \BGE-\frac\alpha 2 A_{j,1}^2\cot_2^2(X_j)dt_j+\frac{\alpha\kappa}4A_{j,1}^2dt_j.\label{paY/Y}\EDE

Define $Q$ and $F$ on $\cal D$ such that $Q=\cot_2'''(X_1)=\cot_2'''(X_2)$ and
\BGE F(t_1,t_2)=\exp\Big(\int_{0}^{t_2}\!\int_0^{t_1} A_{1,1}(s_1,s_2)^2A_{2,1}(s_1,s_2)^2Q(s_1,s_2)ds_1ds_2\Big).\label{F}\EDE
Since $\til F_{k,t_k}^{(S)}(0,\cdot)=\id$, from (\ref{AS}) we have $A_{j,S}=0$ when $t_j=0$. From (\ref{paAkS}) we get
\BGE \frac{\pa_{t_j} F}F=-A_{j,S}dt_j.\label{paF}\EDE

Define a positive function $\ha M$ on $\cal D$ by
\BGE\ha M=A_{1,1}^\alpha A_{2,1}^\alpha Y F^{-\frac {\cc}6}e^{\frac{\cc}{12} \mA}.\label{haM}\EDE
From (\ref{mA}), (\ref{ujtk}), (\ref{pa-A-j-alpha}), (\ref{paAk1}), (\ref{paY/Y}), and (\ref{paF}), we have
\BGE \frac{\pa_{t_j}\ha M}{\ha M}=\alpha \frac{A_{j,2}}{A_{j,1}}d \lambda_j-\alpha \cot_2(X_j)A_{j,1}d\lambda_j- \frac \alpha6 dt_j.\label{pahaM}\EDE
When $t_k=0$, we have $A_{j,1}=1$, $A_{j,2}=0$, $\mA=t_j$, and $X_j=\lambda_j(t_j)-\til f_{j}(t_j,\til z_k)$, so the RHS of (\ref{pahaM}) becomes
\BGE \frac1\kappa \Big(\frac\kappa 2+3\Big)\cot_2(\lambda_j(t_j)-\til f_{j}(t_j,\til z_k))d\lambda_j- \frac \alpha6dt_j.\label{tk0}\EDE
Define another positive function $M$ on  $\cal D$ by
\BGE M(t_1,t_2)=\frac{\ha M(t_1,t_2) \ha M(0,0)}{\ha M(t_1,0)\ha M(0,t_2)}.\label{M}\EDE
Then $M(\cdot,0)\equiv M(0,\cdot)\equiv 1$. From (\ref{dlambda}), (\ref{pahaM}), and (\ref{tk0}), we have
$$\frac{\pa_{t_j} M}{ M}=\Big[ -\Big(3+\frac\kappa2\Big) \frac{A_{j,2}}{A_{j,1}}-\frac{-\kappa-6}{2}\cot_2(X_j) A_{j,1}$$
\BGE+\frac{-\kappa-6}{2}\cot_2(\lambda_j(t_j)-\til f_{j}(t_j,\til z_k)) \Big] \frac{d B_j(t_j)}{\sqrt\kappa}.\label{paM}\EDE
 So when $t_k\in [0,p)$ is a fixed $(\F^k_t)$-stopping time, $M$ as a function in $t_j$ is an $(\F^j_{t_j}\times \F^k_{t_k})_{t_j\ge 0}$-local martingale.

\subsection{Coupling measures}

Let $\JP$ denote the set of disjoint pairs of closed arcs $(J_1,J_2)$ on $\TT$ such that $z_j=e^{i\til z_j}$ is contained in the interior of $J_j$, $j=1,2$. Let $T_j(J_j)$ denote the first time that $S_{L_j(t)}$ intersects $\lin{\TT\sem J_j}$.
Then for every $(J_1,J_2)\in\JP$, if $t_j\le T_{j}(J_j)$, then $S_{L_j(t_j)}\subset J_j$, which implies that $L_j(t_j)\in {\cal H}_{J_j}$. So
$[0,T_{1}(J_1)]\times[0,T_{2}(J_2)]\subset \cal D$.

\begin{Proposition}  (Boundedness) For any $(J_1,J_2)\in\JP$, $|\ln(M)|$ is uniformly bounded on $[0,T_1(J_1)]\times[0,T_2(J_2)]$  by a constant depending only on $J_1$ and $J_2$. \label{bounded}
\end{Proposition}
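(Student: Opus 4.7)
The plan is to show that $\ha M$ is continuous and strictly positive on the compact rectangle $R := [0, T_1(J_1)] \times [0, T_2(J_2)]$, so that $\ln \ha M$ is uniformly bounded there. Since $M(t_1, t_2) = \ha M(t_1, t_2) \ha M(0, 0) \ha M(t_1, 0)^{-1} \ha M(0, t_2)^{-1}$, this bounds $|\ln M|$ as well by a constant depending only on $(J_1, J_2)$ (through the range of $\ha M$ on the corresponding region).

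Continuity of each factor of
\BGEN \ha M = A_{1, 1}^\alpha A_{2, 1}^\alpha Y F^{-\cc/6} e^{(\cc/12)\mA} \EDEN
in $(t_1, t_2)\in R$ is the easier half. By the radial versions of Theorem \ref{continuous}(ii) and Corollary \ref{continuous-Cor}(ii), the operations $f^*$ and $(K_1,K_2)\mapsto K_1\vee K_2$ are continuous on ${\cal H}_{J_1}\times{\cal H}_{J_2}$, which is compact by the radial compactness lemma for $\D$-hulls with support in a proper compact subarc. Thus the transformed hulls $L_{j,t_k}(t_j)$, the conformal maps $f_{j,t_k}(t_j,\cdot)$, and their covering lifts $\til f_{j,t_k}(t_j,\cdot)$ (with the branch pinned by continuation from the initial state) depend continuously on $(L_1(t_1), L_2(t_2))$ and hence on $(t_1,t_2)$. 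Combined with the continuity of $\lambda_j$, this makes $A_{j,h}$, $X_j$, and $\mA$ continuous on $R$.

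The crux is uniform positivity. The critical issue is that $X_j = A_{j,0} - A_{k,0}$ must stay away from $2\pi\Z$. As noted in the text just before (\ref{paY/Y}), for $t_k > 0$ one has $e^{iA_{k,0}} \in L_{k,t_j}(t_k)$ while $e^{iA_{j,0}} = f_{k,t_j}(t_k, e^{i\lambda_j(t_j)})$ lies in $\ha\C\sem L_{k,t_j}(t_k)$, and at $t_j = t_k = 0$ they reduce to the distinct points $e^{i\til z_j}$ and $e^{i\til z_k}$. By continuity and compactness of $R$, this pointwise separation becomes uniform; hence $Y = |\sin_2(X_1)|^{-2\alpha}$ is bounded above, and $Q = \cot_2'''(X_1)$ is bounded, making the integrand defining $F$ bounded on the bounded rectangle, so $F$ is bounded above and away from $0$. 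The derivatives $A_{j,1}$ are positive reals (since $\til F_{k,t_k}$ is $\R$-symmetric), and by the Koebe distortion theorem applied to the covering map at a point staying at positive distance from the singular set, they are bounded above and away from $0$. Finally $\mA$ is continuous on $R$ and hence bounded.

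The main obstacle is the uniform lower bound on the separation of $X_j$ from $2\pi\Z$; the disjointness of $J_1, J_2$ and the compactness of $R$ are both essential here. Once this uniform separation is in hand, assembling the individual bounds on $A_{j,1}$, $Y$, $F$, and $\mA$ yields the proposition.
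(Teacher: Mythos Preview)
Your overall strategy---reduce to bounding each factor of $\ha M$---matches the paper's, and you correctly identify the separation of $X_j$ from $2\pi\Z$ as the crux. However, the sentence ``By continuity and compactness of $R$, this pointwise separation becomes uniform'' is a genuine gap. The rectangle $R=[0,T_1(J_1)]\times[0,T_2(J_2)]$ is \emph{random}: the stopping times $T_j(J_j)$ depend on the Brownian paths. A continuous positive function on a compact set is bounded away from $0$, but the bound you obtain this way depends on the particular realization, not only on $(J_1,J_2)$. The proposition requires a deterministic constant, since $M(T_1(J_1),T_2(J_2))$ is later used as a Radon--Nikodym density with $\EE_\mu[M]=1$. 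The same objection applies to your bounds on $A_{j,1}$ and $F$: invoking Koebe distortion ``at a point staying at positive distance from the singular set'' presupposes a deterministic lower bound on that distance, which is exactly what is at issue.

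The paper avoids this by working not on $R$ but on the deterministic compact hull spaces ${\cal H}_{J_1\cup J_2}$ and ${\cal H}_{J_j}$. For the separation of $e^{iA_{1,0}}$ and $e^{iA_{2,0}}$, it takes fixed compact arcs $E_1,E_2$ in the two components of $\TT\sem(J_1\cup J_2)$ and shows, via compactness of ${\cal H}_{J_1\cup J_2}$ and continuity of $L\mapsto f_L'$, that $|f_{L_1(t_1)\vee L_2(t_2)}'|$ is bounded below on $E_1\cup E_2$ by a constant depending only on $(J_1,J_2)$. The images $f_{L_1(t_1)\vee L_2(t_2)}(E_s)$ then have length bounded below and separate $B_{L_{1,t_2}(t_1)}\ni e^{iA_{1,0}}$ from $B_{L_{2,t_1}(t_2)}\ni e^{iA_{2,0}}$ on $\TT$, yielding a deterministic lower bound on $|e^{iA_{1,0}}-e^{iA_{2,0}}|$. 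For $A_{j,1}$, the paper similarly uses a fixed Jordan curve $\sigma$ separating $J_1$ from $J_2$ together with compactness in hull space and the maximum principle (since $e^{i\lambda_j(t_j)}\in\ha{L_j(t_j)}$ lies inside $f_j(t_j,\sigma)$), rather than a Koebe estimate at a random point. You should rework the argument to route all compactness through ${\cal H}_{J_1\cup J_2}$ rather than through $R$.
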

\begin{proof} Fix $(J_1,J_2)\in\JP$. In this proof, all constants depend only on $(J_1,J_2)$, and we say a function is uniformly bounded if its values on $[0,T_1(J_1)]\times[0,T_2(J_2)]$ are bounded in absolute value by a constant. From (\ref{haM}) and (\ref{M}), it suffices to show that $\ln(A_{1,1})$, $\ln(A_{2,1})$, $\ln(Y)$, $\ln(F)$, and $\mA$ are all uniformly bounded.

Note that if $t_j\le T_j(J_j)$, then $L_j(t_j)\in {\cal H}_{J_j}$. From a radial version of Theorem \ref{bijection-gf*} (iii), we have
\BGE \{L_1(t_1)\vee L_2(t_2):t_j\in[0, T_j(J_j)],\,j=1,2\}\subset {\cal H}_{J_1\cup J_2}.\label{compact-L}\EDE
Since $J_1\cup J_2\subsetneqq\TT$, from Lemma \ref{compact-D-F}, the righthand side is a compact set. So the lefthand side is relatively compact. Since $H\mapsto \dcap(H)$ is continuous, and $\mA(t_1,t_2)=\dcap(L_1(t_1)\vee L_2(t_2))$, we see that  $\mA$ is uniformly bounded. For $j=1,2$, since $ T_j(J_j)\le\mA$, $T_j(J_j)$ is also uniformly bounded.

Let $S_1$ and $S_2$ be the two components of $\TT\sem(J_1\cup J_2)$. For $s=1,2$, let $E_s\subset S_s$ be a compact arc.
From Lemma \ref{compact-new-r}, $L_n\to L$ in ${\cal H}_{J_1\cup J_2}$ implies that $f_{L_n}\luto f_L$ in $\C\sem (J_1\cup J_2)$, which then implies that $f_{L_n}'\luto f_L'$ in $\C\sem (J_1\cup J_2)$.
From (\ref{compact-L}), the compactness of ${\cal H}_{J_1\cup J_2}$, and that $E_1\cup E_2$ are compact subsets of $\C\sem(J_1\cup J_2)$, we conclude that there is a constant $c_1>0$ such that $|f_{L_1(t_1)\vee L_2(t_2)}'(z)|\ge c$ for any $t_j\le T_j(J_j)$, $j=1,2$, and $z\in E_1\cup E_2$. Thus, for $t_j\in[0, T_j(J_j)]$, $j=1,2$, the length of $f_{L_1(t_1)\vee L_2(t_2)}(E_s)$, $s=1,2$, is bounded below by a constant $c_2>0$. Suppose $t_j\in(0,T_j(t_j)]$, $j=1,2$. From Lemma \ref{radial-S}, $e^{iA_{j,0}}\in B_{L_{j,t_{3-j}}(t_j)}$, $j=1,2$. Note that $f_{L_1(t_1)\vee L_2(t_2)}(E_1\cup E_2)$ disconnects $B_{L_{1,t_2}(t_1)}$ from $B_{L_{2,t_1}(t_2)}$ on $\TT$. Thus, there is a constant $c_3>0$ such that $|e^{iA_{1,0}(t_1,t_2)}-e^{iA_{2,0}(t_1,t_2)}|\ge c_3$ for $t_j\in(0,T_j(t_j)]$, $j=1,2$. From continuity, this still holds if $t_j\in[0, T_j(J_j)]$, $j=1,2$. Thus, $\ln(Y)=-2\alpha\ln|\sin_2(X_j)|$, $|\cot_2'(X_j)|$, and $|\cot_2'''(X_j)|$, $j=1,2$, are all uniformly bounded.

We may find a Jordan curve $\sigma$, which is disjoint from $J_1\cup J_2$, such that its interior contains $J_1$ and its exterior contains $J_2$. From compactness, $\sup_{z\in\sigma}\ln|f_j'(t_j,z)|$ and $\sup_{z\in\sigma} \ln|f_{L_1(t_1)\vee L_2(t_2)}'(z)|$ are both uniformly bounded. From (\ref{circ-f}) we see that the value $\sup_{w\in f_j(t_j,\sigma)} \ln|f_{3-j,t_j}'(t_k,w)|$ is also uniformly bounded. Note that the interior of $f_j(t_j,\sigma)$ contains $\ha{L_j(t_j)}$, which contains $e^{i\lambda_j(t_j)}$ if $t_j>0$. From maximal principle, there is $c_4\in(0,\infty)$ such that $A_{j,1}(t_1,t_2)=|f_{3-j,t_j}'(t_{3-j},e^{i\lambda_j(t_j)})|\le c_4$ if $t_j\in(0,T_j(J_j)]$ and $t_{3-j}\in[0,T_{3-j}(J_{3-j})]$. From continuity, $A_{j,1}$ is uniformly bounded, $j=1,2$. From (\ref{paAk1}) and the uniformly boundedness of $|\cot_2'(X_j)|$ we see that $\ln(A_{j,1})$ is uniformly bounded, $j=1,2$. From (\ref{F}) and the uniformly boundedness of $|\cot_2'''(X_j)|$ we see that $\ln(F)$ is also uniformly bounded, which completes the proof.
\end{proof}

Let $\mu_j$ denote the distribution of $(\lambda_j)$, $j=1,2$. Let $\mu=\mu_1\times\mu_2$. Then
$\mu$ is the joint distribution of $(\lambda_1)$ and $(\lambda_2)$, since $\lambda_1$ and $\lambda_2$ are independent.
 Fix $(J_1,J_2)\in\JP$. From  the local martingale property  of $M$ and  Proposition \ref{bounded}, we have
$ \EE_\mu[M(T_1(J_1),T_2(J_2))]=M(0,0)=1$. Define $\nu_{J_1,J_2}$ by
$d\nu_{J_1,J_2}/d\mu=M(T_1(J_1),T_2(J_2))$. Then $\nu_{J_1,J_2}$ is a probability measure.
Let $\nu_1$ and $\nu_2$ be the two marginal measures of $\nu_{J_1,J_2}$. Then
$d\nu_1/d\mu_1=M(T_1(J_1),0)=1$ and $d\nu_2/d\mu_2=M(0,T_2(J_2))=1$, so $\nu_j=\mu_j$, $j=1,2$.
Suppose temporarily that the joint distribution of $(\lambda_1)$ and $(\lambda_2)$ is $\nu_{J_1,J_2}$ instead of $\mu$.
Then the distribution of each $(\lambda_j)$ is still $\mu_j$.

Fix an $(\F^2_t)$-stopping time $t_2\le T_2(J_2)$. From (\ref{dlambda}), (\ref{paM}), and Girsanov theorem
(c.f.\ \cite{RY}), under the probability measure $\nu_{J_1,J_2}$,
there is an $(\F^1_{t_1}\times\F^2_{t_2})_{t_1\ge 0}$-Brownian
motion $\til B_{1,t_2}(t_1)$  such that $\lambda_1(t_1)$,  $0\le t_1\le T_1(J_1)$, satisfies the
$(\F^1_{t_1}\times\F^2_{ t_2})_{t_1\ge 0}$-adapted SDE: $$ d\lambda_1(t_1)=\sqrt\kappa d \til B_{1,t_2}(t_1)-\Big(3+\frac\kappa 2\Big)
\frac{A_{1,2} }{A_{1,1} }dt_1-\frac{-\kappa-6}2\cot_2(X_1) A_{1,1} dt_1,$$ 
which together with (\ref{Ajh}), (\ref{lambda-j-tk}), (\ref{3A}), and It\^o's formula, implies that
$$ d\lambda_{1,t_2}(t_1)=A_{1,1}  \sqrt\kappa d\til B_{1,t_2}(t_1) -\frac{-\kappa-6}2\cot_2(X_1)A_{1,1} ^2dt_1.$$ 
From (\ref{Ajh}) and (\ref{lambda-j-tk}) we get $X_1=A_{1,1}-A_{2,1}=\lambda_{1,t_2}(t_1)-\til f_{1,t_2}(t_1,\lambda_2(t_2))$.
Note that $\lambda_{1,t_2}(0)=\til f_{2,0}(t_2,\til z_1)=\til f_2(t_2,\til z_1)$.
Since $L_{1,t_2}(t_1)$ and $\til f_{1,t_2}(t_1,\cdot)$ are backward radial Loewner hulls and covering maps via the time-change $u_{1,t_2}$, from (\ref{ujtk}) and the above equation, we find that, under the measure $\nu_{J_1,J_2}$, conditioned on $\F^1_{t_2}$ for any $(\F^2_t)$-stopping time $t_2\le T_2(J_2)$, via the time-change $u_{1,t_2}$, $L_{1,t_2}(t_1)=f_2(t_2,\cdot)^*(L_1(t_1))$, $0\le t_1\le  T_1(J_1)$, is a partial backward radial SLE$(\kappa;\frac{-\kappa-6}2)$ process started from $e^i\circ\til f_2(t_2,\til z_1)=f_2(t_2,z_1)$ with marked point $e^i(\lambda_2(t_2))$. Similarly, the above statement holds true if the subscripts ``$1$'' and ``$2$'' are exchanged.

The joint distribution $\nu_{J_1,J_2}$ is a local coupling such that the desired properties in the statement of Theorem \ref{coupl} holds true up to the stopping times $T_1(J_1)$ and $T_2(J_2)$. Then we can apply the maximum coupling technique developed in \cite{reversibility} to construct a global coupling using the local couplings within different pairs $(J_1,J_2)$. The reader is referred to
Theorem 4.5 and Section 4.3 in \cite{duality} for the construction of a global coupling between two forward SLE processes. For the coupling of backward SLE processes, the method is essentially the same. A slight difference is that for the forward SLE processes, a pair of hulls were used to control the growth of $M(\cdot,\cdot)$, which stays uniformly bounded up to the time that the SLE hulls grow out of the given hulls; while for the backward SLE processes, we here used a pair of arcs to control the growth of $M(\cdot,\cdot)$. One fact that is worth mentioning is that here we may choose a dense countable set $\JP^*\subset \JP$ such that, when $S_{L_1(t_1)}\cap S_{L_2(t_2)}=\emptyset$, there exists $(J_1,J_2)\in\JP^*$ with $S_{L_j(t_j)}\subset J_j$, $j=1,2$, from which follows that \BGE T_j(t_k)=\sup\{T_j(J_j):(J_1,J_2)\in\JP^*,t_k\le T_k(J_k)\},\quad j\ne k\in\{1,2\}.\label{Tjtk}\EDE
This finishes the proof 
of Theorem \ref{coupl} in the radial case.

Now we briefly describe the proof for the chordal case. The proof in this case is  simpler because there are no covering maps. Suppose the two backward chordal SLE$(\kappa;-\kappa-6)$ processes start from $(z_j;z_k)$, where $z_1\ne z_2\in\R$.
 Formula (\ref{dlambda}) holds with all tildes removed and the function $\cot_2$ replaced by $z\mapsto \frac 2z$. The domain $\cal D$ and the $\HH$-hulls $L_{1,t_2}(t_1)$ and $L_{2,t_1}(t_2)$ are defined in the same way. Then (\ref{circ-f}) still holds. From Corollary \ref{continuous-Cor} (ii), $f_{1,t_2}(t_1,\cdot)$ and $f_{2,t_1}(t_2,\cdot)$ are $\F^1_{t_1}\times \F^2_{t_2}$-measurable. Define $\mA(t_1,t_2)=\hcap(L_1(t_1)\vee L_2(t_2))/2$. Then (\ref{mA}) holds with $u_{j,t_k}(t_j):=\hcap(L_{j,t_k}(t_j))/2$.

Now we apply the argument in Section \ref{Conformal-chordal} with $W=f_k(t_k,\cdot)$. Then $W_t=f_{k,t_j}(t_k,\cdot)$. Let $F_{k,t_k}(t_j,\cdot)=f_{k,t_j}(t_k,\cdot)$, and define $A_{j,h}$ and $A_{j,S}$ using (\ref{Ajh}) and (\ref{AS}) with all tildes removed. Using (\ref{W=}), (\ref{u'}), (\ref{-3}), and (\ref{W''/W'-chordal}), we see that (\ref{lambda-j-tk}) still holds here; (\ref{ujtk}) and (\ref{3A}) hold with all tildes removed; and (\ref{W''/W'-ensemble}) holds without the tildes and the terms $+\frac16A_{j,1}^2-\frac 16$. 
Then we get the SDEs (\ref{dlambdajtk}) and (\ref{pa-A-j-alpha}) without the terms $+\frac \alpha6   A_{j,1}^2 - \frac \alpha6$. Formulas (\ref{paAk0}), (\ref{paAk1}), and (\ref{paAkS}) hold with $\cot_2$ replaced by $z\mapsto \frac 2z$. We still define $X_j=A_{j,1}-A_{k,1}$. Then $X_j\ne 0$ in $\cal D$. Define $Y$ on $\cal D$ by $Y=|X_1|^{-2\alpha}=|X_2|^{-2\alpha}$. Then (\ref{paY/Y}) holds with $\cot_2$ replaced by $z\mapsto \frac 2z$ and the term $+\frac{\alpha\kappa}4A_{j,1}^2dt_j$ removed. Define $F$ using (\ref{F}) with $Q=-\frac{12}{X_1^4}=-\frac{12}{X_2^4}$. Then (\ref{paF}) still holds. Define $\ha M$ using (\ref{haM}) without the factor $e^{\frac{\cc}{12} \mA}$. Then (\ref{pahaM}) holds with $\cot_2$ replaced by $z\mapsto \frac 2z$ and the term $- \frac \alpha6 dt_j$ removed. Define $M$ using (\ref{M}). Then (\ref{paM}) holds with all tildes removed and $\cot_2$ replaced by $z\mapsto \frac 2z$.

We define $\JP$ to be the set of disjoint pairs of closed real intervals $(J_1,J_2)$ such that $z_j$ is contained in the interior of $J_j$. Then Proposition \ref{bounded} holds with a similar proof, where Lemma \ref{compact-new} is applied here, and we can show that $|X_1|$ is uniformly bounded away from $0$. The argument on the local couplings hold with all tildes and $e^i$ removed and $\cot_2$ replaced by $z\mapsto \frac 2z$. Finally, we may apply the maximum coupling technique to construct a global coupling with the desired properties. Formula (\ref{Tjtk}) still holds here and is used in the construction. This finishes the proof in the chordal case.

\subsection{Other results}
Besides Theorem \ref{coupl}, one may also prove the following two theorems, which are similar to the couplings for forward SLE that appear in \cite{Julien-Comm} and \cite{duality}.

\begin{Theorem}
  Let $\kappa_1,\kappa_2>0$ satisfy $\kappa_1\kappa_2=16$, and $c_1,\dots,c_n\in\R$ satisfy $\sum_{k=1}^n c_k=\frac 32$.
  Let $\vec\rho_j=(\frac{\kappa_j}2,c_1(-\kappa_j-4),\dots,c_n(-\kappa_j-4))$, $j=1,2$.
  Then backward chordal (resp.\ radial) SLE$(\kappa_1;\vec\rho_1)$ commutes with backward chordal (resp.\ radial) SLE$(\kappa_2;\vec\rho_2)$.
\end{Theorem}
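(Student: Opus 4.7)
The plan is to follow the scheme of Theorem \ref{coupl} (Sections 5.1--5.2) with the ensemble enriched to carry the additional marked points $w_2,\dots,w_n$ and the driving asymmetry $\kappa_1\ne\kappa_2$. Let $(L_j(t_j),\lambda_j,f_j,\til f_j)$ be two independent backward SLE$(\kappa_j;\vec\rho_j)$ processes started from $(z_j;z_{3-j},w_2,\dots,w_n)$, driven by independent Brownian motions $B_j$ and satisfying the SDE (\ref{kappa-rho-chordal}) or (\ref{kappa-rho-radial}) of Section \ref{section-kappa-rho}. On the open set ${\cal D}=\{(t_1,t_2):S_{L_1(t_1)}\cap S_{L_2(t_2)}=\emptyset\}$ introduce the ensemble of Section 5.1: dual hulls $L_{j,t_k}(t_j)$, covering maps $\til F_{k,t_k}(t_j,\cdot)$, derivative processes $A_{j,h}(t_1,t_2)$, and $\mA(t_1,t_2)$. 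For each marked point $w_k$ also introduce the image processes $\til F_{k,t_k}(t_j,w_k)$ together with their derivatives in the second variable. By Corollary \ref{continuous-Cor} all these are $(\F^1_{t_1}\times\F^2_{t_2})$-adapted, and their It\^o differentials in either time variable follow from the radial/chordal analogues of the computations leading up to (\ref{paAk0})--(\ref{paAkS}).

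The heart of the argument is to construct a positive two-parameter local martingale $\ha M$ on ${\cal D}$. In analogy with (\ref{haM}), I expect $\ha M$ to be a product of the capacity factors $A_{1,1}^{\alpha_1}A_{2,1}^{\alpha_2}$ with $\alpha_j=(6+\kappa_j)/(-2\kappa_j)$, interaction factors of the form $|\sin_2(\cdot)|^{-\mu}$ (or $|\cdot|^{-\mu}$ in the chordal case) for each unordered pair among $\{\text{two SLE tips}\}\cup\{w_2,\dots,w_n\}$ (computed through the appropriate conformal maps $\til F$), and the central-charge correction $F^{-\cc/6}e^{\cc\mA/12}$, where $F$ is the analogue of (\ref{F}) summed over all pairs of distinct points and $\cc=\cc(\kappa_1)=\cc(\kappa_2)$; the equality of the two central charges under the duality $\kappa_1\kappa_2=16$ follows by a direct computation from $\cc(\kappa)=(8+3\kappa)(\kappa+6)/(2\kappa)$. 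The exponents $\mu$ are uniquely determined by demanding that the It\^o drift of $\ha M$ vanish in each time variable separately. Setting $M=\ha M(t_1,t_2)\ha M(0,0)/[\ha M(t_1,0)\ha M(0,t_2)]$ then normalizes the marginals to $1$ and gives the Radon--Nikodym density for the local coupling.

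Once $M$ is in hand, the rest of the argument replays Section 5.2 almost verbatim: boundedness of $|\ln M|$ on $[0,T_1(J_1)]\times[0,T_2(J_2)]$ for $(J_1,J_2)\in\JP$ (immediate from compactness, since the starting and marked points stay separated during the coupling), Girsanov to identify each $\lambda_j$ conditioned on $\F^k_{t_k}$ as the driving function of a partial backward SLE$(\kappa_j;\vec\rho_j)$ started from the mapped-forward points, and the maximum-coupling patching of \cite{reversibility} and \cite{duality} along a countable dense subfamily of $\JP$. The hardest step is the algebra in paragraph two: verifying that $\kappa_1\kappa_2=16$ and $\sum_k c_k=\tfrac 32$ are precisely the closure conditions for the linear system on the exponents $\mu$. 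The duality relation matches the $\sqrt{\kappa_j}\,dB_j$-contribution to the drift of $\ha M$ against the $\cot_2$-drift produced by the partner force $\kappa_j/2$ (this is where the product $\alpha_1\alpha_2$ enters and duality kicks in), while $\sum c_k=\tfrac 32$ matches the total ``charge'' carried by the $c_k(-\kappa_j-4)$-forces against the central-charge correction $F^{-\cc/6}$; this is the backward-SLE analogue of the Coulomb-gas commutation identities of Dub\'edat (cf.\ \cite{Julien-Comm} and \cite{duality}).
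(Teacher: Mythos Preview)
The paper does not actually supply a proof of this theorem: it is stated in Section~5.3 under ``Other results'' with only the remark that it can be proved like Theorem~\ref{coupl}, using the forward-SLE commutation ideas of \cite{Julien-Comm} and \cite{duality}. Your proposal is precisely that programme: enrich the ensemble of Section~5.1 with the extra marked points, build the two-parameter local martingale out of derivative powers, pairwise interaction factors, and the central-charge correction, then run the boundedness/Girsanov/maximum-coupling machinery of Section~5.2. So at the level of strategy you match what the paper says to do.

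Two small remarks on your algebraic discussion. First, the condition $\sum_k c_k=\tfrac32$ is exactly what makes the total force $\sum_k\rho_{j,k}=\tfrac{\kappa_j}{2}+(-\kappa_j-4)\sum_k c_k=-\kappa_j-6$, the backward-SLE conformal-covariance value from Theorems~\ref{cord-change-1}--\ref{cord-change-2}; this is the cleanest way to see why that constant appears, and it is worth saying so rather than attributing it solely to matching the $F^{-\cc/6}$ term. Second, your description of how $\kappa_1\kappa_2=16$ enters (``the product $\alpha_1\alpha_2$'') is morally right but slightly imprecise: in the analogue of (\ref{pahaM}) the cross term that must cancel involves $\alpha_j$ against the partner drift coming from the force $\kappa_j/2$ at $z_{3-j}$, and the closure condition one actually solves is a linear relation between $\alpha_1$, $\alpha_2$ and the interaction exponent for the tip pair; duality enters because $\alpha_j=-\tfrac{6+\kappa_j}{2\kappa_j}$ satisfies $\kappa_1\alpha_1+2=\kappa_2\alpha_2+2$ precisely when $\kappa_1\kappa_2=16$ (and likewise $\cc(\kappa_1)=\cc(\kappa_2)$, as you correctly check). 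If you intend to flesh this out, writing down the exponents explicitly---as in \cite{duality} for the forward case---would remove the one genuinely unverified step in your outline.
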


\begin{Theorem}
Let $\kappa>0$ and $\vec\rho\in\R^n$, whose first coordinate is $2$. Then backward chordal (resp.\ radial) SLE$(\kappa;\vec\rho)$ commutes with backward chordal (resp.\ radial) SLE$(\kappa;\vec\rho)$.
\end{Theorem}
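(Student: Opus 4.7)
The plan is to extend the proof of Theorem~\ref{coupl} to accommodate a general force vector $\vec\rho$ whose first coordinate is $\rho_1=2$. I describe the chordal case; the radial case follows by the parallel computations of Section~\ref{Conformal-radial}. Let $(L_j(t);0\le t<T_j)$, $j=1,2$, be two independent backward chordal SLE$(\kappa;\vec\rho)$ processes with $L_j$ started from $(z_j;z_{3-j},w_2,\dots,w_n)$ and driven by $\lambda_j$, and let $\mu=\mu_1\times\mu_2$ be their joint law. Define ${\cal D}$, the transported hulls $L_{j,t_k}(t_j)$, the composed maps $f_{j,t_k}(t_j,\cdot)$, and the derivatives $A_{j,h},A_{j,S}$ at $\lambda_j(t_j)$ exactly as in Section~\ref{comm}. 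For each $m\in\{2,\dots,n\}$ and $j\ne k\in\{1,2\}$, introduce
\[
P_{j,m}=f_{k,t_j}(t_k,f_j(t_j,w_m)),\qquad Q_{j,m}=f_{k,t_j}'(t_k,f_j(t_j,w_m))\,f_j'(t_j,w_m),
\]
i.e., the image of $w_m$ under the full composition and the corresponding product derivative. Their $t_j$-evolution follows from the chordal analogues of (\ref{paAk0})--(\ref{paAk1}), applied now to $f_j(t_j,w_m)$ in place of the singular point $e^{i\lambda_k(t_k)}$ of Section~\ref{comm}.

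The main step is to exhibit a positive function $\hat M$ on ${\cal D}$ of the form
\[
\hat M = A_{1,1}^{\alpha}A_{2,1}^{\alpha}\,|A_{1,0}-A_{2,0}|^{\beta}\,F^{\gamma}\prod_{m=2}^{n}\bigl(Q_{1,m}^{a_m}Q_{2,m}^{a_m}\,|A_{1,0}-P_{1,m}|^{b_m}|A_{2,0}-P_{2,m}|^{b_m}\bigr),
\]
where $F$ is the Schwarzian factor of (\ref{F}) and $(\alpha,\beta,\gamma,a_m,b_m)\in\R$ are exponents to be determined. Applying It\^o's formula to $\log\hat M$ via (\ref{dAjh/Ajh}), (\ref{paAk0})--(\ref{paAkS}) and the analogous SDEs for $P_{j,m}$ and $Q_{j,m}$, and setting $M=\hat M(t_1,t_2)\hat M(0,0)/[\hat M(t_1,0)\hat M(0,t_2)]$, one requires that the deterministic part of $\pa_{t_j}\log M$ vanish, so that $M$ is a local martingale in each variable. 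The resulting linear system decouples into an $(\alpha,\beta,\gamma)$-block associated with the shared pair of starting points $(z_1,z_2)$ and one $2\times 2$ block $(a_m,b_m)$ per auxiliary force point $w_m$. The hypothesis $\rho_1=2$ is precisely the algebraic condition that makes the overdetermined $(\alpha,\beta,\gamma)$-block consistent, in parallel with the analogous forward-SLE commutation calculations of~\cite{Julien-Comm} (with signs reversed according to the backward convention of Section~\ref{section-kappa-rho}); each remaining $2\times 2$ block then has a unique solution for $(a_m,b_m)$ in terms of $\rho_m$ with no further obstruction. Thus $M$ is a positive local $(\F^j_{t_j}\times\F^k_{t_k})_{t_j\ge 0}$-martingale in each variable, with $M(\cdot,0)\equiv M(0,\cdot)\equiv 1$.

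The remainder of the proof runs identically to the end of Section~\ref{comm}. Let $\JP$ be the set of pairs of disjoint closed intervals $(J_1,J_2)\subset\R$ with $z_j$ in the interior of $J_j$ and $w_2,\dots,w_n\notin J_1\cup J_2$; for $(J_1,J_2)\in\JP$ let $T_j(J_j)$ be the first time $S_{L_j(t)}\not\subset J_j$. Proposition~\ref{bounded} generalizes: Theorem~\ref{continuous} and Lemma~\ref{compact-new} give uniform bounds on $A_{j,1}$, $|A_{1,0}-A_{2,0}|^{-1}$ and $F$ on $[0,T_1(J_1)]\times[0,T_2(J_2)]$ as before, while Koebe's distortion theorem applied to $f_{k,t_j}$ on a neighborhood separating $J_1\cup J_2$ from the force points gives uniform bounds on $Q_{j,m}$ and $|A_{j,0}-P_{j,m}|^{-1}$. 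Hence $|\log M|$ is uniformly bounded there, $\EE_\mu[M(T_1(J_1),T_2(J_2))]=1$, and tilting $\mu$ produces a measure $\nu_{J_1,J_2}$ with the correct marginals. Girsanov's theorem then identifies the $\nu_{J_1,J_2}$-conditional distribution of $f_k(t_k,\cdot)^*(L_j(\cdot))$ given $\F^k_{t_k}$, at any $(\F^k_t)$-stopping time $t_k\le T_k(J_k)$, as a stopped backward chordal SLE$(\kappa;\vec\rho)$ started from the transported data $(f_k(t_k,z_j);\lambda_k(t_k),f_k(t_k,w_2),\dots,f_k(t_k,w_n))$. The global coupling is assembled from $\{\nu_{J_1,J_2}:(J_1,J_2)\in\JP^*\}$ along a countable dense $\JP^*\subset\JP$ by the maximum coupling technique of~\cite{reversibility}, with the analogue of (\ref{Tjtk}) providing the necessary exhaustion. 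The main obstacle is the detailed It\^o bookkeeping of the $(\alpha,\beta,\gamma)$-system and the verification that $\rho_1=2$ is exactly its consistency condition; this is a finite but lengthy algebraic check, and the only place in the argument where the special value of the first force enters.
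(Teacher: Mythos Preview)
The paper does not actually prove this theorem: it is stated in Section~5.3 with only the remark that it ``may also [be] prove[d] \dots\ similar to the couplings for forward SLE that appear in \cite{Julien-Comm} and \cite{duality},'' i.e., by adapting the machinery of Theorem~\ref{coupl}. Your outline does precisely this---build a two-time local martingale $\hat M$ with the extra force-point factors, check boundedness on pairs $(J_1,J_2)\in\JP$, tilt to obtain local couplings, and glue via the maximum coupling technique---so your approach is exactly what the paper intends, and in fact supplies more detail than the paper does.

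Two small points. First, for $M$ to be a local martingale you need the drift of $M$ (not of $\log M$) to vanish; equivalently, the finite-variation part of $d\log M$ must equal $-\tfrac12\langle d\log M\rangle$, not zero. Second, your ansatz for $\hat M$ may be missing cross factors of the form $|P_m-P_{m'}|^{c_{mm'}}$ among the auxiliary force points (such factors appear in the partition functions of \cite{Julien-Comm} and \cite{duality}); whether they are strictly needed depends on the bookkeeping, since many one-variable terms cancel in the ratio $M(t_1,t_2)\hat M(0,0)/[\hat M(t_1,0)\hat M(0,t_2)]$, but you should check this when carrying out the algebra. The assertion that $\rho_1=2$ is exactly the consistency condition for the $(\alpha,\beta,\gamma)$-block is correct (it is the backward analogue of the well-known forward relation) and is indeed where the hypothesis enters.
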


\section{Reversibility of Backward Chordal SLE} \label{section-reversibility}

\begin{Theorem}
Let $\kappa\in(0,4]$ and $z_1\ne z_2\in\TT$. Suppose a backward radial SLE$(\kappa;-\kappa-6)$ process $(L_1(t))$ started from $(z_1;z_2)$ commutes with a backward radial SLE$(\kappa;-\kappa-6)$ process $(L_2(t))$ started from $(z_2;z_1)$. Then a.s.\ they induce the same welding.
\end{Theorem}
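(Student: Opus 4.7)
The plan is to show, for each fixed $x\in\TT\setminus\{z_1,z_2\}$, that $\phi_1(x)=\phi_2(x)$ almost surely, where $\phi_j$ denotes the welding induced by $(L_j(t))$ as in Section \ref{welding}; the two weldings will then be shown equal on all of $\TT$ by a density-plus-continuity argument. Set $\bar t_2=\tau_2(x):=\inf\{t:x\in S_{L_2(t)}\}$; since $x$ is not a fixed point of $\phi_2$, $\bar t_2$ is an $(\F^2_t)$-stopping time with $\bar t_2<\infty$ a.s. At time $\bar t_2$ the point $x$ is one endpoint of the closed arc $S_{L_2(\bar t_2)}$; write $y$ for the other endpoint, so by the welding description in Section \ref{welding} we have $y=\phi_2(x)$, and the continuous boundary extension of $f_2(\bar t_2,\cdot)$ sends both $x$ and $y$ to the common base point $e^{i\lambda_2(\bar t_2)}$.

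Next, apply Definition \ref{def-coupl}(ii) at the stopping time $\bar t_2$. Conditionally on $\F^2_{\bar t_2}$, after a time change, $f_2(\bar t_2,\cdot)^*(L_1(t_1))$, $0\le t_1<T_1(\bar t_2)$, is distributed as a backward radial $\SLE(\kappa;-\kappa-6)$ started from $(f_2(\bar t_2,z_1);e^{i\lambda_2(\bar t_2)})$. Because $-\kappa-6\le-\kappa/2-2$, the last remark of Section \ref{section-kappa-rho} implies that this pushforward SLE is complete ($T=\infty$), its force point is never swallowed, and its support exhausts $\TT\setminus\{e^{i\lambda_2(\bar t_2)}\}$. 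Pulling back by $f_2(\bar t_2,\cdot)^{-1}$, the arc $S_{L_1(t_1)}$ exhausts the open arc $\TT\setminus S_{L_2(\bar t_2)}$ as $t_1\uparrow T_1(\bar t_2)$. In the pushforward frame the two endpoints of the growing support approach the force point from opposite sides, so in the original frame the two endpoints of $S_{L_1(t_1)}$ approach the endpoints $x$ and $y$ of $S_{L_2(\bar t_2)}$ simultaneously. Hence both $x$ and $y$ enter $S_{L_1}$ at the same instant $T_1(\bar t_2)$, giving $\tau_1(x)=\tau_1(y)=T_1(\bar t_2)$; the welding characterization for process $1$ then yields $\phi_1(x)=y=\phi_2(x)$.

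Fix a countable dense set $D\subset\TT\setminus\{z_1,z_2\}$. A countable union of null sets gives that, almost surely, $\phi_1=\phi_2$ on $D$. For $\kappa\in(0,4]$ each $\phi_j$ is a continuous involution of $\TT$: at any non-fixed $x$, once $t>\tau_j(x)$, $x$ lies in the interior of $S_{L_j(t)}$, on which $\phi_j$ coincides with the continuous welding $\phi_{L_j(t)}$ of the simple curve $L_j(t)$; continuity at the two fixed points follows from the fact that the arcs $S_{L_j(t)}$ close in on the force point from both sides as $t\to\infty$. Two continuous involutions of $\TT$ agreeing on a dense set must coincide, so $\phi_1=\phi_2$ almost surely.

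The main obstacle is the ``symmetric pinch'' step, i.e.\ that $S_{L_1(t_1)}$ reaches both endpoints of $S_{L_2(\bar t_2)}$ at the same time $T_1(\bar t_2)$ rather than hitting one side first. This is precisely where the force $-\kappa-6$ is essential: it places the pushforward in the regime $\rho\le-\kappa/2-2$, in which the SLE runs forever without ever swallowing its force point, which is what forces $S_{L_1(t_1)}$ to fill the whole complementary arc $\TT\setminus S_{L_2(\bar t_2)}$ and to meet $S_{L_2(\bar t_2)}$ simultaneously at both ends.
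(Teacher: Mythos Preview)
Your approach is essentially the same as the paper's: use the commutation relation to show the pushforward process fills the whole complementary arc, conclude that the two support arcs meet at both endpoints simultaneously, and then pass to a countable dense set and use continuity. The paper conditions at deterministic times $t_2\in\Q_{>0}$ and reads off the endpoints of $S_{L_2(t_2)}$, while you condition at the hitting time $\bar t_2=\tau_2(x)$ for $x$ in a countable dense set; both routes are valid under Definition~\ref{def-coupl}.

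There is, however, a genuine gap at the ``symmetric pinch'' step. You write that the pushforward ``is complete ($T=\infty$) \dots and its support exhausts $\TT\setminus\{e^{i\lambda_2(\bar t_2)}\}$'', citing the last remark of Section~\ref{section-kappa-rho}. That remark says the \emph{complete} $\SLE(\kappa;-\kappa-6)$ runs forever and never swallows its force point; it does \emph{not} tell you that the \emph{partial} process handed to you by the commutation definition has stopping time equal to $\infty$. Your final paragraph acknowledges this is the main obstacle but only restates the conclusion rather than proving it. What is missing is the link: by the very definition of $T_1(\bar t_2)$, as $t_1\uparrow T_1(\bar t_2)$ at least one endpoint of $S_{L_1(t_1)}$ approaches an endpoint of $S_{L_2(\bar t_2)}$; under $f_2(\bar t_2,\cdot)$ this means at least one endpoint of the pushforward support approaches the force point $e^{i\lambda_2(\bar t_2)}$. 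If the partial process were stopped at a finite time in the time-changed parametrization, its support at that time would be a closed arc strictly bounded away from the force point (since the force point is never swallowed at finite time). These two statements contradict each other, so the stopping time must be infinite and the process is complete; only then can you conclude that \emph{both} endpoints reach the force point, and hence that $\tau_1(x)=\tau_1(y)$. The paper carries out exactly this contradiction argument; once you insert it, your proof is correct and matches the paper's.
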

\begin{proof}
For $j=1,2$, let $S^j_t=S_{L_j(t)}$ and $f^j_t=f_{L_j(t)}$. Let $T_j(\cdot)$, $j=1,2$, be as in Definition \ref{def-coupl}. Let $\phi_j$ be the welding induced by $(L_j(t))$. Since $-\kappa-6\le -\kappa/2-2$, from the last remark in Section \ref{section-kappa-rho}, we see that, for $j=1,2$, a.s.\ $T_j=\infty$, $S^j_\infty=\TT\sem\{z_{3-j}\}$, and $\phi_j$ is an involution of $\TT$ with exactly two fixed points: $z_1$ and $z_2$.

Fix $t_2>0$. Since $(L_1(t))$ and $(L_2(t))$ commute, the following is true. Conditioned on $(L_2(t))_{t\le t_2}$, $(f^2_{t_2})^*(L_1(t_1))$, $0\le t_1<T_1(t_2)$, is a partial backward radial SLE$(\kappa;-\kappa-6)$ process, after a time-change, started from $(f^2_{t_2}(z_1);B_{L_2(t_2)})$. Here we use $B_{L_2(t_2)}$ also to denote the unique point in the base of $L_2(t_2)$, which is equal to $e^{i\lambda_2(t_2)}$, where $\lambda_2$ is a driving function for $(L_2(t_2))$. We have
\BGE S:= \bigcup_{0\le t_1<T_1(t_2)} S_{(f^2_{t_2})^*(L_1(t_1))}=f^2_{t_2}(\bigcup_{0\le t_1<T_1(t_2)} S^1_{t_1})=f^2_{t_2}(S^1_{T_1(t_2)^-}).\label{f*}\EDE
Recall that $f^2_{t_2}$ is a homeomorphism from $\TT\sem S^2_{t_2}$ onto $\TT\sem B_{L_2(t_2)}$. From the definition of $T_1(t_2)$, we see that $S^1_{T_1(t_2)}$ intersects $S^2_{t_2}\ne\emptyset$ at one or two end points of both arcs. If they intersect at only one point, then $S^1_{T_1(t_2)^-}$ is a proper subset of $\TT\sem S^2_{t_2}$, and these two arcs share an end point. From (\ref{f*}), this then implies that the arc $S$ is a proper subset of $\TT\sem B_{L_2(t_2)}$, and $B_{L_2(t_2)}$ is an end point of $S$. Recall that, after a time-change, $(f^2_{t_2})^*(L_1(t_1))$, $0\le t_1<T_1(t_2)$, is a partial backward radial SLE$(\kappa;-\kappa-6)$ process. Since $S\ne\TT\sem B_{L_2(t_2)}$, the process is not complete. Then we conclude that $S$ is contained in a closed arc on $\TT$ that does not contain $B_{L_2(t_2)}$ because the force point is not swallowed by the process at any finite time, which contradicts that $B_{L_2(t_2)}$ is an end point of $S$. Thus, a.s.\ $S^1_{T_1(t_2)}$ and $S^2_{t_2}$ share two end points. Since $\phi_j$ swaps the two end points of any $S^j_t$, $j=1,2$, we see that a.s.\ $\phi_2=\phi_1$ on $\pa_{\TT} S^2_{t_2}$. Let $t_2>0$ vary in the set of rational numbers, we see that a.s.\ $\phi_2=\phi_1$ on $\bigcup_{t\in\Q_{>0}} \pa_{\TT} S^2_{t_2}$, which is a dense subset of $\TT$. The conclusion follows since $\phi_1$ and $\phi_2$ are continuous.
\end{proof}

We now state the reversibility of backward chordal SLE$_\kappa$ for $\kappa\in(0,4]$ in terms of its welding. Recall that a backward chordal SLE$_\kappa$ welding  is an involution of $\ha\R$ with two fixed points: $0$ and $\infty$.

\begin{Theorem}
  Let $\kappa\in(0,4]$, and $\phi$ be a backward chordal SLE$_\kappa$ welding. Let $h(z)=-1/z$. Then $h\circ \phi\circ h$ has the same distribution as $\phi$.\label{reversibility}
\end{Theorem}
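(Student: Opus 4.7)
The plan is to combine three ingredients already developed in the paper: the commutation theorem (Theorem \ref{coupl}) for backward radial SLE$(\kappa;-\kappa-6)$, the immediately preceding theorem saying that two such commuting processes a.s.\ induce the same welding on $\TT$, and Corollary \ref{chordal-radial} which identifies backward chordal SLE$_\kappa$ as the M\"obius image of backward radial SLE$(\kappa;-\kappa-6)$. The map $h(z)=-1/z$ will enter as the unique M\"obius involution of $\HH$ which swaps $0$ and $\infty$.

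Concretely, I would fix two distinct points $z_1\neq z_2\in\TT$ and, via Theorem \ref{coupl}, couple a backward radial SLE$(\kappa;-\kappa-6)$ process $(L_1(t))$ started from $(z_1;z_2)$ with a backward radial SLE$(\kappa;-\kappa-6)$ process $(L_2(t))$ started from $(z_2;z_1)$. By the theorem immediately preceding Theorem \ref{reversibility}, on an event of full measure these two processes induce the same welding $\psi$, which is an involution of $\TT$ fixing exactly $\{z_1,z_2\}$. Then I would choose any M\"obius transformation $W:\D\conf\HH$ with $W(z_1)=0$ and $W(z_2)=\infty$, and set $W':=h\circ W$. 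A direct check (using $h:\HH\to\HH$) shows that $W':\D\conf\HH$ is also M\"obius, and satisfies $W'(z_2)=0$, $W'(z_1)=\infty$.

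By Corollary \ref{chordal-radial} applied to each of $W$ and $W'$, the hulls $W^*(L_1(t))$ and $(W')^*(L_2(t))$ are, after suitable time-changes, backward chordal SLE$_\kappa$ hulls started from $0$; let $\phi^{(1)}$ and $\phi^{(2)}$ denote the weldings they induce. By the conformal transformation rule (\ref{welding-trans}) for weldings, together with the a.s.\ identity $\psi=$ (welding of $L_1$) $=$ (welding of $L_2$), one has
\BGE
\phi^{(1)}=W\circ\psi\circ W^{-1},\qquad \phi^{(2)}=W'\circ\psi\circ (W')^{-1}.
\EDE
Since $W'=h\circ W$ and $h$ is an involution,
\BGE
\phi^{(2)}=h\circ(W\circ\psi\circ W^{-1})\circ h=h\circ\phi^{(1)}\circ h.
\EDE
Both $\phi^{(1)}$ and $\phi^{(2)}$ are distributed as backward chordal SLE$_\kappa$ weldings (with the standard fixed points $0$ and $\infty$), so $\phi^{(1)}\stackrel{d}{=}\phi^{(2)}=h\circ\phi^{(1)}\circ h$, proving the theorem.

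I expect the conceptual content of the proof to be essentially complete once the commutation coupling and the radial-to-chordal conversion are in hand; the only points that require a moment of care are verifying that $h\circ W$ really does map $\D$ to $\HH$ with the swapped boundary correspondence $(z_2\to 0,z_1\to\infty)$, and ensuring the welding transformation formula (\ref{welding-trans}) applies despite the time-changes (it does, because the welding only depends on the hulls up to reparametrization). The real technical work — constructing the commuting coupling and identifying the transformed process — has already been done in Sections \ref{Conformal} and \ref{comm}, so no further obstacle arises at this final step.
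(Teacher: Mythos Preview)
Your proposal is correct and follows essentially the same approach as the paper: couple two commuting backward radial SLE$(\kappa;-\kappa-6)$ processes, invoke the preceding theorem to identify their weldings, push each through a M\"obius map to $\HH$ (with $W'=h\circ W$ corresponding to the paper's $W_2=h\circ W_1$), apply Corollary \ref{chordal-radial} and the welding transformation rule (\ref{welding-trans}), and conclude. The only cosmetic difference is notation.
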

\begin{proof}
 Let $(L_1(t))$ and $(L_2(t))$ be commuting backward radial SLE$(\kappa;-\kappa-6)$ procesees as in Theorem \ref{coupl}, which induce the weldings $\psi_1$ and $\psi_2$, respectively. The above theorem implies that a.s.\ $\psi_1=\psi_2$.
 For $j=1,2$, let $W_j$ be a M\"obius transformation that maps $\D$ onto $\HH$ such that $W_j(z_j)=0$ and $W_j(z_{3-j})=\infty$, and $W_2=h\circ W_1$. From Corollary \ref{chordal-radial}, $K_j(t):=W_j^*(L_j(t))$, $0\le t<\infty$, is a backward chordal SLE$_\kappa$, after a time-change, which then induces backward chordal SLE$_\kappa$ welding $\phi_j$, $j=1,2$. Then $\phi_1$ and $\phi_2$ have the same law as $\phi$. From (\ref{welding-trans}), we get $\phi_j=W_j\circ \psi_j\circ W_j^{-1}$, $j=1,2$, which implies that a.s.\ $\phi_2=h\circ \phi_1\circ h$. The conclusion follows since $\phi_1$ and $\phi_2$ has the same distribution as $\phi$.
\end{proof}

\begin{Lemma}
  Let $\kappa>0$. Let $f_t$, $0\le t<\infty$, be backward chordal SLE$_\kappa$ maps. Then for every $z_0\in\HH$, a.s.\ (\ref{z0}) holds. \label{z0-SLE}
\end{Lemma}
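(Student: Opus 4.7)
The plan is to reduce the claim to an ergodic statement about a one-dimensional diffusion. Write $W_t = f_t(z_0) - \lambda(t) = X_t + iY_t$ where $\lambda(t) = \sqrt\kappa B(t)$. From the backward chordal Loewner ODE one obtains
\[
dX_t = -\sqrt\kappa\, dB_t - \frac{2X_t}{X_t^2+Y_t^2}\,dt, \qquad
\frac{dY_t}{dt} = \frac{2Y_t}{X_t^2+Y_t^2},
\]
so $Y_t$ is strictly increasing with $Y_t \ge Y_0 > 0$. Differentiating the Loewner ODE in $z$ gives $\partial_t\log f_t'(z_0) = 2/W_t^{\,2}$, whose real part yields
\[
\log\frac{Y_t}{|f_t'(z_0)|} = \log Y_0 + \int_0^t \frac{4Y_s^2}{(X_s^2+Y_s^2)^2}\,ds,
\]
and so (\ref{z0}) is equivalent to the divergence of this integral.

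Next, introduce $\rho_t = X_t/Y_t$ and perform the time change $v(t) = \log(Y_t/Y_0)$, noting $dv/dt = 2/(Y_t^2(1+\rho_t^2))$. A direct It\^o computation shows that in $v$-time $\rho$ satisfies the autonomous SDE
\[
d\rho_v = \sqrt{\kappa(1+\rho_v^2)/2}\, d\widehat B_v - 2\rho_v\,dv,
\]
while the integral above becomes $\int_0^{v(t)} 2/(1+\rho_u^2)\,du$. Thus I must show two things: (a) $V_\infty := \lim_{t\to\infty} v(t) = \infty$ almost surely, i.e.\ $Y_t\to\infty$; and (b) $\int_0^\infty 2/(1+\rho_u^2)\,du = \infty$ almost surely along the solution of the displayed SDE.

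For (b), the SDE is a one-dimensional diffusion with smooth coefficients and strictly positive diffusion $\sqrt{\kappa(1+\rho^2)/2}$, and the stationary equation is solved by $\pi(\rho)\propto (1+\rho^2)^{-\alpha}$ with $\alpha = 1 + 4/\kappa > 1/2$, so $\pi$ is a genuine probability density. Standard one-dimensional diffusion theory (Feller's test, or an explicit Lyapunov function such as $\log(1+\rho^2)$, whose drift is bounded away from $0$ at infinity) shows non-explosion and positive recurrence, hence ergodicity; the ergodic theorem then gives $\tfrac1V\int_0^V 2/(1+\rho_u^2)\,du \to \int 2/(1+\rho^2)\pi(\rho)d\rho > 0$ a.s., and in particular divergence as $V\to\infty$.

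For (a), I would argue by contradiction: if $V_\infty < \infty$ on a positive-probability event, then the inverse time change reads $t = \int_0^{V_\infty} Y_0^2 e^{2v}(1+\rho_v^2)/2\,dv$, and since $e^{2v}$ stays bounded on $[0,V_\infty]$, $t\to\infty$ would force $\int_0^{V_\infty}\rho_v^2\,dv = \infty$ on a finite $v$-interval, i.e.\ explosion of $\rho_v$ before $V_\infty$ — contradicting non-explosion established above. Combined with (b), this gives $\int_0^{V_\infty} 2/(1+\rho_u^2)\,du = \infty$ almost surely, yielding (\ref{z0}). The main technical point is the rigorous verification of ergodicity/non-explosion of the $v$-time SDE uniformly over $\kappa > 0$; everything else is routine It\^o calculus and time-changing.
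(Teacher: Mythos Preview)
Your approach is essentially identical to the paper's. Both proofs introduce the ratio $\rho_t=X_t/Y_t$ (the paper calls it $A_t$), time-change by $v=\log Y_t$, obtain the same autonomous SDE $d\rho_v=\sqrt{\kappa(1+\rho_v^2)/2}\,d\widehat B_v-2\rho_v\,dv$, prove $V_\infty=\infty$ by a contradiction based on non-explosion of this SDE, and then argue that $\int_0^\infty 2/(1+\rho_v^2)\,dv=\infty$. The only cosmetic difference is that the paper simply invokes \emph{recurrence} of $\rho_v$ (which already forces infinite occupation time of any bounded interval and hence divergence of the integral), whereas you go further and identify the stationary density $(1+\rho^2)^{-(1+4/\kappa)}$ to get positive recurrence and apply the ergodic theorem; this is correct but more than is needed. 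Your contradiction for $V_\infty=\infty$ is also the same idea as the paper's, though note that the precise statement is that non-explosion makes $\rho_v$ continuous on the compact interval $[0,V_\infty]$ and hence bounded, so $\int_0^{V_\infty}(1+\rho_v^2)\,dv<\infty$, contradicting $t\to\infty$; calling the failure ``explosion before $V_\infty$'' is a slight abuse but the logic is fine.
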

\begin{proof}
  Let $Z_t=f_t(z_0)$, $X_t=\Ree z_t$, and $Y_t=\Imm z_t$.  Then
  $$dX_t=-\sqrt\kappa dB(t)-\frac{2X_t}{X_t^2+Y_t^2}\,dt,\quad dY_t=\frac{2Y_t}{X_t^2+Y_t^2}\,dt$$
  Let $R_t=|f_t'(z_0)|$. Then $ \frac{dR_t}{R_t}=\frac{2(X_t^2-Y_t^2)}{(X_t^2+Y_t^2)^2}dt$.
  Let $N_t=Y_t/R_t$ and $A_t=X_t/Y_t$. Then
  $$\frac{dN_t}{N_t}=\frac{4Y_t^2}{(X_t^2+Y_t^2)^2}\,dt,\quad dA_t=-\frac{\sqrt\kappa dB(t)}{Y_t}-\frac{4A_t}{X_t^2+Y_t^2}\,dt.$$
  Let $u(t)=\ln(Y_t)$. Then $u'(t)=\frac{2}{X_t^2+Y_t^2}$. Let $T=\sup u([0,\infty))$ and define $\ha N_s=N_{u^{-1}(s)}$ and $\ha A_s=A_{u^{-1}(s)}$ for $0\le s<T$. Then
  $$\frac{d\ha N_s}{\ha N_s}=\frac{2}{\ha A_s^2+1}\,ds,\quad d\ha A_s=-\sqrt{1+\ha A_s^2}\sqrt{\kappa/2} d\ha B(s)-2\ha A_sds,$$
  where $\ha B(s)$ is another Brownian motion. We claim that $T=\infty$. Suppose $T<\infty$. Then $\lim_{t\to\infty} Y(t)=e^T\in\R$. From the SDE for $A_s$, we see that a.s.\ $\lim_{s\to T} A_s\in\R$, which implies that $\lim_{t\to \infty} A_t\in\R$ and $\lim_{t\to\infty} X_t\in\R$ as $X_t=Y_tA_t$. Then we have a.s.\ $s'(t)=\frac{2}{X_t^2+Y_t^2}$ tends to a finite positive number as $t\to\infty$, which contradicts that $T=\sup\{s(t), 0\le t < \infty\}<\infty$. So the claim is proved. Using It\^o's formula, we see that $\ha A_s$, $0\le s < \infty$, is recurrent. Since $(\ln(\ha N_s))'=\frac2{\ha A_s^2+1}$, we see that a.s.\ $\ha N_s\to \infty$ as $s\to\infty$. So a.s.\ $N_t=\frac{\Imm f_t(z_0)}{|f_t'(z_0)|}\to\infty$ as $t\to\infty$, i.e., (\ref{z0}) holds.
\end{proof}

If $\kappa\in(0,4]$, then since the backward chordal traces are simple, (\ref{beta>0}) holds.
From the above lemma and Section \ref{normalized}, we see that, for $\kappa\in(0,4]$, the backward chordal SLE$_\kappa$ a.s.\ generates a normalized global backward chordal trace $\beta$, which we call a normalized global backward chordal SLE$_\kappa$ trace.
Recall that $\beta(t)$, $0\le t<\infty$, is simple with $\beta(0)=0$, and $i\not\in \beta$; and there is $F_\infty:\HH\conf\C\sem\beta$, whose continuation maps $\R$ onto $\beta$ such that (\ref{norm}) holds, and for any $x\in\R$, $F_\infty(x)=F_\infty(\phi(x))\in\beta$. Now we state the reversibility of the backward chordal SLE$_\kappa$ for $\kappa\in(0,4)$ in terms of $\beta$.

\begin{Theorem}
  Let $\kappa\in(0,4)$, and $\beta$ be a normalized global backward chordal SLE$_\kappa$ trace. Let $h(z)=-1/z$. Then $h(\beta\sem\{0\})$ has the same distribution as $\beta\sem\{0\}$ as random sets.
\end{Theorem}
\begin{proof}
  For $j=1,2$, let $\phi_j$ be a backward chordal SLE$_\kappa$ welding and $\beta_j$ be the corresponding normalized global trace. Then $\beta_j$ is a simple curve with one end point $0$, and there exists $F_j:\HH\conf \C\sem\beta_j$ such that $F_j(i)=i$, $F_j(0)=0$, and $F_j(x)=F_j(\phi_j(x))$ for $x\in\R$. From Theorem \ref{reversibility} we may assume that $\phi_2=h\circ \phi_1\circ h^{-1}$. Now it suffices to show that $h(\beta_2\sem\{0\})=\beta_1\sem\{0\}$.

  Define $G=h\circ F_2\circ h\circ F_1^{-1}$. Then $G$ is a conformal map defined on $\C\sem\beta_1$. It has continuation to $\beta_1\sem\{0\}$. In fact, if $z\in\C\sem\beta_1$ and $z\to z_0\in\beta_1\sem\{0\}$, then $F_1^{-1}(z)\to \{x,\phi_1(x)\}$ for some $x\in\R\sem\{0\}$, which then implies that $h\circ F_1^{-1}(z)\to \{h(x),h\circ \phi_1(x)\}$; since $\phi_2\circ h=h\circ\phi_1$, we find that $F_2\circ h\circ F_1^{-1}(z)$ tends to some point on $\beta_2\sem \{0\}$, so $G(z)$ tends to some point on $h(\beta_2\sem\{0\})$. It was proved in \cite{RS-basic} that a forward SLE$_\kappa$ trace is the boundary of a H\"older domain. Then the same is true for backward chordal SLE$_\kappa$ traces and the normalized global trace. From the results in \cite{JS}, we see that $\beta_1\sem\{0\}$ is conformally removable, which means that $G$ extends to a conformal map from $(\C\sem\beta_1)\cup(\beta_1\sem\{0\})=\C\sem\{0\}$ onto $\C\sem\{0\}$, and maps $\beta_1\sem\{0\}$ to $h(\beta_2\sem\{0\})$. Since $G(i)=i$, either $G=\id$ or $G=h$. Suppose $G=h$. Then $F_1=F_2\circ h$. Since $F_1(0)=F_2(0)=0$, for $j=1,2$, $F_j$ maps a neighborhood of $0$ in $\HH$ onto a neighborhood of $0$ in $\C$ without a simple curve. Since $F_1=F_2\circ h$,  $F_1$ also maps a neighborhood of $\infty$ in $\HH$ onto a neighborhood of $0$ without a simple curve, which contradicts the univalent property of $F_1$.  Thus, $G=\id$, and we get $h(\beta_2\sem\{0\})=G(\beta_1\sem\{0\})=\beta_1\sem\{0\}$, as desired.
\end{proof}

Now we propose a couple of questions. First, let's consider backward chordal SLE$_\kappa$ for $\kappa>4$. Since the process does not generate simple backward chordal traces, the random welding $\phi$ can not be defined. However, the lemma below and the discussion in Section \ref{normalized} show that we can still define a global backward chordal SLE$_\kappa$ trace.

\begin{Lemma}
Let $\kappa\in(0,\infty)$. Suppose $\beta_{t}$, $0\le t<\infty$, are backward chordal traces driven by $\lambda(t)=\sqrt\kappa B(t)$. Then a.s.\ (\ref{beta>0}) holds.
\end{Lemma}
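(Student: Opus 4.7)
The plan is to reduce the lemma to a fact about forward chordal $\SLE_\kappa$ via the time-reversal built into the Loewner equation, and then exploit scale invariance together with continuity of the trace.

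For fixed $t_1>0$, Lemma~\ref{ft2t1}(iii) says that $(f_{t_1-s,t_1})_{s\in[0,t_1]}$ is the forward chordal Loewner family driven by $\tilde\lambda_{t_1}(s):=\lambda(t_1-s)$. Setting $\eta_{t_1}(s):=\beta_{t_1}(t_1-s)$, the identity (\ref{ft2t1lambda}) identifies $\eta_{t_1}$ with the forward chordal trace driven by $\tilde\lambda_{t_1}$, so the condition $\beta_{t_1}([0,t_0])\subset\HH$ is equivalent to $\eta_{t_1}([t_1-t_0,t_1])\subset\HH$. Time-reversal of Brownian motion gives $(\tilde\lambda_{t_1}(s)-\lambda(t_1))_{s\in[0,t_1]}\stackrel{d}{=}\sqrt\kappa B$ on $[0,t_1]$, and then translation invariance of the forward Loewner equation identifies $\eta_{t_1}$ in distribution with a standard forward $\SLE_\kappa$ trace $\gamma$ on $[0,t_1]$, translated by $\lambda(t_1)\in\R$. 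Since real translations preserve $\HH$, I may work directly with $\gamma$.

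Next I show $P(\beta_{t_1}([0,t_0])\subset\HH)\to 1$ as $t_1\to\infty$. The scale invariance $(\gamma(s))\stackrel{d}{=}(r^{-1}\gamma(r^2 s))$ applied with $r=\sqrt{t_1}$ gives
\[
P\bigl(\beta_{t_1}([0,t_0])\subset\HH\bigr)=P\bigl(\gamma_1([1-t_0/t_1,1])\subset\HH\bigr),
\]
where $\gamma_1$ is a standard $\SLE_\kappa$ trace on $[0,1]$. The standard fixed-time fact that $\gamma_1(1)\in\HH$ almost surely, combined with the almost-sure continuity of the trace, produces a random $\epsilon_*>0$ with $\gamma_1([1-\epsilon_*,1])\subset\HH$; hence $P(\gamma_1([1-\epsilon,1])\subset\HH)\uparrow 1$ as $\epsilon\downarrow 0$, and the claimed convergence follows by taking $\epsilon=t_0/t_1$ and $t_1\to\infty$.

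To conclude, fix $t_0$. The event $\{\exists\,t_1>t_0: \beta_{t_1}([0,t_0])\subset\HH\}$ is measurable by joint continuity of $(t_1,t)\mapsto\beta_{t_1}(t)$, since it equals a countable supremum over $t_1\in\mathbb{Q}\cap(t_0,\infty)$; its probability is bounded below by $\sup_{t_1>t_0}P(\beta_{t_1}([0,t_0])\subset\HH)=1$, so it equals $1$. A countable intersection over a dense sequence of rational $t_0$, combined with the monotonicity that a witness $t_1$ for some $t_0'$ also works for any $t_0\le t_0'$, upgrades this to (\ref{beta>0}). The one non-elementary input is the fixed-time fact $\gamma_1(1)\in\HH$ a.s.\ for all $\kappa>0$ (the hardest case being $\kappa\ge 8$, where the trace is space-filling), which is standard in the $\SLE$ literature; everything else in the argument is soft, relying only on time-reversal of Brownian motion, translation and scale invariance of $\SLE_\kappa$, and continuity of the trace.
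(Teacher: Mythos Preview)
Your proof is correct and takes a genuinely different route from the paper's argument. Both proofs begin by using time-reversal of Brownian motion to identify the backward trace $\beta_{t_1}$ (up to a real translation) with a forward chordal $\SLE_\kappa$ trace on $[0,t_1]$. From there the two arguments diverge. The paper invokes a stronger fixed-time result, Theorem~6.1 of \cite{duality2}, which asserts that for each deterministic $t_1$ the continuation of $g_{t_1}^{-1}$ maps the \emph{entire interior} of $S_{K_{t_1}}$ into $\HH$; it then combines this with the fact that $S_\infty=\R$ (so that for large integer $t_1$ the whole compact set $\lambda([0,t_0])$ lies inside $S_{L_{t_1}}$) to force $\beta_{t_1}([0,t_0])\subset\HH$. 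Your argument uses only the weaker input that the tip $\gamma(1)\in\HH$ a.s., and compensates via Brownian scaling: scale invariance converts the question about $\gamma$ on $[t_1-t_0,t_1]$ into one about a unit-time $\SLE$ on $[1-t_0/t_1,1]$, and then continuity of the trace near time~$1$ yields $P(\gamma_1([1-\eps,1])\subset\HH)\uparrow1$. Thus your proof trades a heavier external theorem for the extra tool of scale invariance, giving a more self-contained argument whose only nontrivial $\SLE$ input is the (standard) fact that the tip lies in $\HH$ at a fixed positive time.
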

\begin{proof} If $\kappa\in(0,4]$, a.s.\ the traces are simple, so (\ref{beta>0}) holds. Now suppose $\kappa>4$. Let $f_t$ and $L_t$ be the corresponding maps and hulls. It suffices to show that, for any $t_0>0$, a.s.\ there exists $t_1>t_0$ such that $\beta_{t_1}([0,t_0])\subset\HH$.

Let $g_t$ and $K_t$, $0\le t<\infty$, be the forward chordal Loewner maps and hulls driven by $\sqrt\kappa B(t)$. From Theorem 6.1 in \cite{duality2}, for any deterministic time $t_1\in(0,\infty)$, the continuation of $g_{t_1}^{-1}$ a.s.\ maps the interior of $S_{K_{t_1}}$ into $\HH$. From Lemma \ref{ft2t1} and the property of Brownian motion, we see that, for any $t_1\in(0,\infty)$, $f_{t_1}$ has the same distribution as $\lambda(t_1)+g_{t_1}^{-1}(\cdot-\lambda(t_1))$, which implies that the continuation of $f_{t_1}$ a.s.\ maps the interior of $S_{L_{t_1}}$ into $\HH$.

Since a.s.\ $\bigcup_{n=1}^\infty S_n=S_\infty=\R\supset \lambda([0,t_0])$, and $(S_t)$ is an increasing family of intervals, we see that a.s.\ there is $N\in\N$ such that the interior of $S_N$ contains $\lambda([0,t_0])$. Let $t_1=N$. Then $f_{t_1}$ maps $\lambda([0,t_0])$ into $\HH$, which implies that $\beta_{t_1}(t)=f_{t_1}(\lambda(t))\in\HH$ for $0\le t\le t_0$.
\end{proof}

\begin{Question}
  Do we have the reversibility of the global backward chordal SLE$_\kappa$ trace for $\kappa>4$?
\end{Question}

Second, let's consider backward radial SLE$_\kappa$ processes. One can show that (\ref{beta>0-r}) a.s.\ holds. Since $T=\infty$, we may define a global backward radial SLE$_\kappa$ trace.

\begin{Question}
  Does a global backward radial SLE$_\kappa$ trace satisfy some reversibility property of any kind?
\end{Question}
Recall that the forward radial SLE$_\kappa$ trace does not satisfy the reversibility property in the usual sense. However, it's proved in \cite{whole} that, for $\kappa\in(0,4]$, the whole-plane SLE$_\kappa$, as a close relative of radial SLE$_\kappa$, satisfies reversibility.

Finally, it is worth mentioning the following simple fact. Recall that, if $\kappa\in(0,4]$, a backward radial SLE$_\kappa$ welding is an involution of $\TT$ with two fixed points, one of which is $1$. The following theorem gives the distribution of the other fixed point $\zeta$, and says that a backward radial SLE$_\kappa$ process conditioned on $\zeta$ is a backward radial SLE$(\kappa;-4)$ process with force point $\zeta$. It is similar to Theorem 3.1 in \cite{duality2}, and we omit its proof.

\begin{Theorem}
  Let $\kappa\in(0,4]$. Let $\mu$ denote the distribution of a backward radial SLE$_\kappa$ process. For $\theta\in(0,2\pi)$, let $\nu_\theta$ denote the distribution of a backward radial SLE$(\kappa;-4)$ process started from $(1;e^{i\theta})$. Let $f(\theta)=C\sin_2(\theta)^{4/\kappa}$, where $C>0$ is such that $\int_0^{2\pi} f(\theta)d\theta=1$. Then $$\mu=\int_0^{2\pi} \nu_\theta f(\theta)d\theta.$$
\end{Theorem}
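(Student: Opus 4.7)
The plan is to construct, for each $\theta\in(0,2\pi)$, a nonnegative local martingale $M^\theta_t$ under $\mu$ such that the Girsanov change of measure from $\mu$ to $M^\theta_t\cdot\mu$ realizes $\nu_\theta$, and then to prove the pathwise disintegration identity $\int_0^{2\pi}M^\theta_tf(\theta)\,d\theta=1$ for every $t\ge 0$, $\mu$-a.s. Given these two ingredients, Fubini yields, for any bounded $\F_t$-measurable functional $F$,
$$\int_0^{2\pi}\EE_{\nu_\theta}[F]\,f(\theta)\,d\theta=\EE_\mu\Big[F\int_0^{2\pi}M^\theta_tf(\theta)\,d\theta\Big]=\EE_\mu[F],$$
so $\mu$ and $\int_0^{2\pi}\nu_\theta f(\theta)\,d\theta$ agree on every $\F_t$, hence on the full $\sigma$-algebra.

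Under $\mu$, set $X^\theta_t:=\lambda(t)-\til f^\lambda_t(\theta)$ with $X^\theta_0=-\theta$. The covering ODE $\partial_t\til f_t(z)=-\cot_2(\til f_t(z)-\lambda(t))$ and the oddness of $\cot_2$ give $dX^\theta_t=\sqrt\kappa\,dB(t)-\cot_2(X^\theta_t)\,dt$, while under $\nu_\theta$ the SDE (\ref{kappa-rho-radial}) with $\rho_1=-4$ adds a drift $2\cot_2(X^\theta_t)\,dt$ to $\lambda$. Applying Girsanov and then It\^o to $\ln|\sin_2(X^\theta_t)|$, using $(\ln\sin_2)'=\tfrac12\cot_2$, $\cot_2'=-\tfrac12\csc_2^2$, and $|\sin_2(X^\theta_0)|=|\sin_2(\theta)|$, produces the closed form
$$M^\theta_t=\Big(\frac{|\sin_2(X^\theta_t)|}{|\sin_2(\theta)|}\Big)^{4/\kappa}\exp\!\Big(\tfrac12\int_0^t\csc_2^2(X^\theta_s)\,ds\Big),\qquad \csc_2:=1/\sin_2.$$
A direct It\^o computation confirms $dM^\theta_t=M^\theta_t\cdot\tfrac{2\cot_2(X^\theta_t)}{\sqrt\kappa}\,dB(t)$, so $M^\theta_t$ is a $\mu$-local martingale whose associated measure-change is exactly the drift shift from $\mu$ to $\nu_\theta$.

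Differentiating the covering ODE in $\theta$ yields $\partial_t\ln(\til f^\lambda_t)'(\theta)=\tfrac12\csc_2^2(X^\theta_t)$, so $(\til f^\lambda_t)'(\theta)=\exp(\tfrac12\int_0^t\csc_2^2(X^\theta_s)\,ds)$, matching the exponential factor in $M^\theta_t$ exactly. Thus $M^\theta_tf(\theta)=C\,|\sin_2(X^\theta_t)|^{4/\kappa}\,(\til f^\lambda_t)'(\theta)$. For fixed $t$, the map $\theta\mapsto\til f^\lambda_t(\theta)$ is a $C^1$ increasing bijection from the unique connected component $(b_t,2\pi-a_t)$ of the lift of $\TT\setminus S_{L_t}$ in $(0,2\pi)$ onto an interval of length $2\pi$ (fixed by continuity from $t=0$, where $\til f^\lambda_0=\id$); extending $M^\theta_t:=0$ for already-swallowed $\theta\in[0,b_t]\cup[2\pi-a_t,2\pi]$ (consistent with $|\sin_2(X^\theta_t)|=0$ there), the change of variables $u=X^\theta_t$ gives
$$\int_0^{2\pi}M^\theta_tf(\theta)\,d\theta=C\int_0^{2\pi}|\sin_2(u)|^{4/\kappa}\,du=1,$$
which is the sought pathwise identity.

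The main obstacle is making the Girsanov step rigorous: $M^\theta_t$ is a priori only a local martingale, and its logarithm involves $\cot_2(X^\theta_s)$, which blows up precisely as the force point is swallowed. I would introduce stopping times $\tau_n^\theta$ forcing $X^\theta_\cdot$ to stay bounded away from $2\pi\Z$; then $M^\theta_{\cdot\wedge\tau_n^\theta}$ is a bounded true martingale correctly implementing $\nu_\theta|_{\F_{\cdot\wedge\tau_n^\theta}}=M^\theta_{\cdot\wedge\tau_n^\theta}\,\mu|_{\F_{\cdot\wedge\tau_n^\theta}}$. Passing $n\to\infty$ uses that under $\nu_\theta$ the process $X^\theta$ has cotangent drift whose local behaviour near $2\pi\Z$ matches that of a Bessel process of dimension $1+4/\kappa\ge 2$ (since $\kappa\in(0,4]$), which does not reach the boundary in finite time; hence $\tau_n^\theta\nearrow\infty$ $\nu_\theta$-a.s., extending the identification to every fixed $t$. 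A minor additional subtlety is confirming in paragraph three that the branch of $\til f^\lambda_t$ selected by continuity from $t=0$ indeed sweeps out a period of length exactly $2\pi$; this is immediate from $\til f^\lambda_0=\id$ and the continuity of the boundary correspondence.
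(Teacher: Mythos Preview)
Your proposal is correct and follows the standard Girsanov--disintegration approach: the closed form for $M^\theta_t$, the identification of the exponential factor with $(\til f^\lambda_t)'(\theta)$, the change of variables $v=\til f^\lambda_t(\theta)$ over the surviving interval, and the Bessel comparison (dimension $1+4/\kappa\ge 2$) to upgrade the local martingale all check out. The paper itself omits the proof, pointing to the analogous forward-SLE result (Theorem~3.1 in \cite{duality2}), where precisely this technique is used; your argument is thus essentially the one the authors had in mind.
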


\appendixpage
\addappheadtotoc
\appendix
\section{Carath\'eodory Topology} \label{A}
\begin{Definition} Let $(D_n)_{n=1}^\infty$ and $D$ be domains in $\C$. We say that $(D_n)$ converges to $D$, and write $D_n\dto
D$, if for every $z\in D$, $\dist(z,\C\sem D_n)\to \dist
(z,\C\sem D)$. This is equivalent to the following:
\begin{enumerate}
  \item [(i)]  every compact subset of $D$ is contained in all but finitely
many $D_n$'s;
\item [(ii)] for every point $z_0\in\pa D$, there exists $z_n\in\pa D_n$ for each $n$ such that $z_n\to z_0$.
\end{enumerate} \label{def-lim}
\end{Definition}

\no {\bf Remark.}
 A sequence of domains may converge to two different domains. For example, let $D_n=\C\sem((-\infty,n])$. Then $D_n\dto\HH$, and
$D_n\dto -\HH$ as well. But two different limit domains of the same domain sequence must be disjoint from each other, because if they
have nonempty intersection, then one contains some boundary point of the other, which implies a contradiction.


\begin{Lemma} Suppose $D_n\dto D$, $f_n:D_n\conf E_n$, $n\in\N$, and $f_n\luto f$ in $D$. Then either
$f$ is constant on $D$, or $f$ is a conformal map on $D$. In the latter case, let $E=f(D)$. Then $E_n\dto E$ and $f_n^{-1}\luto f^{-1}$
in $E$. \label{domain convergence*}
\end{Lemma}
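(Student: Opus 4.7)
First I would establish the dichotomy via Hurwitz's theorem. Suppose $f$ is non-constant but not injective, so $f(z_1)=f(z_2)=w_0$ for distinct $z_1,z_2\in D$. Choose $r>0$ so that the closed disks $\lin{B(z_j,r)}\subset D$ are disjoint and $f-w_0$ does not vanish on $\pa B(z_j,r)$. For large $n$ these disks lie in $D_n$ (by $D_n\dto D$) and $f_n\to f$ uniformly on them, so Hurwitz's theorem forces $f_n-w_0$ to have a zero in each disk, contradicting univalence of $f_n$. Hence $f$ is either constant or conformal.

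Assume henceforth $f$ is conformal and set $E=f(D)$. The workhorse is an application of Rouch\'e's theorem: for each $w_0\in E$ with $z_0:=f^{-1}(w_0)$, pick $r>0$ small enough that $\lin{B(z_0,r)}\subset D$ and $f-w_0$ has no zero on $\pa B(z_0,r)$. For large $n$, $\lin{B(z_0,r)}\subset D_n$ and $f_n\to f$ uniformly on it, so Rouch\'e applied to $f_n-w_0$ and $f-w_0$ yields a unique $z_n\in B(z_0,r)$ with $f_n(z_n)=w_0$. This shows simultaneously that $w_0\in E_n$ eventually and $f_n^{-1}(w_0)\to z_0$. A standard compactness argument promotes the first statement to: every compact $K\subset E$ is contained in $E_n$ for all large $n$, which is item (i) of $E_n\dto E$.

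The main obstacle is item (ii) of $E_n\dto E$. Given $w_0\in\pa E$, suppose no sequence $w_n\in\pa E_n$ converges to $w_0$. Then for some $\eps>0$ and along a subsequence, $B(w_0,\eps)\cap\pa E_n=\emptyset$, so the disk is either disjoint from $E_n$ or contained in $E_n$. The disjoint case contradicts the previous paragraph applied to a point of $E$ inside $B(w_0,\eps)$. In the contained case, $f_n^{-1}$ is defined and univalent on $B(w_0,\eps)$; evaluating at a point $w'\in B(w_0,\eps)\cap E$ (where $f_n^{-1}(w')\to f^{-1}(w')\in D$) gives a bounded starting value, and Koebe's distortion theorem makes $\{f_n^{-1}\}$ locally bounded on $B(w_0,\eps)$. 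Extract a subsequential holomorphic limit $g$; it agrees with $f^{-1}$ on the nonempty open set $B(w_0,\eps)\cap E$, so analytic continuation yields $f\circ g=\id$ throughout $B(w_0,\eps)$, forcing $B(w_0,\eps)\subset E$ and contradicting $w_0\in\pa E$.

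Finally, $f_n^{-1}\luto f^{-1}$ on $E$ is obtained by combining the pointwise statement from the second paragraph with local boundedness of $\{f_n^{-1}\}$ on compact subsets of $E$. The latter follows by applying item (i) to slightly enlarged compact sets and tracking that the preimages $f_n^{-1}$ stay in a fixed compact subset of $D$; Montel's theorem gives normality, and every subsequential limit must coincide with $f^{-1}$ by the pointwise convergence, so the whole sequence converges locally uniformly.
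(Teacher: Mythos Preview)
Your plan is correct and uses exactly the two tools the paper singles out in its remark---Rouch\'e's theorem (which you invoke through Hurwitz) and Koebe's theorem---so the approach matches the paper's.

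There is one step that, as written, is not yet justified. In the contradiction argument for item~(ii) you extract a univalent limit $g$ on $B(w_0,\eps)$ with $g=f^{-1}$ on $B(w_0,\eps)\cap E$, and then assert that ``analytic continuation yields $f\circ g=\id$ throughout $B(w_0,\eps)$''. The composition $f\circ g$ is only defined where $g$ lands in $D$, and the identity principle does not by itself force $g(B(w_0,\eps))\subset D$. You must rule out $g$ hitting $\pa D$. This is where Koebe's $1/4$ theorem and condition~(ii) of $D_n\dto D$ re-enter: if $g(w_1)=z_1\in\pa D$ for some $w_1\in B(w_0,\eps)$, pick $z_1^{(n)}\in\pa D_n$ with $z_1^{(n)}\to z_1$; since $f_n^{-1}(w_1)\to z_1$ and $(f_n^{-1})'(w_1)\to g'(w_1)\ne 0$, Koebe $1/4$ applied to $f_n^{-1}$ on a fixed disk $B(w_1,\delta)\subset B(w_0,\eps)$ gives
\[
\dist\bigl(f_n^{-1}(w_1),\pa D_n\bigr)\ \ge\ \tfrac{\delta}{4}\,|(f_n^{-1})'(w_1)|\ \ge\ c>0
\]
for large $n$, contradicting $|f_n^{-1}(w_1)-z_1^{(n)}|\to 0$. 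With this patch in place, $g(B(w_0,\eps))\subset D$, so $f\circ g$ is globally defined and equals $\id$, giving $B(w_0,\eps)\subset E$, the desired contradiction. The rest of your outline is sound.
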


\no{\bf Remark.} 
The above lemma resembles the Carath\'eodory kernel theorem (Theorem 1.8, \cite{Pom-bond}), but the domains here don't have to be simply connected.
The main ingredients in the proof are Rouch\'e's theorem and Koebe's $1/4$ theorem. The lemma
 also holds in the case that $D_n$ and $D$ are domains of any Riemann surface, if the metric in the underlying space is used in place of the Euclidean metric for Definition \ref{def-lim} and locally uniformly convergence. In particular, if we use the spherical metric, then Lemma \ref{domain convergence*} holds for domains of $\ha\C$.

\section{Topology on Interior Hulls}\label{B}
Let ${\cal H}$ denote the set of all interior hulls in $\C$. Recall that for any $H\in\cal H$, $\phi_H^{-1}$ is defined on $\{|z|>\rad(H)\}$, and for a nondegenerate interior hull, $\psi_H(z)=\vphi_H^{-1}(z)=\phi_H^{-1}(\rad(H)z)$ is defined on $\{|z|>1\}$.
It's shown in Section 2.5 of \cite{int-LERW} that there is a metric $d_{\cal H}$ on $\cal H$ such that for any $H_n,H\in\cal H$, the followings are equivalent:
\begin{enumerate}
  \item $d_{\cal H}(H_n,H)\to 0$
  \item $\rad(H_n)\to\rad(H)$ and $\phi_{H_n}^{-1}\luto\phi_H^{-1}$ in $\{|z|>\rad(H)\}$.
  \item $\C\sem H_n\dto \C\sem H$.
\end{enumerate}
In particular, we see that $\rad$ is a continuous function on $({\cal H},d_{\cal H})$. Thus, for nondegenerate interior hulls, $d_{\cal H}(H_n,H)\to 0$ iff $\psi_{H_n}\luto\psi_H$ in $\{|z|>1\}$. The following lemma is Lemma 2.2 in \cite{int-LERW}.

\begin{Lemma}
  For any $F\in\cal H$, the set $\{H\in{\cal H}:H\subset F\}$ is compact.\label{compact-lem}
\end{Lemma}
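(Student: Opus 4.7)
The plan is to establish sequential compactness: given any sequence $(H_n)$ of interior hulls contained in $F$, I will extract a subsequence converging in $d_{\cal H}$ to some $H\in{\cal H}$ with $H\subset F$. Since $H_n\subset F$, the inequality $\rad(H_n)\le\diam(H_n)\le\diam(F)\le 4\rad(F)=:R_0$ yields a uniform bound on the radii, so passing to a subsequence $\rad(H_n)\to r\in[0,R_0]$. I will work with the normalized exterior maps $f_n:=\phi_{H_n}^{-1}$, each defined on $\{|z|>\rad(H_n)\}\cup\{\infty\}\supset\{|z|>R_0\}\cup\{\infty\}$ and having Laurent expansion $f_n(z)=z+c_0^{(n)}+c_1^{(n)}/z+c_2^{(n)}/z^2+\cdots$ at $\infty$. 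The key step is to show that $(f_n)$ is a normal family on $\{|z|>r\}$, and this reduces to uniform bounds on the coefficients $c_k^{(n)}$.

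For $k\ge 1$, the Gronwall area theorem applied to the shifted rescaled univalent function $w\mapsto(f_n(R_0 w)-c_0^{(n)})/R_0$ on $\D^*$ yields $|c_k^{(n)}|\le R_0^{k+1}/\sqrt{k}$. For the constant coefficient $c_0^{(n)}$, I use the fact that the omitted set of a univalent function of the form $z+e_1/z+e_2/z^2+\cdots$ on $\D^*$ is contained in a universal disk $\lin{D(0,C)}$ (a direct Cauchy--Schwarz estimate on a fixed circle $|z|=R>1$ using the area bound $\sum k|e_k|^2\le 1$ gives such a $C$). Translating back, $H_n\subset\lin{D(c_0^{(n)},C\rad(H_n))}\subset\lin{D(c_0^{(n)},CR_0)}$; since $H_n$ is nonempty and contained in the bounded set $F$, this forces $\dist(c_0^{(n)},F)\le CR_0$, so $c_0^{(n)}$ is uniformly bounded. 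Combining all the estimates, $(f_n)$ is uniformly bounded on each compact subset of $\{|z|>R_0\}$, hence normal there; a diagonal argument over a sequence of radii decreasing to $r$ extends normality to $\{|z|>r\}$. Passing to a further subsequence, $f_n\luto f$ on $\{|z|>r\}$ for some analytic $f$ with $f(\infty)=\infty$ and $f'(\infty)=1$; since $f'(\infty)\ne 0$, $f$ is nonconstant, and Hurwitz's theorem gives that $f$ is univalent on $\{|z|>r\}\cup\{\infty\}$.

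Set $H:=\ha\C\sem f(\{|z|>r\}\cup\{\infty\})$. Then $\ha\C\sem H=f(\{|z|>r\}\cup\{\infty\})$ is open and simply connected in $\ha\C$ (the homeomorphic image of such a set), so $H$ is a nonempty, compact, connected subset of $\C$ with connected complement, i.e.\ $H\in{\cal H}$. By uniqueness of the normalized exterior conformal map, $f=\phi_H^{-1}$ and $\rad(H)=r=\lim_n\rad(H_n)$. To see $H\subset F$: for any $z\in\C\sem F$ one has $\dist(z,H_n)\ge\dist(z,F)>0$ uniformly in $n$, so by Lemma \ref{domain convergence*} applied to $D_n=\ha\C\sem H_n$ and $f_n$ we get $\ha\C\sem H_n\dto\ha\C\sem H$, whence $\dist(z,H)\ge\dist(z,F)>0$ and $z\notin H$. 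Finally, by the characterization of $d_{\cal H}$ recalled at the beginning of Appendix \ref{B} ($\rad(H_n)\to\rad(H)$ together with $\phi_{H_n}^{-1}\luto\phi_H^{-1}$ in $\{|z|>r\}$), we conclude $H_n\to H$ in $d_{\cal H}$.

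The main technical obstacle is the uniform bound on $c_0^{(n)}$, since the area theorem alone controls only the higher Laurent coefficients. The universal disk bound for the omitted set combined with the hypothesis $H_n\subset F$ is exactly what is needed to pin $c_0^{(n)}$ down; geometrically, $H_n\subset F$ prevents the hulls from drifting off inside $F$. Once this is in place, the rest is standard normal-family extraction plus the Carath\'eodory machinery already developed in the paper.
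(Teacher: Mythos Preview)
The paper does not actually prove this lemma; it simply cites it as Lemma~2.2 in \cite{int-LERW}. So there is no in-paper proof to compare against, and your self-contained argument via sequential compactness of the normalized exterior maps is a reasonable route. The overall strategy---bound the Laurent coefficients of $f_n=\phi_{H_n}^{-1}$, extract a locally uniform limit $f$, and identify $H=\ha\C\sem f(\{|z|>r\}\cup\{\infty\})$---is correct. Two points should be tightened.

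First, you apply the area theorem after scaling by $R_0$, obtaining $|c_k^{(n)}|\le R_0^{\,k+1}/\sqrt{k}$. This only gives local uniform boundedness of $(f_n)$ on $\{|z|>R_0\}$, so the ``diagonal argument over radii decreasing to $r$'' does not yet go through when $r<R_0$. Scale by $\rad(H_n)$ instead: then $|c_k^{(n)}|\le \rad(H_n)^{\,k+1}/\sqrt{k}$, and since $\rad(H_n)\to r$ this yields, for every $\eps>0$, uniform bounds on $\{|z|>r+\eps\}$ for all large $n$. Your omitted-set containment $H_n\subset\lin{D(c_0^{(n)},C\rad(H_n))}$ already uses this scaling, so the fix is just to use it consistently.

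Second, the argument for $H\subset F$ is slightly circular. You invoke $\ha\C\sem H_n\dto\ha\C\sem H$ (which does follow from Lemma~\ref{domain convergence*}, applied with $D_n=\{|z|>\rad(H_n)\}\cup\{\infty\}$ rather than $\ha\C\sem H_n$), and then write ``$\dist(z,H)\ge\dist(z,F)$'' for $z\in\C\sem F$. But the convergence $\dist(z,H_n)\to\dist(z,H)$ from Definition~\ref{def-lim} is only asserted for $z\in\ha\C\sem H$, which is precisely what you want to conclude. A clean repair: from property~(ii) of Definition~\ref{def-lim}, every point of $\pa H$ is a limit of points of $\pa H_n\subset H_n\subset F$, so $\pa H\subset F$. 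Then $H\sem F$ lies in the interior of $H$, hence is open; it is also closed in the connected unbounded set $\C\sem F$, and bounded, hence empty.

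With these two adjustments your proof is complete.
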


\begin{Corollary}
  For any $F\in\cal H$ and $r>0$, the set $\{H\in{\cal H}:H\subset F,\rad(H)\ge r\}$ is compact. \label{compact-cor}
\end{Corollary}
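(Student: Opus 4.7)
The plan is to deduce the corollary directly from Lemma \ref{compact-lem} together with the continuity of the radius function. By Lemma \ref{compact-lem}, the set $A:=\{H\in{\cal H}:H\subset F\}$ is compact in $({\cal H},d_{\cal H})$. Thus, to prove that the subset
\[
A_r := \{H\in{\cal H}:H\subset F,\ \rad(H)\ge r\} = A\cap \{H\in{\cal H}:\rad(H)\ge r\}
\]
is compact, it suffices to verify that it is closed in $A$ (equivalently, closed in $({\cal H},d_{\cal H})$).

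The key observation is that the function $\rad:({\cal H},d_{\cal H})\to[0,\infty)$ is continuous. This continuity was already noted in the discussion following the three equivalent characterizations of $d_{\cal H}$-convergence in Appendix \ref{B}: convergence $H_n\to H$ in $d_{\cal H}$ is equivalent to $\rad(H_n)\to\rad(H)$ together with $\phi_{H_n}^{-1}\luto\phi_H^{-1}$ on $\{|z|>\rad(H)\}$, so in particular $\rad(H_n)\to\rad(H)$. Therefore the preimage $\rad^{-1}([r,\infty))$ is closed in $\cal H$, and $A_r$ is the intersection of the compact set $A$ with a closed set, hence compact.

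I do not anticipate any obstacle here; the argument is a one-line consequence of Lemma \ref{compact-lem} and the continuity of $\rad$ that has already been established. The only thing to be slightly careful about is to note that no lower bound like $r>0$ is needed for the continuity of $\rad$ (it is continuous also at degenerate hulls), so the closedness of $\{\rad\ge r\}$ holds for any $r\ge 0$.
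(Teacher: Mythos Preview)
Your proposal is correct and matches the paper's intended approach: the corollary is stated without proof immediately after Lemma \ref{compact-lem}, so it is meant as an immediate consequence of that lemma together with the already-noted continuity of $\rad$, exactly as you argue.
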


\section{Topology on $\HH$-hulls} \label{C}
From Section 5.2 in \cite{LERW}, there is a metric $d_{\cal H}$ on the space of $\HH$-hulls such that $d_{\cal H}(H_n\to H_\infty)$ iff $f_{H_n}\luto f_{H_\infty}$ in $\HH$. From Lemma \ref{domain convergence*}, this implies that $\HH\sem H_n\dto \HH\sem H_\infty$. But $\HH\sem H_n\dto \HH\sem H_\infty$ does not imply $d_{\cal H}(H_n\to H_\infty)$. A counterexample is $H_n=\{z\in\HH:|z-2n|\le n\}$ and $H_\infty=\emptyset$. Since $H_1\cdot H_2=H_3$ iff $f_{H_1}\circ f_{H_2}=f_{H_3}$,  the dot product is continuous.

Formula (5.1) in \cite{LERW} states that for any $\HH$-hull $H$, there is a positive measure $\mu_H$ supported by $S_H^*$, the convex hull of $S_H$, such that for any $z\in\C\sem S_H^*$,
\BGE f_H(z)=z+\int \frac{-1}{z-x} d\mu_H(x).\label{f-z}\EDE
In particular, if $H$ is bounded by a crosscut, then $\mu_H$ is absolutely continuous w.r.t.\ the Lebesgue measure, and $d\mu_H/dx=\frac 1\pi \Imm f_H(x)$, where the value of $f_H$ on $S_H^*$ is the continuation of $f_H$ from $\HH$. If $H$ is approximated by a sequence of $\HH$-hulls $(H_n)$, then $\mu_H$ is the weak limit of $(\mu_{H_n})$. We may choose each $H_n$ to be bounded by a crosscut, whose height is not bigger than $h+1/n$, where $h$ is the height of $H$. Then each $\mu_{H_n}$ has a density function, whose $L^\infty$ norm is not bigger than $(h+1/n)/\pi$. Thus, $\mu_H$ also has a density function, whose $L^\infty$ norm is not bigger than $h/\pi$. We use $\rho_H$ to denote the density function of $\mu_H$.
Since $f_H:\C\sem S_H^*\conf \C\sem \ha H^*$ and $f_H'(\infty)=1$, we see that $\rad(\ha H^*)=\rad(S_H^*)=|S_H^*|/4$. Thus, $\diam(\ha H^*)\le 4\rad(\ha H^*)=|S_H^*|$. On the other hand, the diameter of $\ha H^*$ is at least twice the height of $H$. So $\|\rho_H\|_{\infty}\le \frac{|S_H^*|}{2\pi}$.

By approximating any $\HH$-hull $H$ using a sequence of $\HH$-hulls $(H_n)$, each of which is the union of finitely many mutually disjoint $\HH$-hulls bounded by crosscuts in $\HH$, we see that $\mu_H$ is in fact supported by $S_H$. By continuation, (\ref{f-z}) holds for any $z\in\C\sem S_H$. Furthermore, the support of $\mu_H$ is exactly $S_H$ because from (\ref{f-z}) $f_H$ extends analytically to the complement of the support of $\mu_H$, while from Lemma \ref{extend}$f_H$ can not be extended analytically beyond $\C\sem S_H$. So we obtain the following lemma.

\begin{Lemma}
  For any $\HH$-hull $H$, $\mu_H$ has a density function $\rho_H$, whose support is $S_H$, and whose $L^\infty$ norm is no more than $\frac{|S_H^*|}{2\pi}$. Moreover, (\ref{f-z}) holds for any $z\in\C\sem S_H$.
\end{Lemma}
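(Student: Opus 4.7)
The starting point is formula (5.1) of \cite{LERW}, which already gives the representation $f_H(z)=z+\int\frac{-1}{z-x}\,d\mu_H(x)$ for $z\in\C\sem S_H^*$, with $\mu_H$ a positive finite measure supported on the convex hull $S_H^*$. My plan is to upgrade this in two stages: first show that $\mu_H$ is absolutely continuous with a bounded density, then show that its support is in fact the (possibly Cantor-like) set $S_H$, not merely $S_H^*$. Once these are in hand, (\ref{f-z}) for $z\in\C\sem S_H$ follows by analytic continuation of the integral representation across $S_H^*\sem S_H$.

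For the density bound, I would start from the well-known special case where $H$ is bounded by a crosscut $C$ in $\HH$: there the continuous extension of $f_H$ to $S_H^*$ is explicit, $\mu_H$ is absolutely continuous with $d\mu_H/dx=\pi^{-1}\Imm f_H(x)$, and consequently $\|\rho_H\|_\infty$ is dominated by $h/\pi$, where $h$ is the height of $H$. For a general $\HH$-hull $H$ with height $h$, I would approximate $H$ by crosscut-bounded hulls $H_n$ whose heights do not exceed $h+1/n$; since weak convergence of $H_n\to H$ implies $\mu_{H_n}\rightharpoonup \mu_H$, and the uniformly bounded densities $\rho_{H_n}$ have a weak-$\ast$ subsequential limit in $L^\infty$, the limiting measure $\mu_H$ must itself have a density $\rho_H$ with $\|\rho_H\|_\infty\le h/\pi$. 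To convert the height bound into a bound in terms of $|S_H^*|$, I would use the normalization $f_H'(\infty)=1$, which forces $\rad(\ha H^*)=\rad(S_H^*)=|S_H^*|/4$, and combine $\diam(\ha H^*)\le 4\rad(\ha H^*)=|S_H^*|$ with the obvious $\diam(\ha H^*)\ge 2h$ to deduce $h\le |S_H^*|/2$, giving $\|\rho_H\|_\infty\le |S_H^*|/(2\pi)$.

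For the support claim, I would again use approximation, but this time by finite unions of disjoint crosscut-bounded hulls whose bases cover $S_H$ more and more tightly; this shows that $\mu_H$ places no mass on any relatively open subinterval of $S_H^*$ that is disjoint from $S_H$, so $\supp(\mu_H)\subset S_H$. The reverse inclusion is where Lemma \ref{extend} does the real work: if the support were a proper closed subset $F\subsetneqq S_H$, then the integral representation would analytically extend $f_H$ to $\C\sem F$, contradicting the non-extendability statement of Lemma \ref{extend}. Hence $\supp(\mu_H)=S_H$, and since the integral $\int\frac{-1}{z-x}\,d\mu_H(x)$ is analytic on $\C\sem S_H$, the identity (\ref{f-z}) extends from $\C\sem S_H^*$ to all of $\C\sem S_H$ by uniqueness of analytic continuation.

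The main obstacle I anticipate is the approximation argument itself: one must verify that for an arbitrary $\HH$-hull $H$ there are crosscut-bounded approximants $H_n\to H$ (in $d_{\cal H}$) whose heights converge to that of $H$, so that the density bounds pass cleanly to the limit. This is a geometric construction rather than a subtle analytic one — one can, for instance, take $H_n=H\cup f_H(\{|z-a|\le 1/n\}\cap\HH)$-type thickenings and check the capacity/height control — but the bookkeeping is the only nontrivial point; the rest is routine weak convergence plus Lemma \ref{extend}.
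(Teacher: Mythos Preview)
Your proposal is correct and follows essentially the same approach as the paper: the density bound via crosscut-bounded approximants with controlled heights, the conversion $h\le |S_H^*|/2$ via $\rad(\ha H^*)=\rad(S_H^*)$, the refinement $\supp(\mu_H)\subset S_H$ via approximation by finite disjoint unions of crosscut-bounded hulls, and the reverse inclusion via Lemma~\ref{extend} are exactly the steps the paper takes. The only cosmetic difference is that the paper presents this argument in the paragraphs preceding the lemma statement rather than as a separate proof, and it does not spell out the approximation construction any more explicitly than you do.
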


The following lemma extends Lemma 5.4 in \cite{LERW}, and we now give a proof.

\begin{Lemma}
  For any compact $F\subset \R$, ${\cal H}_F:=\{H:S_H\subset F\}$ is compact, and $H_n\to H$ in ${\cal H}_F$ implies that $f_{H_n}\luto f_H$ in $\C\sem F$. \label{compact-new}
\end{Lemma}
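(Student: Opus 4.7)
The plan is to exploit the integral representation
\[ f_H(z) = z - \int \frac{d\mu_H(x)}{z-x}, \qquad z \in \C \setminus F, \]
just established, together with the density bound $\|\rho_H\|_\infty \le |S_H^*|/(2\pi)$. Fix $R$ with $F \subset [-R,R]$. For any $H \in {\cal H}_F$ we then have $|S_H^*| \le 2R$, so the $\rho_H$ are uniformly bounded in $L^\infty$, and $\mu_H(\R) = \hcap(H)$ is uniformly bounded over ${\cal H}_F$. Hence $\{\mu_H : H \in {\cal H}_F\}$ is a uniformly bounded family of positive measures supported in the fixed compact set $F$, which is weak-$*$ relatively compact.

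Given a sequence $(H_n) \subset {\cal H}_F$, I pass to a subsequence along which $\mu_{H_n} \to \mu$ weakly, with $\mathrm{supp}(\mu) \subset F$. For each $z \in \C \setminus F$ the function $x \mapsto (z-x)^{-1}$ is continuous on $F$, and a standard equicontinuity argument promotes this to locally uniform convergence of the Cauchy transforms on $\C \setminus F$. Setting $f(z) := z - \int (z-x)^{-1} d\mu(x)$, we thereby obtain $f_{H_n} \luto f$ in $\C \setminus F$, and in particular in $\HH$.

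The identification step is to recognize $f$ as $f_H$ for some $H \in {\cal H}_F$. Since $f(z) = z + O(1/z)$ at infinity, $f$ is not constant. Each $f_{H_n}$ maps $\HH$ into $\HH$, and since $f(z) \sim z$ for large $z \in \HH$ the open image $f(\HH)$ cannot lie in the lower half plane, so by Lemma \ref{domain convergence*}, $f : \HH \conf f(\HH) \subset \HH$ and $\HH \setminus f_{H_n}(\HH) \dto \HH \setminus f(\HH)$. Let $H := \HH \setminus f(\HH)$; the asymptotic $f(z) - z \to 0$ forces $H$ to be bounded, and $f(\HH)$ is simply connected with $H$ relatively closed in $\HH$, so $H$ is an $\HH$-hull. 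Uniqueness of the integral representation then forces $f = f_H$ and $\mu = \mu_H$, whence $S_H = \mathrm{supp}(\mu_H) \subset F$ and $H \in {\cal H}_F$. Since $f_{H_n} \luto f_H$ in $\HH$ is by definition $d_{\cal H}(H_n,H) \to 0$, compactness of $({\cal H}_F, d_{\cal H})$ is established.

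For the second assertion, suppose $H_n \to H$ in $({\cal H}_F, d_{\cal H})$. Every subsequence of $(\mu_{H_n})$ has, by the compactness just proved, a further weakly convergent sub-subsequence whose limit $\mu'$ satisfies $z - \int (z-x)^{-1} d\mu'(x) = f_H(z)$ on $\HH$; uniqueness of the Cauchy transform forces $\mu' = \mu_H$. Hence the full sequence $\mu_{H_n} \to \mu_H$ weakly, and the Cauchy-transform argument from the second paragraph upgrades $f_{H_n} \luto f_H$ from $\HH$ to all of $\C \setminus F$. The principal obstacle is the identification step: given a weak-$*$ limit of the $\mu_{H_n}$, one must verify that the associated $f$ is conformal on $\HH$ and that its omitted set is a bounded, relatively closed $\HH$-hull with support in $F$, rather than merely some analytic self-map of $\HH$; the explicit integral representation together with Lemma \ref{domain convergence*} is exactly what enables this.
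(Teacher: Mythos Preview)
Your proof is correct and follows essentially the same approach as the paper: extract a weak limit of the $\mu_{H_n}$ (the paper phrases this as weak-$*$ convergence of the densities $\rho_{H_n}$ in $L^\infty$), use the integral representation to get locally uniform convergence of $f_{H_n}$ on $\C\setminus F$, and then identify the limit as some $f_H$ with $S_H\subset F$. The only cosmetic differences are that the paper applies Lemma~\ref{domain convergence*} on all of $\C\setminus F$ and invokes $\R$-symmetry plus Lemma~\ref{extend} to locate $S_H$ inside $F$, whereas you restrict to $\HH$ and read off $S_H=\mathrm{supp}(\mu_H)\subset F$ directly from uniqueness of the Cauchy transform; both routes are valid.
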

\begin{proof}
  Suppose $(H_n)$ is a sequence in ${\cal H}_F$. Let $|F^*|$ denote the length of the convex hull of $F$. Then for each $n$, $\rho_{H_n}$ is supported by $S_{H_n}\subset F$, and the $L^\infty$ norm of $\rho_{H_n}$ is no more than $\frac{|S_{H_n}^*|}{2\pi}\le \frac{|F^*|}{2\pi} $. Thus, $(\rho_{H_n})$ contains a subsequence $(\rho_{H_{n_k}})$, which converges in the weak-$*$ topology to a function $\rho$ supported by $F$. From (\ref{f-z}) we see that $f_{H_{n_k}}$ converges uniformly on each compact subset of $\C\sem F$, and if $f$ is the limit function, then $f(z)-z=\int_F \frac{-1}{z-x} \rho(x)dx$, $z\in\C\sem F$.
  So $f(z)-z\to 0$ as $z\to \infty$. This means that $f$ can not be constant. From Lemma \ref{domain convergence*}, $f$ is a conformal map on $\C\sem F$. Since $f(z)-z\to 0$ as $z\to\infty$, $\infty$ is a simple pole of $f$. Thus, $f(\C\sem F)$ contains a neighborhood of $\infty$. Let $G=\C\sem f(\C\sem F)$. Then $G$ is compact. Since every $f_{H_{n_k}}$ is $\R$-symmetric, so is $f$. Let $H=G\cap \HH$. Then $f$ maps $\HH$ conformally onto $\HH\sem H$. This implies that $H$ is an $\HH$-hull and $f=f_H$ on $\HH$ because $f(z)-z\to 0$ as $z\to \infty$. Since $f$ extends $f_H|_\HH$, from Lemma \ref{extend}, we see that $S_H\subset F$ and $f=f_H$ in $\C\sem F$. Since $f_{H_{n_k}}\luto f_H$ in $\HH$, we get $H_{n_k}\to H\in{\cal H}_F$. This shows that ${\cal H}_F$ is compact.
  The above argument also gives $f_{H_{n_k}}\luto f_H$ in $\C\sem F$. If $H_n\to H$, then any subsequence $(H_{n_k})$ of $(H_n)$ contains a subsequence $(H_{n_{k_l}})$ such that $f_{H_{n_{k_l}}}\luto f_H$ in $\C\sem F$, which implies that $f_{H_n}\luto f_H$ in $\C\sem F$.
\end{proof}

\section{Topology on $\D$-hulls} \label{D}
Define a metric $d_{\cal H}$ on the space of $\D$-hulls such that
\BGE d_{\cal H}(H_1,H_2)=\sum_{n=1}^\infty \frac{1}{2^n}\sup_{|z|\le 1-1/n}\{|f_{H_1}(z)-f_{H_2}(z)|\}.\label{metric-D}\EDE
It is clear that $d_{\cal H}(H_n,H)\to 0$ iff $f_{H_n}\luto f_H$ in $\D$.
From Lemma \ref{domain convergence*}, this implies that $\D\sem H_n\dto \D\sem H$. 
On the other hand, from Lemma \ref{compact-D-M} below, one see that $\D\sem H_n\dto\D\sem H$ also implies that $H_n\to H$.
Since $f_{H_n}\luto f_H$ in $\D$ implies that $f_{H_n}'(0)\to f_H'(0)$, we see that $\dcap$ is a continuous function. Moreover, the dot product is also continuous.

\begin{Lemma}
  For any $M<\infty$, $\{H:\dcap(H)\le M\}$ is compact. \label{compact-D-M}
\end{Lemma}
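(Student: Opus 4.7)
The plan is to establish sequential compactness via a normal families argument. Given a sequence $(H_n)$ of $\D$-hulls with $\dcap(H_n)\le M$, I will extract a subsequence converging in the metric $d_{\cal H}$ to some $\D$-hull $H$ with $\dcap(H)\le M$.

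First, since each $f_{H_n}$ maps $\D$ conformally into $\D$, the family $\{f_{H_n}\}$ is uniformly bounded on $\D$, hence normal by Montel's theorem. Pass to a subsequence (still denoted $(H_n)$) with $f_{H_n}\luto f$ in $\D$ for some analytic $f:\D\to\lin{\D}$. The normalizations $f_{H_n}(0)=0$ and $f_{H_n}'(0)=e^{-\dcap(H_n)}\ge e^{-M}$ pass to the limit, giving $f(0)=0$ and $f'(0)\ge e^{-M}>0$; in particular $f$ is nonconstant, and by the maximum modulus principle $f(\D)\subset\D$.

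Next I will invoke Lemma \ref{domain convergence*} with $D_n=D=\D$ and $f_n=f_{H_n}:\D\conf\D\sem H_n$. Since the subsequential limit $f$ is nonconstant, the lemma gives that $f$ is a conformal map on $\D$ and $\D\sem H_n\dto f(\D)$. Set $H:=\D\sem f(\D)$. Then $f(\D)$ is an open simply connected subset of $\D$ containing $0$, so $H$ is a $\D$-hull. The inverse $f^{-1}:f(\D)\conf\D$ satisfies $f^{-1}(0)=0$ and $(f^{-1})'(0)=1/f'(0)>0$, so by the uniqueness of $g_H$ we have $f^{-1}=g_H$ and $f=f_H$. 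Therefore $\dcap(H)=-\ln f'(0)\le M$, and the locally uniform convergence $f_{H_n}\luto f_H$ in $\D$ translates (via the explicit formula (\ref{metric-D}) defining $d_{\cal H}$ as a weighted sum of sup-norms on the exhausting discs $\{|z|\le 1-1/n\}$) into $d_{\cal H}(H_n,H)\to 0$. A standard subsequence argument then shows that if one starts with an arbitrary sequence and every subsequence has a further subsequence converging to some limit in the set, the whole sequence has a convergent subsequence, yielding compactness of $\{H:\dcap(H)\le M\}$.

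The only delicate point is ruling out a constant subsequential limit, and this is precisely where the capacity bound is used: $\dcap(H_n)\le M$ supplies the uniform derivative bound $f_{H_n}'(0)\ge e^{-M}$ that forces $f$ to be nonconstant. Without such a bound (e.g., if $\dcap(H_n)\to\infty$), the hulls could swallow almost all of $\D$ and the limit would collapse to $f\equiv 0$, so compactness would genuinely fail; the statement of the lemma is sharp in that respect.
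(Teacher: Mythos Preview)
Your proof is correct and follows essentially the same approach as the paper's: extract a locally uniformly convergent subsequence of $(f_{H_n})$ by normality, use the derivative bound $f_{H_n}'(0)\ge e^{-M}$ to rule out a constant limit, invoke Lemma \ref{domain convergence*} to conclude the limit is conformal, and identify it as $f_H$ for some $\D$-hull $H$ with $\dcap(H)\le M$. Your added remark on sharpness is a nice touch but not needed for the argument.
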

\begin{proof} Suppose $(H_n)$ is a sequence of $\D$-hulls with $\dcap(H_n)\le M$ for each $n$. Then $f_{H_n}'(0)=e^{-\dcap(H_n)}\ge e^{-M}$.
Since $(f_{H_n})$ is uniformly bounded in $\D$, it contains a subsequence $(f_{H_{n_k}})$, which converges locally uniformly in $\D$. Let $f$ be the limit. Then $f'(0)=\lim_{k\to\infty} f_{H_{n_k}}'(0)\ge e^{-M}$. Thus, $f$ is not constant. From Lemma \ref{domain convergence*}, $f$ is conformal in $\D$. Since $f(0)=\lim_{k\to\infty} f_{H_{n_k}}(0)$ and $f'(0)>0$, we see that $f=f_H|_\D$ for some $\D$-hull $H$. Since $f'(0)\ge e^{-M}$, $\dcap(H)\le M$. From $f_{H_{n_k}}\luto f_H$ in $\D$ we get $H_{n_k}\to H$. \end{proof}

\no{\bf Remark.}
We may compactify the space of $\D$-hulls by adding one element $H_\infty$ with the associated function $f_{H_\infty}\equiv 0$ in $\D$, and defining the metric $d_{\cal H}$ in the extended space using (\ref{metric-D}).

\begin{Lemma}
  For any compact $F\subsetneqq\TT$, ${\cal H}_F:=\{H:S_H\subset F\}$ is compact. \label{compact-D-F}
\end{Lemma}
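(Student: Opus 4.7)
The plan is to deduce the claim from its half-plane analogue, Lemma \ref{compact-new}, by conjugating with a M\"obius transformation that sends a point of $\TT\sem F$ to $\infty$.

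Since $F\subsetneqq\TT$, I would pick $z_0\in\TT\sem F$ and let $W:\D\conf\HH$ be a M\"obius transformation with $W(z_0)=\infty$; then $E:=W(F)$ is a compact subset of $\R$. For each $H\in{\cal H}_F^\D$ one has $W^{-1}(\infty)=z_0\notin S_H^\TT$, so Theorem \ref{Mob}(i) supplies a M\"obius transformation $W^H:\D\conf\HH$ for which $W^*(H):=W^H(H)$ is an $\HH$-hull with $S_{W^*(H)}^\R=W(S_H^\TT)\subset E$. Thus $W^*$ sends ${\cal H}_F^\D$ into ${\cal H}_E^\HH$. Applying Theorem \ref{Mob}(ii) to $W^{-1}:\HH\conf\D$ produces a map $(W^{-1})^*:{\cal H}_E^\HH\to{\cal H}_F^\D$, and the two maps are mutually inverse; this is the M\"obius analogue of the composition identities $(W^K)^{-1}=(W^{-1})^{W^*(K)}$ recorded in the remarks after Definition \ref{W^K-def} (and again after Theorem \ref{W^K-d}), and follows directly from the uniqueness clauses of Theorem \ref{Mob}.

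The remark at the very end of Section~\ref{Section-Conformal} asserts that $K\mapsto W^K(K)$ in Theorem \ref{Mob}(i) is continuous when restricted to ${\cal H}_F^\D$, and similarly for part (ii). Hence $W^*$ is a continuous bijection ${\cal H}_F^\D\to{\cal H}_E^\HH$ with continuous inverse, i.e., a homeomorphism. Since ${\cal H}_E^\HH$ is compact by Lemma \ref{compact-new}, so is ${\cal H}_F^\D$.

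The only subtle point is the verification of bijectivity: one must check that the composition law $(W^K)^{-1}=(W^{-1})^{W^*(K)}$, which the paper establishes within the pure $\HH$- and pure $\D$-frameworks, carries over when $W$ crosses between $\D$ and $\HH$. This is routine, however, because both sides of the purported identity are Möbius transformations satisfying the same normalising functional equation from Theorem \ref{Mob}, and so must coincide by the uniqueness clause there. I do not expect any other obstacle; the argument is essentially a direct transport of the $\HH$-side compactness through the conformal dictionary of Section~\ref{Section-Conformal}.
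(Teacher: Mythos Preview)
Your approach is genuinely different from the paper's, which proves the lemma directly: first bounding $\dcap(H)$ uniformly for $H\in{\cal H}_F$ via a harmonic-measure estimate (giving relative compactness by Lemma~\ref{compact-D-M}), and then showing ${\cal H}_F$ is closed by an analytic-continuation argument for $f_H$ across $\TT\sem F$.

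Your M\"obius-transfer idea is elegant, but as written it is circular. You need continuity of $(W^{-1})^*:{\cal H}_E^\HH\to{\cal H}_F^\D$ (this is the direction that actually delivers compactness, as the continuous image of the compact set ${\cal H}_E^\HH$). You cite the remark at the end of Section~\ref{Section-Conformal} for this, but that remark establishes continuity for Theorem~\ref{Mob}(ii) by adapting the proof of Theorem~\ref{continuous}(i), whose first step is to extract a convergent subsequence in the \emph{target} space. For Theorem~\ref{Mob}(ii) the target is ${\cal H}_{W^{-1}(E)}^\D={\cal H}_F^\D$, and its compactness is precisely Lemma~\ref{compact-D-F}. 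So the paper's route to the remark you cite already consumes the lemma you are trying to prove.

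The gap is repairable, because the M\"obius maps $(W^{-1})^{K}$ depend on only three real parameters. If $K_n\to K_0$ in ${\cal H}_E^\HH$ and $V_n:=(W^{-1})^{K_n}$, then from the functional equation $g^\D_{H_n}\circ V_n\circ f^\HH_{K_n}=W^{-1}$ one reads off $V_n^{-1}(0)=f^\HH_{K_n}(W(0))$, which converges by Lemma~\ref{compact-new}; the remaining rotational parameter is fixed by the positivity of $(g^\D_{H_n})'(0)$ together with the argument of $(f^\HH_{K_n})'(W(0))$, which also converges. Hence $V_n\to V_0$ as M\"obius transformations, and $f^\D_{H_n}=V_n\circ f^\HH_{K_n}\circ W\luto f^\D_{H_0}$ in $\D$, giving $H_n\to H_0$. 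With this direct argument in place, your proof goes through and avoids the paper's harmonic-measure bound entirely; the trade-off is a hands-on M\"obius computation in place of the paper's more self-contained estimate.
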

\begin{proof}
Let $H\in{\cal H}_F$. From conformal invariance, the harmonic measure of $\TT\sem \ha H$ in $\D\sem H$ seen from $0$ equals to the harmonic measure of $\TT\sem S_H$ in $\D$ seen from $0$, which is bounded below by $|\TT\sem F|/|\TT|>0$. This implies that the distance between $0$ and $H$ is bounded below by a positive constant $r$ depending on $F$, which then implies that $\dcap(H)$ is bounded above by $-\ln(r)<\infty$. From Lemma \ref{compact-D-M}, we see that ${\cal H}_F$ is relatively compact.

It remains to show that ${\cal H}_F$ is bounded. Let $(H_n)$ be a sequence in ${\cal H}_F$, which converges to $H$. We need to show that $H\in{\cal H}_F$. Since $H_{n}\in{\cal H}_F$, each $f_{H_{n}}$ is analytic in $\ha\C\sem F$. We have $f_{H_{n}}\luto f_H$ in $\D$. From $\TT$-symmetry,  $f_{H_{n}}\luto f_H$ in $\D^*$. Let $J=\{|z|=2\}\subset\D^*$. Then $f_{H_{n_k}}\to f_H$ uniformly on $J$. Sine $f_{H_{n}}$ maps $\{|z|< 2\}\sem F$ into the Jordan domain bounded by $f_{H_{n}}(J)$, we see that the family $(f_{H_{n}})$ is uniformly bounded in $\{|z|< 2\}\sem F$. So it contains a subsequence $(f_{H_{n_{k}}})$, which converges locally uniformly in $\{|z|< 2\}\sem F$. The limit function is analytic in $\{|z|< 2\}\sem F$ and agrees with $f_H$ on $\D$, which implies that $f_H$ extends analytically across $\TT\sem F$. So $S_H\subset F$, i.e., $H\in{\cal H}_F$.
\end{proof}

There is an integral formula for $\D$-hulls which is similar to (\ref{f-z}).
For any $\D$-hull $H$, there is a positive measure $\mu_H$ with support $S_H$ such that
\BGE f(z)=z\cdot\exp\Big(\int_\TT -\frac{x+z}{x-z}d\mu_H(x)\Big),\quad z\in \C\sem S_H,\label{f/z}\EDE
and $H_n\to H$ iff $\mu_{H_n}\to \mu_H$ weakly. Moreover, $\mu_H$ is absolutely continuous w.r.t.\ the Lebesgue measure on $\TT$, and the density function is bounded. From this integral formula, it is easy to get the following lemma.

\begin{Lemma}
   For any compact $F\subset\TT$, $H_n\to H$ in ${\cal H}_F$ implies that $f_{H_n}\luto f_H$ in $\C\sem F$. \label{compact-new-r}
\end{Lemma}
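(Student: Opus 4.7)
The plan is to read off the locally uniform convergence outside $F$ from the integral representation (\ref{f/z}) together with the weak convergence of the measures $\mu_{H_n}$. First I would note that since $H_n, H \in {\cal H}_F$, the supports $S_{H_n}$ and $S_H$ all lie in $F$, so each $\mu_{H_n}$ and $\mu_H$ is a positive measure on the compact set $F$. Evaluating (\ref{f/z}) at $z=0$ (more precisely, computing $\log(f_H(z)/z) \to -|\mu_H|$ as $z \to 0$, which gives $f_H'(0) = e^{-|\mu_H|}$ and hence $|\mu_H| = \dcap(H)$) shows that the total masses $|\mu_{H_n}|$ converge to $|\mu_H|$, using the continuity of $\dcap$ on the space of $\D$-hulls; in particular they are uniformly bounded. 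Combined with the paper's assertion that $H_n \to H$ is equivalent to $\mu_{H_n} \to \mu_H$ weakly, we are in the classical setting of weakly convergent positive measures sharing a fixed compact support with uniformly bounded total mass.

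Next I would fix an arbitrary compact $K \subset \C \sem F$. The kernel
$$ \Phi(x,z) := -\frac{x+z}{x-z} $$
is continuous on the compact product $F \times K$, hence uniformly continuous there. A standard argument --- either directly via uniform continuity in $x$ (jointly in $z \in K$), or by Stone--Weierstrass approximation of $\Phi$ by finite sums $\sum_i \phi_i(x)\psi_i(z)$ --- promotes weak convergence into uniform convergence of the integrals:
$$ \int_F \Phi(x,z)\, d\mu_{H_n}(x) \longrightarrow \int_F \Phi(x,z)\, d\mu_H(x) \quad \text{uniformly in } z \in K. $$
The map $w \mapsto z e^{w}$ is continuous in $w$ uniformly for $z \in K$, so exponentiating and multiplying by $z$ preserves uniform convergence. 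By (\ref{f/z}), this gives $f_{H_n} \to f_H$ uniformly on $K$, and since $K \subset \C \sem F$ was arbitrary, we obtain $f_{H_n} \luto f_H$ in $\C \sem F$.

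I do not foresee a serious obstacle: this is essentially the radial counterpart of Lemma \ref{compact-new}, with the representation (\ref{f/z}) playing the role of (\ref{f-z}), and in fact the argument is slightly cleaner than the half-plane case because the measures have a common compact support from the outset rather than only $L^\infty$ control on their densities. The only point requiring mild care is the uniform-in-$z$ upgrade of weak convergence, but this is immediate from the joint continuity of $\Phi$ on the compact set $F \times K$ together with the uniform mass bound derived above.
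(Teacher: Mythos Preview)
Your proposal is correct and follows exactly the route the paper indicates: the paper's proof is the single sentence ``From this integral formula, it is easy to get the following lemma,'' and you have supplied precisely the standard details --- weak convergence of $\mu_{H_n}$ plus joint continuity of the kernel on $F\times K$ with the uniform mass bound $|\mu_{H_n}|=\dcap(H_n)\to\dcap(H)$ --- that make this line rigorous.
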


\end{document}